\numberwithin{equation}{section}
\newtheorem{theorem}{Theorem}[section]
\newtheorem{lemma}[theorem]{Lemma}
\newtheorem{proposition}[theorem]{Proposition}
\newtheorem{corollary}[theorem]{Corollary}
\theoremstyle{definition}
\newtheorem{example}[theorem]{Example}
\newtheorem{definition}[theorem]{Definition}
\newtheorem{assumption}[theorem]{Assumption}
\newtheorem{remark}[theorem]{Remark}
\newcommand{\Emb}{{\mathbb{E}}}
\newcommand{\Fmb}{{\mathbb{F}}}
\newcommand{\Imb}{{\mathbb{I}}}
\newcommand{\Zmb}{{\mathbb{Z}}}
\newcommand{\Bmc}{{\mathcal{B}}}
\newcommand{\Emc}{{\mathcal{E}}}
\newcommand{\Fmc}{{\mathcal{F}}}
\newcommand{\Gmc}{{\mathcal{G}}}
\newcommand{\Hmc}{{\mathcal{H}}}
\newcommand{\Wmc}{{\mathcal{W}}}
\newcommand{\Xmc}{{\mathcal{X}}}
\newcommand{\Ymc}{{\mathcal{Y}}}
\newcommand{\Pms}{{\mathscr{P}}}
\def\R{{\mathbb R}}
\def\N{{\mathbb N}}
\def\PP{{\mathbb P}}
\def\P{{\mathcal P}}
\newcommand{\ps}{\mathscr{P}}
\newcommand{\te}{\text}
\newcommand{\indp}{\perp\!\!\!\perp}
\newcommand{\indic}[1]{\Imb_{\left\{#1\right\}}}
\newcommand{\ex}[1]{\Emb\left[#1\right]}
\newcommand{\exmu}[2]{\Emb_{#1}\left[#2\right]}	
\newcommand{\deq}{\overset{\text{(d)}}{=}}		
\newcommand{\defeq}{:=}
\newcommand{\borel}{\Bmc}
\newcommand{\wh}[1]{\widehat{#1}}
\newcommand{\ind}{\hspace{24pt}}
\newcommand{\alt}[1]{\widetilde{#1}}
\newcommand{\law}{\te{Law}}
\newcommand{\ep}{\epsilon}
\newcommand{\filtm}{\Fmc}
\newcommand{\filt}{\Fmb}
\newcommand{\pcar}[1]{^{(#1)}}
\newcommand{\wsp}{\mathcal{W}}
\newcommand{\mf}{\mathcal{M}}
\newcommand{\incs}{\mathcal{I}}
\title{Mean-Field Analysis of Latent Variable Process Models on Dynamically Evolving Graphs with Feedback Effects}
\author{Ankan Ganguly\footnote{Corresponding Author}, Konstantinos Spiliopoulos\thanks{KS was partially supported by  NSF SES-2120115 and NSF-DMS 2311500.} and Daniel Sussman\\
Department of Mathematics \& Statistics, Boston University\\
665 Commonwealth Avenue, Boston, 02215, MA, United States\footnote{\emph{Email Addresses}: ankang@bu.edu (AG), kspiliop@bu.edu (KS) and sussman@bu.edu (DS)}}
\date{}
\begin{document}
\maketitle
\begin{abstract}
We study the mean-field limit of a generic class of dynamic co-evolving latent space networks motivated by the social and opinion dynamics literature. Such models include $n$ agents, whose opinions are given by latent stochastic processes, and a dynamic network process describing agent interactions. Models in this class incorporate (a) bi-directional feedback between the latent processes and the network process, (b) persistence effects, meaning that the network structure at the current time depends on the value of the latent processes at the current time but also on the network structure at the previous time instance and (c) localized interactions, meaning that individual agents do not have global information. We characterize the distributional limit of a random sample taken from the latent space network as the number of nodes in the network diverges. We describe the rich conditional probabilistic structure of the resulting limiting model which we use to establish the limiting behavior of the following quantities: (i) the empirical measure of the latent process, (ii) a conditional empirical measure relating the latent process to the network process and (iii) the network process graphon. In proving our main results, we derive a general conditional propagation of chaos result, which is of independent interest. Our novel approach to studying the limiting behavior of random samples proves to be a very useful methodology for fully grasping the asymptotic behavior of co-evolving particle systems. Numerical results are included to illustrate the theoretical findings.
\end{abstract}

\sloppy Keywords: Interacting particle systems, co-evolving networks, opinion dynamics, mean-field, graphons, propagation of chaos\\
MSC Classifications: 60K35, 60J05, 91D30 (Primary), 60B10, 60G57, 62D05 (Secondary)


\section{Introduction}
\label{sec::intro}

Interacting particle systems are a class of mathematical models used to describe a group of ``particles" or ``agents" in which interactions influence agents' behavior. Such models can be found in a large variety of fields in both the social and physical sciences, including opinion dynamics~\cite{friedkin1990social,sznajd2000opinion}, voter behavior~\cite{clifford1973model,holley1975ergodic},
herding~\cite{banerjee1992simple} or flocking~\cite{douglis1948social,frey2018cognitive}, polarization~\cite{DelVicario2017,Matakos2017}, interacting particle systems in applied mathematics and statistical physics~\cite{GartnerIPS,Garnier2, MFK2008,SirignanoSpiliopoulos2018NN_LLN}, ecology and theoretical biology ~\cite{IssacsonMS,
SirignanoSpiliopoulos2018NN_LLN}, and economics and game theory~\cite{AllenGale2000, FrickeLux2015,GSS,EisenbergNoe2001, Spiliopoulos2015} to name just a few. In such models, interactions are typically modeled using networks. However, such networks are often large and complex. This often renders these models both numerically and analytically challenging, so \emph{mean-field} approximations are often used to study such models. This paper aims to develop a mean-field theory for a class of interacting particle systems on dynamic random networks in which the particles and networks co-evolve. We also include a characterization of the network limit. 

\ind Mean-field approximation theory (see  \cite{Kur70,Kur71,Szn91,Oel84} for related early works) is a standard tool for approximating networked models (see \cite{Kol10,ChaDie22A,ChaDie22B} and the references therein). Given an exchangeable, weakly interacting particle system with $n$ agents (all pairs of agents interact, and the strength of each pairwise interaction is inversely proportional to $n$), one first establishes ``propagation of chaos," which implies that the agents are asymptotically independent of one another \cite{Szn91}. This assumption can then be used to establish a McKean-Vlasov equation (first introduced in \cite{McK66}) characterizing the limiting dynamics of each particle as $n\to\infty$. During the last few decades, there has been considerable interest in the problem of extending mean-field approximations to interacting particle systems on more general graphs and understanding the resulting limiting equations \cite{CruTan24,CopCrePha25,CredeFPha25,DelGiaLuc16,OliRei19,CopDieGia19,BudMukWu19,BayraktarChakrabortyWu2023}. 

 \ind The theory of graphons as limiting objects of dense graphs was introduced by Lov\'{a}sz and Szegedy \cite{LovSze06} and further developed by Borgs et al. in \cite{Boretal08, Boretal12} to describe the convergence of large, dense (possibly random) networks in the limit as network size increases to infinity. A given sequence of graphs $G_1, G_2, \dots$ is called to be \emph{left-convergent} if, for all finite graphs $H$, the homomorphism density $H$ with respect to $G_n$ also converges to some limit. One of the fundamental results of graphon theory states that left convergence is equivalent to convergence in the graphon space and that this limiting graphon characterizes the limiting homomorphism densities. The graphon convergence of a sequence of graphs can be used to establish (scaling) limits involving many global graph statistics of interest, such as edge-density, clustering coefficient and eigenvalues of the adjacency or laplacian matrices of the graphs \cite{Lov12}. It is worth noting that in many mean-field approximations of interacting particle systems on dense networks, graphon representations of the limiting network appear in the limiting model \cite{DelGiaLuc16, OliRei19,BayraktarChakrabortyWu2023,CaiHua21}. These connections are an area of active research. Lastly, just as graphons are useful for the study of graph asymptotics, \emph{probability-graphons} can be used to study the asymptotics of graphs with decorated edges \cite{LovSze10,AbrDelWei23,Zuc24A,Zuc24B}. One type of probability-graphon of note is the \emph{multiplexon} \cite{BhaGan25}, which can be used to describe the limiting behavior of multiplexes: collections of graphs sharing the same vertex.


\ind Our work is motivated by recent developments in the literature on social networks and polarization. Interacting particle systems are a natural choice of model for such studies. In such models, the evolution of agent beliefs (coded as latent variables) depends on their interactions (modeled as a dynamic network). In this context, it is natural to assume that agent interactions are heavily influenced by their opinions; agents tend to favor interactions with similar agents and avoid interactions with agents holding different opinions. As a result, it is natural to study dynamic co-evolving systems in which networks evolve over time, and the evolution of both the network and agent opinions depends on one another. 

\ind In the literature on social networks, this co-evolving interaction phenomenon is modeled via so-called co-evolving networks in which the links between the nodes and certain attributes of the nodes evolve over time in ways that affect each other. A non-exhaustive list of dynamic network models within the statistics and social network analysis literature includes dynamic Erd{o}s-R\`{e}nyi graph models \cite{BraHolMan22,BraHolMan23,BhaBudWu19}, dynamic stochastic block models (e.g., \cite{yang2011detecting,xu2014dynamic,corneli2016exact,matias2017statistical,zhang2017finding}), dynamic latent space network models (e.g., \cite{sarkar2006dynamic, sewell2015latent, sewell2015analysis, sewell2016latent, sewell2017latent,Loyal2023}), temporal exponential family random graph models (e.g., \cite{hanneke2010discrete,krivitsky2014separable}), stochastic actor-oriented models (SAOM,  e.g., \cite{snijders1997simulation,snijders2010introduction,snijders2017modeling}), opinion dynamics with bounded confidence models (BCOD, \cite{HegKra02,Deffuant2000,PerKerIni22}), and the recently proposed dynamic co-evolving latent space network with attractor models (CLSNA, \cite{Zhuetal23,Panetal24}).  The SAOM, BCOD, and CLSNA models are inherently co-evolving. See Examples \ref{ex::BCOD} and \ref{ex::CLSNA} for more details.

\ind The class of models we study makes a couple of structural assumptions, which add to the technical difficulty of its analysis. First, it is assumed that two agents that interact at time $t$ are (all else equal) more likely to interact at time $t+1$ than a pair of agents that don't interact at time $t$. Motivated by this phenomenon, the model includes persistence effects, meaning that the graph structure at the next time instance does not depend only on the corresponding latent process values but also on the graph structure at the current time instance. Second, it is assumed that individual agents have no global information, including the size of the population. In particular, this implies that the strength of the influence of one agent on another is inversely proportional to the second agent's degree in the network rather than the size of the entire network. 

\ind Motivated by applications in social and opinion dynamics literature, we study a generic class of models satisfying the above criteria. Our work shows that such models possess a rich mathematical structure. We also remark that the mathematical tools developed in this paper are general, and we expect them to apply to a much larger class of models than described in this article.

\ind To the best of our knowledge, our paper is among the first rigorous studies of the mean-field limit of an interacting particle system with dynamic, co-evolving networks. Indeed, models with co-evolving particles and networks are difficult to work with as the network itself is endogenous to the model. This can make it difficult to grasp properties of the underlying network. In addition, standard mean-field models are no longer sufficient, as it is also necessary to (in some way) capture the dependencies between agent beliefs and their interactions and to understand the limiting network structure. To resolve this problem, we introduce the \textit{sample perspective} for interacting particle system limits. We examine the asymptotics of a random sample of fixed size $k$ of agents in the limit as the population $n$ converges to infinity. Within this random sample, we characterize the asymptotic joint distribution of agent opinion dynamics and the subnetwork trajectory induced by the random sample. We show that this limiting random sample (which we refer to as the mean-field limit) has a rich conditional structure which we use to establish propagation of chaos (asymptotic independence of beliefs), conditional propagation of chaos (asymptotic conditional independence of beliefs between two agents given their interaction history), related hydrodynamic limits and even a graphon limit of the underlying network. Including persistence effects also greatly complicates the conditional structure of the model. Lastly, as a result of our assumption that individual agents only have access to local information, interactions between particles are non-linear functions of the local empirical measure. This induces some significant technical challenges in establishing sufficient uniform integrability. To resolve these challenges, we must establish lower bounds on the number of interactions involving any individual agent. Lastly, to properly capture the heterogeneity in the joint structure of the graph at different times, we treat the graph trajectories as multiplexes. The term multiplex refers to a collection of graphs sharing a vertex set, with each graph being a layer of the multiplex network. This point of view allows us to study and identify the asymptotic behavior of the underlying multiplexons, see Section \ref{ssec::Gconv}. 

\ind 
One relevant article introduced by Bayraktar and Wu \cite{BayWu21} studies a general continuous-time interacting particle system with state space $\Zmb$ on a dynamic, multi-colored graph which may co-evolve with its particles and which exhibits persistence, see also \cite{BAYRAKTAR_WU_2024,BayraktarChakrabortyWu2023} for related works. The authors in \cite{BayWu21} study law of large numbers, propagation of chaos and central limit theorem results given various assumptions on the dynamics of the model. 
The model we study is different, and in addition to the law of large numbers and propagation of chaos (albeit not central limit theorem), we include a graphon-level analysis of the limiting network. Furthermore, at a high level, \cite{BayWu21} takes a stochastic differential equation approach to the model, while our approach provides a stronger emphasis on the conditional structure of our model. Another artice of note by Crucianelli and Tangpi \cite{CruTan24} takes a different approach in their analysis of the convergence of interacting diffusions on (exogeneous) sparse $W$-random graphs to the limiting graphon particle system. Likewise, Coppini et al. \cite{CopCrePha25} examine the mean-field dynamics of a system of interacting diffusions with an exogenous sequence of dense graphs and for which agents only have local information. This introduces many of the same challenges we faced while establishing uniform integrability of our sampled process. Both articles use Fubini extensions to address some significant measurability problems that arise in the continuous time setting. This allows them to ``sample" an $N$ particle system from the infinite-dimensional limiting system and establish quantitative bounds on the distance between the $N$ particle interacting diffusion and the $N$ particle system sampled from the limiting graphon SDE. This differs from our sampling method as we sample from both the $N$ particle system and the limiting mean-field process. There have been other recent works examining specific examples of interacting particle systems on large, co-evolving networks on discrete state spaces, including the voter model \cite{BasSly17,Aveetal25,Baletal24,Kraetal23}, the evoSIR model \cite{DurYao22} and the Hawkes process \cite{Mac24}.


\ind Let us next present in greater detail the model we study. Consider a model for which at each time $t \in \N_0$, we model $n$ agents holding latent opinions represented by the set of vectors $\{Z^n_i(t)\}_{i=1}^n \subset \R^d$. Interactions between agents at time $t$ are encoded by a dynamic (undirected) network with adjacency matrix $A^n(t) \defeq (A^n_{ij}(t))_{i,j=1}^n$. At time $t+1$, agent $i$ updates their opinion by considering a convex combination of their own opinion and the average opinion of their neighbors at time $t$, then perturbing that opinion with i.i.d. additive noise $\{\xi_i(t)\}_{i\in \N,t\in\N_0}$:
\begin{align}
\label{eqn::Znevolve}
Z^n_{i}(t+1) &= (1 - \gamma)Z^n_{i}(t) + \gamma L^n_i(t) + \xi_i(t),\\
\label{eqn::Lnidef}
L^n_i(t) &= \frac{1}{d_{A^n(t)}(i)}\sum_{j=1}^n Z^n_j(t)A^n_{ij}(t),\\
\label{eqn::dAnt}
d_{A^n(t)}(i) &= \sum_{j=1}^n A^n_{ij}(t),
\end{align}
where $\gamma \in (0,1)$ is some fixed constant.

\ind The network $A^n$ also evolves with time. Fix any $t \in \N_0$ and $1\leq i < j \leq n$. At time $t+1$, there exists an edge between agents $i$ and $j$ with probability depending on \textbf{(a)} their latent positions $(Z^n_i(t+1),Z^n_j(t+1))$ and \textbf{(b)} the interaction between the agents $A^n_{ij}(t)$ at time $t$. This probability is determined by an a.e. continuous \emph{interaction function} $B: \{0,1\}\times\R^d\times \R^d \to [0,1]$:
\begin{equation}
\label{eqn::Anevolve}
\PP\left(A^n_{ij}(t+1) = 1\middle|\Fmc^{A,n}_t\right) = \PP\left(A^n_{ji}(t+1) = 1\middle|\Fmc^{A,n}_t\right) = B\left(A^n_{ij}(t),Z^n_i(t+1),Z^n_j(t+1)\right),
\end{equation}
where $\Fmc^{A,n}_t = \sigma(Z^n[t+1],A^n[t])$. It is also assumed that for any $i \in [1:n]$, $A^n_{ii}(t) = 1$. The assumption that all particles have self-loops ensures that the denominator in \eqref{eqn::Lnidef} is always strictly positive, and it does not impact the subgraph densities of $A^n_{ij}$ in the limit as $n\to\infty$. Note the persistence effect, in that the behavior of $A^n_{ij}(t+1)$ does not only depend on the latent processes $(Z^n_i(t+1),Z^n_j(t+1))$, but also depends on $A^n_{ij}(t)$. It can be noted that \eqref{eqn::Anevolve} implies that the interaction between two agents at time $t+1$ is determined by the latent positions of the agents at time $t+1$ given by $Z^n(t+1) \defeq (Z^n_i(t+1))_{i=1}^n$ and the network $A^n(t)$ at time $t$. Indeed, it is assumed that $\{A^n_{ij}(t+1)\}_{1\leq i < j\leq n}$ are mutually conditionally independent given $\Fmc^{A,n}_t$.
\begin{remark}
The model \eqref{eqn::Znevolve}-\eqref{eqn::Anevolve} captures the persistence, co-evolving and local interaction features we are interested in. We seek to understand how these features interact in the large network limit of this class of models. As demonstrated in this work, despite the perhaps simple formulation of the model, the conditional structure and limiting analysis of this class of models is rich. In Examples \ref{ex::BCOD}-\ref{ex::CLSNA} we provide two relevant examples of concrete applied network models from the social and opinion dynamics literature that are special cases of the model \eqref{eqn::Znevolve}-\eqref{eqn::Anevolve}.
\end{remark}
\begin{example}[BCOD Model]
\label{ex::BCOD}
When the interaction kernel $B$ takes the form
\[ B(a,z_1,z_2) = \indic{\|z_1-z_2\|_2 \leq \delta},\]
then our model becomes an \emph{opinion dynamics with bounded confidence} (BCOD) model (e.g. \cite{HegKra02,Deffuant2000,PerKerIni22} to name a few representative papers here). Specifically, in the model of \cite{HegKra02}, the authors take $\gamma=1$ and the noise component $\xi_i(t)=0$ for all agents $i$. In  \cite{Deffuant2000,PerKerIni22}, $\gamma$ is allowed to vary, $\xi_i(t)=0$  and $Z^{n}_{i}$ is restricted to be in a compact state space, say $[0,1]$, for all agents $i$.
\end{example}
 
\begin{example}[CLSNA Model]
\label{ex::CLSNA}
When the interaction kernel $B$ takes the form
\[ \te{logit}(B(a,z_1,z_2)) = \alpha + \delta a - |z_1-z_2|,\]
for constants $\alpha,\delta > 0$, then the model becomes a co-evolving latent space network with attractor model (CLSNA, \cite{Zhuetal23,Panetal24}).
 The original CLSNA model as introduced in \cite{Zhuetal23} assumes the additive noise $\xi_i(t)$ is multivariate normal with no bias and a covariance matrix of $\sigma^2I_d$ for some $\sigma > 0$. In addition, the original CLSNA model is slightly more general than our own model in that it introduces multiple particle types with both attraction and repulsion forces among different types present. However, it coincides with our model when the particles are of a single type. In forthcoming work, we leverage the mathematical machinery developed in this paper to tackle the more involved multitype particle case with dynamical interactions. We provide a numerical analysis of the CLSNA model and its mean-field limit in Section \ref{sec::Num}.
\end{example}

\ind 
Our main contributions are summarized below.
\begin{itemize}
\item We rigorously study a generic class of models that features persistence, co-evolution and local interaction.
\item We introduce the sample perspective proof methodology, which turns out to be both a natural and a powerful vehicle to comprehensively study mean-field limits of interacting particle systems featuring dynamic co-evolution of particle dynamics and the underlying network (Theorem \ref{thm::Sampleconv}). 
\item We provide an analysis of the rich conditional structure of the sample limit generated from the above methodology (Definition \ref{defn::AZlim}, Proposition \ref{prop::welldef} and Corollary \ref{coro::morestruct}). In particular, we show that the mean-field limit no longer possesses a co-evolving structure: agents' beliefs do not depend on the underlying sample subgraph, though the sample subgraph still depends on agent beliefs.
\item We establish limits of empirical distributions of the latent processes and of conditional empirical distributions describing the interaction between latent processes and the network process (Theorem \ref{thm::hydro}).
\item We derive graphon and multiplexon limits of the interaction network trajectory (Theorems \ref{thm::Antconv} and \ref{thm::Antmultconv} and Corollary \ref{coro::Wdefthm}).
\item We provide a general statement and proof of a conditional propagation of chaos result, Proposition \ref{prop::condconvres}, generalizing its classical analogue \cite[Proposition 2.2(i)]{Szn91}.
\end{itemize}

\ind 
The conditional propagation of chaos result essentially states that certain empirical measures provide consistent estimators of the conditional distribution of two agents' latent beliefs conditioned on their interaction history. A similar result is shown in \cite{BayWu21} in the context of the class of interacting particle systems studied there. To the best of our knowledge, our article is the first to establish simple, general conditions under which conditional propagation of chaos holds for generic models.

\ind The rest of the paper is organized as follows. We conclude this introduction with a short subsection on the notation that is used throughout the paper, Section \ref{ssec::notation}. In Section \ref{ssec::assures} we state our assumptions (Section \ref{sssec::assu}) and our results regarding the limiting behavior of the latent opinions model (Section \ref{sssec::samplelimit}). These results include Theorem \ref{thm::Sampleconv}, which describes the joint limit of a random sample of agents and \ref{thm::hydro}, which establishes hydrodynamic limits describing the global behavior of the model. In Section \ref{ssec::Gconv} we present our main convergence results on the latent particle network, Theorems \ref{thm::Antconv}, \ref{thm::Antmultconv} and Corollary \ref{coro::Wdefthm}. Section \ref{sec::Num} includes our numerical studies, which we use to demonstrate our theoretical results: we examine several relevant statistics for comparing the $n$-particle system to the mean-field process. In Section \ref{sec::Num} we also present a mean-field verification algorithm which approximates the limiting mean-field model that may be of independent interest. 

\ind The proofs of the main theorems are in the subsequent sections. In Section \ref{sec::condpropchaos} we present and prove our conditional propagation of chaos result, which is of independent interest. In Section \ref{sec::proppf}, we rigorously establish multiple equivalent characterizations of the conditional structure of the limiting model. In Section \ref{sec::pfsmple} we prove Theorem \ref{thm::Sampleconv}, which establishes the limiting behavior of a random sample of an $n$-agent process. This then leads to the propagation of chaos result, Theorem \ref{thm::hydro}, proven in Section \ref{ssec::hydropf}. Section \ref{sec::graphon} proves our network convergence results, Theorem \ref{thm::Antmultconv} and Corollary \ref{coro::Wdefthm}. Appendix \ref{sec::useful} contains several useful technical lemmas (including several conditional convergence lemmas, which, despite our best efforts, we were unable to find in the literature) that are used in various aspects of the proofs of the main results. We include a small schematic of the major results of the paper in Fig. \ref{fig::thmlist}.

\begin{figure}[H]
\begin{center}
\begin{tikzpicture}
\tikzset{node distance = 2cm}
\node (a) at (0,0) {Theorem \ref{thm::Sampleconv}};
\node (b) [below right = of a] {Theorem \ref{thm::hydro}};
\node (c) [above right = of a] {Theorem \ref{thm::Antmultconv}};
\node (d) [right = of c] {Theorem \ref{thm::Antconv}};
\node (e) [left = of a] {Proposition \ref{prop::condconvres}};

\draw [line width=1pt, double distance=1pt, arrows = {-stealth}] (e) -- (a);
\draw [line width=1pt, double distance=1pt, arrows = {-stealth}] (a) -- (b);
\draw [line width=1pt, double distance=1pt, arrows = {-stealth}] (a) -- (c);
\draw [line width=1pt, double distance=1pt, arrows = {-stealth}] (c) -- (d);
\end{tikzpicture}
\end{center}
\caption{Above is a list of the major results in this paper and their dependencies.}
\label{fig::thmlist}
\end{figure}

\subsection{Notation}
\label{ssec::notation}

Throughout, the sets $\Xmc$ and $\Ymc$ are always assumed to be arbitrary Polish spaces (such as Euclidean space). Same for $\Xmc_i$ or $\Ymc_i$ for $i \in I$, where $I$ is any index set. $\P(\Xmc)$ is the space of Borel probability measures on $\Xmc$ equipped with the topology of weak convergence. $C_b(\Xmc)$ is the space of bounded, continuous functions from $\Xmc$ to $\R$. $C_0(\Xmc)$ is the space of continuous functions vanishing at infinity (the uniform closure of the set of continuous functions with compact support). For any $U \subseteq \R$, $C_b(\Xmc,U)$ is the space of bounded, continuous functions from $\Xmc$ to $U$. Both $C_0(\cdot)$ and $C_b(\cdot)$ are equipped with the uniform topology. Lastly, we use $\ps(\Xmc)$ to denote the power set of $\Xmc$; this is only used for certain finite $\Xmc$ for which any measurability concerns are trivially satisfied.

\ind For integers $a < b$, let $[a:b] = \{a,a+1,\dots,b\}$ be the sequence of integers from $a$ to $b$. If $a = 0$, then we simply write $[b]\defeq [0:b]$. This notation is also used in vertex/matrix indices. For example, $x_{1:n} = (x_1,\dots,x_n)$ and $M_{1:k,1:k} = (M_{ij})_{i,j \in 1:k}$. Let $\N = \{1,2,\dots\}$ denote the natural numbers and let $\N_0 = \{0\}\cup \N$ be the set of whole numbers. For $a,b \in \R$, $a\wedge b = \min\{a,b\}$ and $a \vee b = \max\{a,b\}$. Given $x \in \R^d$, $|x| = \sqrt{x\cdot x}$. Given a function $f: \Xmc \to \R^d$, $\|f\|_{\infty} = \sup_{x \in \Xmc}|f(x)|$ is the uniform norm of $f$. $S_n$ is the set of permutations of the set $[1:n]$.

\ind Let $\mf(\Xmc)$ be the space of functions from $\N_0$ to $\Xmc$ and let $\mf_t(\Xmc)$ be the space of functions from $[t]$ to $\Xmc$. We write $x(t)$ for the value of $x$ at time $t$ and $x[a:b] \defeq (x(a),\dots,x(b))$ for the trajectory of $x$ in the time interval $[a:b]$. For $t \in \N_0$, we shorten $x[0:t]$ to $x[t]\in \mf_t(\Xmc)$. 

\ind Lastly, for $k \in \N$, we use the notation $\incs_k$ for the subset $\{(i,j): 1\leq i < j \leq k\}\subset [1:k]^2$.

\ind A summary of notation introduced later in the paper is included in Table \ref{tbl::notation}.

\begin{table}[H]
\begin{center}
\begin{tabular}{|m{2cm}|m{0.65\textwidth}|m{3.5cm}|}
\hline
Notation & Definition & Location\\\hline
$B_0$ & The conditional probability that $i$ and $j$ interact given their latent opinions. & Assumption \ref{assu::init}(b)\\\hline
$B$ & The probability that $i$ and $j$ interact given their latent opinions and the existence/non-existence of an edge between $i$ and $j$ in the previous time.& \eqref{eqn::Anevolve}\\\hline
$\alt{B}$ & The transition kernel induced by $B$.& Lemma \ref{lem::condpBtild}\vspace{2pt}\\\hline
$B_t$ & The limiting conditional probability of there being an edge between agents $i$ and $j$ at time $t$ given the latent opinions of agent $i$.& \eqref{eqn::Bsdef}\\\hline
$L^n_i$ & The averaging term in the $n$-agent evolution equation \eqref{eqn::Lnidef}.& \eqref{eqn::Lnidef}\\\hline
$L\pcar{i}$ & The limit as $n\to\infty$ of $L^n_i$. & \eqref{eqn::Lidef}\\\hline
$\mf_t(\Xmc)$ & The space of functions from $[t]$ to $\Xmc$. & Section \ref{ssec::notation}\\\hline
$\incs_k$ & the set of $1\leq i <j \leq k$. & Section \ref{ssec::notation}\\\hline
$\Fmc^n_s$ & $\sigma(Z^n[s],A^n[s])$ & Section \ref{sssec::assu}\\\hline
$\Fmc^{A,n}_s$ & $\sigma(Z^n[s],A^n[s-1])$ if $s > 0$ and $\sigma(Z^n[0])$ otherwise. & Section \ref{sssec::assu}\\\hline
$\Fmc^k_s$& $\sigma(Z\pcar{1:k}[s],A\pcar{1:k,1:k}[s])$ & Section \ref{ssssec::limit}\\\hline
$\Gmc^k_s$& $\sigma(Z\pcar{1:k}[s])$ & Section \ref{ssssec::limit}\\\hline
$\Gmc^{A,k}_s$ & $\sigma(Z\pcar{1:k}[s],A\pcar{1:k,1:k}[s-1])$ if $s > 0$ and $\sigma(Z\pcar{1:k}(0))$ otherwise. &\eqref{eqn::filtrations}\\\hline
$\Hmc^k_s$& $\sigma(Z\pcar{1:k}(s),A\pcar{1:k,1:k}(s-1))$ if $s > 0$ and $\sigma(Z\pcar{1:k}(0))$ otherwise. &\eqref{eqn::Hmc}\\\hline
\end{tabular}
\caption{Selected notation from the paper.}
\label{tbl::notation}
\end{center}
\end{table}

\subsection{Statements and Declarations}
We declare that the authors have no competing interests as defined by Springer, or other interests that might be perceived to influence the results and/or discussion reported in this paper.
\subsection{Acknowledgement}

The authors thank Giulio Zucal for bringing our attention to the references \cite{LovSze10, AbrDelWei23,Zuc24A,Zuc24B}. We also thank the reviewer of the paper for the very insightful and constructive review of the initial submitted version of this article.

\section{Assumptions and Limiting Behavior of the Latent Particle System}
\label{ssec::assures}

In this section, we outline the assumptions we make about the model as well as the asymptotic results we derive on the convergence of the latent process. 

\subsection{Assumptions}
\label{sssec::assu}

We begin by explicitly stating a few standard assumptions about the conditional structure of the process described in \eqref{eqn::Znevolve}-\eqref{eqn::Anevolve}. For each $n \in \N$ and $t \in \N_0$, recall that $\Fmc^{A,n}_t \defeq \sigma(Z^n[t],A^n[t-1])$ for $t > 0$ and $\Fmc^{A,n}_0 = \sigma(Z^n(0))$. Moreover, define $\Fmc^n_t \defeq \sigma(Z^n[t],A^n[t])$.

\begin{assumption}[Conditional Structure and Absolute Continuity]
\label{assu::cond}
The process $(Z^n,A^n,\xi)$ possesses the following properties:
\begin{enumerate}[label = (\alph*)]
\item The collection $\{\xi_i(t)\}_{i \in \N,t\in \N_0}$ is i.i.d., absolutely continuous and for each $t \in \N$, $(\xi_i(t))_{i\in\N}$ is independent of $\bigvee_{n \in \N}\filtm^n_t$.
\item For each $n\in \N$ and $t \in \N_0$, $A^n(t)$ is a symmetric $\{0,1\}$-random matrix where $\{A^n_{ij}(t)\}_{(i,j) \in \incs_n}$ is a conditionally independent collection of Bernoulli random variables given $\filtm^{A,n}_{t}$. In addition, for any $i\leq n$, $A^n_{ii}(t) = 1$.
\end{enumerate}
\end{assumption}

We assume the initial distribution of the process satisfies the following condition:
\begin{assumption}[Initial Conditions]
\label{assu::init}
The initial states $Z^n_{1:n}(0)$ and network adjacency matrix $A^n_{1:n,1:n}(0)$ satisfy the following conditions:
\begin{enumerate}[label = (\alph*)]
\item For all $n \in \N$, $Z^n_{1:n}(0)$ are exchangeable, weakly dependent and weakly convergent in the sense that there exists an absolutely continuous probability measure $\mu_0 \in \P(\R^d)$ such that $\mu^n_0 \defeq \frac{1}{n}\sum_{i=1}^n \delta_{Z^n_i(0)} \to \mu_0$ in probability;
\item There exists an a.e. continuous function $B_0: \R^d\times \R^d \to [0,1]$ such that
\[\ex{A^n_{ij}(0)} = B_0(Z^n_i(0),Z^n_j(0))\te{  for any $(i,j) \in \incs_n$}.\]
\end{enumerate}
\end{assumption}
Finally, our limiting results require the random variables $\{Z^n_i(t)\}_{n \in \N, t \leq T, i\in [1:n]}$ to be uniformly integrable for all $T < \infty$. Below, we state an assumption that ensures that the process remains uniformly integrable.
\begin{assumption}[Uniform Integrability Bounds]
\label{assu::bds}
The following assumptions hold:
\begin{enumerate}[label = (\alph*)]
\item There exists a non-decreasing mapping $M_Z: \R \to \R_+$ such that
\begin{equation}
\label{eq::MGFZ0}
\limsup_{n\to\infty} \ex{\exp\left(C|Z^n_1(0)|\right)} \defeq M_Z(C) < \infty \te{ for all }C \in \R;
\end{equation}
\item the noise terms satisfy a similar bound: there exists a non-decreasing mapping $M_{\xi}: \R\to\R_+$ such that
\begin{equation}
\label{eq::MGFxi0}
 \ex{\exp\left(C|\xi_1(0)|\right)} \defeq M_{\xi}(C) < \infty \te{ for all }C \in \R;
\end{equation}
\item the functions $B$ and $B_0$ satisfy exactly one of the two assumptions below:
\begin{enumerate}[label = (\roman*)]
\item (At most exponentially decaying interactions) for every $(a,z_1,z_2) \in \{0,1\}\times (\R^d)^2$, there exist constants $\bar{C}$ and $C_b >0$ such that
\begin{align}
\inf_{(z_1,z_2)\in\R^d\times\R^d} B_0(&z_1,z_2)\exp\left(C_b|z_1-z_2|\right) \geq\bar{C},\label{eq::B0expbd}\\
\inf_{(a,z_1,z_2)\in\{0,1\}\times\R^d\times\R^d} B(a,&z_1,z_2)\exp\left(C_b|z_1-z_2|\right) \geq\bar{C};\label{eq::Bexpbd}
\end{align} 

\item (Finite range interactions) there exists a constant $C_b > 0$ such that for all $(a,z_1,z_2) \in \{0,1\}\times\R^d\times\R^d$ such that $|z_1-z_2| > C_b$,
\begin{equation}
\label{eq::B0Bcutoff}
B_0(z_1,z_2) = B(a,z_1,z_2) = 0.
\end{equation}
Moreover, for any $(a,z) \in \{0,1\}\times\R^d$, $\min\{B_0(z,z),B(a,z,z)\} > 0$ and $(z,z)$ and $(a,z,z)$ are continuity points of $B_0$ and $B$ respectively.
\end{enumerate}
\end{enumerate}
\end{assumption}

\begin{remark}[Application of Assumption \ref{assu::bds}(c)]
\label{rem:2c}
Assumption \ref{assu::bds}(c) covers two distinct cases. In Assumption \ref{assu::bds}(c)(i), all agents may interact with positive probability regardless of their opinions. In such a case, the probability of interaction between two agents cannot decay more than exponentially fast in the difference in agent opinions, see Example \ref{ex::CLSNA}. In Assumption \ref{assu::bds}(c)(ii), we allow for agents to have no interactions if their opinions are sufficiently different (Example \ref{ex::BCOD}), but we do require interaction probabilities to be continuous and strictly positive at any point where two agents have identical opinions. In Lemma \ref{lem::posdenom}, the assumption is used to show that the denominator of the averaging term $L^n_i(t)$ is of order $\theta(n)$. In Section \ref{sssec::Abpf}, uniform integrability of $Z^n_i[t]$ is established using lower bounds in the denominator given Assumption \ref{assu::bds}(c)(i) or bounds on the averaging term $L^n_i(t)$ established via Assumption \ref{assu::bds}(c)(ii).
\end{remark}

\subsection{Limiting Model over a Randomly Sampled Subnetwork}
\label{sssec::samplelimit}

In this section, we describe the behavior as $n\to\infty$ of the latent opinions and interaction subnetwork generated by a random sample (with or without replacement) of $k$ agents, which we state rigorously in Theorem \ref{thm::Sampleconv}. Theorem \ref{thm::Sampleconv} is used to establish the limiting behavior of the empirical measure of the latent processes and a conditional empirical measure relating the latent positions of pairs of agents to the interaction history between them (Theorem \ref{thm::hydro}). We also apply Theorem \ref{thm::Sampleconv} to study limits of the network at a graphon level (Theorems \ref{thm::Antconv} and \ref{thm::Antmultconv}).

\ind In the limit, the agents of a random sample of size $k$ have i.i.d. latent opinions $Z\defeq Z\pcar{1:k}$ satisfying equations \eqref{eqn::Zevolve}-\eqref{eqn::mu2def} below. The limiting subnetwork connecting the agents given by the Bernoulli random variables $A\defeq A\pcar{1:k,1:k}$ with a rich conditional structure detailed in Definition \ref{defn::AZlim} of Section \ref{ssssec::limit}. In Section \ref{ssssec::intuition}, we provide simple heuristic arguments that motivate our description of the limiting behavior of the $k$-agent sample. In Section \ref{ssssec::limit}, we describe the limiting model $(Z\pcar{1:k}[t],A\pcar{1:k,1:k}[t])$ in detail. Then, in Section \ref{ssssec::mainres}, we rigorously state our main convergence result: that the limiting distribution of the latent opinions and subnetwork generated by a sample of size $k$ is given by $(Z\pcar{1:k}[t],A\pcar{1:k,1:k}[t])$.

\subsubsection{ Building Intuition}
\label{ssssec::intuition}

Suppose that $n$ is large, and we select a sample of $k$ agents uniformly at random with or without replacement. By a simple exchangeability argument, it can be shown that for all $n \in \N$, the joint opinion dynamics and subnetwork connecting nearly any $k$ agents have the same distribution (with deviations only when agents are chosen multiple times within the same sample). We may, therefore, assume without loss of generality that our sample consists of agents $1,\dots,k$. To keep things simple, let's assume the latent opinions and the subnetwork converge weakly:
\[(Z^n_{1:k}[t],A^n_{1:k,1:k}[t]) \Rightarrow (Z[t],A[t]).\]
What can we say about the limiting quantities?

\ind First, our assumptions ensure that for large $n$, each agent interacts with at least $cn$ other agents, where $c > 0$ is some (possibly random) constant. This suggests that interactions between any finite set of agents vanish in the limit as $n\to\infty$. In addition, we can infer that the limiting opinions $\{Z\pcar{i}[t]\}_{i \in [1:k]}$ are independent and identically distributed (by exchangability).

\ind The dynamic subnetwork described by $A[t]$ is much more interesting. Just as in the $n$ particle case, at time $t > 0$, we can construct a new subnetwork $A(t)$ at each time $t$ by setting $A\pcar{ij}(t) = 1$ with probability $B(A\pcar{ij}(t-1),Z\pcar{i}(t),Z\pcar{j}(t))$ for each $(i,j) \in \incs_k$. Moreover, just as in the $n$-particle case, the edges $\{A\pcar{ij}(t)\}_{(i,j)\in \incs_k}$ are conditionally independent given $\sigma(A[t-1],Z[t])$. However, in the $n$-particle case, $Z_i[t]$ and $Z_j[t]$ are not independent, while $Z\pcar{i}[t]$ and $Z\pcar{j}[t]$ are independent. Then for $i\neq j$ and $0 < s \leq t$,
\begin{itemize}
\item the random variables $\{A\pcar{ij}(s)\}_{(i,j)\in \incs_k}$ are conditionally independent given $\sigma(A(s-1),Z(s))$.
\item The random elements $\{A\pcar{ij}[t]\}_{(i,j)\in \incs_k}$ are conditionally independent given $\sigma(Z[t])$.
\item $A\pcar{ij}[t]$ is a conditional Markov chain given $\sigma(Z[t])$ with Markov kernel $B$.
\item The \emph{edge-process} $A\pcar{ij}[s]$ is conditionally independent of $\sigma(Z[s+1:t])$ given $\sigma(Z[s])$.
\end{itemize}

We can make use of the conditional Markov chain formulation to find the conditional probability that $A\pcar{ij}(s) = 1$ given $Z\pcar{i,j}[t]$. First, consider the function $\wh{B}: [0,1]\times \R^d\times\R^d \to \R$ defined by
\begin{equation}
\label{eqn::Bhatdef}
\wh{B}(p,x,y) \defeq  pB(1,x,y) + (1-p)B(0,x,y).
\end{equation}

Then, we actually have that  $\wh{B}(p,x,y) = \ex{B(A,x,y)}$ where $A \sim \te{Ber}(p)$. This motivates the following definition:
\begin{align}
\label{eqn::Bsdef}
B_s(Z\pcar{i}[s],Z\pcar{j}[s]) &\defeq \ex{A\pcar{ij}(s)\middle|Z\pcar{i,j}[s]} = \wh{B}\left(B_{s-1}(Z\pcar{i}[s-1],Z\pcar{j}[s-1]),Z\pcar{i}(s),Z\pcar{j}(s)\right),
\end{align}
for $0 < s\leq t$, where the definition of $B_0$ is given in Assumption \ref{assu::init}(b). This conditional structure and the recursively defined functions $\{B_s\}_{s \in \N_0}$ can be leveraged to derive the dynamics of the limiting opinions $Z[t]$. To do this, we show that for any $i\in \N$ and as $n \to \infty$, the sequence of paired random elements $\{(Z^n_j(t),A^n_{ij}(t))\}_{j=1}^n$ becomes approximately conditionally independent given $Z^n_i[t]$. Therefore, we can prove the following conditional law of large numbers results:
\begin{equation}
\label{eqn::clln}
\frac{1}{n}\sum_{j=1}^n Z^n_j(t)A^n_{ij}(t) \Rightarrow \ex{Z'(t)A'(t)\middle|Z\pcar{i}(t)} \te{ and } \frac{1}{n}\sum_{j=1}^n A^n_{ij}(t) \Rightarrow \ex{A'(t)\middle|Z\pcar{i}(t)},
\end{equation}
where $(Z\pcar{i}[t],Z'[t],A'[t])\deq (Z\pcar{1}[t],Z\pcar{2}[t],A\pcar{12}[t])$. With this, we can derive the limit of the term $L^n_i(t)$ from \eqref{eqn::Lnidef}:
\begin{align}
L^n_i(t) &= \frac{1}{d_{A^n(t)}(i)}\sum_{j=1}^n Z^n_j(t)A^n_{ij}(t) = \frac{\frac{1}{n}\sum_{j=1}^n Z^n_j(t)A^n_{ij}(t)}{\frac{1}{n}\sum_{j=1}^n A^n_{ij}(t)}\nonumber\\
&\Rightarrow \frac{\ex{Z'(t)A'(t)\middle|Z\pcar{i}[t]}}{\ex{A'(t)\middle|Z\pcar{i}[t]}} = \frac{\ex{Z'(t)B_t(Z\pcar{i}[t],Z'[t])\middle|Z\pcar{i}[t]}}{\ex{B_t(Z\pcar{i}[t],Z'[t])\middle|Z\pcar{i}[t]}} \defeq L\pcar{i}(t).\label{eqn::Liintuit}
\end{align}

From here, we can then derive the following limiting dynamics for $Z\pcar{i}$:
\begin{equation}
\label{eqn::Zintuit}
Z\pcar{i}(t+1) = (1-\gamma)Z\pcar{i}(t) + \gamma L\pcar{i}(t) + \xi_i(t).
\end{equation}

\subsubsection{A Full Characterization of the Limiting Model}
\label{ssssec::limit}

In this section, we provide a complete description of the distribution of the limiting latent opinions and subnetwork of a random sample up to some arbitrarily given time $t \in \N_0$: $(Z\pcar{1:k}[t],A\pcar{1:k,1:k}[t])$. Importantly, we provide multiple characterizations of the conditional structure of the model. For this, we make use of the following filtrations. For each $0\leq s \leq t$ define 
\begin{equation}
\label{eqn::filtrations}
\Fmc^k_s \defeq \sigma\left(Z\pcar{1:k}[s],A\pcar{1:k,1:k}[s]\right),\quad \Gmc^k_s \defeq \sigma\left(Z\pcar{1:k}[s]\right),
\nonumber
\end{equation}
\begin{equation}
\Gmc^{A,k}_t \defeq \begin{cases}
\sigma\left(Z\pcar{1:k}[s],A\pcar{1:k,1:k}[s-1]\right)&\te{ if } s > 0,\\
\sigma\left(Z\pcar{1:k}(0)\right)&\te{ if } s = 0.
\end{cases}
\end{equation}

For $0\leq s \leq t$, we also define the $\sigma$-algebra $\Hmc^k_s$ by
\begin{equation}
\label{eqn::Hmc}
\Hmc^k_s = \begin{cases}
\sigma\left(Z\pcar{1:k}(s),A\pcar{1:k,1:k}(s-1)\right) &\te{ if } s > 0,\\
\sigma\left(Z\pcar{1:k}(0)\right) &\te{ if } s= 0.
\end{cases}
\end{equation}

Combining \eqref{eqn::Bsdef}, \eqref{eqn::Liintuit} and \eqref{eqn::Zintuit}, we define $Z\pcar{i}$ as the solution to the following equations:
\begin{align}
\label{eqn::Zevolve}
Z\pcar{i}(s+1) &= (1-\gamma)Z\pcar{i}(s) + \gamma L\pcar{i}(s) + \xi_i(s),\\
\label{eqn::Lidef}
L\pcar{i}(s)&=\frac{\exmu{\mu_s^2}{Z'(s)B_s(Z\pcar{i}[s],Z'[s])\middle|Z\pcar{i}[s]}}{\exmu{\mu_s^2}{B_s(Z\pcar{i}[s],Z'[s])\middle|Z\pcar{i}[s]}}\\
\label{eqn::Bevolve}
B_{s}(z_1[s],z_2[s]) &= \begin{cases}
\hat{B}\left(B_s(z_1[s-1],z_2[s-1]),z_1(s),z_2(s)\right) &\te{ if } s > 0,\\
B_0\left(z_1(0),z_2(0)\right) &\te{ if } s = 0,
\end{cases}\\
\label{eqn::mu2def}
\mu_s^2 &\defeq \law(Z\pcar{i}[s],Z'[s]) \defeq \mu_s\otimes \mu_s,
\end{align}
where for each $s$, $\xi_{1:k}(s)$ is independent of $\Fmc^k_s$, $\{\xi_i(s)\}_{i\in [1:k],s\in[t]}$ are i.i.d., and $\mu_s = \law(Z\pcar{1}[s])$.

\ind For each $i\neq j \in [1:k]$ and $s \in [t]$, we can also define $A\pcar{ij}(s)$ to be a Bernoulli random variable defined by
\begin{equation}
\label{eqn::Aevolve}
\ex{A\pcar{ij}(s)\middle|\Gmc^{A,k}_s} =\begin{cases}
B\left(A\pcar{ij}(s-1),Z\pcar{i}(s),Z\pcar{j}(s)\right) &\te{ if } s > 0,\\
B_0\left(Z\pcar{i}(0),Z\pcar{j}(0)\right) &\te{ if } s = 0.
\end{cases}
\end{equation}

We now completely define the joint distribution of the limiting mean-field process $(Z\pcar{1:k}[t],A\pcar{1:k,1:k}[t])$.

\begin{definition}
\label{defn::AZlim} (Limiting mean-field process) We say that $(Z\pcar{1:k}[t],A\pcar{1:k,1:k}[t])$ is a limiting mean-field process (pair) if the following hold. 
Given i.i.d. initial conditions $(Z\pcar{i}(0))_{i \in [1:k]}$, $Z\pcar{i}[t]$ is the unique solution to \eqref{eqn::Zevolve}-\eqref{eqn::mu2def} for each $i \in [1:k]$. Likewise, for each $s \in [t]$ and $i\neq j \in [1:k]$, $A\pcar{ij}(s)$ is a Bernoulli random variable satisfying \eqref{eqn::Aevolve}. For any $(i,j) \in [1:k]^2$, $A\pcar{ij}[t] = A\pcar{ji}[t]$ and $A\pcar{ii}(s) = 1$ for all $s \in [t]$. Lastly, the joint distribution of $(Z\pcar{1:k}[t],A\pcar{1:k,1:k}[t])$ possesses either of the following (equivalent, as proven in Proposition \ref{prop::welldef}) conditional structures:
\begin{enumerate}[label = (\alph*)]
\item For each $s \in [t]$, the Bernoulli random variables in $\{A\pcar{ij}(s)\}_{(i,j) \in \incs_k}$ are mutually conditionally independent given $\Gmc^{A,k}_s$. 
\item We equip $\{0,1\}^{\binom{k}{2}\times (t+1)}$ with the product $\sigma$-algebra and, for $1\leq i < j \leq k$, view $A\pcar{ij}[t]$ as a $\mf_t(\{0,1\})$-random element. The random elements $\{A\pcar{ij}[t]\}_{(i,j) \in \incs_k}$ are mutually conditionally independent given $\Gmc^k_t$. In addition, for $(i,j) \in \incs_k$, $A\pcar{ij}[t]$ is a conditional Markov chain given $\Gmc^k_t$ with initial distribution and transition kernel defined by
\begin{align}
\PP\left(A\pcar{ij}(s)=1\middle|\Gmc^k_t,A\pcar{ij}(s-1)=a\right) &= B\left(a,Z\pcar{i}(s),Z\pcar{j}(s)\right)\te{ for }s > 0, a \in \{0,1\},\label{eqn::Atrans}\\
\PP\left(A\pcar{ij}(0)=1\middle|\Gmc^k_t\right) &= B_0\left(Z\pcar{i}(0),Z\pcar{j}(0)\right).\label{eqn::Ainit}
\end{align}
\item For any $s \in [t]$, the $\mf_s(\{0,1\})$-random elements in $\{A\pcar{ij}[s]\}_{(i,j) \in \incs_k}$ are mutually conditionally independent given $\Gmc^k_s$.
\end{enumerate}
\end{definition}

An explicit definition of the transition kernel given by $\PP(A\pcar{(i,j)} = a|\Gmc^{A,k}_s,A\pcar{(ij)}(s-1)=a')$ can be found in Lemma \ref{lem::condpBtild}.

\ind Throughout the article, when we reference Definition \ref{defn::AZlim}(a) (or (b) or (c)), we include the first part of the definition in the statement regarding equations \eqref{eqn::Zevolve}-\eqref{eqn::Aevolve}. Proposition \ref{prop::welldef} shows that Definition \ref{defn::AZlim} consistently and completely characterizes the joint distribution of $(Z\pcar{1:k}[t],A\pcar{1:k,1:k}[t])$ and that Definition \ref{defn::AZlim}(a),  \ref{defn::AZlim}(b)  and  \ref{defn::AZlim}(c) are indeed equivalent.

\begin{proposition}[Conditional Structure of the Limiting Model]
\label{prop::welldef}
Definitions \ref{defn::AZlim}(a), \ref{defn::AZlim}(b), and \ref{defn::AZlim}(c) individually characterize the distribution of $(Z\pcar{1:k}[t],A\pcar{1:k,1:k}[t])$. In addition, the distributions described in all three parts of Definition \ref{defn::AZlim} are equivalent.
\end{proposition}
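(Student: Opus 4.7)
The plan is to proceed in three stages: first, verify that the $Z$-dynamics are well-defined and autonomous; second, build from them a canonical process satisfying Definition \ref{defn::AZlim}(a) and show it also satisfies (b) and (c); third, verify that each of (b) and (c) alone, together with \eqref{eqn::Aevolve}, uniquely pins down the joint distribution, so that all three characterizations yield the same law.

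Equations \eqref{eqn::Zevolve}--\eqref{eqn::mu2def} do not involve $A$, so given $\mu_0$ from Assumption \ref{assu::init}, I will inductively construct the triple $(\mu_s, B_s, Z\pcar{1:k}[s])$; the only delicate point is the a.s.\ positivity of the denominator in \eqref{eqn::Lidef}, which follows from Assumption \ref{assu::bds}(c) combined with the full support of $\mu_s$ for $s \geq 1$ imparted by the absolutely continuous additive noise. This uniquely determines the law of $Z\pcar{1:k}[t]$. Given $Z\pcar{1:k}[t]$, I will define a canonical $A\pcar{1:k,1:k}[t]$ by sampling $\{A\pcar{ij}(s)\}_{(i,j)\in \incs_k}$ inductively in $s$ as conditionally independent Bernoullis given $\Gmc^{A,k}_s$ with parameters from \eqref{eqn::Aevolve}. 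This realizes Definition \ref{defn::AZlim}(a) by construction and uniquely determines the joint distribution under (a).

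To show that the canonical process also satisfies (b), I will exploit the autonomy of $Z$: the noise $\{\xi_i(r)\}_{i,\, r \geq s}$ driving $Z\pcar{1:k}[s+1:t]$ is, by Assumption \ref{assu::cond}(a), independent of the Bernoulli randomness generating $A\pcar{1:k,1:k}(s)$, so $A\pcar{1:k,1:k}(s) \indp Z\pcar{1:k}[s+1:t] \mid \Gmc^{A,k}_s$. Together with (a) this gives
\[
\PP\!\left(A\pcar{1:k,1:k}(s) = a(s) \,\middle|\, \Gmc^k_t \vee \sigma(A\pcar{1:k,1:k}[s-1])\right) = \PP\!\left(A\pcar{1:k,1:k}(s) = a(s) \,\middle|\, \Gmc^{A,k}_s\right) = \prod_{(i,j)\in \incs_k} q_{ij,s},
\]
where $q_{ij,s}$ is the Bernoulli factor in \eqref{eqn::Aevolve}. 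Multiplying over $s$ via the chain rule and rearranging the double product over $(s,(i,j))$ delivers the Markov chain factorization of (b); the implication (b) $\Rightarrow$ (c) is then immediate by marginalization, since the conditional law of $A\pcar{ij}[s]$ given $\Gmc^k_t$ depends only on $(Z\pcar{i}[s], Z\pcar{j}[s])$ and hence is $\Gmc^k_s$-measurable.

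Finally, I must verify that (b) and (c) individually pin down the joint distribution. Under (b), the conditional distribution of $A\pcar{1:k,1:k}[t]$ given $\Gmc^k_t$ is given explicitly as a product of Markov chains, so the joint law is determined. Under (c), I will induct on $s$: assuming the joint law of $(Z\pcar{1:k}[s-1], A\pcar{1:k,1:k}[s-1])$ is determined, the marginal of $Z\pcar{1:k}(s)$ is fixed, and by (c) it suffices to determine each $\PP(A\pcar{ij}[s] \in \cdot \mid \Gmc^k_s)$. Applying the chain rule, two conditional independence facts are needed at each step: (i) $A\pcar{ij}[s-1] \indp Z\pcar{1:k}(s) \mid \Gmc^k_{s-1}$ (from the autonomy of $Z$ and Assumption \ref{assu::cond}(a)), and (ii) $A\pcar{ij}(s) \indp \{A\pcar{i'j'}[s-1]\}_{(i',j') \neq (i,j)} \mid \Gmc^k_s \vee \sigma(A\pcar{ij}[s-1])$, which is a contraction of the (c)-statement at time $s$ via the elementary fact that $(X_1, X_2) \indp (Y_1, Y_2) \mid C$ implies $X_2 \indp Y_1 \mid C, X_1$. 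Combined with \eqref{eqn::Aevolve} and a tower argument, these recover the same Markov kernel as (b), matching the canonical process. The main obstacle will be the asymmetry of the filtrations $\Gmc^k_s$ and $\Gmc^{A,k}_s$ in the (c)-step: the contraction (ii) is the key observation, ensuring that conditional independence of entire $A$-trajectories given $\Gmc^k_s$ is strong enough to recover step-by-step conditional independence given $\Gmc^{A,k}_s$.
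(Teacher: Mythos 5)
Your proof is correct, and the engine is the same as the paper's: the autonomy of the $Z$-dynamics \eqref{eqn::Zevolve}--\eqref{eqn::mu2def}, which yields that $A\pcar{1:k,1:k}[s]$ is conditionally independent of $Z\pcar{1:k}[s+1:t]$ given $\Gmc^k_s$ (isolated in the paper as Lemma \ref{lem::futind}), followed by a chain-rule factorization in $s$. The logical skeleton differs. The paper shows that (b) alone pins down the law (Lemma \ref{lem::achar}) and closes the cycle (a)\,$\Rightarrow$\,(b)\,$\Rightarrow$\,(c)\,$\Rightarrow$\,(a) (Lemmas \ref{lem::atob}--\ref{lem::ctoa}), with (c)\,$\Rightarrow$\,(a) obtained via two applications of weak union \cite[Proposition 3.2(a)]{PutSch85}; this is the minimal amount of work. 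You instead construct a canonical process realizing (a), show it also satisfies (b) and (c), and then separately verify that each of (a), (b), (c) determines the law, which avoids the weak-union manipulation at the price of three ``determines'' arguments where one plus the cycle would do. One step in your outline is superfluous: your fact (ii) is a valid contraction of (c) but is not needed for the second chain-rule factor, because $\E\left[A\pcar{ij}(s)\,\middle|\,\Gmc^{A,k}_s\right]=B\left(A\pcar{ij}(s-1),Z\pcar{i}(s),Z\pcar{j}(s)\right)$ from \eqref{eqn::Aevolve} is already $\sigma\left(Z\pcar{1:k}[s],A\pcar{ij}(s-1)\right)$-measurable, so the tower property alone gives $\Pmb\left(A\pcar{ij}(s)=a(s)\,\middle|\,\Gmc^k_s, A\pcar{ij}[s-1]\right)=\alt{B}\left(a(s),A\pcar{ij}(s-1),Z\pcar{i}(s),Z\pcar{j}(s)\right)$. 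Also, the independence of the $\xi$-noise from $\Fmc^k_s$ that underlies the autonomy argument is stipulated in Definition \ref{defn::AZlim} (the sentence after \eqref{eqn::mu2def}), not Assumption \ref{assu::cond}(a), which governs only the $n$-particle system.
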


Definition \ref{defn::AZlim} implies the following additional structure:

\begin{corollary}
\label{coro::morestruct}
The $\mf_t(\R^d)$-random elements $Z\pcar{i}[t]$, $i \in [1:k]$ are i.i.d., and for every $s \in [t]$, the random elements $A\pcar{ij}(s)$, $(i,j) \in \incs_k$, are conditionally independent given $\Hmc^k_s$ with
\begin{equation}
\label{eqn::exAH}
\ex{A\pcar{ij}(s)\middle|\Hmc^k_s} = \ex{A\pcar{ij}(s)\middle|\Gmc^{A,k}_s}.
\end{equation}
Lastly, for any $s \in [t]$,
\begin{equation}
\label{eqn::exAG}
\ex{A\pcar{ij}(s)\middle|\Gmc^k_s} = B_s\left(Z\pcar{i}[s],Z\pcar{j}[s]\right).
\end{equation}
\end{corollary}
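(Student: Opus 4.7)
The plan is to establish the three assertions in order, each by a short induction invoking a different part of Proposition~\ref{prop::welldef}. For the i.i.d. property of $Z\pcar{1:k}[t]$, I would induct on $s\in [t]$. The base case $s=0$ is built into Definition~\ref{defn::AZlim}. For the inductive step, observe that by \eqref{eqn::Lidef} the drift $L\pcar{i}(s)$ depends only on the trajectory $Z\pcar{i}[s]$ and on the common marginal law $\mu_s = \law(Z\pcar{1}[s])$, so the update
\[
Z\pcar{i}(s+1) = (1-\gamma)Z\pcar{i}(s) + \gamma L\pcar{i}(s) + \xi_i(s)
\]
is a deterministic measurable functional of $Z\pcar{i}[s]$ and $\xi_i(s)$ alone. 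Since by Assumption~\ref{assu::cond} the noises $\{\xi_i(s)\}_{i\in [1:k]}$ are i.i.d. and independent of $\Fmc^k_s$, the inductive hypothesis propagates and $Z\pcar{1}[s+1],\dots,Z\pcar{k}[s+1]$ remain i.i.d.

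For the conditional independence given $\Hmc^k_s$ and the identity \eqref{eqn::exAH}, I would deduce both simultaneously from Definition~\ref{defn::AZlim}(a), which provides conditional independence with respect to the strictly larger $\sigma$-algebra $\Gmc^{A,k}_s \supseteq \Hmc^k_s$. The essential observation is that by \eqref{eqn::Aevolve} the conditional mean $\Emb[A\pcar{ij}(s)\mid \Gmc^{A,k}_s] = B(A\pcar{ij}(s-1),Z\pcar{i}(s),Z\pcar{j}(s))$ is already $\Hmc^k_s$-measurable, and because each $A\pcar{ij}(s)$ is Bernoulli, its conditional law given $\Gmc^{A,k}_s$ is entirely determined by this mean. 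Consequently, for any bounded collection $\{f_{ij}\}_{(i,j)\in \incs_k}$ on $\{0,1\}$, the factorization
\[
\Emb\Bigl[\prod_{(i,j)\in \incs_k} f_{ij}(A\pcar{ij}(s))\,\Big|\,\Gmc^{A,k}_s\Bigr] = \prod_{(i,j)\in \incs_k} \Emb\bigl[f_{ij}(A\pcar{ij}(s))\,\big|\,\Gmc^{A,k}_s\bigr]
\]
holds with every factor on the right $\Hmc^k_s$-measurable. Applying the tower property with $\Hmc^k_s \subseteq \Gmc^{A,k}_s$ then yields both the factorization over $\Hmc^k_s$ and the identity \eqref{eqn::exAH}.

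For \eqref{eqn::exAG}, I would leverage Definition~\ref{defn::AZlim}(b): conditionally on $\Gmc^k_t$, each $A\pcar{ij}[t]$ is a Markov chain with initial law \eqref{eqn::Ainit} and transition kernel \eqref{eqn::Atrans}. Proceeding by induction on $s$ with the base case immediate,
\[
\Emb[A\pcar{ij}(s)\mid \Gmc^k_t] = \Emb\bigl[B(A\pcar{ij}(s-1),Z\pcar{i}(s),Z\pcar{j}(s))\,\big|\,\Gmc^k_t\bigr] = \wh{B}\bigl(\Emb[A\pcar{ij}(s-1)\mid \Gmc^k_t],\,Z\pcar{i}(s),\,Z\pcar{j}(s)\bigr),
\]
which by the inductive hypothesis and the recursion \eqref{eqn::Bevolve} equals $B_s(Z\pcar{i}[s],Z\pcar{j}[s])$. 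Since this quantity is $\Gmc^k_s$-measurable and $\Gmc^k_s \subseteq \Gmc^k_t$, one application of the tower property delivers \eqref{eqn::exAG}.

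The only delicate step is the passage in \eqref{eqn::exAH} from conditional independence given $\Gmc^{A,k}_s$ to conditional independence given the strictly smaller $\Hmc^k_s$; this would fail for general random variables and goes through here only because the $A\pcar{ij}(s)$ are Bernoulli and their conditional means are $\Hmc^k_s$-measurable. The remaining claims then reduce to routine inductions once the appropriate characterization from Proposition~\ref{prop::welldef} has been invoked.
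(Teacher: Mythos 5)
Your proof is correct and follows essentially the same strategy as the paper for all three assertions: the i.i.d. property via the deterministic functional dependence of $Z\pcar{i}[t]$ on $(Z\pcar{i}(0),\xi_i[t-1])$, the $\Hmc^k_s$-statements via the tower property and the $\Hmc^k_s$-measurability of the Bernoulli conditional means, and \eqref{eqn::exAG} via induction through the $\wh{B}$ recursion. The one small organizational difference is that for \eqref{eqn::exAG} you condition on $\Gmc^k_t$ throughout the induction and descend to $\Gmc^k_s$ only at the end, which neatly avoids the filtration-change step $\PP(A\pcar{ij}(s-1)=a\mid\Gmc^k_s)=\PP(A\pcar{ij}(s-1)=a\mid\Gmc^k_{s-1})$ that the paper performs mid-induction and which silently relies on Lemma \ref{lem::futind}.
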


We prove Proposition \ref{prop::welldef} and Corollary \ref{coro::morestruct} in Section \ref{sec::proppf}.

\subsubsection{Convergence Results}
\label{ssssec::mainres}

We now rigorously state the results described in Section \ref{ssssec::intuition} in terms of the limiting mean-field process described in Section \ref{ssssec::limit}. We additionally describe some hydrodynamic limits which follow as a consequence of our main result below. 

\ind We begin by stating our main result: the limiting distribution of the latent opinions of a sample of size $k$ and the resulting subnetwork. We prove this result in Section \ref{sec::pfsmple}.

\begin{theorem}[Asymptotic Distribution of a Random Sample]
\label{thm::Sampleconv}
Suppose Assumptions \ref{assu::cond}, \ref{assu::init} and \ref{assu::bds} hold. Fix any $t \in \N_0$ and for each $n \in \N$, let $M^n_k = \{m^n_1,\dots,m^n_k\}$ be a uniform random sample (with or without replacement) of size $k$ from the set $[1:n]$ so that $Z^n_{m^n_{1:k}}$ is a sample of size $k$ from the population of latent opinions in our $n$-agent model. Then, the sample latent positions and the associated network trajectory converge in distribution:
\[\left(Z^n_{m^n_{1:k}}[t],A^n_{m^n_{1:k},m^n_{1:k}}[t]\right) \Rightarrow \left(Z\pcar{1:k}[t],A\pcar{1:k,1:k}[t]\right),\]
where $\left(Z\pcar{1:k}[t],A\pcar{1:k,1:k}[t]\right)$ is defined via \eqref{eqn::Zevolve}-\eqref{eqn::Aevolve}.
\end{theorem}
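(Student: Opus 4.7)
The plan is to proceed by induction on $t \in \Nmb_0$, reducing to the first $k$ agents via exchangeability and feeding the inductive hypothesis into the conditional propagation of chaos result (Proposition \ref{prop::condconvres}) to pass through one step of the recursion \eqref{eqn::Znevolve}--\eqref{eqn::Anevolve}. Because the dynamics are symmetric in the node indices and the initial law is exchangeable by Assumption \ref{assu::init}, the array $(Z^n_{1:n}[t], A^n_{1:n,1:n}[t])$ is jointly exchangeable; under both sampling regimes the event that $m^n_1, \dots, m^n_k$ are all distinct has probability $1 - O(1/n)$, so it is enough to establish
\[\bigl(Z^n_{1:k}[t], A^n_{1:k,1:k}[t]\bigr) \Rightarrow \bigl(Z\pcar{1:k}[t], A\pcar{1:k,1:k}[t]\bigr).\]
At $t = 0$, Assumption \ref{assu::init} (together with the random-sampling structure, which averages against the empirical measure) gives an i.i.d.\ limit from $\mu_0$, and the edges $\{A^n_{ij}(0)\}_{(i,j)\in \incs_k}$ are conditionally independent Bernoullis with parameters $B_0(Z^n_i(0), Z^n_j(0))$ converging a.s.\ to $B_0(Z\pcar{i}(0), Z\pcar{j}(0))$ because the absolute continuity of $Z\pcar{i}(0)$ places the limiting arguments almost surely in the continuity set of $B_0$.

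For the inductive step, the heart of the argument is passing to the limit of
\[L^n_i(t) \;=\; \frac{\frac{1}{n}\sum_{j=1}^n Z^n_j(t)\, A^n_{ij}(t)}{\frac{1}{n}\sum_{j=1}^n A^n_{ij}(t)}, \qquad i \in [1:k].\]
For each fixed $i$, exchangeability and the inductive hypothesis imply that the paired sequence $\{(Z^n_j[t], A^n_{ij}[t])\}_{j \neq i}$ is asymptotically conditionally i.i.d.\ given $Z^n_i[t]$, so Proposition \ref{prop::condconvres} delivers the joint conditional law of large numbers \eqref{eqn::clln}. The uniform integrability required to move $Z^n_j(t)$ inside the limit is supplied by the exponential moment bounds \eqref{eq::MGFZ0}--\eqref{eq::MGFxi0} iterated through \eqref{eqn::Znevolve}, while Assumption \ref{assu::bds}(c) provides a high-probability lower bound of order $n$ on $d_{A^n(t)}(i)$ (via a positive density of agents in a region where $B$ is bounded below), keeping the denominator bounded away from $0$. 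The continuous mapping theorem then gives $L^n_i(t) \Rightarrow L\pcar{i}(t)$, and the conditional Slutzky-type lemma from Appendix \ref{sec::useful}, combined with independence of the i.i.d.\ noise $\xi_i(t)$ from $\Fmc^n_t$ under Assumption \ref{assu::cond}(a), promotes this to joint convergence of $Z^n_{1:k}[t+1]$ together with all earlier quantities.

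To complete the inductive step I would then append the edges at time $t+1$. By Assumption \ref{assu::cond}(b), $\{A^n_{ij}(t+1)\}_{(i,j)\in\incs_k}$ are conditionally independent Bernoullis given $\Fmc^{A,n}_{t+1}$ with parameters $B(A^n_{ij}(t), Z^n_i(t+1), Z^n_j(t+1))$; these parameters are bounded a.e.\ continuous functions of conditioning variables that were just shown to converge jointly. One further application of the conditional Slutzky lemma then yields $(Z^n_{1:k}[t+1], A^n_{1:k,1:k}[t+1]) \Rightarrow (Z\pcar{1:k}[t+1], A\pcar{1:k,1:k}[t+1])$, with the limit having the conditional independence structure of Definition \ref{defn::AZlim}(a); by Proposition \ref{prop::welldef} this pins down the full joint law and closes the induction.

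The hard part will be the interaction between the ratio structure of $L^n_i(t)$ and the feedback coupling between $Z^n$ and $A^n$: a propagation of chaos applied to the numerator or the denominator separately is insufficient, since we need joint conditional convergence \emph{while} keeping the denominator bounded away from $0$. This is what forces both the quantitative degree lower bound under Assumption \ref{assu::bds}(c) — whose two sub-cases (i) and (ii) have to be handled separately, with the exponential-decay case (i) being what necessitates the moment generating function bounds \eqref{eq::MGFZ0}--\eqref{eq::MGFxi0} — and the use of Proposition \ref{prop::condconvres} in place of unconditional propagation of chaos, because $A^n_{ij}(t)$ is coupled to $Z^n_j(t)$ through the feedback mechanism. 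A second subtlety is that asymptotic conditional independence of the pairs $(Z^n_j[t], A^n_{ij}[t])$ must be propagated carefully through the one-step recursion; this is the reason Section \ref{sec::proppf} establishes multiple equivalent conditional characterizations of the limit, since each inductive step uses Proposition \ref{prop::condconvres} against whichever $\sigma$-algebra is most convenient.
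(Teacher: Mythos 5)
Your proposal follows the paper's proof closely: induction on $t$, reduction to the first $k$ agents via joint exchangeability (handling the repeated-index event when sampling with replacement), conditional propagation of chaos (Proposition \ref{prop::condconvres}) to pass $L^n_i(t)$ to the limit, uniform integrability supplied by the exponential-moment bounds (split into the two sub-cases of Assumption \ref{assu::bds}(c)), a degree lower bound keeping the denominator of $L^n_i(t)$ away from zero, and the conditional Slutzky lemma to assemble joint convergence. This matches the paper's ``Property A'' bootstrap (Definition \ref{def::property}, Lemmas \ref{lem::t0}--\ref{lem::induct}, and Proposition \ref{prop::genstat}) in all essentials, including the need to propagate exchangeability, uniform integrability, weak convergence, and absolute continuity jointly through the inductive step.

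The one point where your plan does not quite close is the final appending of the edges $A^n_{1:k,1:k}(t+1)$ (and, implicitly, the same issue at $t=0$). You propose a further application of the conditional Slutzky lemma, but Lemma \ref{Slutzkycond} requires the limiting $Y_i$ to be an almost surely deterministic function $\phi_i(X_i)$ of the limiting $X_i$. The edge variables $A\pcar{ij}(t+1)$ are genuinely random given $\Gmc^{A,k}_{t+1}$ --- Bernoulli conditional on the past, not determined by it --- so the hypothesis of Lemma \ref{Slutzkycond} fails at exactly this step. What the paper uses instead is Lemma \ref{lem::cibrlem}: if $B^n_1,\dots,B^n_m$ are conditionally independent Bernoullis given $X^n$ with parameters $\phi_i(X^n)$, $X^n\Rightarrow X$, and each $\phi_i$ is $X$-a.s.\ continuous (which is where the absolute-continuity component of Property A enters), then $(X^n,B^n)\Rightarrow(X,B)$ with the same conditional-Bernoulli structure. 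That is the tool that lets you invoke Definition \ref{defn::AZlim}(a) and Proposition \ref{prop::welldef} to identify the limit; substituting it for the Slutzky invocation in both the base case and the edge-appending step completes the induction.
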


We now provide a few hydrodynamic and conditional hydrodynamic limits. Using the notation from Definition \ref{defn::AZlim}, recall that for any $t \in \N$, $\mu_t = \law(Z\pcar{1}[t])$. Additionally, define
\begin{equation}
\label{eqn::muatdef}
\mu\pcar{i}_{A,t} \defeq \law\left(Z\pcar{i}[t],Z'[t],A'[t]\middle|Z\pcar{i}[t]\right),
\end{equation}
where $(Z\pcar{i},Z',A') \deq (Z\pcar{1},Z\pcar{2},A\pcar{12})$. In Section \ref{ssec::hydropf}, we prove that Theorem \ref{thm::Sampleconv} implies the following hydrodynamic limits:
\begin{theorem}[Hydrodynamic Limits]
\label{thm::hydro}
Suppose Assumptions \ref{assu::cond}, \ref{assu::init} and \ref{assu::bds} hold. Then for any $t \in \N_0$, the following hydrodynamic convergences hold:
\begin{equation}
\label{eqn::muntdef}
\mu^n_t \defeq \frac{1}{n}\sum_{i=1}^n \delta_{Z^n_i[t]} \to \mu_t\te{ in probability},
\end{equation}
and for any $i \in \N$,
\begin{equation}
\label{eqn::munaitdef}
(Z^n_i[t],\mu^n_{A,i,t}) \defeq \left(Z^n_i[t],\frac{1}{n}\sum_{j=1}^n \delta_{Z^n_i[t],Z^n_j[t],A^n_{ij}[t]}\right) \Rightarrow (Z\pcar{i}[t],\mu\pcar{i}_{A,t}).
\end{equation}
\end{theorem}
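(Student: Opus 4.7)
The plan is to derive both hydrodynamic limits from Theorem \ref{thm::Sampleconv} and the conditional propagation of chaos result (Proposition \ref{prop::condconvres}), with the second convergence being essentially a direct application of the latter.

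\emph{First convergence.} I will show $\langle f, \mu^n_t\rangle \defeq \frac{1}{n}\sum_{i=1}^n f(Z^n_i[t]) \to \ex{f(Z\pcar{1}[t])}$ in $L^2$ for each $f$ in a countable determining subclass of $C_b(\mf_t(\R^d))$; since $\mf_t(\R^d)$ is Polish, this yields $\mu^n_t \to \mu_t$ in probability. By exchangeability,
\[
\ex{\langle f, \mu^n_t\rangle} = \ex{f(Z^n_1[t])} \to \ex{f(Z\pcar{1}[t])}
\]
by Theorem \ref{thm::Sampleconv} with $k=1$. Decomposing the second moment,
\[
\ex{\langle f,\mu^n_t\rangle^2} = \frac{1}{n}\ex{f(Z^n_1[t])^2} + \frac{n-1}{n}\ex{f(Z^n_1[t])f(Z^n_2[t])},
\]
the diagonal term vanishes by boundedness of $f$, while Theorem \ref{thm::Sampleconv} with $k=2$ together with the independence of $Z\pcar{1}[t]$ and $Z\pcar{2}[t]$ from Corollary \ref{coro::morestruct} forces the off-diagonal term to $\ex{f(Z\pcar{1}[t])}^2$, so that $\operatorname{Var}(\langle f,\mu^n_t\rangle) \to 0$.

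\emph{Second convergence.} By exchangeability I may assume $i=1$. Every atom of $\mu^n_{A,1,t}$ has $Z^n_1[t]$ in its first coordinate, exactly matching the role of $Z\pcar{1}[t]$ in $\mu\pcar{1}_{A,t}$, so the joint convergence reduces to showing
\[
\left(Z^n_1[t],\ \widetilde\mu^n_{A,1,t}\right) \Rightarrow \left(Z\pcar{1}[t],\ \kappa(Z\pcar{1}[t],\cdot)\right),\qquad \widetilde\mu^n_{A,1,t} \defeq \frac{1}{n}\sum_{j=1}^n \delta_{Z^n_j[t],\, A^n_{1j}[t]},
\]
where $\kappa(z,\cdot)$ is a regular version of the conditional law of $(Z'[t], A'[t])$ given $Z\pcar{1}[t]=z$. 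This is a conditional propagation of chaos statement for the array $\{(Z^n_j[t], A^n_{1j}[t])\}_{j \geq 2}$, which I plan to derive from Proposition \ref{prop::condconvres}. The hypothesis that these pairs are asymptotically conditionally i.i.d.\ given $Z^n_1[t]$, for arbitrarily many of them, is supplied by Theorem \ref{thm::Sampleconv} applied to the sample $\{1,\dots,k+1\}$ for each $k$, combined with Corollary \ref{coro::morestruct} (i.i.d.\ latents $Z\pcar{j}[t]$) and the conditional independence encoded in Proposition \ref{prop::welldef}(b) (edge trajectories are conditionally independent given all latents).

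\emph{Main obstacle.} The core difficulty is the careful invocation of Proposition \ref{prop::condconvres}: one must verify that the finite-sample weak limits furnished by Theorem \ref{thm::Sampleconv} assemble into the precise asymptotic conditional exchangeability hypotheses demanded by the proposition, and that the resulting limit kernel coincides with $\kappa$. The conditional structure results in Proposition \ref{prop::welldef} and Corollary \ref{coro::morestruct} will be essential here, as they identify the limiting conditional law in exactly the form needed. Uniform integrability conditions will be inherited from Assumption \ref{assu::bds} via bounds already established in the proof of Theorem \ref{thm::Sampleconv}. Once Proposition \ref{prop::condconvres} delivers the convergence of $(Z^n_1[t], \widetilde\mu^n_{A,1,t})$, reattaching $Z^n_1[t]$ as the first coordinate (which is constant across atoms of $\mu^n_{A,1,t}$) recovers the claimed convergence.
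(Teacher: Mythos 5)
Your plan is correct and follows essentially the same route as the paper: the first limit from Theorem \ref{thm::Sampleconv} with $k=2$ plus classical propagation of chaos, the second from Proposition \ref{prop::condconvres} with $X^n = Z^n_1[t]$ and $Y^n_j = (Z^n_j[t],A^n_{1j}[t])$. Two small remarks. In the first part you redo the variance calculation by hand, whereas the paper simply cites Proposition \ref{prop::convres}; the content is identical, since that proposition \emph{is} Sznitman's two-moment argument. In the second part, your device of stripping $Z^n_1[t]$ from each atom to form $\widetilde\mu^n_{A,1,t}$ and then reattaching it is an unnecessary detour: $\eta^n_{XY} = \frac{1}{n}\sum_j \delta_{X^n,Y^n_j}$ in Proposition \ref{prop::condconvres} already carries $X^n = Z^n_1[t]$ in every atom, so the proposition delivers \eqref{eqn::munaitdef} directly once the $\Xmc/\Ymc$-convenience hypotheses are checked. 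Also, your mention of inheriting uniform integrability from Assumption \ref{assu::bds} is not needed at this stage; the proposition only asks for exchangeability excluding $1$ (from property A(a)) and continuous dependence of $\law(X,Y\mid X)$ on $X$, and the latter — the one genuinely non-obvious verification, which you correctly flag as the main obstacle — is established in the paper by writing out the conditional Markov kernel $\alt{B}_0\cdot\prod_s\alt{B}$ from Definition \ref{defn::AZlim}(b) and invoking absolute continuity of $Z\pcar{1}[t]$.
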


\begin{remark}
\label{rmk::munaitdefcpropchaos}
Note that \eqref{eqn::muntdef} of Theorem \ref{thm::hydro} is a standard propagation of chaos result that follows from the exchangeability of $\{Z^n_i[t]\}_{i=1}^n$ \cite{Szn91}. Likewise, we may regard \eqref{eqn::munaitdef} as a \emph{conditional} propagation of chaos result as it describes a hydrodynamic limit in which the limiting measure is a conditional law. The proof of \eqref{eqn::munaitdef} relies on the more general conditional propagation of chaos result of Proposition \ref{prop::condconvres}, we make use of the fact that the sequence $\{(Z^n_j[t],A^n_{ij}[t])\}_{j=1}^n$ is exchangeable \emph{excluding} $i$ in the sense of Definition \ref{def::exchexcli}. We discuss the concept of conditional propagation of chaos in more detail in Section \ref{sec::condpropchaos}.
\end{remark}

\section{Graphon and Multiplexon Convergence of the Latent Particle Network Process}
\label{ssec::Gconv}

In this section, we examine the interaction networks generated by the model (with adjacency matrix $A^n(t)$) and establish appropriate convergence theorems regarding these graphs. We show that the networks converge in a graphon sense and that the network trajectories (collection of networks at all times) converge in a multiplexon sense described in Section \ref{sssec::trajconv} below. We also show that the graphon limit of the interaction network between agents is determined by the function $B_t$ defined in \eqref{eqn::Bsdef}, and the multiplexon limit of the interaction network trajectory is described by the conditional Markov chain outlined in Definition \ref{defn::AZlim}(b).

\subsection{Left-Convergence of Graphs and Graphon Convergence}
\label{sssec::graphon}

There are many ways in which graphs can be said to converge. One natural method of studying graph convergence is to examine the homomorphism densities of subgraphs. Such homomorphism densities can be used to derive common graph statistics such as edge density, clustering coefficients and large/small eigenvalues of the adjacency matrix (up to normalization). If such statistics converge for a sequence of large graphs, then we may intuitively understand that the graphs possess a similar limiting structure. This notion of convergence is called \emph{left-convergence} \cite[Chapter 11]{Lov12}.

\ind We now provide a precise definition of left-convergence. Let $H = (V_H,E_H)$ and $G = (V_G,E_G)$ be two graphs. Then a \emph{homomorphism} from $H$ to $G$ is a map $\phi: V_H\to V_G$ such that if $\{u,v\} \in E_H$, then $\{\phi(u),\phi(v)\} \in E_G$. If $\te{Hom}(H,G)$ is the set of homomorphisms from $H$ to $G$, then the \emph{homomorphism density} of $H$ in $G$ \cite[Section 2]{LovSze06} is the function
\[t(H,G) \defeq \frac{\left|\te{Hom}(H,G)\right|}{|V_G|^{|V_H|}}.\]

\ind For each $n \in \N$, let $G_n$ be a graph with $n$ vertices. Then the sequence $\{G_n\}_{n \in \N}$ is said to be \emph{left-convergent} if for all simple, finite graphs $H$, $\lim_{n\to\infty} t(H,G_n)$ exists. If the sequence $\{G_n\}_{n \in \N}$ is left-convergent, then its limit can be described in terms of a \emph{graphon}, which is defined below.

\begin{definition}[Graphon Space]
\label{defn::graphon}
Let $\wsp_*$ be the space of bounded, symmetric and measurable functions $W: [0,1]^2 \to \R$ equipped with the \emph{cut norm} \cite[Section 8.2]{Lov12}:
\[\|W\|_{\square} \defeq \sup_{S,T\in\borel([0,1])} \left|\int_{S\times T} W(x,y)\,dx\,dy\right|.\]
Any $W \in \wsp_*$ such that $0\leq W\leq 1$ is called a \emph{graphon}, and the $\wsp \defeq \{W \in \wsp_*: 0\leq W \leq 1\}$ is the space of graphons.
\end{definition}

Any finite graph $G = (V,E)$ can be associated with its \emph{empirical graphon} $W^G \in \wsp$ given by
\[W^G(x,y) = \indic{\{\lceil xn\rceil,\lceil yn\rceil\} \in E},\]
where $n = |V|$. Furthermore, the function $t(H,\cdot)$ can be expanded to the space of graphons \cite[Section 2.5]{LovSze06}:
\[t(H,W) = \int_{[0,1]^{|V_H|}} \prod_{\{i,j\} \in E_H} W(x_i,x_j)\,dx_1,\dots,\,dx_{|V_H|}.\]
It's easily verified that for any finite graph $G$, $t(H,G) = t(H,W^G)$. 

\ind We say a map $\psi$ is a measure-preserving transformation (m.p.t.) if for any Borel-measurable $A \subset [0,1]$, $A$ and $\psi^{-1}(A)$ have the same Lebesgue measure. Two graphons $W_1$ and $W_2$ are said to be \emph{equivalent} ($W_1\sim W_2$) if $W_2$ lies in the closure of the set
\[\{W: W(x,y) = W_1^{\psi}(x,y)\defeq W_1(\psi(x),\psi(y))\te{ for an m.p.t. } \psi:[0,1]\to[0,1]\}.\]
This is analogous to describing two graphs as equivalent if they are isomorphic \cite[Section 8.2.2]{Lov12}. 

\begin{definition}[Graphon Classes]
\label{defn::graphonclasses}
We define the space of graphon classes $\alt{\wsp}\defeq \wsp/\sim$. If $W$ is a representative graphon of the equivalence class $\alt{W}$, and we equip $\alt{\wsp}$ with the metric
\[\delta_{\square}(\alt{W}_1,\alt{W}_2) = \delta_{\square}(W_1,W_2) \defeq \inf_{\substack{\psi: [0,1]\to [0,1]\\\te{measure-preserving}}} \|W_1 - W_2^{\psi}\|_{\square},\]
where the above equation holds independently of the choice of representative graphons $W_i\sim \alt{W}_i$, $i = 1,2$ (see Section 8.2.2 and Theorem 8.13 of \cite{Lov12}). 
\end{definition}

It is known that $\alt{\wsp}$ is a compact metric space \cite[Theorem 9.23]{Lov12}. In addition, it is an easy consequence of the famous "counting lemma," \cite[Lemma 10.22]{Lov12}, that for $W_1,W_2\sim \alt{W}\in\alt{\wsp}$ and any graph $H$, $t(H,W_1) = t(H,W_2)$, so the homomorphism density $t(H,\alt{W})$ is a well-defined quantity. With this background, we may now state Proposition \ref{prop::graphonleft}, see also \cite{LovSze06} for earlier related results.
\begin{proposition}
\label{prop::graphonleft}
\cite[Theorem 11.5]{Lov12} A sequence of graphs $\{G_n\}_{n\in \N}$ is left-convergent if and only if there exists a graphon class $\alt{W} \in \alt{\wsp}$ such that
\[\lim_{n\to\infty} \alt{W}^{G_n} = \alt{W}.\]
In addition, for all finite, simple graphs $H$,
\[\lim_{n\to\infty} t(H,G_n) = t(H,\alt{W}).\]
\end{proposition}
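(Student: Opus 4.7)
\ind The plan is to prove the equivalence via the three cornerstones of graphon theory: the counting lemma, the Frieze--Kannan weak regularity lemma, and the fact that homomorphism densities separate equivalence classes of graphons. The forward direction (graphon convergence $\Rightarrow$ left-convergence) is the easier half, while the reverse direction requires compactness of $(\alt{\wsp}, \delta_\square)$. The identification of the limit of $t(H,G_n)$ with $t(H,\alt{W})$ then falls out of the same Lipschitz estimate used for the forward direction.

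\ind \textbf{Step 1 (Counting lemma).} I would first establish that for any simple graph $H$ and any $W_1, W_2 \in \wsp$, one has $|t(H,W_1) - t(H,W_2)| \leq |E(H)| \, \|W_1 - W_2\|_\square$. This goes by a telescoping argument, replacing $W_1$ by $W_2$ one edge at a time and controlling each replacement by the cut norm. The key sub-step is that $\|\cdot\|_\square$ controls integrals of $W_1 - W_2$ against products of bounded measurable functions in the non-edge coordinates, which follows by approximating such products by simple functions and invoking the definition of the cut norm. Taking the infimum over measure-preserving transformations upgrades the bound to $\delta_\square$, and since $t(H,\cdot)$ is invariant under $\sim$, the estimate descends to $\alt{\wsp}$.

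\ind \textbf{Step 2 (Compactness of $\alt{\wsp}$).} The hard part of the argument is showing that $(\alt{\wsp}, \delta_\square)$ is compact, which I would derive from the weak regularity lemma of Frieze and Kannan: for every $W \in \wsp$ and every $\ep > 0$, there exists a partition $\{P_1,\ldots,P_m\}$ of $[0,1]$ into at most $m \leq 2^{O(1/\ep^2)}$ measurable pieces such that the block-constant graphon $W_P$ obtained by averaging $W$ over each rectangle $P_i \times P_j$ satisfies $\|W - W_P\|_\square \leq \ep$. Because, up to the equivalence $\sim$ and a finite quantization of the block values in $[0,1]$, there are only finitely many such block-constant graphons of bounded complexity, this exhibits a finite $\ep$-net for $\alt{\wsp}$ and hence total boundedness. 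Completeness of $\alt{\wsp}$ is a separate but more routine verification, and together they yield compactness.

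\ind \textbf{Step 3 (Identification and conclusion).} To close the argument, suppose $\{G_n\}$ is left-convergent. By Step 2, every subsequence of $\{\alt{W}^{G_n}\}$ admits a further $\delta_\square$-convergent subsubsequence with some limit $\alt{W} \in \alt{\wsp}$. By Step 1, along such a subsubsequence $t(H,G_n) = t(H,\alt{W}^{G_n}) \to t(H,\alt{W})$ for every finite simple $H$. Since the full sequence $t(H,G_n)$ converges, all subsequential limit graphons yield the same values $\{t(H,\alt{W})\}_H$. Invoking the (nontrivial) fact that homomorphism densities separate points in $\alt{\wsp}$ (provable by showing that finite polynomial combinations of the $t(H,\cdot)$ are dense in $C(\alt{\wsp})$ via Stone--Weierstrass, together with regularity-based approximation), all subsequential limits must coincide, so the whole sequence converges to a unique $\alt{W}$, and the ``in addition'' clause follows from Step 1. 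The converse direction is then immediate from Step 1. I expect the separating-points fact and the weak regularity lemma underlying compactness to be the two places where substantive combinatorial and analytic work is required, with the remaining steps being relatively routine manipulations.
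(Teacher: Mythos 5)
The paper does not give its own proof of this statement; it is invoked as a known result, cited directly as~\cite[Theorem~11.5]{Lov12}, so there is no in-paper argument to compare against. Your sketch correctly reproduces the standard proof strategy from the cited source: the counting (Lipschitz) lemma for $t(H,\cdot)$ in the cut metric, compactness of $(\alt{\wsp},\delta_\square)$ via Frieze--Kannan weak regularity, and the separation of points by homomorphism densities. One small caution in Step~2: exhibiting a finite $\ep$-net is not quite as immediate as ``finitely many block-constant graphons of bounded complexity after quantization,'' because the weak regularity partition cells can have arbitrary measures; one must first reduce, via a measure-preserving rearrangement, to partitions into intervals with discretized endpoints (or instead run the martingale argument used in~\cite{Lov12}), and completeness of $\alt{\wsp}$ likewise requires a genuine argument rather than a ``routine'' verification. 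These are fillable gaps, and the overall outline is sound.
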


With this, we can state our first graph convergence result. Let $\lambda$ be the Lebesgue measure.

\begin{theorem}[Graphon Limit]
\label{thm::Antconv}
Suppose that Assumptions \ref{assu::cond}, \ref{assu::init} and \ref{assu::bds} hold, and fix any $t \in \N_0$. For each $n \in \N$, let $G_n$ be the random graph with adjacency matrix $A^n(t)$. For any $\lambda$-$\mu_t$ measure-preserving transformation $\theta_t: [0,1]\to \mf_t(\R^d)$, let $W$ be the graphon defined by
\[W(u_1,u_2) = B_t\left(\theta_t(u_1),\theta_t(u_2)\right)\te{ for all } u_1,u_2 \in [0,1].\]

Then the equivalence class $\alt{W}$ of $W$ does not depend on our choice of $\lambda$-$\mu_t$ measure-preserving transformations $\theta_t$. In addition, $\alt{W}^{G_n} \to \alt{W}$ in probability in $\alt{\wsp}$.
\end{theorem}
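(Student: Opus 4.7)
The strategy is to invoke Proposition \ref{prop::graphonleft}, which reduces graphon convergence in $\alt{\wsp}$ to convergence of homomorphism densities, and then compute these densities by combining Theorem \ref{thm::Sampleconv} with the conditional structure of the limiting model. For each finite simple graph $H$ on vertex set $[1:k]$, I will show that $t(H,G_n) \to t(H,W)$ in $L^2$ and that this limit depends only on $\mu_t$ and $B_t$ (not on $\theta_t$). Since each member of the countable family $\{t(H,\cdot)\}_H$ is continuous on $\alt{\wsp}$ (counting lemma) and the family separates points of $\alt{\wsp}$ (Proposition \ref{prop::graphonleft} together with the compactness of $\alt{\wsp}$), this will yield $\alt{W}^{G_n} \to \alt{W}$ in probability in $\alt{\wsp}$ via a standard subsequential argument, and simultaneously imply well-definedness of $\alt{W}$: if $Z\pcar{1:k}[t]$ are i.i.d.\ $\mu_t$, a change of variables through the m.p.t.\ $\theta_t$ gives
\[ t(H,W) = \int_{[0,1]^k} \prod_{\{i,j\}\in E_H} B_t\bigl(\theta_t(u_i),\theta_t(u_j)\bigr)\,du = \ex{\prod_{\{i,j\}\in E_H} B_t\bigl(Z\pcar{i}[t],Z\pcar{j}[t]\bigr)}, \]
and the right-hand side manifestly does not depend on the choice of $\lambda$-$\mu_t$ m.p.t.\ $\theta_t$.

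For the first moment, expanding $t(H,G_n)$ as an average over all maps $\phi: V_H \to [1:n]$ (equivalently, a uniform size-$k$ sample $m^n_{1:k}$ with replacement) gives $\ex{t(H,G_n)} = \ex{\prod_{\{i,j\}\in E_H} A^n_{m^n_i m^n_j}(t)}$. The integrand is uniformly bounded by $1$, so Theorem \ref{thm::Sampleconv} together with bounded convergence yields $\ex{t(H,G_n)} \to \ex{\prod_{\{i,j\}\in E_H} A\pcar{ij}(t)}$. Conditioning on $\Gmc^k_t$ and invoking the conditional independence of $\{A\pcar{ij}[t]\}_{(i,j)\in \incs_k}$ given $\Gmc^k_t$ (Definition \ref{defn::AZlim}(b), restricted to the time-$t$ coordinate) together with \eqref{eqn::exAG} of Corollary \ref{coro::morestruct} collapses this to the formula for $t(H,W)$ displayed above.

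For the variance, the key observation is that $t(H,G_n)^2 = t(H \sqcup H, G_n)$, where $H \sqcup H$ is the disjoint union of two copies of $H$ on $2k$ vertices. Applying the first-moment argument with $k$ replaced by $2k$ (Theorem \ref{thm::Sampleconv} with sample size $2k$) gives $\ex{t(H,G_n)^2} \to \ex{\prod_{\{i,j\}\in E_{H\sqcup H}} B_t(Z\pcar{i}[t],Z\pcar{j}[t])}$. Since the edge set of $H\sqcup H$ partitions into two copies of $E_H$ on disjoint vertex blocks and the $Z\pcar{i}[t]$ are i.i.d.\ (Corollary \ref{coro::morestruct}), the expectation factorizes into $t(H,W)^2$. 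Combined with the first-moment limit, this gives $\mathrm{Var}(t(H,G_n)) \to 0$, hence $L^2$ convergence and thus convergence in probability.

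The main obstacle, and the non-routine core of the argument, is the correct exploitation of Definition \ref{defn::AZlim}(b): the trajectory-level conditional independence, restricted to time $t$, is used first to identify $\ex{\prod A\pcar{ij}(t)}$ with $t(H,W)$ via \eqref{eqn::exAG}, and second, in its sample-size-$2k$ instance, to produce the factorization driving the variance bound. Once this conditional structure is in place, the remaining steps (bounded convergence, well-definedness of $\alt{W}$, and passage from homomorphism-density convergence to cut-metric convergence in probability via subsequencing in the compact space $\alt{\wsp}$) are standard graphon-theoretic bookkeeping.
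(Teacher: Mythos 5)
Your proof is correct, and the core estimates coincide with the paper's: expand the homomorphism density as a sum over maps $[1:k]\to[1:n]$, reduce to a random $k$-sample, invoke Theorem \ref{thm::Sampleconv} plus bounded convergence, factorize the limit through $\Gmc^k_t$ via Definition \ref{defn::AZlim}(b) and \eqref{eqn::exAG}, change variables via $\theta_t$, and drive the variance to zero with a $2k$-vertex argument using the independence of $Z\pcar{1:k}[t]$ and $Z\pcar{k+1:2k}[t]$. Where you diverge is at the level of architecture: the paper never proves Theorem \ref{thm::Antconv} directly, but instead proves the multigraphon Theorem \ref{thm::Antmultconv} together with Corollary \ref{coro::Wdefthm} using the probability-graphon framework of Abraham--Delmas--Weibel (Propositions \ref{multhomequiv} and \ref{tmultcont}), and then specializes to $S=\{t\}$ (Remark \ref{rmk::thmantconv}). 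You instead prove the scalar statement directly through classical graphon theory (Proposition \ref{prop::graphonleft}, compactness of $\alt\wsp$, and the diagonal/subsequential step). Two further cosmetic improvements in your version: (i) re-reading $\frac{1}{n^k}\sum_{\phi}$ as an expectation over a uniform size-$k$ sample \emph{with replacement} lets Theorem \ref{thm::Sampleconv} absorb the non-injective maps for free, whereas the paper handles distinct and non-distinct index tuples explicitly and estimates the error $R_n = 2|1-n!/(n^k(n-k)!)|$; and (ii) identifying $t(H,G_n)^2 = t(H\sqcup H,G_n)$ makes the $2k$-sample variance bound more transparent than the paper's doubled-index sum. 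The trade-off is that the paper's version yields, in the same stroke, the full multigraphon convergence of the joint trajectory $G_n[t]$, which your argument does not touch; if you wanted the multiplex result you would need to replace Proposition \ref{prop::graphonleft} by the probability-graphon analogues and run the same computation over subsets $S\subseteq [t]$, which is exactly what Section \ref{sec::graphon} does.
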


\begin{remark}
\label{rmk::specase}
Theorem \ref{thm::Antconv} is a special case of Theorem \ref{thm::Antmultconv} and Corollary \ref{coro::Wdefthm} which we prove in Section \ref{ssec::Antmultconvpfpf}. See Remark \ref{rmk::thmantconv} for details.
\end{remark}

\begin{remark}[Existence of a Measure-Preserving Transformation]
\label{rmk::thetatexists}
In fact, the existence of $\theta_t$ is given by \cite[Theorem 5.1]{Wil77}, which states that for any Polish space $\Xmc$ and any Borel probability measure $\eta \in \P(\Xmc)$, there exists a measurable mapping $\phi: (0,1)\to \Xmc$ such that for any $O \in \borel(\Xmc)$, $\lambda(\phi^{-1}(O)) = \eta(O)$. In particular, this implies that for any $U \sim \te{Unif}(0,1)$, $\phi(U) \sim \eta$. Then $\theta_t$ exists because $\Xmc = \mf_t(\R^d)$ is Polish and $\mu_t \in \P(\Xmc)=\P(\mf_t(\R^d))$ is a Borel probability measure, so the theorem applies. This implies that for any $U \sim \te{Unif}[0,1]$ 
 and $i \in \N$, $\theta_t(U) \deq Z\pcar{i}[t]$.
\end{remark}

\subsection{Convergence of Latent Network Trajectories as Multiplexons}
\label{sssec::trajconv}

Fix $t\in \N_0$. For each $n \in \N$ and $s \in [t]$, define the graph $G_n(s) = ([1:n],E_n(s))$ to be the graph with adjacency matrix $A^n(s)$. Given Theorem \ref{thm::Antconv}, it  follows that
\begin{equation}
\label{eqn::badconv}
\alt{W}^{G_n[t]} \defeq \left(\alt{W}^{G_n(s)}\right)_{s \in [t]} \to \alt{W}[s] \defeq (\alt{W}(s))_{s \in [t]} \te{ in probability,}
\end{equation}
where $\alt{W}(s)$ is the graphon class described in Theorem \ref{thm::Antconv}. However, this result is somewhat unsatisfying. For each $n$, the vertices of $G_n[t]$ all correspond to a fixed particle. However, \eqref{eqn::badconv} only tells us that there is a way to relabel the vertices of each graph $G_n(s)$ to get a different graph $G'_n(s)$ such that
\[W^{G'_n(s)} \to W(s) \te{ for all }s \in [t],\]
where for each $s$, $W(s)$ is some representative graphon of $\alt{W}(s)$. The problem is that the relabeling may differ for each value of $s$. This means that the latent particle associated with the vertex labeled ``1'' may be different in $G'_n(s)$ and $G'_n(s')$ for $s \neq s'$. As a result, $\alt{W}[s]$ from \eqref{eqn::badconv} does not properly capture heterogeneities in the joint structure of $G_n$ at different times.

\ind To handle this, we treat the graph trajectory of the $n$-particle system as a \emph{multiplex}, $\bm{G}_n$. A multiplex is a collection of graphs sharing a vertex set. Each graph is called a \emph{layer} of the multiplex network. For example, $\bm{G}_n$ is a multiplex network on the vertex set $[1:n]$ with $t+1$ layers where for all $s \in [t]$, the $s$th layer of the multiplex consists of the edges in $G_n(s)$. 

\begin{definition}[Cumulative Multiplex Decomposition]
\label{defn::multdecomp}
For any $(t+1)$-layer multiplex $\bm{H} = (V_H,E_H(0),E_H(1),\dots,E_H(t))$ and any $S\subseteq [t]$, let $H^S = (V_H,E_H^S)$ where
\[E_H^S = \cap_{s \in S} E_H(s).\]
So $E_H^S$ contains all edges that lie in $E_H(s)$ for all $s \in S$.
\end{definition}

\ind We can now define multiplexons, recently introduced in \cite{BhaGan25}, which are a special class of probability-graphons \cite{AbrDelWei23}. In particular, we are interested in \emph{cumulatively decomposable} multiplexons.

\begin{definition}[Cumulatively Decomposable Multiplexons]
\label{defn::cdmultiplexon}
A $(t+1)$-layer cumulatively decomposable multiplexon is a vector of graphons indexed by the subsets of $[t]$: $\mathbf{W} = (W^S)_{S\subseteq [t],S\neq\emptyset}$ such that for a.e. $(x,y) \in [0,1]^2$, there exists a measure $\mu_{xy} \in \P(\Pms([t]))$ such that
\[\sum_{S': S'\supseteq S} \mu_{x,y}(S') = W^S(x,y).\]
We define $\Wmc\pcar{t}_{\blacktriangle}$ to be the space of $(t+1)$-layer cumulatively decomposable multiplexons.
\end{definition}

In (3.5) of \cite{BhaGan25}, the pseudo-metric $\delta_{\square}^t$ is defined on $\Wmc\pcar{t}_{\blacktriangle}$ by
\[\delta_{\square}^t(\mathbf{W}_1,\mathbf{W}_2) = \inf_{\substack{\psi: [0,1]\to[0,1]\\\te{measure-preserving}}}\sum_{\substack{S \subseteq [t]\\ S\neq \emptyset}} \|W_1^S - (W_2^S)^{\psi}\|_{\square}.\]
Let $\mathbf{W}_1\sim \mathbf{W}_2$ if and only if $\delta_{\square}^t(\mathbf{W}_1,\mathbf{W}_2) = 0$. Then $\alt{\Wmc}\pcar{t}_{\blacktriangle} \defeq \Wmc\pcar{t}_{\blacktriangle}/\sim$ is a well-defined, compact metric space with respect to $\delta^t_{\square}$ \cite[Theorem 4.2]{BhaGan25}. Under this definition, the vertices of each layer of the multiplexon are labeled consistently.

\ind The empirical multiplexon of a multiplex $\bm{G}$ is given by
\[\mathbf{W}^{\bm{G}} = \left(W^{G^S}\right)_{S\subseteq [t],S\neq \emptyset}.\]
It can be shown that $\mathbf{W}^{\bm{G}} \in \Wmc\pcar{t}_{\blacktriangle}$, \cite[Remark 3.12]{BhaGan25}. We now state the following theorem which extends Theorem \ref{thm::Antconv}:

\begin{theorem}[Multiplexon Convergence]
\label{thm::Antmultconv} Suppose Assumptions \ref{assu::cond}, \ref{assu::init} and \ref{assu::bds} hold at time $0$ and fix any $t \in \N_0$. For each $n \in \N$, let $\bm{G}_n$ be the random multiplex such that for each $s \in [t]$, $G_n(s)$ is the random graph with adjacency matrix $A^n(s)$. Then there exists a multiplexon $\mathbf{W}$ (given explicitly in \eqref{eqn::Wmultdef} below) such that
\[\alt{\mathbf{W}}^{\bm{G}_n} \to \alt{\mathbf{W}} \te{ in probability.}\]
\end{theorem}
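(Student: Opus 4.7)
The plan is to reduce multigraphon convergence to the sample-level convergence already provided by Theorem \ref{thm::Sampleconv}, by way of a sampling characterization of probability-graphon convergence from \cite{AbrDelWei23}, transferred to the multigraphon setting via the topological equivalence noted in Footnote \ref{ft::prob}.

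First, I would make the limit explicit. Fix a $\lambda$-$\mu_t$ measure-preserving map $\theta_t : [0,1] \to \mf_t(\R^d)$ as in Remark \ref{rmk::thetatexists}, and for each non-empty $S \subseteq [t]$ and $u_1,u_2 \in [0,1]$, set
\[ W^S(u_1, u_2) \defeq \ex{\prod_{s\in S} A\pcar{12}(s) \,\middle|\, Z\pcar{1}[t] = \theta_t(u_1),\ Z\pcar{2}[t] = \theta_t(u_2)}.\]
Using the conditional-Markov structure of Definition \ref{defn::AZlim}(b) together with \eqref{eqn::Atrans}--\eqref{eqn::Ainit}, each such conditional expectation admits a closed-form expression obtained by iterating $B$ along the trajectories $\theta_t(u_1)$ and $\theta_t(u_2)$. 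The fact that the equivalence class $\alt{\mathbf{W}} = (\alt{W}^S)_{S}$ does not depend on the particular $\theta_t$ follows, layer by layer in $S$, from exactly the same measure-preserving change-of-variable argument used to prove Theorem \ref{thm::Antconv}.

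Next, I would invoke the sampling characterization of the cut-metric topology on $\alt{\Wmc}\pcar{t}$, inherited from the probability-graphon theory of \cite{AbrDelWei23} through the bi-Lipschitz equivalence of $\delta^t_\square$ and $\delta_{\square,\Pmc}$ recorded in Footnote \ref{ft::prob}. Since the candidate limit $\alt{\mathbf{W}}$ is deterministic, it suffices to show that for every fixed $k \in \N$, the uniformly sampled $k$-vertex sub-multiplex of $G_n[t]$ converges in distribution to the $k$-vertex sample multiplex obtained from $\mathbf{W}$: sample i.i.d.\ uniforms $U_1,\dots,U_k$ on $[0,1]$ and, conditionally on $(\theta_t(U_i))_{i=1}^k$, draw each edge trajectory independently from the $\ps([t])$-valued kernel whose $\{S' \supseteq S\}$-mass is $W^S(U_i,U_j)$. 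By Remark \ref{rmk::thetatexists} the $\theta_t(U_i)$ are i.i.d.\ with law $\mu_t$, and by Definition \ref{defn::AZlim}(b) together with the definition of $W^S$, the joint conditional edge law of $A\pcar{1:k,1:k}[t]$ given $Z\pcar{1:k}[t]$ is exactly the one produced by this sampling recipe. Consequently the $k$-vertex sample from $\mathbf{W}$ has the same law as $(Z\pcar{1:k}[t], A\pcar{1:k,1:k}[t])$, and Theorem \ref{thm::Sampleconv} supplies
\[ \left(Z^n_{m^n_{1:k}}[t],\ A^n_{m^n_{1:k},m^n_{1:k}}[t]\right) \Rightarrow \left(Z\pcar{1:k}[t],\ A\pcar{1:k,1:k}[t]\right) \]
from which the desired convergence in distribution of the sampled sub-multiplex (at every $k$) is read off, delivering $\alt{\mathbf{W}}^{G_n[t]} \to \alt{\mathbf{W}}$ in $\delta^t_{\square}$-probability.

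The hard part will be the careful invocation and translation of the sampling/counting theorem from the probability-graphon framework of \cite{AbrDelWei23} into the multigraphon language under the metric $\delta^t_{\square}$. Two subsidiary delicate points are: (i) verifying that convergence in distribution of sampled sub-multiplexes at every $k$ upgrades to convergence in $\delta^t_\square$-probability of $\alt{\mathbf{W}}^{G_n[t]}$ to the deterministic limit $\alt{\mathbf{W}}$ (rather than merely convergence in distribution), which hinges on uniform continuity of the sampling map on $\alt{\Wmc}\pcar{t}$ and the deterministic nature of $\alt{\mathbf{W}}$; and (ii) matching the sampling model of \cite{AbrDelWei23}, whose edge-decoration comes from a single conditionally independent draw from a $\ps([t])$-valued kernel, with our sequential description of each edge trajectory as a conditional Markov chain along time -- a matching that is precisely what the definition of $W^S$ in terms of $\ex{\prod_{s\in S} A\pcar{12}(s)\mid\cdot}$ is designed to achieve, via the standard identification between the marginals $\{\mu(\{S'\supseteq S\})\}_S$ and a probability measure on $\ps([t])$.
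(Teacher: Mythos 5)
Your proposal takes a genuinely different route from the paper's proof, and while the overall plan is sound in spirit, it contains a gap at the step you flag as ``delicate point (i).''

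\textbf{What the paper does.} The paper's proof works directly with multiplex homomorphism densities. For each multiplex $\mathbf{H}$ with vertex set $[1:k]$, it rewrites $t(\mathbf{H},G_n[t])$ as a normalized sum over $k$-tuples of vertices, uses joint exchangeability to reduce to a fixed $k$-tuple, applies Property A(c) (equivalently, Theorem~\ref{thm::Sampleconv}) and Definition~\ref{defn::AZlim}(b) to evaluate $\lim_n \Emb[t(\mathbf{H},G_n[t])]$, and then repeats this with $2k$-tuples to evaluate $\lim_n \Emb[t(\mathbf{H},G_n[t])^2]$. Both limits equal $t(\mathbf{H},\mathbf{W})$ and $t(\mathbf{H},\mathbf{W})^2$ respectively, so $t(\mathbf{H},G_n[t]) \to t(\mathbf{H},\mathbf{W})$ in probability by the second-moment method, and Proposition~\ref{tmultcont} then delivers convergence in the metric $\delta^t_\square$. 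The second-moment computation is where the variance control comes from, and it is the decisive step.

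\textbf{The gap in your argument.} You propose to establish only that, for each fixed $k$, the uniformly sampled $k$-vertex sub-multiplex of $G_n[t]$ converges in distribution to the $k$-sample from $\mathbf{W}$, and then to conclude convergence in probability of $\alt{\mathbf{W}}^{G_n[t]}$ to $\alt{\mathbf{W}}$. The reduction of the $k$-sample law of $\mathbf{W}$ to the law of $(Z\pcar{1:k}[t],A\pcar{1:k,1:k}[t])$ is correct (Remark~\ref{rmk::thetatexists}, Definition~\ref{defn::AZlim}(b), Remark~\ref{rem::Bzzeqdist}), and Theorem~\ref{thm::Sampleconv} supplies the distributional convergence of samples. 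But $k$-sample convergence for all $k$ is equivalent to $\Emb[t(\mathbf{H},G_n[t])] \to t(\mathbf{H},\mathbf{W})$ for all $\mathbf{H}$ — i.e., convergence of \emph{first} moments of homomorphism densities. To conclude convergence in probability of the empirical multigraphon classes one must rule out that the random $\alt{\mathbf{W}}^{G_n[t]}$ are spreading out around $\alt{\mathbf{W}}$, which is a statement about variances. The correct way to get this ``for free'' from the sampling picture is via a de Finetti/Aldous–Hoover-type theorem asserting that the law of a random (multi)graphon equivalence class is uniquely determined by its sequence of $k$-sample distributions; in combination with compactness of $\alt{\Wmc}\pcar{t}$ and the determinism of $\alt{\mathbf{W}}$ this does yield the upgrade. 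Your attribution of the upgrade to ``uniform continuity of the sampling map'' is not the right identification: uniform continuity of sampling is a counting-lemma-type fact going from graphon to samples, whereas what you need is the converse (samples determine the graphon law). If \cite{AbrDelWei23} contains such a representation/uniqueness result for probability-graphons it can be invoked, but you should name it explicitly; if it does not, the moment computation is the standard way to close the gap, which is what the paper does. A self-contained fix within your framework is to also verify $2k$-sample convergence and observe that this controls the second moment $\Emb[t(\mathbf{H},G_n[t])^2]$.

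\textbf{Minor circularity.} You justify the independence of $\alt{\mathbf{W}}$ from the choice of $\theta_t$ by appeal to ``the same change-of-variable argument used to prove Theorem~\ref{thm::Antconv},'' but the paper proves Theorem~\ref{thm::Antconv} as a corollary of the present theorem (Remark~\ref{rmk::specase}), so that is circular. The clean argument — used in both the paper's proof and implicitly in yours — is that the homomorphism densities (equivalently, the $k$-sample laws) are the same for every $\lambda$-$\mu_t$ measure-preserving $\theta_t$, and the equivalence class is determined by them via Proposition~\ref{tmultcont}.
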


We prove Theorem \ref{thm::Antmultconv} in Section \ref{ssec::Antmultconvpfpf}.

\subsection{The Latent Network Trajectory Limit}
\label{sssec::Limit}

In Theorem \ref{thm::Antmultconv}, we show that the trajectory $\bm{G_n}$ has a limit $\mathbf{W}$ and claim that $\mathbf{W}$ is given explicitly. Here, we describe the limit.

\ind Let $\theta_t$ be a $\lambda$-$\mu_t$ measure-preserving transformation as described in the statement of Theorem \ref{thm::Antconv}. For any $(z_1,z_2) \in \mf_t\left(R^d\right)\times\mf_t\left(R^d\right)$ define the time-inhomogeneous $\{0,1\}$-valued Markov chain $\{B_{z_1,z_2}(s): s \in [t]\}$ with following initial conditions and trajectories:
\begin{align}
\PP(B_{z_1,z_2}(0) = 1) &= B_0\left(z_1(0),z_2(0)\right).\label{eq::Bzz0def}\\
\PP\left(B_{z_1,z_2}(s) = 1\middle|B_{z_1,z_2}(s-1) = b\right) &= B\left(b, z_1(s), z_2(s)\right).\label{eq::Bzzdef}
\end{align}

\begin{remark}
\label{rem::Bzzeqdist}
Notice that Definition \ref{defn::AZlim}(b) implies 
\[\law\left(B_{Z\pcar{i}[t],Z\pcar{j}[t]}[t]\middle|Z\pcar{i}[t],Z\pcar{j}[t]\right) \deq\law\left(A\pcar{ij}[t]\middle|Z\pcar{i}[t],Z\pcar{j}[t]\right)\te{ for }i\neq j.\]
\end{remark}

Then we can define the multiplexon $\mathbf{W}$ from Theorem \ref{thm::Antmultconv}:

\begin{corollary}[Multiplexon Limit]
\label{coro::Wdefthm}
Under the conditions of Theorem \ref{thm::Antmultconv}, $\mathbf{W}$ satisfies $\mathbf{W} = (W^S)_{S \subseteq [t], S\neq \emptyset}$, where for each non-empty $S \subseteq [t]$,
\begin{equation}
\label{eqn::Wmultdef}
W^S(u_1,u_2) = \PP\left(B_{\theta_t(u_1),\theta_t(u_2)}(s) = 1 \te{ for all }s \in S\right).
\end{equation}
\end{corollary}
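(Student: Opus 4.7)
The plan is to identify the multigraphon $\mathbf{W}$ whose existence is asserted by Theorem \ref{thm::Antmultconv} with the candidate $\mathbf{W}^{\mathrm{prop}}$ defined by \eqref{eqn::Wmultdef}. I will rely on the sampling characterization of probability-graphon (equivalently, multigraphon via footnote \ref{ft::prob}) convergence from \cite{AbrDelWei23}: convergence in $\delta_{\square}^t$ is equivalent to convergence in distribution, for every fixed $k\in\N$, of the $k$-vertex sampled multiplex obtained by drawing $k$ uniform coordinates in $[0,1]$ and generating edges from the multigraphon. Theorem \ref{thm::Sampleconv} already identifies the limiting sampling distribution for $\alt{\mathbf{W}}^{G_n[t]}$, so the task reduces to checking that sampling from $\mathbf{W}^{\mathrm{prop}}$ produces the same distribution.

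First I describe the canonical $k$-sample from $\mathbf{W}^{\mathrm{prop}}$. Draw $U_1,\dots,U_k$ i.i.d.\ uniform on $[0,1]$ and, conditional on these, generate independently across pairs $(i,j)\in\incs_k$ an edge trajectory $A^{\mathrm{prop}}_{ij}[t]\in\{0,1\}^{t+1}$ whose joint distribution is characterized by $\PP(S\subseteq\{s : A^{\mathrm{prop}}_{ij}(s)=1\}\mid U_i,U_j) = W^{S,\mathrm{prop}}(U_i,U_j)$ for every nonempty $S\subseteq [t]$. By \eqref{eqn::Wmultdef}, this is exactly the law of the Markov chain $B_{\theta_t(U_i),\theta_t(U_j)}[t]$. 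Since $\theta_t$ is $\lambda$-$\mu_t$ measure-preserving (Remark \ref{rmk::thetatexists}), the tuple $(\theta_t(U_i))_{i\in[1:k]}$ is i.i.d.\ $\mu_t$ and hence has the same joint distribution as $(Z\pcar{i}[t])_{i\in[1:k]}$ by Corollary \ref{coro::morestruct}. Remark \ref{rem::Bzzeqdist} together with Definition \ref{defn::AZlim}(b) then identifies the conditional law of $(A^{\mathrm{prop}}_{ij}[t])_{(i,j)\in\incs_k}$ given the vertex trajectories with that of $(A\pcar{ij}[t])_{(i,j)\in\incs_k}$ given $(Z\pcar{i}[t])_{i\in[1:k]}$. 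Consequently, the full sample under $\mathbf{W}^{\mathrm{prop}}$ is equal in distribution to $A\pcar{1:k,1:k}[t]$.

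Next, Theorem \ref{thm::Sampleconv} provides, for every $k\in\N$, convergence in distribution of the multiplex induced on a uniform random $k$-sample of the vertices of $G_n[t]$ to $A\pcar{1:k,1:k}[t]$; by the previous paragraph, this equals the canonical $k$-sample from $\mathbf{W}^{\mathrm{prop}}$. Invoking the sampling characterization of convergence in $\alt{\Wmc}\pcar{t}$ yields $\alt{\mathbf{W}}^{G_n[t]} \to \alt{\mathbf{W}}^{\mathrm{prop}}$ in distribution, and combined with Theorem \ref{thm::Antmultconv}, which already provides convergence in probability to some limit $\alt{\mathbf{W}}$, uniqueness of weak limits forces $\alt{\mathbf{W}} = \alt{\mathbf{W}}^{\mathrm{prop}}$, as claimed.

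I expect the main technical obstacle to be the careful invocation of the sampling characterization of probability-graphon convergence in the multiplex setting: specifically, verifying that joint convergence in distribution of all $k$-vertex sampled multi-subgraphs (which Theorem \ref{thm::Sampleconv} supplies) is equivalent to convergence of the associated empirical multigraphons in $\delta_{\square}^t$, and bridging between the distributional sampling statement and the in-probability cut-metric statement from Theorem \ref{thm::Antmultconv}. Once the appropriate multiplex version of the Abraham-Delmas-Weibel sampling theorem is in place, the identification reduces to matching the finite-dimensional conditional structure encoded in Definition \ref{defn::AZlim}(b) with the Markov chain construction of $\mathbf{W}^{\mathrm{prop}}$, which has been verified in the paragraphs above.
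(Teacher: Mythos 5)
Your proposal takes a genuinely different route from the paper, and the core identification of the limit is correct, but there are two places where the argument is not yet airtight.

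The paper proves Theorem~\ref{thm::Antmultconv} and Corollary~\ref{coro::Wdefthm} in a single stroke by working directly with homomorphism densities: it expands $t(\mathbf{H},G_n[t])$ as an average over vertex tuples, uses joint exchangeability and property~A(c) (i.e.\ Theorem~\ref{thm::Sampleconv}) to show $\ex{t(\mathbf{H},G_n[t])}\to t(\mathbf{H},\mathbf{W})$, repeats the computation for the second moment to conclude $t(\mathbf{H},G_n[t])\to t(\mathbf{H},\mathbf{W})$ \emph{in probability}, and then invokes Proposition~\ref{tmultcont}. You instead reach for a \emph{sampling} characterization of multigraphon convergence (convergence of the laws of $k$-vertex sampled multiplexes), match the $k$-sample from $\mathbf{W}^{\mathrm{prop}}$ with the law of $A\pcar{1:k,1:k}[t]$ via Remark~\ref{rem::Bzzeqdist}, Definition~\ref{defn::AZlim}(b) and Corollary~\ref{coro::morestruct}, and close with a uniqueness-of-limits argument. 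The matching of the $k$-sample from $\mathbf{W}^{\mathrm{prop}}$ to $\law(A\pcar{1:k,1:k}[t])$ is correct and uses the right pieces of the conditional structure.

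Two gaps remain. First, the sampling characterization for probability-graphons is not among the results from \cite{AbrDelWei23} the paper imports (it cites only Propositions~\ref{multhomequiv} and~\ref{tmultcont}, which are phrased in terms of homomorphism densities of multiplexes, not induced-sample distributions); you flag it as the ``main technical obstacle'' but do not supply or locate it. Second, and more substantively, even granting that characterization for deterministic sequences, your application to the \emph{random} sequence $\alt{\mathbf{W}}^{G_n[t]}$ is a leap: Theorem~\ref{thm::Sampleconv} controls only the \emph{marginal} law of the $k$-sample (averaged over the randomness of $G_n[t]$), which fixes first moments $\ex{t(\mathbf{H},G_n[t])}$ but not the joint moments $\ex{\prod_i t(\mathbf{H}_i,G_n[t])}$ that determine the distribution of the random element $\alt{\mathbf{W}}^{G_n[t]}$ in the compact space $\alt{\Wmc}\pcar{t}$. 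So the intermediate assertion ``$\alt{\mathbf{W}}^{G_n[t]}\Rightarrow\alt{\mathbf{W}}^{\mathrm{prop}}$ in distribution'' does not follow from what you have. Fortunately, you do not actually need it: from Theorem~\ref{thm::Antmultconv} one has $t(\mathbf{H},\alt{\mathbf{W}}^{G_n[t]})\to t(\mathbf{H},\alt{\mathbf{W}})$ in probability, hence (by boundedness) $\ex{t(\mathbf{H},G_n[t])}\to t(\mathbf{H},\alt{\mathbf{W}})$; combined with $\ex{t(\mathbf{H},G_n[t])}\to t(\mathbf{H},\mathbf{W}^{\mathrm{prop}})$ (which your $k$-sample matching plus inclusion--exclusion, or equivalently the paper's first-moment calculation, does give), one gets $t(\mathbf{H},\alt{\mathbf{W}})=t(\mathbf{H},\alt{\mathbf{W}}^{\mathrm{prop}})$ for all $\mathbf{H}$, and Proposition~\ref{tmultcont} finishes. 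Reformulating your final step this way avoids both the unproved sampling lemma and the first-moment/distributional conflation, and lands squarely on the paper's own first-moment computation as the essential ingredient.
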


\begin{remark}
\label{rmk::thmantconv}
As a consequence of Corollary \ref{coro::morestruct} and Definition \ref{defn::AZlim}(b), if we set $S = \{t\}$, then 
\begin{align*}
W^{\{t\}}(u_1,u_2) &= \PP\left(B_{\theta_t(u_1),\theta_t(u_2)}(t) = 1\right) = \PP\left(A\pcar{12}(t) = 1\middle|\Gmc^2_t\right)\bigg|_{Z\pcar{1:2}[t]=\theta_t(u_{1:2})}\\
&= B_t\left(\theta_t(u_1),\theta_t(u_2)\right) = W(u_1,u_2).
\end{align*}
This proves that Theorem \ref{thm::Antconv} is a special case of Theorem \ref{thm::Antmultconv} together with Corollary \ref{coro::Wdefthm}.
\end{remark}

\section{Numerical Results}
\label{sec::Num}

In this section, we provide a few numerical illustrations of the convergence results of this article. We select some useful functionals of the CSLNA model $(Z^n[T],A^n[T])$ \cite{Zhuetal23,Panetal24} described in equations (\ref{eqn::Znevolve})-(\ref{eqn::Anevolve}), and compare them to the same functionals applied to the mean-field limiting model $(Z\pcar{1:n}[T],A\pcar{1:n,1:n}[T])$ given in Definition \ref{defn::AZlim}.

\ind In Section \ref{ssec::Alg}, we provide details of the algorithms used to generate our figures. This includes a mean-field verification algorithm which approximates the limiting mean-field model. Our verification algorithm operates by using an iterative method to approximate $\mu$ from \eqref{eqn::mu2def} and then uses this approximate measure to compute the conditional expectations of \eqref{eqn::Lidef}. We also introduce a coupling between the limiting mean-field and $n$-particle model. This coupling, together with some of our numerical results, suggests that there exists some deeper structure in both models, which goes somewhat beyond the developed theory of this paper. Lastly, we describe the parameters applied in our simulations.

\ind In Section \ref{ssec::ill}, we provide our numerical simulations. We test four functionals. First, we compute the mean square error of the $n$-particle system with respect to the mean-field system. Next, we construct a network containing all edges on which the coupled $n$-particle system and the mean-field system disagree, and we plot the density of this network. This measure can be used to bound the operator norm distance between the respective adjacency matrices of the $n$-particle system and its coupled mean-field system \eqref{Eq:OperatorNormBound}. Then, we look at two global graph statistics: the average triangle density (which is useful for applications involving transitivity of interactions) and the average second-largest eigenvalue of the adjacency matrix.

\subsection{Algorithms}
\label{ssec::Alg}

\subsubsection{A Mean-Field Verification Algorithm}
\label{sssec::MFver}

In this section, we describe our mean-field verification algorithm and provide a heuristic argument for its correctness. In Section \ref{ssec::ill}, we provide numerical evidence that the mean-field algorithm is correct for a standard choice of parameters.

\ind Our mean-field verification algorithm relies on the fact that \eqref{eqn::Zevolve}-\eqref{eqn::mu2def} may be viewed as a fixed point equation for $\mu_t$. The idea is as follows: let $\wh{\mu}_t$ be any candidate distribution for $Z\pcar{i}[t]$. Let $Z\pcar{i}[t] \defeq \wh{Z}[t]$ solve \eqref{eqn::Zevolve}, \eqref{eqn::Bevolve}, \eqref{eqn::mu2def} and
\begin{equation}
\label{eqn::Lialtdef}
L\pcar{i}(t)=\frac{\exmu{\wh{\mu}_t^2}{Z'(t)B_t(Z\pcar{i}[t],Z'[t])\middle|Z\pcar{i}[s]}}{\exmu{\wh{\mu}_t^2}{B_t(Z\pcar{i}[t],Z'[t])\middle|Z\pcar{i}[t]}},
\end{equation}
where we recall that $Z'$ is an i.i.d. copy of $Z\pcar{i}$. If $\wh{\mu}_t = \law(Z\pcar{i}[t])$ for every $i$, then $\wh{\mu}_t$ solves \eqref{eqn::Zevolve}-\eqref{eqn::mu2def}, so $\wh{\mu}_t = \mu_t$. To find this fixed point solution, we simply iterate. Start with a guess $\wh{\mu}\pcar{0}_t$ such that $\wh{\mu}\pcar{0}_0 = \mu_0$. In our case, we let $\wh{Z}_{1:n}\pcar{0}[T]$ be the random walk defined by the following equation for all $t \in [T-1]$:
\begin{equation}
\label{eqn::Z0init}
\wh{Z}_{1:n}\pcar{0}(t+1) = (1 - \gamma)\wh{Z}_{1:n}\pcar{0}(t) + \xi_i(t).
\end{equation}
For each $k \in \N$, let $\wh{Z}\pcar{k+1}[t]$ solve \eqref{eqn::Zevolve}, \eqref{eqn::Bevolve}, \eqref{eqn::mu2def} and \eqref{eqn::Lialtdef} with $\wh{\mu}_t = \wh{\mu}\pcar{k}_t$. Set
\[\wh{\mu}\pcar{k+1}_t = \law(\wh{Z}\pcar{k+1}[t]).\]
Assuming $\wh{\mu}\pcar{k}_t \to \wh{\mu}_t$ for some measure $\wh{\mu}_t$, it follows that $\wh{\mu}_t$ is a fixed-point solution to \eqref{eqn::Zevolve}, \eqref{eqn::Bevolve}, \eqref{eqn::mu2def} and \eqref{eqn::Lialtdef}, so $\wh{\mu}_t = \mu_t$. We simply approximate $\law(\wh{Z}\pcar{k+1}[t])$ by the empirical measure obtained by generating $N$ i.i.d. copies $\wh{Z}\pcar{k+1}[t]$ for some sufficiently large parameter $N \in \N$. After iterating to convergence, we use the limiting reference measure $\wh{\mu}_t$ to generate our mean-field model. The algorithm is summarized below\footnote{All code used in this article is available on github: \url{https://github.com/Ankan-prob/CLSNA_single}. Simulation data is available upon request.}

\begin{algorithm}[H]
\begin{algorithmic}[1]
\Function{MeanFieldSample}{$\wh{\mu}_T, n$}
\State \Return i.i.d. $\wh{Z}_{1:n}[T]$ solving \eqref{eqn::Zevolve}, \eqref{eqn::Bevolve}, \eqref{eqn::mu2def} and \eqref{eqn::Lialtdef}.
\EndFunction
\State
\State \# Apply \Call{ReferenceSample}{} below to find the reference measure $\wh{\mu}_T$ to be used in \Call{MeanFieldSample}{}.
\Function{ReferenceSample}{$m, N$}
\State $\wh{Z}\pcar{0}_{1:N}[T] \leftarrow Z^N[T]$ where $Z^N[T]$ is generated from \eqref{eqn::Z0init}. 
\State $\wh{\mu}\pcar{0}_t \leftarrow \frac{1}{N}\sum_{j=1}^N \delta_{\wh{Z}\pcar{0}_j[T]}$.
\For{$i=1$ to $m$}
\State $\wh{Z}\pcar{i}_{1:N}[T] \leftarrow \Call{MeanFieldSample}{\wh{\mu}\pcar{i-1}_T,N}$. 
\State $\wh{\mu}\pcar{i}_T \leftarrow \frac{1}{N}\sum_{j=1}^N \delta_{\wh{Z}\pcar{i}_{j}[T]}$.
\EndFor
\State \Return $\wh{\mu}\pcar{m}_T$
\EndFunction
\end{algorithmic}
\caption{Mean-Field Algorithm}
\label{alg::MFalg}
\end{algorithm}

Once the mean-field particles $\wh{Z}_{1:n}[T] \overset{\te{(d)}}{\approx} Z\pcar{1:n}[T]$ are generated, we generate the network $A\pcar{1:n,1:n}[T]$ using the conditional Markov-chain formulation of Definition \ref{defn::AZlim}(b).

\begin{remark}
\label{rem::efficiency}
The goal of Algorithm \ref{alg::MFalg} is to test and demonstrate the validity of the mean field approximation $Z\pcar{1:n}[t]$ and not to provide an efficient algorithm to do so. Development of efficient approximation algorithms and the proof of their convergence is outside the scope of this paper. For completeness, we mention our algorithm for generating the mean-field reference measure $\wh{\mu}\pcar{m}_T$ has an asymptotic runtime of $O(N^2dTm)$, where $N$ is the number of particles in the reference measure, after which $O(NdT)$ time is required to simulate each mean-field particle using the given reference measure. In particular, $m$ represents the number of iterations of the \Call{ReferenceSample}{} function used to generate the reference measure in Algorithm \ref{alg::MFalg}. We selected $m$ large enough that we observed numerical convergence in our plots. 
It is indeed a very interesting problem to develop provably-efficient algorithms for simulating the mean-field process in such networks.
\end{remark}

\subsubsection{Coupling}
\label{sssec::coup}

In a few of our results, we examine a coupling between the mean-field model and the $n$-particle model. This coupling is relatively simple. First, we generate a reference measure $\wh{\mu}_T$ using the \Call{ReferenceSample}{} function from Algorithm \ref{alg::MFalg}. We then use the reference measure and \Call{MeanFieldSample}{$\wh{\mu},n$} to compute the mean-field particles. Lastly, we couple the $n$-particle system, $Z^n[t]$, together with a mean-field system with $n$ particles, $Z\pcar{1:n}[t]$, in the following manner:
\begin{itemize}
\item $Z\pcar{1:n}(0) = Z^n_{1:n}(0)$.
\item Both processes are generated using the same additive noise $\xi_i(t)$.
\end{itemize}

To generate the network $A^n[t]$, at each time $s$ and for each $(i,j)\in \incs_n$, we use \eqref{eqn::Anevolve} to compute the (conditional) probability that $A^n_{ij}(s) = 1$, then use i.i.d. uniform$(0,1)$ random variables $U^n_{ij}(s)$ to compute the realization of these edges. As mentioned in the previous section, the mean-field network, $A\pcar{1:n,1:n}[t]$, is computed using Definition \ref{defn::AZlim}(b), where we use the same uniform random variables $U^n_{ij}(s)$ to compute the realization of $A\pcar{ij}(s)$.

\subsubsection{Parameters}
\label{sssec::Param}

Above, we initialize $Z^n(0)$ and $Z\pcar{1:n}(0)$ to be a collection of $n$ i.i.d. standard normal random vectors in $\R^2$. The additive noise terms $\{\xi_i(t)\}_{i \in\N, t\in \N_0}$ are also i.i.d. standard normal random vectors in $\R^2$. We use a logistic link function:

\begin{align}
B_0(z_1,z_2) &= \frac{1}{1 + \exp\left(0.5\|z_1-z_2\|_2-1\right)},\\
B(a,z_1,z_2) &= \frac{1}{1 + \exp\left(0.5\|z_1-z_2\|_2-1-a\right)}.
\end{align}

All simulations are generated using the parameters given in the Table \ref{tbl::paramtable} below. The mean-field simulations are constructed using $R=20$ different independently generated reference measures $\wh{\mu}_1,\dots,\wh{\mu}_{20}$ which were each generated using $N=4000$ particles. For each reference measure $\wh{\mu}$, we use $\wh{\mu}$ to simulate $M=100$ copies of a coupled $n$-particle system and $n$-particle mean-field system for all $n \in \{10,20,50,100,200,500,1000\}$. We use multiple reference measures. 
\begin{table}[H]
\begin{center}
\begin{tabular}{|c|c|p{0.5\textwidth}|}
\hline
Parameter & Parameter Value(s) & Parameter Meaning\\\hline
$R$ & $20$ & Number of reference measures generated.\\\hline
$M$ & $100$ &Number of simulations used to compute process/network statistics.\\\hline
$N$ & $4000$ &Number of particles used to construct mean-field reference measure.\\\hline
$n$ & $10,20,50,100,200,500,1000$ & Number of particles in the simulations.\\\hline
$T$ & $100$ & The simulation runs $T$ timesteps.\\\hline
$\gamma$ & $0.3$ & See \eqref{eqn::Znevolve} and \eqref{eqn::Zevolve}.\\\hline
$m$ & $100$ & See \Call{ReferenceSample}{$m,N$} in Algorithm \ref{alg::MFalg}.\\\hline
\end{tabular}
\end{center}
\caption{This is a list of selected parameters of our numerical simulation and their values.}
\label{tbl::paramtable}
\end{table}

\subsection{Numerical Illustrations}
\label{ssec::ill}

Below, we provide simulations of a few different statistics comparing the $n$-particle process to the limiting mean-field process. For each statistic and each of $R=20$ reference measures, we calculate the average value of the statistic over $M = 100$ conditionally i.i.d. samples of the coupled particle system constructed using the reference measure. In this way, we obtain an i.i.d. sample of $20$ values of the statistic at each time. We report the mean value of these samples as the value of the statistic. Then, when needed, we use $t$-statistics to construct $95\%$ confidence intervals. Each statistic is accompanied by 3 or 4 figures. For each statistic, Figure (a) describes the evolution of the statistic in question over 100 time steps for all values of $n$ in $\{10,20,50,100,200,500,1000\}$. Figure (b) displays the same statistic but restricted to the $n = 1000$ along with a 95\% confidence band to display the spread of our data. Figure (c) displays the average value of the statistic in question for each value of $n$ along with error bars signifying 95\% confidence. The average is taken over all iterations of the simulation and at all times after the first 20 time steps. We remove the first 20 time steps to better measure each of our statistics at stationarity. For convenience, we use a log scale for $n$. Lastly, Figure \ref{fig::trianglePlot}, which measures the triangle density of the network, includes a fourth plot comparing the triangle density of the network to the triangle density of an Erd\"{o}s-R\`enyi plot with the same density as our network.

\ind To describe the measured statistics, we use the following useful notation. For each $n\in \{10,20,50,100,200,500,1000\}$, $i \in [1:n]$, $r \in [1:R]$ and $k \in [1:M]$, $Z^{n,k,r}_{i}[T]$ represents the $i$th particle of the $k$th simulation of the $n$-particle process constructed with respect to the $r$th reference measure. Likewise, $Z^{n,k,r,(i)}[T]$ represents the $i$th particle of the $k$th simulation of the mean-field process coupled to $Z^{n,k}[T]$ and constructed with respect to the $r$th reference measure. For any $t \leq T$, $A^{n,k,r}(t)$ is the adjacency matrix of the $k$th simulation of the $n$-particle process at time $t$ constructed with respect to the $r$th reference measure. Likewise, $A^{(1:n),(1:n),k,r}(t)$ represents the adjacency matrix of the $k$th simulation of the mean-field process coupled to the $n$-particle process at time $t$ and generated using the $r$th reference measure.

\vspace{8pt}

\textbf{Mean Square Error:} In Figure \ref{fig::MSEPlot}, we plot the average value of the mean-square error of the limiting mean-field model with respect to the $n$-particle process to which it is coupled. That is, for $n$ and $t \in [T]$,
\[MSE_n(t) = \frac{1}{nMR} \sum_{r=1}^R\sum_{i=1}^n\sum_{k=1}^{M} \left(Z^{n,k,r}_i(t) - Z^{n,k,r,(i)}(t)\right)^2.\]
We observe that the MSE increases linearly with time with a slope that seems to vanish as $n\to\infty$.
\begin{figure}[H]
\centering
\begin{subfigure}{0.48\textwidth}
\includegraphics[scale = 0.48]{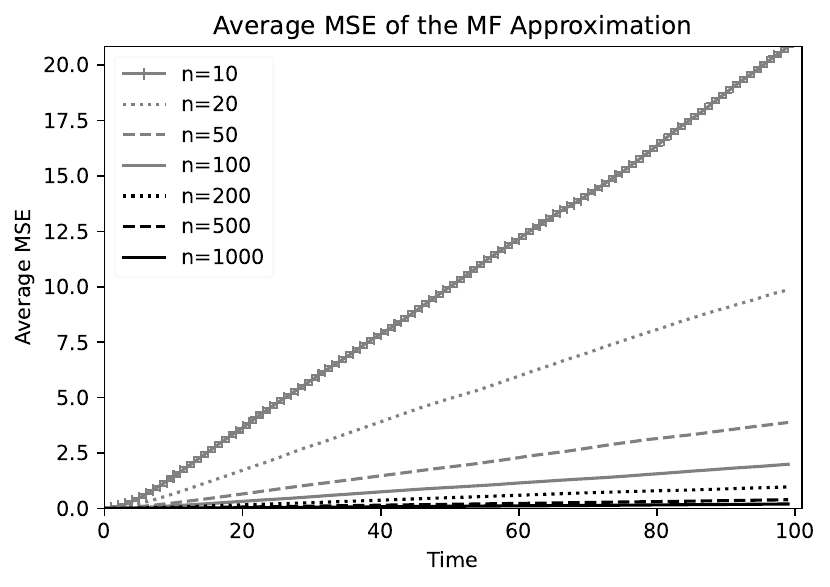}
\caption{MSE values for all $n$.}
\end{subfigure} \\ 
\begin{subfigure}{0.48\textwidth}
\includegraphics[scale = 0.48]{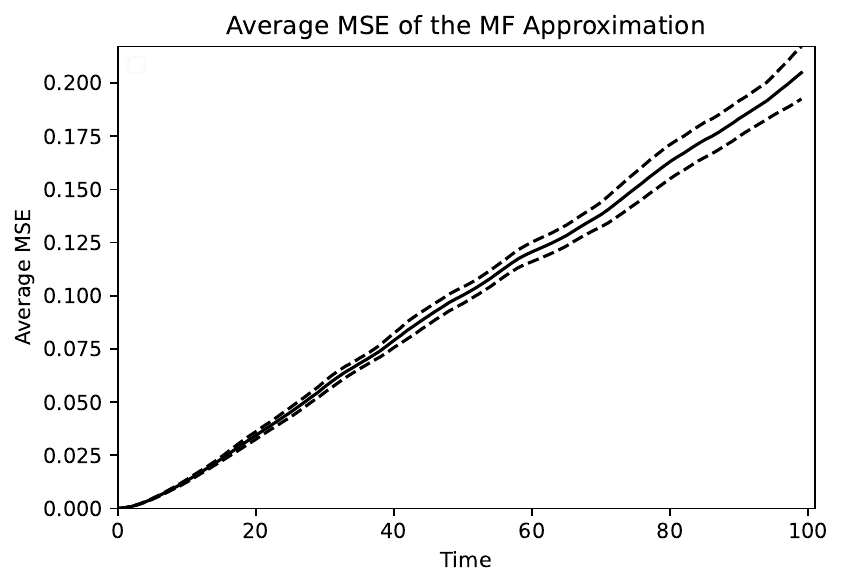}
\caption{95\% confidence bands for the $n = 1000$ case.}
\end{subfigure}~
\begin{subfigure}{0.48\textwidth}
\includegraphics[scale = 0.48]{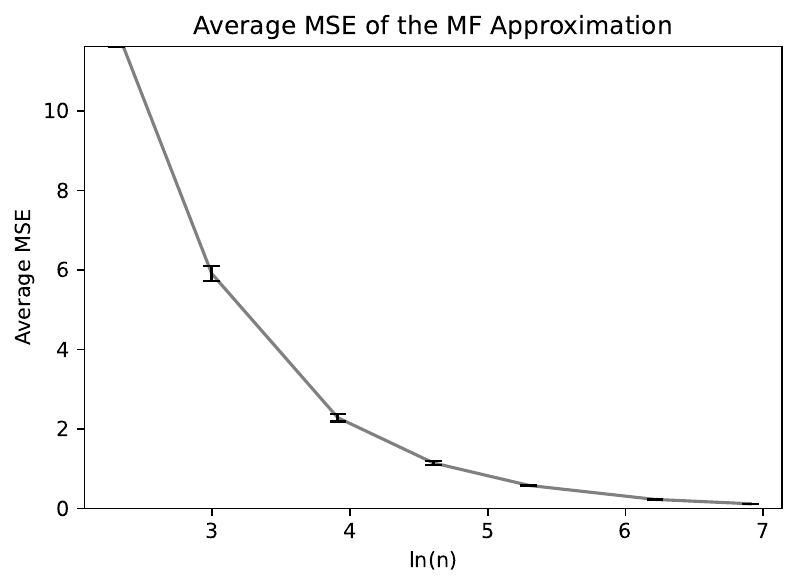}
\caption{MSE averaged over time for different $n$.}
\end{subfigure}
\caption{The Mean Square Error of the mean-field approximation of the particle trajectories averaged over all particles and simulations.}
\label{fig::MSEPlot}
\end{figure}

\textbf{Density of the Symmetric Difference Network:} For a given $n$, $t \in [T]$, $k \in [1:M]$ and $r \in [1:R]$, we define the symmetric difference network to be the graph $G^{n,k,r}_{SD}(t)$ with adjacency matrix $A^{n,k,r}_{SD}(t)$ whose edges are given by the vertex pairs on which $A^{n,k,r}(t)$ and $A^{(1:n),(1:n),k,r}(t)$ disagree. The density of the symmetric difference network is a useful measure of how different the two networks are. In fact, this measure can also be used to bound the operator distance between the respective adjacency matrices of the $n$-particle system and the mean-field model. Because the operator norm of a matrix is bounded from above by its Frobenius norm, a simple calculation shows that 
\begin{align}
\frac{1}{n}\left\|A^{n,k,r}(t) - A^{(1:n),(1:n),k,r}(t)\right\|_{\te{op}} \leq \sqrt{d(G^{n,k,r}_{SD}(t))},\label{Eq:OperatorNormBound}
\end{align}
where for any graph $G$, $d(G)$ is the density of edges in $G$.

\ind In Figure \ref{fig::symdiffPlot}, we plot the density of the symmetric difference network. Our coupling ensures $A^{n,k,r}(0)=A^{(1:n),(1:n),k,r}(0)$, so this network is initialized by the empty graph. The average graph density is given by:
\[d_n(t) \defeq \frac{2}{n(n-1)MR}\sum_{r=1}^R\sum_{k=1}^M \sum_{(i,j)\in \incs_{n}} A^{n,k,r}_{\te{SD},ij}(t).\]
\begin{figure}[H]
\centering
\begin{subfigure}{0.48\textwidth}
\includegraphics[scale = 0.48]{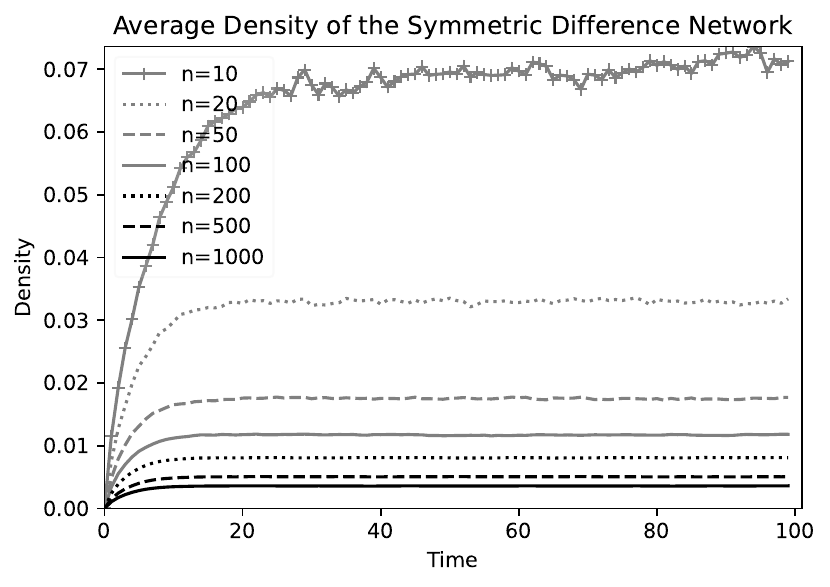}
\caption{Graph densities for all $n$.}
\end{subfigure}\\
\begin{subfigure}{0.48\textwidth}
\includegraphics[scale = 0.48]{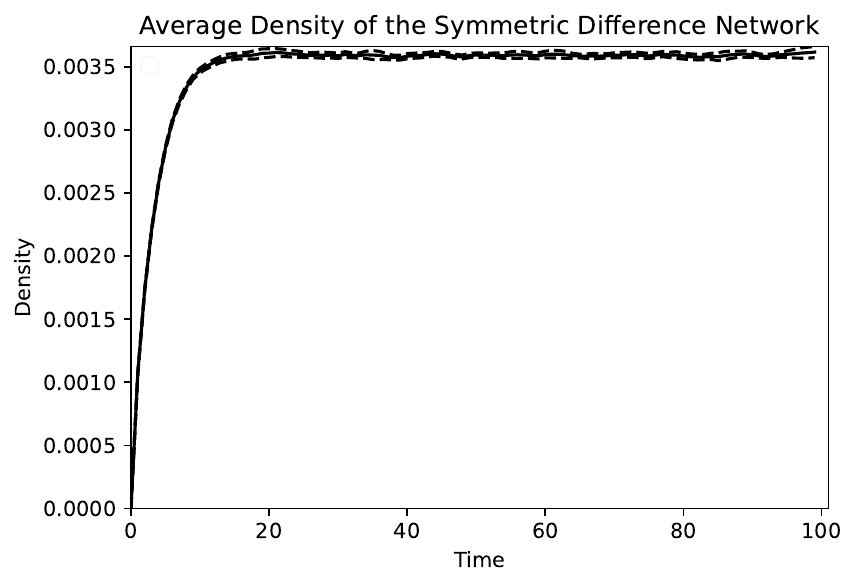}
\caption{95\% confidence bands for the $n = 1000$ case.}
\end{subfigure} ~
\begin{subfigure}{0.48\textwidth}
\includegraphics[scale = 0.48]{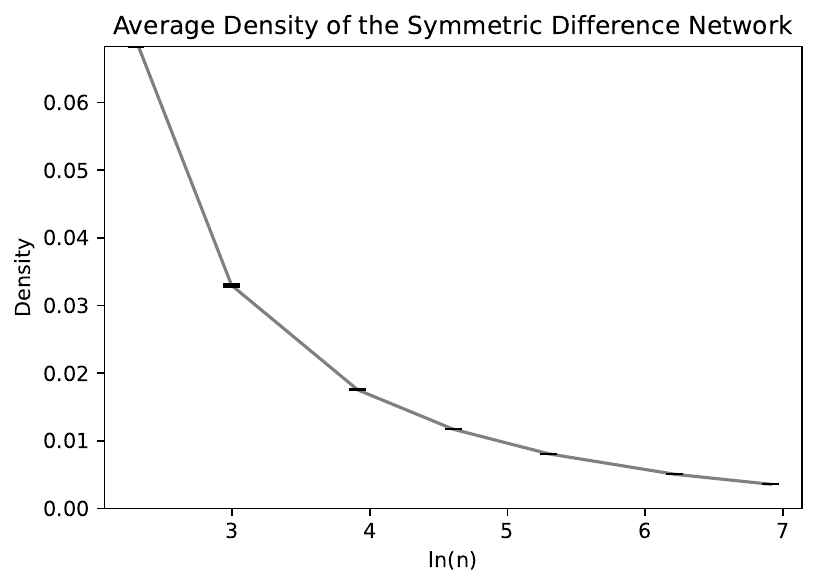}
\caption{Symmetric difference graph density averaged over time for different $n$.}
\end{subfigure}
\caption{The density of edges in the symmetric difference network $G^{n,k}_{SD}(t)$.}
\label{fig::symdiffPlot}
\end{figure}

Figure \ref{fig::symdiffPlot} is actually quite interesting as it suggests some structure in the limiting mean-field process that goes beyond what we proved in the theoretical section. For example, for moderate values of $n$, the density of the symmetric difference network rapidly stabilizes around 10 time steps into the simulation. After that, it remains at a consistent value for the remaining time. These values are also incredibly stable: the confidence band in Figure \ref{fig::symdiffPlot}(b) and the error bars in Figure \ref{fig::symdiffPlot}(b) are barely visible. This suggests that the mean density of the symmetric difference graph reaches stationarity very quickly, and its stationary distribution has very small fluctuations. That said, the stationary density appears to converge to $0$ as $n$ increases, which suggest that the interaction networks between agents of the two models differ by $o(n^2)$ different edges. It is worth noting that this form of convergence is slightly stronger than the convergence we proved in Theorems \ref{thm::Antconv} and \ref{thm::Antmultconv}.


\vspace{8pt}

\textbf{Triangle Density Errors:} In Figure \ref{fig::trianglePlot}, we plot the average difference in the homomorphism density of triangles in the limiting mean-field process with respect to the homomorphism density of triangles in the $n$-particle system. More specifically, we measure
\[T_n(t) \defeq \frac{1}{MR}\sum_{r=1}^R\sum_{k=1}^M \left(t\left(C_3,A^{(1:n),(1:n),k,r}(t)\right) - t\left(C_3,A^{n,k,r}(t)\right)\right),\]
where $C_3$ is the 3-cycle (or triangle) and $t$ is the homomorphism density function introduced in Section \ref{sssec::graphon}.
\begin{figure}[H]
\centering
\begin{subfigure}{0.48\textwidth}
\includegraphics[scale = 0.48]{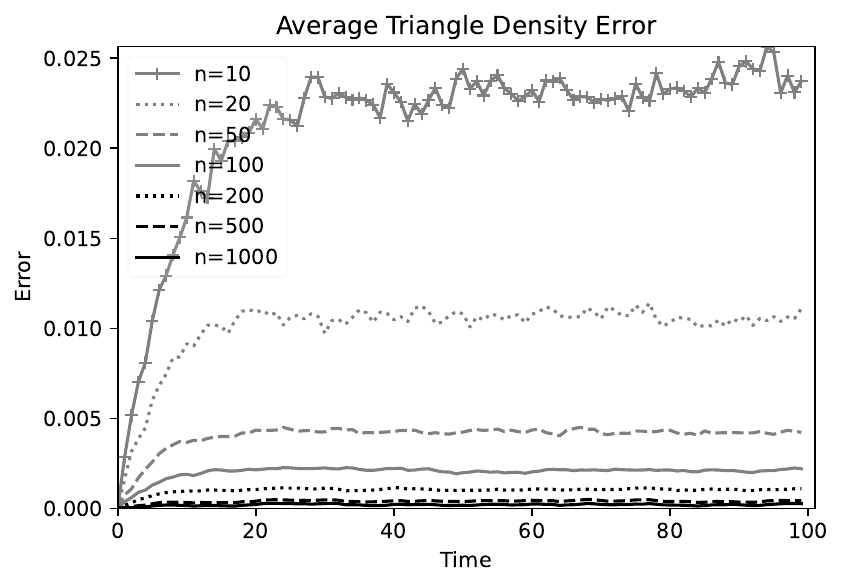}
\caption{Triangle densities errors for all $n$.}
\end{subfigure} ~ 
\begin{subfigure}{0.48\textwidth}
\includegraphics[scale = 0.48]{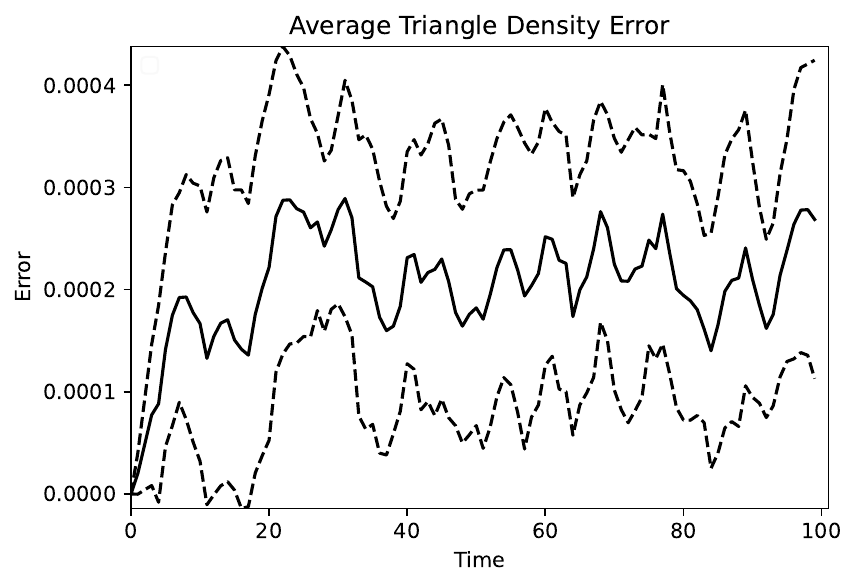}
\caption{95\% confidence bands for the $n = 1000$ case.}
\end{subfigure}\\
\begin{subfigure}{0.48\textwidth}
\includegraphics[scale = 0.48]{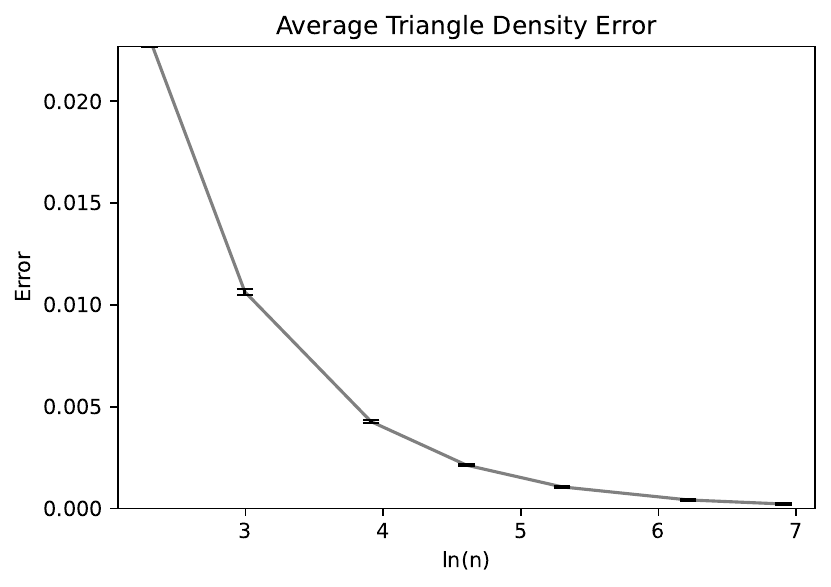}
\caption{Triangle density averaged over time for different $n$.}
\end{subfigure}~
\begin{subfigure}{0.48\textwidth}
\includegraphics[scale = 0.48]{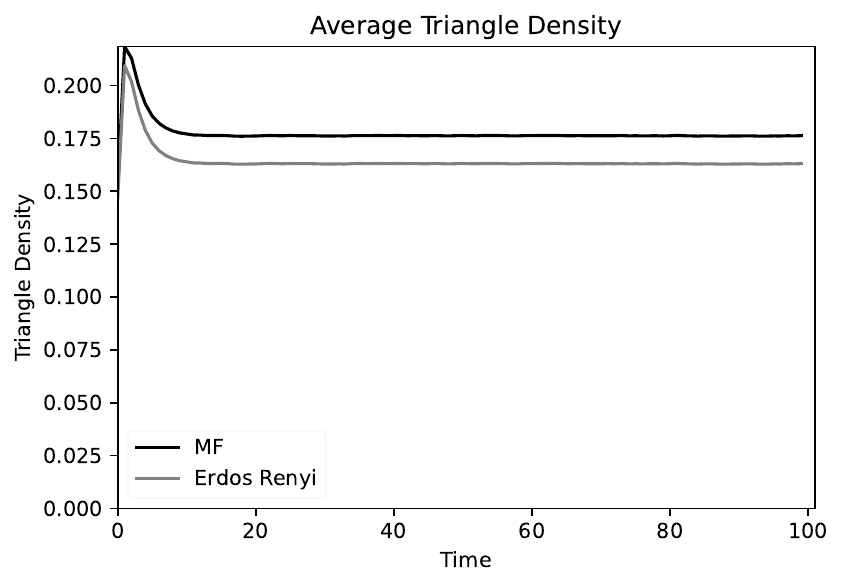}
\caption{Triangle densities of the mean-field network and comparable Erd\"{os}-R\`{e}nyi graphs.}
\end{subfigure}
\caption{In Figures \ref{fig::trianglePlot}(a)-(c), we examine the average signed error we would find if we tried to approximate the homomorphism densities of triangles in the $n$-particle system using the limiting mean-field process. In Figure (d), we compare the homomorphism density of triangles in the mean-field model vs. an Erd\"{os}-R\`{e}nyi graph with the same edge density as the mean-field model.}
\label{fig::trianglePlot}
\end{figure}

\ind It is an easy consequence of Theorem \ref{thm::Antconv} that these errors should converge to $0$ as $n \to \infty$, which we observe. What is interesting is that for finite $n$, the mean-field network generated by our algorithm seems to overestimate the triangle density of the network compared to the $n$-particle system. Indeed, when we checked the average triangle densities for coupled systems generated from individual reference measures, we found the monotonicity of the errors held for every reference measure we generated.

\ind Figure \ref{fig::trianglePlot}(d) shows that the mean-field model with $n = 1000$ has a higher triangle density than we would expect to see from an Erd\"os-R\`enyi graph with the same edge density. This suggests that there are correlations between edges of the $n$-particle network that do not vanish in the large $n$ limit. Indeed, these non-vanishing triangle correlations are consistent with the conditional Markovian dependence of the edges in the graph on the latent opinions $Z\pcar{1:n}$.

\vspace{8pt}

\textbf{Second Largest Eigenvalue Errors:} The second largest eigenvalue of the adjacency matrix of a network is a statistic with a large variety of applications.
Generally, the leading eigenvalues of the adjacency matrix can be used to assess model fit for stochastic blockmodels \cite{Bickel2016-me,Fishkind2013-kl} and other network models \cite{Chen2021-kn, Athreya2018-ww}.
The second largest eigenvalue is specifically associated with deviations from rank-1 models and is loosely related to the second smallest eigenvalue of the graph Laplacian, also known as the algebraic connectivity of a network.
For a given matrix $A$, let $\lambda_2(A)$ be the second largest eigenvalue of $A$. In Figure \ref{fig::2ndeig}, we examine the signed error of the mean-field approximation of the scaled second largest eigenvalue of the $n$-particle network:
\[\te{EIG}_n(t) \defeq \frac{1}{MR}\sum_{r=1}^R\sum_{k=1}^M \frac{1}{n}\left(\lambda_2\left(A^{(1:n),(1:n),k,r}(t)\right) - \lambda_2\left(A^{n,k,r}(t)\right)\right).\]

\ind It is a standard result that the appropriately scaled leading eigenvalues of the adjacency matrix of a network are continuous in the cut topology in the sense that $\frac{1}{n}\lambda_k(G_n) \to \lambda_k(W)$ when $G_n \to W$ in the cut topology (and $\lambda_k$ is the $k$th largest $L^2$ eigenvalue of $W$) \cite[Theorem 11.53]{Lov12}.

\ind Just as for the triangle density errors we measured in Figure \ref{fig::trianglePlot}, Figure \ref{fig::2ndeig} suggests that the error in the (scaled) second eigenvalue increases monotonically to a deterministic limit for large $n$. We likewise checked the same plot for the data generated from each reference measure individually and observed the same behavior but with a different limit. 
\begin{figure}[H]
\centering
\begin{subfigure}{0.48\textwidth}
\includegraphics[scale = 0.48]{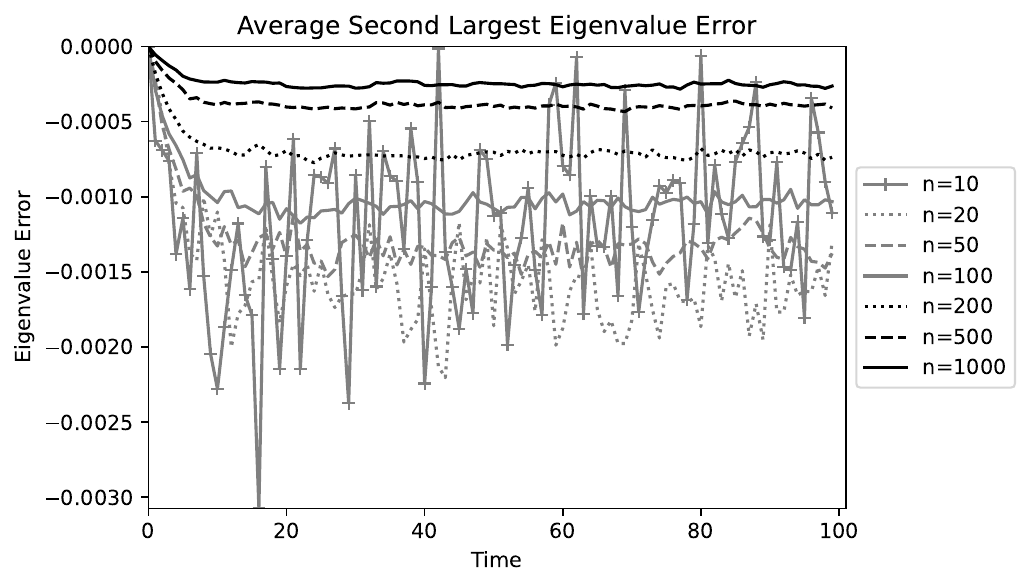}
\caption{2nd largest eigenvalue errors for all $n$.}
\end{subfigure} \\
\begin{subfigure}{0.48\textwidth}
\includegraphics[scale = 0.48]{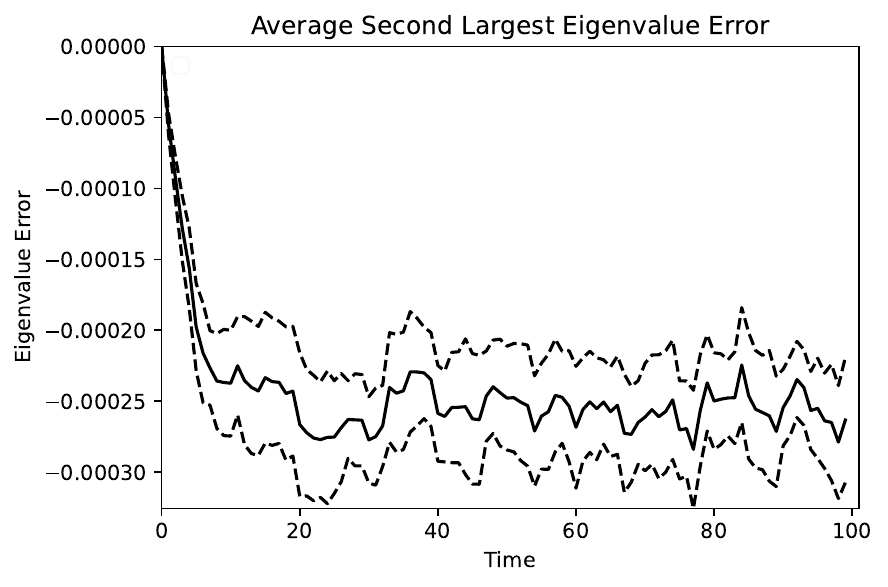}
\caption{95\% confidence bands for the $n = 1000$ case.}
\end{subfigure}~
\begin{subfigure}{0.48\textwidth}
\includegraphics[scale = 0.48]{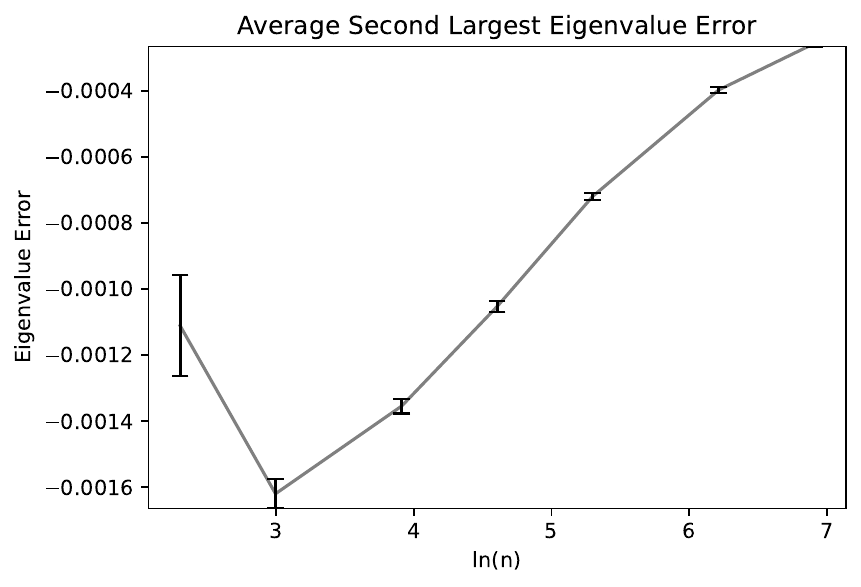}
\caption{2nd largest eigenvalues averaged over time for different $n$.}
\end{subfigure}
\caption{The average difference in the 2nd largest eigenvalue of the mean-field model and the $n$ particle model (divided by $n$).}
\label{fig::2ndeig}
\end{figure}

\section{Conditional Propagation of Chaos}
\label{sec::condpropchaos}

In this section, we introduce a conditional propagation of chaos result, which is of independent interest. We begin with a classical descriptor of propagation of chaos:

\begin{proposition}[Propagation of Chaos]
\label{prop::convres}
Let $\{X^n_{1:n}\}_{n \in \N}$ be a triangular array of $\Xmc$-random elements such that for all $n \in \N$, $(X^n_i)_{i=1}^n$ is exchangeable. Suppose $\eta \defeq \law(X)\in \P(\Xmc)$ is a deterministic probability measure. Then,
\[\eta_X \defeq \frac{1}{n}\sum_{i=1}^n \delta_{X^n_i} \to\eta \te{ in probability}\]
if and only if for any $k \in \N$,
\[(X^n_1,\dots,X^n_k) \Rightarrow (X\pcar{1},\dots,X\pcar{k}),\]
where $X\pcar{1},\dots,X\pcar{k}$ are i.i.d. copies of $X$. Moreover, the ``if'' statement of this lemma holds even when $k = 2$.
\end{proposition}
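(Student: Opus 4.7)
The proposition is a standard propagation-of-chaos characterization; the strategy is to reduce convergence in probability of the empirical measure to convergence of expectations of bounded continuous test functionals, and to go back and forth between joint moments of the triangular array and powers of $\langle \eta^n_X, f\rangle$ using exchangeability. The main things to keep track of are (i) the metrizability of weak convergence on $\P(\Xmc)$ so that ``in probability'' statements can be tested on a countable family of bounded continuous functions, and (ii) the combinatorial bookkeeping that separates sampling with replacement from sampling without replacement.

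\textbf{``If'' direction, $k=2$ suffices.} Fix $f \in C_b(\Xmc)$ and compute, using exchangeability,
\[
\mathbb{E}\bigl[(\langle \eta^n_X,f\rangle - \langle \eta,f\rangle)^2\bigr]
= \tfrac{1}{n}\,\mathbb{E}[f(X^n_1)^2] + \tfrac{n-1}{n}\,\mathbb{E}[f(X^n_1)f(X^n_2)] - 2\,\mathbb{E}[f(X^n_1)]\langle\eta,f\rangle + \langle\eta,f\rangle^2.
\]
The hypothesis $(X^n_1,X^n_2)\Rightarrow (X\pcar{1},X\pcar{2})$ with $X\pcar{1}\perp X\pcar{2}$ of law $\eta$, combined with boundedness of $f$, sends each term to the value that makes this vanish. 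Thus $\langle \eta^n_X,f\rangle \to \langle\eta,f\rangle$ in $L^2$, hence in probability. Since $\Xmc$ is Polish, there is a countable convergence-determining family $\{f_m\}\subset C_b(\Xmc)$ (e.g.\ coming from a countable dense subset of $C_0(\Xmc)$ in an appropriate metric together with constants), so by a diagonal/subsequence argument the random measures $\eta^n_X$ converge in probability to $\eta$ in the weak topology on $\P(\Xmc)$.

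\textbf{``Only if'' direction.} Fix $k$ and $f_1,\dots,f_k \in C_b(\Xmc)$. The key identity is
\[
\mathbb{E}\Bigl[\prod_{i=1}^k \langle \eta^n_X, f_i\rangle\Bigr]
= \frac{1}{n^k}\sum_{i_1,\dots,i_k=1}^{n}\mathbb{E}\Bigl[\prod_{j=1}^k f_j(X^n_{i_j})\Bigr].
\]
Partition the $n^k$ index tuples by the pattern of coincidences among the $i_j$. The tuples with all distinct entries number $n(n-1)\cdots(n-k+1) = n^k(1+O(1/n))$, and by exchangeability each contributes the same quantity $\mathbb{E}\bigl[\prod_j f_j(X^n_j)\bigr]$; the remaining $O(n^{k-1})$ tuples are bounded uniformly by $\prod_j \|f_j\|_\infty$. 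Hence
\[
\mathbb{E}\Bigl[\prod_{i=1}^k \langle \eta^n_X, f_i\rangle\Bigr] = \mathbb{E}\Bigl[\prod_{j=1}^k f_j(X^n_j)\Bigr] + O(1/n).
\]
On the other hand, $\eta^n_X \to \eta$ in probability together with bounded convergence gives $\mathbb{E}[\prod_i \langle\eta^n_X,f_i\rangle]\to \prod_i \langle\eta,f_i\rangle$. Combining the two,
\[
\mathbb{E}\Bigl[\prod_{j=1}^k f_j(X^n_j)\Bigr] \longrightarrow \prod_{j=1}^k \langle\eta,f_j\rangle = \mathbb{E}\Bigl[\prod_{j=1}^k f_j(X\pcar{j})\Bigr],
\]
which by a standard product-functional argument yields $(X^n_1,\dots,X^n_k)\Rightarrow (X\pcar{1},\dots,X\pcar{k})$.

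\textbf{Main obstacle.} Nothing here is conceptually deep; the only mild subtlety is ensuring that ``convergence in probability in $\P(\Xmc)$'' really is equivalent to scalar convergence in probability along a convergence-determining countable family, so that the computation for a single $f$ upgrades to full weak convergence of the empirical measure. This is handled by choosing a metric for the weak topology (e.g.\ the bounded-Lipschitz or Prokhorov metric) and using the fact that $\P(\Xmc)$ is then itself Polish, so convergence in probability is characterized through any convergence-determining countable family in $C_b(\Xmc)$.
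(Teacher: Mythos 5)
Your proof is correct, but the paper does not actually prove this proposition; it simply records it as a restatement of Sznitman's Proposition 2.2(i) in \cite{Szn91}. So the relevant comparison is less with the paper's (one-line) proof and more with the paper's proof of its generalization, Proposition \ref{prop::condconvres}, which uses exactly the strategy you adopt here: an $L^2$ expansion of $\langle\eta^n_X,f\rangle-\langle\eta,f\rangle$ via exchangeability for the ``empirical measure from the $k$-agent limit'' direction, a combinatorial bookkeeping of coincidence patterns among index tuples for the converse, and a reduction of convergence in probability of random measures to scalar convergence along a countable convergence-determining family (the paper packages that last step as Lemma \ref{convinprobsuff}). In that sense your argument is the natural specialization of the paper's conditional proof to the unconditional case, and it cleanly isolates why $k=2$ suffices for the ``if'' direction (the $L^2$ identity only ever touches pairs $(X^n_1,X^n_2)$). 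Two small remarks. First, your parenthetical invocation of ``a countable dense subset of $C_0(\Xmc)$'' as a source of the convergence-determining family requires $\Xmc$ locally compact; the paper's standing assumption is merely that $\Xmc$ is Polish, so you should instead take, e.g., a countable family of bounded Lipschitz functions built from a countable dense subset of $\Xmc$ (the paper's Lemma \ref{convinprobsuff} has the same local-compactness restriction, which is why its application elsewhere requires care, see Remark \ref{rem::localcomp}). Second, the claim that a subsequential limit $\nu$ of $(X^n_1,\dots,X^n_k)$ is pinned down by $\int\prod_j f_j\,d\nu = \prod_j\langle\eta,f_j\rangle$ uses tightness of the joint law, which you should note follows from tightness of each marginal (each $X^n_j$ has the same law as $X^n_1$, which converges). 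With those minor caveats, your two-sided proof is complete, and the paper's choice to cite rather than prove simply defers all of this content to the conditional version.
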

\begin{proof}
This is just a restatement of \cite[Proposition 2.2(i)]{Szn91} in our notation.
\end{proof}

As mentioned in Remark \ref{rmk::munaitdefcpropchaos}, this describes \eqref{eqn::muntdef} of Theorem \ref{thm::hydro}. However, it is not sufficient to establish \eqref{eqn::munaitdef} of the same theorem. In this section, we extend Proposition \ref{prop::convres} to a more general case, which we call \emph{conditional propagation of chaos}. Conditional propagation of chaos plays a key role in the proof of Theorem \ref{thm::Sampleconv}, as it is useful for establishing conditional law of large numbers result described in \eqref{eqn::clln}, which is then used to establish the weak limit of $(Z^n_i[t],L^n_i(t))$, and therefore of $Z^n_i[t+1]$. To state the generalization of Proposition \ref{prop::convres}, we require a few definitions.

\begin{definition}[Continuous Dependence]
\label{def::depcont}
Let $X$ and $Y$ be $\Xmc$ and $\Ymc$-random elements respectively. Then we say $Y$ \emph{depends continuously on} $X$ if there exists an $X$-almost surely continuous function $\phi: \Xmc \to \Ymc$ such that $\phi(X) = Y$ almost surely.
\end{definition}

It is important to note that in \eqref{eqn::munaitdef}, exchangeability notably fails. Specifically, the collection of random elements $(Z^n_j(t),A^n_{ij}(t))_{j=1}^n$ are not exchangeable. This is because $(Z^n_i(t),A^n_{ii}(t)) \overset{\te{(d)}}{\neq} (Z^n_j(t),A^n_{ij}(t))$ for $i \neq j$. Ultimately this does not matter as the collection $(Z^n_j(t),A^n_{ij}(t))_{j\neq i}$ \emph{is} exchangeable and the $\frac{1}{n}\delta_{Z^n_i[t],Z^n_i[t],A^n_{ii}[t]}$ term in \eqref{eqn::munaitdef} vanishes as $n \to \infty$. Taking this into account, we define a new notion of exchangeability. 

\begin{definition}[Exchangeability Excluding $i$]
\label{def::exchexcli}
For any $n \in \N$ and $i \in [1:n]$, a $\Xmc$-random element $X$ and a collection of $n$ $\Ymc$-random elements $Y \defeq (Y_1,\dots,Y_n)$ are said to be \emph{exchangeable excluding} $i$ if for every permutation $\sigma \in S_n$,
\[\left(X,(Y_j)_{j=1}^n\right) \deq \left(X,(Y_{\sigma(j)})_{j=1}^n\right)\te{ when for } j=i, \sigma(i) = i.\] 
\end{definition}

It is convenient to combine Definitions \ref{def::depcont} and \ref{def::exchexcli} into a single property.

\begin{definition}[$\Xmc/\Ymc$-Convenience]
\label{def::usefulassu}
Let $\mathbf{X} \defeq \{X^n\}_{n \in \N}\cup \{X\}$ be a sequence of $\Xmc$-valued random elements and let $\mathbf{Y} \defeq \{(Y^n_i)_{i\in [1:n]}\}_{n \in \N}\cup\{Y\}$ be a collection of random vectors with entries in $\Ymc$. Then $(\mathbf{X},\mathbf{Y})$ is said to be $\Xmc/\Ymc$-\emph{convenient} if for each $n \in \N$, $(X^n,Y^n)$ is exchangeable excluding 1 and $\eta_{XY} \defeq \law(X,Y|X)$ depends continuously on $X$. 
\end{definition}

We now state our conditional propagation of chaos result, which we prove in Section \ref{ssec::cppf}.

\begin{proposition}[Conditional Propagation of Chaos]
\label{prop::condconvres}
Suppose that $\Xmc$ and $\Ymc$ are locally compact. Let $(\mathbf{X},\mathbf{Y})$ be an $\Xmc/\Ymc$-convenient collection of random elements. Then
\begin{align}
\label{eqn::muXYiconv}
(X^n,\eta^n_{XY})&\defeq \left(X^n,\frac{1}{n}\sum_{j=1}^n \delta_{X^n,Y^n_j}\right) \Rightarrow (X,\eta_{XY}),
\end{align}
if and only if for every $k \in \N\setminus \{1,2\}$,
\begin{equation}
\label{eqn::XXXYYconv}
(X^n,Y^n_{2:k}) \Rightarrow (X,Y\pcar{2:k}),
\end{equation}
where
\begin{itemize}
\item $\law(X,Y\pcar{j}|X) = \eta_{XY}$ for all $j$,
\item and $(Y\pcar{j})_{j=2}^k$ are conditionally independent given $X$.
\end{itemize}
In addition, \eqref{eqn::XXXYYconv} implies \eqref{eqn::muXYiconv} when $k = 3$.
\end{proposition}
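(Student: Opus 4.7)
The plan is to generalize Sznitman's argument (Proposition 2.2(i) of \cite{Szn91}) to accommodate two new features: the limiting measure $\eta_{XY}$ is random (a continuous function of $X$) rather than deterministic, and exchangeability holds only excluding index $1$. The key reduction is that, because $\Xmc$ and $\Ymc$ are locally compact Polish, $\P(\Xmc \times \Ymc)$ is Polish and weak convergence on it is determined by countably many test functions, so joint weak convergence $(X^n, \eta^n_{XY}) \Rightarrow (X, \eta_{XY})$ is equivalent to convergence, for $f$ in a countable determining subclass of $C_b(\Xmc \times \Ymc)$ and all $g \in C_b(\Xmc)$, of the joint moments $\E[g(X^n) \langle \eta^n_{XY}, f\rangle^p]$ for $p = 0,1,2$ to $\E[g(X) \langle \eta_{XY}, f\rangle^p]$. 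I would then handle the two implications separately.

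For the only-if direction, fix bounded continuous $\Phi: \Xmc \times \Ymc^{k-1} \to \R$ and its $Y$-symmetrization $\bar\Phi$. Exchangeability excluding $1$ gives $\E[\Phi(X^n, Y^n_{2:k})] = \E[\bar\Phi(X^n, Y^n_{2:k})]$, and a standard $U$-statistic expansion rewrites this as
\[\E\left[\int \bar\Phi(X^n, y_2, \ldots, y_k) \prod_{j=2}^k d\tilde\eta^n_Y(y_j)\right] + O(1/n),\]
where $\tilde\eta^n_Y$ is the $Y$-marginal of $\eta^n_{XY}$. The map $(x, \mu) \mapsto \int \bar\Phi(x, y_2, \ldots, y_k) \prod_j d\mu_Y(y_j)$, with $\mu_Y$ the $Y$-marginal, is bounded and continuous on $\Xmc \times \P(\Xmc \times \Ymc)$; combined with the hypothesis $(X^n, \eta^n_{XY}) \Rightarrow (X, \eta_{XY})$ and the continuous dependence of $\eta_{XY}$ on $X$ (Definition \ref{def::depcont}), the right-hand side converges to $\E[\int \bar\Phi(X, y_2, \ldots, y_k) \prod_j d\nu(y_j|X)]$, where $\nu(\cdot|X)$ is the conditional $Y$-distribution given $X$. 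Conditional i.i.d.-ness of the $Y^{(j)}$ identifies this limit with $\E[\Phi(X, Y^{(2:k)})]$, yielding $(X^n, Y^n_{2:k}) \Rightarrow (X, Y^{(2:k)})$.

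For the if-direction at $k=3$, first observe tightness of $(X^n, \eta^n_{XY})$: $X^n \Rightarrow X$ is automatically tight, and tightness of the random measures $\{\eta^n_{XY}\}$ follows from Markov's inequality applied to the intensity $\E[\eta^n_{XY}(\cdot)]$, which via exchangeability excluding $1$ reduces (up to an $O(1/n)$ term from $j = 1$) to tightness of $\law(X^n, Y^n_2)$, guaranteed by the $k=3$ hypothesis. Along any subsequential limit $(X^n, \eta^n_{XY}) \Rightarrow (X^*, \eta^*)$, I compute the first two moments with a $g(X^n)$ weighting, peeling off the $j = 1$ term ($O(1/n)$) and, in the second moment, the diagonal $j = k$ and the index-$1$ cross terms (also $O(1/n)$):
\[\E[g(X^n) \langle \eta^n_{XY}, f\rangle] = \tfrac{n-1}{n}\E[g(X^n) f(X^n, Y^n_2)] + O(1/n) \longrightarrow \E[g(X) \phi(X)],\]
\[\E[g(X^n) \langle \eta^n_{XY}, f\rangle^2] = \tfrac{(n-1)(n-2)}{n^2}\E[g(X^n) f(X^n, Y^n_2) f(X^n, Y^n_3)] + O(1/n) \longrightarrow \E[g(X) \phi(X)^2],\]
where $\phi(X) \defeq \langle \eta_{XY}, f\rangle = \E[f(X, Y)|X]$, using the $k=3$ hypothesis and the conditional independence of $Y^{(2)}, Y^{(3)}$ given $X$ in the second limit. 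Ranging $g$ over $C_b(\Xmc)$ forces $\E[\langle \eta^*, f\rangle \mid X^*] = \phi(X^*)$ and $\E[\langle \eta^*, f\rangle^2 \mid X^*] = \phi(X^*)^2$ a.s., so the conditional variance of $\langle \eta^*, f\rangle$ given $X^*$ vanishes, giving $\langle \eta^*, f\rangle = \langle \eta_{XY}(X^*), f\rangle$ a.s. Scanning a countable determining class of $f$'s identifies $\eta^* = \eta_{XY}(X^*)$ a.s., i.e., $(X^*, \eta^*) \deq (X, \eta_{XY})$, and uniqueness of the subsequential limit upgrades to convergence of the full sequence.

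The main obstacle is precisely this identification step in the random-limit setting. Unlike Sznitman's deterministic case, where it suffices to show $\mathrm{Var}(\langle \eta^n_{XY}, f\rangle) \to 0$, here $\langle \eta_{XY}, f\rangle$ is itself a nondegenerate random variable (a function of $X$), so one must show that the \emph{conditional} variance of $\langle \eta^*, f\rangle$ given $X^*$ vanishes around the correct target $\phi(X^*)$. This is what forces the weighted-moment formulation with $g(X^n)$, and is also where the continuous-dependence hypothesis plays a critical role: it guarantees that $\phi = \langle \eta_{XY}, f\rangle$ is a continuous function of $X$, so the weighted-moment identities can be inverted by uniqueness of conditional expectations. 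Careful bookkeeping of the "excluding index $1$" corrections is a secondary nuisance, contributing only negligible $O(1/n)$ terms that are absorbed into the moment estimates above.
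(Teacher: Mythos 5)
Your only-if direction matches the paper's argument (symmetrization, $U$-statistic expansion with an $O(1/n)$ remainder, passage to the limit via the hypothesis $(X^n,\eta^n_{XY})\Rightarrow(X,\eta_{XY})$ together with the conditional-product identity that the paper records as Lemma \ref{lem::CIgivenprod}); the only cosmetic difference is that the paper lifts $f$ to $\alt f$ on $(\Xmc\times\Ymc)^{k-1}$ rather than integrating against the $Y$-marginal. Your if direction, however, is a genuinely different proof. The paper applies the Skorokhod representation theorem to couple $\bar X^n\to\bar X$ in probability and proves an explicit $L^2$ estimate $\ex{(\langle\bar\eta^n_{XY},f\rangle-\langle\bar\eta_{XY},f\rangle)^2}\to 0$ via second-moment and cross-term computations; the cross term is where continuous dependence of $\eta_{XY}$ on $X$ enters, since one must pass the fixed functional $\phi(\bar X)=\langle\bar\eta_{XY},f\rangle$ through the weak limit $(\bar X,\bar X^n,\bar Y^n_2)\Rightarrow(\bar X,\bar X,\bar Y\pcar{2})$, which requires $\phi$ to be an a.s.\ continuous function of $\bar X$. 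You instead argue by tightness plus subsequence identification through $g(X^n)$-weighted conditional moments: along any subsequential limit $(X^*,\eta^*)$ you deduce $\ex{\langle\eta^*,f\rangle\mid X^*}=\phi(X^*)$ and $\ex{\langle\eta^*,f\rangle^2\mid X^*}=\phi(X^*)^2$, so the conditional variance vanishes and $\eta^*=\eta_{XY}(X^*)$ a.s.\ after scanning a countable determining class. Both routes are sound; the paper's yields a quantitative $L^2$ bound, while yours sidesteps the coupling machinery entirely. One inaccuracy in your commentary deserves flagging: you claim continuity of $\phi$ is needed ``so the weighted-moment identities can be inverted by uniqueness of conditional expectations,'' but your argument only uses that $\phi$ is a \emph{measurable} function of $X$ --- automatic for a regular conditional expectation --- so your proof in fact shows the if direction holds without the continuous-dependence clause of Definition \ref{def::usefulassu}, a hypothesis the paper's Skorokhod/cross-term route genuinely requires.
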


\begin{definition}[Conditional Propagation of Chaos]
\label{CPC}
We say a pair of random variable collections $(\mathbf{X},\mathbf{Y})$ satisfies the $\Xmc/\Ymc$-\emph{conditional propagation of chaos} property if $(\mathbf{X},\mathbf{Y})$ is $\Xmc/\Ymc$-convenient and satisfies \eqref{eqn::muXYiconv}.
\end{definition}

\begin{remark}[Extension to Continuous Time]
\label{rem::localcomp}
In Proposition \ref{prop::condconvres}, $\bm{X}$ and $\bm{Y}$ are intended to represent node and edge trajectories of a sequence of interacting particle systems on dynamic networks. However, in the continuous-time context, this will typically result in $\Xmc$ and $\Ymc$ failing to be locally compact. In this situation, the ``only if" direction of Proposition \ref{prop::condconvres} still holds. This is because local compactness is only needed to due to our application of Lemma \ref{convinprobsuff} which is not used to in the proof of the ``only if" direction of the proposition. We expect that the ``if" direction also holds in this case. To establish this direction of the proposition in the non-locally compact case, it will be necessary to place additional assumptions on $\{\eta^n_{XY}\}_{n \in \N}$ such that 
\[\langle \alt{\eta}^n_{XY},f\rangle \to \langle \alt{\eta}_{XY},f\rangle \te{ in probability for all }f \in C_b(\Xmc\times \Ymc)\te{ implies } \alt{\eta}^n_{XY} \to \alt{\eta}_{XY} \te{ in probability.}\]
For continuous time interacting particle systems, $\Xmc$ and $\Ymc$ are typically spaces of continuous or c\`adl\`ag functions on discrete or Euclidean state spaces. We expect the above convergence to hold for such function spaces under the additional condition that the sequence $\{\alt{\eta}^n_{XY}\}_{n\in \N}$ is a.s. tight (e.g. via an application of \cite[Theorems 7.1, 13.1]{Bil99}).
\end{remark}

\begin{remark}[Why Do We Exclude $Y^n_1$ and $Y\pcar{1}$?]
\label{why23?}
In Proposition \ref{prop::condconvres}, we work with the convergence $(X^n,Y^n_{2:k}) \Rightarrow (X,Y\pcar{2:k})$. Why is this? Because $(X^n,Y^n)$ is exchangeable excluding 1, this is equivalent to stating that $(X^n,Y^n_{j_{1:m}}) \Rightarrow (X,Y\pcar{j_{1:m}})$ for any distinct $j_1,\dots,j_m$ in $[2:n]$. We simply state everything for $j_i = i+1$ and $m=k-1$. This is because \eqref{eqn::muXYiconv} does not give us any information regarding the limit of the expression $(X^n,Y^n_1)$ due to the fact that $(X^n,Y^n)$ is exchangeable \emph{excluding} 1.
\end{remark}

\begin{remark}[Conditional Propagation of Chaos Under Exchangeability Excluding $i$]
\label{rmk::why1?}
Suppose that $\left(X,(Y_j)_{j=1}^n\right)$ is exchangeable excluding $i$ for some $i \in [1:n]$. Suppose also that $Z_1 = Y_i$, $Z_{j+1} = Y_{j}$ for $j < i$ and $Z_j = Y_j$ for $j > i$. Then it is easily seen that $\left(X,(Z_j)_{j=1}^n\right)$ is exchangeable excluding 1. This is why in Definition \ref{def::usefulassu}, we simply allow $(X^n,Y^n)$ to be exchangeable excluding 1.
\end{remark}
\subsection{Proof of the Conditional Propagation of Chaos Property}
\label{ssec::cppf}

\begin{proof}[Proof of Proposition \ref{prop::condconvres}]
First, suppose that \eqref{eqn::muXYiconv} holds. Then we need to show that \eqref{eqn::XXXYYconv} holds for all $k\geq 3$. Let $f \in C_b(\Xmc\times \Ymc^{k-1})$ be any bounded, continuous function. Additionally, define the function $\alt{f} \in C_b((\Xmc\times\Ymc)^{k-1})$ by
\[\alt{f}\left((x_{j-1},y_j)_{j=2}^k\right) = f(x_1,y_2,\dots,y_k).\]
By assumption, $\eta^n_{XY}\Rightarrow \eta_{XY}$, so $\left(\eta^n_{XY}\right)^{k-1} \Rightarrow \left(\eta_{XY}\right)^{k-1}$. Furthermore, by Lemma \ref{lem::CIgivenprod},
\[\left\langle \left(\eta_{XY}\right)^{k-1},\alt{f}\right\rangle = \ex{\alt{f}\left((X,Y\pcar{j})_{j=2}^k\right)\middle|X} = \ex{f(X,Y\pcar{2:k})\middle|X},\]
where $\law(X,Y\pcar{j}|X) = \eta_{XY}$ for all $j$ and $(Y\pcar{j})_{j=2}^k$ are conditionally independent given $X$. By exchangeability (excluding 1), $f(X^n,Y^n_2,Y^n_3) \deq f(X^n,Y^n_i,Y^n_j)$ for any $i,j \in [1:n]$ such that $i\neq j$ and $i,j\neq 1$. Applying this and the fact that $f$ and $\alt{f}$ are bounded and continuous and \eqref{eqn::muXYiconv},
\begin{align*}
\ex{f(X^n,Y^n_{2:k})} &= \frac{(n-k)!}{(n-1)!}\sum_{\substack{j_{1:k-1}\in [2:n]^{k-1}\\ \te{distinct}}}\ex{f(X^n,Y^n_{j_{1:k-1}})}\\
&= \frac{(n-k)!}{(n-1)!}\sum_{\substack{j_{1:k-1}\in [2:n]^{k-1}\\ \te{distinct}}}\ex{\alt{f}\left((X^n,Y^n_{j_{\ell}})_{\ell=1}^{k-1}\right)}\\
&= \ex{\frac{1}{n^{k-1}}\sum_{j_{1:k-1}\in [1:n]^{k-1}} \alt{f}\left((X^n,Y^n_{j_{\ell}})_{\ell=1}^{k-1}\right)} + R_n\\
&= \ex{\langle \left(\eta^n_{XY}\right)^{k-1}, \alt{f}\rangle} + R_n\\
&\to \ex{\langle \left(\eta_{XY}\right)^{k-1}, \alt{f}\rangle}\\
&= \ex{\ex{f(X,Y\pcar{2:k}\middle|X}}\\
&= \ex{f(X,Y\pcar{2:k})},
\end{align*}
where we note that the convergence holds because $|R_n|$ is $o(1)$ as we now show. Fix any $\ep > 0$ and suppose that $n > \frac{k}{\ep}$. Then,
\[n - k > (1-\ep)n \Rightarrow n-k + 1 > (1 - \ep)n \Rightarrow \frac{(n-1)!}{(n-k)!} > (1 - \ep)^{k-1}n^{k-1} > (1 - (k-1)\ep)n^{k-1} > (1 - k\ep)n^{k-1}.\]
Note that there are $\frac{(n-1)!}{(n-k)!}$ choices of $j_{1:k-1}$ in $[2:n]^{k-1}$ that are distinct. There are $n^{k-1}$ elements in the set $[1:n]^{k-1}$. Therefore there are $n^{k-1}-\frac{(n-1)!}{(n-k)!}$ choices of $j_{1:k-1}$ such that the $j$'s are not distinct or such that $\min\{j_i\} = 1$. Then,
\begin{align}
|R_n| &= \left|\left(\frac{(n-k)!}{(n-1)!}-\frac{1}{n^{k-1}}\right)\sum_{\substack{j_{1:k-1}\in [2:n]^{k-1}\\\te{distinct}}} \ex{\alt{f}\left((X^n,Y^n_{j_{\ell}})_{\ell=1}^{k-1}\right)} - \frac{1}{n^{k-1}}\sum_{\substack{j_{1:k-1}\in [1:n]^{k-1}\\\te{not distinct}\\\te{or } \min\{j_i\} = 1}} \ex{\alt{f}\left((X^n,Y^n_{j_{\ell}})_{\ell=1}^{k-1}\right)}\right|\nonumber\\
&\leq \left|1 - \frac{(n-1)!}{n^{k-1}(n-k)!}\right|\|\alt{f}\|_{\infty} + \left|\frac{n^{k-1} - \frac{(n-1)!}{(n-k)!}}{n^{k-1}}\right|\|\alt{f}\|_{\infty}\label{eqn::Rnbd}\\
&< 2k\ep \|\alt{f}\|_{\infty}\label{eqn::Rnbdo1}
\end{align}

Since $k$ is fixed and (as $n \to \infty$) $\ep$ can be arbitrarily small, it follows that $|R_n|$ is $o(1)$ as previously claimed. This completes the proof that \eqref{eqn::muXYiconv} implies that \eqref{eqn::XXXYYconv} holds for all $k \geq 3$.

\ind Now we instead suppose that \eqref{eqn::XXXYYconv} holds for $k = 3$. Because $X^n \Rightarrow X$, it is possible to construct the following coupling by applying the Skorokhod representation theorem. Let $(\bar{\Omega},\bar{\Fmc},\bar{\PP})$ be a complete probability space containing the random elements $(\bar{X}^n,\bar{Y}^n)\deq (X^n,Y^n)$ for all $n \in\N$  and $(\bar{X},\bar{Y}\pcar{2},\bar{Y}\pcar{3})\deq (X,Y\pcar{2},Y\pcar{3})$ such that $\bar{X}^n \to \bar{X}$ in probability. 
  Then, it suffices to show 
\[\bar{\eta}^n_{XY} \defeq \frac{1}{n}\sum_{j=1}^n \delta_{\bar{X}^n,\bar{Y}^n_j} \to \bar{\eta}_{XY} \defeq \law(\bar{X},\bar{Y}\pcar{2}|\bar{X})\te{ in probability.}\]
By Lemma \ref{convinprobsuff}, we can do this by showing that 
\[\langle \bar{\eta}^n_{XY}, f \rangle \to \langle \bar{\eta}_{XY},f\rangle\te{ in probability}\]
for all $f \in C_b(\Xmc\times \Ymc)$. In fact, we prove the following stronger $L^2$ convergence result:
\begin{equation}
\label{eqn::mucondconvsuff}
\lim_{n\to\infty}\ex{\left(\langle\bar{\eta}^n_{XY},f\rangle - \langle \bar{\eta}_{XY},f\rangle\right)^2} = 0.
\end{equation}
First we show convergence of the second moment of $\langle \bar{\eta}^n_{XY},f\rangle$ utilizing the exchangeability (excluding 1) of $(X^n,Y^n)$ and the bounded convergence theorem:
\begin{align*}
\lim_{n\to\infty}\ex{\langle \bar{\eta}^n_{XY},f\rangle^2} &= \lim_{n\to\infty}\frac{1}{n^2}\sum_{i,j=1}^n\ex{f(\bar{X}^n,\bar{Y}^n_i)f(\bar{X}^n,\bar{Y}^n_j)}\\
&= \lim_{n\to\infty}\frac{1}{(n-1)(n-2)}\sum_{\substack{i,j=2\\i\neq j}}^n\ex{f(\bar{X}^n,\bar{Y}^n_i)f(\bar{X}^n,\bar{Y}^n_j)} + R'_n\\
&= \lim_{n\to\infty}\ex{f(\bar{X}^n,\bar{Y}^n_2)f(\bar{X}^n,\bar{Y}^n_3)} + R'_n\\
&= \ex{f(\bar{X},\bar{Y}\pcar{2})f(\bar{X}^n,\bar{Y}\pcar{3})},
\end{align*}
where the last equality holds because $R'_n = o(1)$, where the proof that $R'_n$ is $o(1)$ is nearly identical to the proof that $R_n$ is $o(1)$ in the special case that $k=3$ (see \eqref{eqn::Rnbdo1}):
\begin{align*}
|R'_n| &= \left|\left(\frac{1}{n^2} - \frac{(n-3)!}{(n-1)!}\right)\sum_{\substack{i,j=2\\ i \neq j}}^n \ex{f(\bar{X}^n,\bar{Y}^n_i)f(\bar{X}^n,\bar{Y}^n_j)} + \frac{1}{n^2}\sum_{\substack{i,j=1\\ i=j\te{ or } i\wedge j = 1}}\ex{f(\bar{X}^n,\bar{Y}^n_i)f(\bar{X}^n,\bar{Y}^n_j)}\right|\\
&\leq \left|1 - \frac{(n-1)!}{n^2(n-3)!}\right|\|f\|^2_{\infty} + \left|\frac{n^2 - \frac{(n-1)!}{(n-3)!}}{n^2}\right|\|f\|^2_{\infty}\\
&= o(1).
\end{align*}

Checking the second moment of $\langle \bar{\eta}_{XY},f\rangle$ using the fact that $\bar{Y}\pcar{2}$ and $\bar{Y}\pcar{3}$ are conditionally i.i.d. given $\bar{X}$:
\begin{align*}
\ex{\langle\bar{\eta}_{XY},f\rangle^2} &= \ex{\ex{f(\bar{X},\bar{Y}\pcar{2})|\bar{X}}^2}\\
&= \ex{\ex{f(\bar{X},\bar{Y}\pcar{2})|\bar{X}}\ex{f(\bar{X},\bar{Y}\pcar{3})|\bar{X}}}\\
&= \ex{f(\bar{X},\bar{Y}\pcar{2})f(\bar{X},\bar{Y}\pcar{3})}\\
&=\lim_{n\to\infty} \ex{\langle \bar{\eta}^n_{XY},f\rangle^2}.
\end{align*}

Lastly we investigate the correlation term $\ex{\langle \bar{\eta}^n_{XY},f\rangle\langle \bar{\eta}_{XY},f\rangle}$. For this, note that we can assume without loss of generality that $\left((\bar{X},\bar{X}^n),\bar{Y}^n\right)$ is exchangeable excluding 1 for all $n \in \N$\footnote{To be completely rigorous, we can achieve this with a new coupling. Let $\{\sigma_n\}_{n \in \N}$ be a collection of mutually independent random permutations independent of $(\bar{X},\bar{Y}\pcar{2},\bar{Y}\pcar{3},\bar{X}^n,\bar{Y}^n)_{n \in \N}$. Assume that for each $n$, $\sigma_n$ is uniformly sampled from the subset $\{\psi \in S_n: \psi(1) = 1\}$. Then note that for any $\psi \in S_n$ such that $\psi(1) = 1$, $\psi\circ\sigma \deq \sigma$ so:
\[\left(\bar{X},\bar{X}^n,\alt{Y}^n\right) \defeq \left(\bar{X},\bar{X}^n,\left(\bar{Y}^n_{\sigma(j)}\right)_{j=1}^n\right) \deq \left(\bar{X},\bar{X}^n,\left(\bar{Y}^n_{\psi\circ\sigma(j)}\right)_{j=1}^n\right) = \left(\bar{X},\bar{X}^n,\left(\alt{Y}^n_{\psi(j)}\right)_{j=1}^n\right).\]
So we can get exchangeability excluding 1 by replacing $\bar{Y}^n$ with $\alt{Y}^n$. Furthermore, by exchangeability excluding 1, $(\bar{X}^n,\alt{Y}^n) \deq (\bar{X}^n,\bar{Y}^n) \deq (X^n,Y^n)$ so this is a valid coupling.}. In addition, $\bar{\eta}_{XY}$ depends continuously on $\bar{X}$ by assumption, which implies that there exists a bounded, a.s. continuous $\phi:\Xmc\to \R$ such that $\phi(\bar{X}) = \langle \bar{\eta}_{XY},f\rangle$ a.s.. Using the fact that $g: \Xmc\times \Xmc\times \Ymc \mapsto \R$ defined by $g(x,x^n,y^n) = \phi(x)f(x^n,y^n)$ is bounded and a.s. continuous and $(\bar{X},\bar{X}^n,\bar{Y}^n_2) \Rightarrow (\bar{X},\bar{X},\bar{Y}\pcar{2})$,
\begin{align*}
\lim_{n\to\infty}\ex{\langle \bar{\eta}^n_{XY},f\rangle\langle \bar{\eta}_{XY},f\rangle} &= \lim_{n\to\infty}\ex{\phi(\bar{X})\left(\frac{1}{n}\sum_{j=1}^n f(\bar{X}^n,\bar{Y}^n_j)\right)}\\
&= \lim_{n\to\infty}\ex{\phi(\bar{X})\left(\frac{1}{n-1}\sum_{j=2}^n f(\bar{X}^n,\bar{Y}^n_j)\right)} + R''_n\\
&= \lim_{n\to\infty}\ex{\phi(\bar{X})f(\bar{X}^n,\bar{Y}^n_2)} + R''_n\\
&= \ex{\phi(\bar{X})f(\bar{X},\bar{Y}\pcar{2})}\\
&= \ex{\ex{f(\bar{X},\bar{Y}\pcar{2})|\bar{X}}f(\bar{X},\bar{Y}\pcar{2})}\\
&= \ex{\ex{f(\bar{X},\bar{Y}\pcar{2}|\bar{X}}^2}\\
&= \ex{\langle \bar{\eta}_{XY},f\rangle^2}.
\end{align*}

Once more, the convergence above holds because $R''_n = o(1)$ as shown below:
\begin{align*}
|R''_n| &= \left|\left(\frac{1}{n}-\frac{1}{n-1}\right)\sum_{j=2}^n \ex{\phi(\bar{X})f(\bar{X}^n,\bar{Y}^n_j)} + \frac{1}{n}\ex{\phi(\bar{X})f(\bar{X}^n,\bar{Y}^n_1)}\right|\\
&\leq \left|-\frac{1}{n(n-1)}*(n-1)\right|\|\phi\|_{\infty}\|f\|_{\infty} + \frac{1}{n}\|\phi\|_{\infty}\|f\|_{\infty}\\
&= \frac{2}{n}\|\phi\|_{\infty}\|f\|_{\infty}\\
&= o(1).
\end{align*}

Combining the three computations above yields
\begin{align*}
\lim_{n\to\infty}\ex{\left(\langle\bar{\eta}^n_{XY},f\rangle - \langle \bar{\eta}_{XY},f\rangle\right)^2} &= \lim_{n\to\infty}\ex{\langle\bar{\eta}^n_{XY},f\rangle^2 - 2\langle\bar{\eta}^n_{XY},f\rangle\langle \bar{\eta}_{XY},f\rangle + \langle \bar{\eta}_{XY},f\rangle^2}\\
&=  \ex{\langle \bar{\eta}_{XY},f\rangle^2} - 2\ex{\langle \bar{\eta}_{XY},f\rangle^2} + \ex{\langle \bar{\eta}_{XY},f\rangle^2}\\
&= 0.
\end{align*}
Since we have shown \eqref{eqn::mucondconvsuff}, we are now done.
\end{proof}

\section{Proof of Proposition \ref{prop::welldef} and Corollary \ref{coro::morestruct}}
\label{sec::proppf}

In this section and Section \ref{sec::pfsmple}, it is convenient to define the following shorthand notation. Let $F: \Xmc \to [0,1]$ be any function with a range in $[0,1]$. Then we define the function $\alt{F}: [0,1]\times \Xmc \to [0,1]$ by
\begin{equation}
\label{eqn::altdef}
\alt{F}(p,x) = pF(x) + (1 - p)(1 - F(x)).
\end{equation}
We also repeatedly use the following useful result:
\begin{lemma}
\label{lem::condpBtild}
Fix any $s \geq 1$, $(a,a') \in \{0,1\}^2$ and $(i,j) \in \incs_k$. Then under Definition \ref{defn::AZlim}(a), (b) or (c),
\begin{equation}
\label{eq::condpBtild}
\PP\left(A\pcar{ij}(s) = a\middle|\Gmc^{A,k}_s,A\pcar{ij}(s-1) = a'\right) = \alt{B}\left(a,a',Z\pcar{i}(s),Z\pcar{j}(s)\right),
\end{equation}
where $\alt{B}\left(a,a',Z\pcar{i}(s),Z\pcar{j}(s)\right)$ is defined via \eqref{eqn::altdef} with $p=a$ and $x=\left(a',Z\pcar{i}(s),Z\pcar{j}(s)\right)$.
\end{lemma}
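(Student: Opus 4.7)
The plan is to reduce the joint conditioning to a plain evaluation: since $A\pcar{ij}(s-1)$ is already $\Gmc^{A,k}_s$-measurable, conditioning on $\{A\pcar{ij}(s-1)=a'\}$ on top of $\Gmc^{A,k}_s$ adds no information and merely specifies a value at which to evaluate the conditional probability. With that reduction in hand, the identity \eqref{eqn::Aevolve} for $\PP(A\pcar{ij}(s)=1\mid \Gmc^{A,k}_s)$ yields the claim directly.

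Concretely, I would first recall that for $s>0$,
\[
\Gmc^{A,k}_s = \sigma\!\left(Z\pcar{1:k}[s],\,A\pcar{1:k,1:k}[s-1]\right),
\]
so $A\pcar{ij}(s-1)$ is $\Gmc^{A,k}_s$-measurable. Working with a regular version of $\PP(\,\cdot\mid \Gmc^{A,k}_s)$, which exists because the state space $\{0,1\}$ is standard Borel, yields the rewriting
\[
\PP\!\left(A\pcar{ij}(s)=a\,\middle|\,\Gmc^{A,k}_s,\,A\pcar{ij}(s-1)=a'\right) \;=\; \PP\!\left(A\pcar{ij}(s)=a\,\middle|\,\Gmc^{A,k}_s\right)\Big|_{A\pcar{ij}(s-1)=a'}.
\]

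Next, I would invoke \eqref{eqn::Aevolve}, which under Definition \ref{defn::AZlim}(a) gives
\[
\PP\!\left(A\pcar{ij}(s)=1\,\middle|\,\Gmc^{A,k}_s\right) \;=\; B\!\left(A\pcar{ij}(s-1),\,Z\pcar{i}(s),\,Z\pcar{j}(s)\right).
\]
Evaluating at $A\pcar{ij}(s-1)=a'$ yields $B(a',Z\pcar{i}(s),Z\pcar{j}(s))$ when $a=1$ and its complement when $a=0$. Combining the two cases into a single expression via the shorthand $\alt{F}(p,x)=pF(x)+(1-p)(1-F(x))$ with $F=B$, $p=a$, and $x=(a',Z\pcar{i}(s),Z\pcar{j}(s))$ gives precisely the right-hand side of \eqref{eq::condpBtild}. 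Finally, because Proposition \ref{prop::welldef} shows that Definitions \ref{defn::AZlim}(a), (b), and (c) describe the same joint law, the identity automatically holds under all three characterizations.

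There is no serious obstacle: the content is fundamentally a bookkeeping identity. The only conceptual point requiring care is to treat $\PP(\,\cdot\mid \Gmc^{A,k}_s)$ as a regular conditional distribution so that the ``evaluation at $A\pcar{ij}(s-1)=a'$'' step is rigorously justified. Since $A\pcar{ij}(s-1)$ takes values in the finite set $\{0,1\}$ and is $\Gmc^{A,k}_s$-measurable, standard measure-theoretic disintegration makes this immediate.
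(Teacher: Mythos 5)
Your core computation is exactly the paper's: both use \eqref{eqn::Aevolve} to read off $\PP(A\pcar{ij}(s)=1\mid\Gmc^{A,k}_s, A\pcar{ij}(s-1)=a') = B(a', Z\pcar{i}(s), Z\pcar{j}(s))$, take the complement for $a=0$, and package the two cases via the $\alt{B}$ shorthand. Your elaboration on regular conditional probabilities and $\Gmc^{A,k}_s$-measurability of $A\pcar{ij}(s-1)$ is harmless extra detail; the paper elides it.

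However, your final sentence introduces a circularity that you should remove. You appeal to Proposition \ref{prop::welldef} to transfer the identity from Definition \ref{defn::AZlim}(a) to (b) and (c), but Lemma \ref{lem::condpBtild} is itself used in the paper's proof of Proposition \ref{prop::welldef} (in Lemmas \ref{lem::achar} through \ref{lem::ctoa}), so you cannot invoke that proposition here. The correct observation is that no equivalence is needed at all: equation \eqref{eqn::Aevolve} belongs to the common first part of Definition \ref{defn::AZlim}, which (as the paper states explicitly after the definition) is included whenever any one of (a), (b), or (c) is referenced. Hence the same one-line computation holds verbatim under each characterization, without needing to know in advance that they coincide.
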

\begin{proof}
By Definition \ref{defn::AZlim}, $A\pcar{ij}$ satisfies \eqref{eqn::Aevolve}. Setting $a = 1$, this implies that
\begin{align*}
\PP\left(A\pcar{ij}(s) = 1\middle|\Gmc^{A,k}_s,A\pcar{ij} = a'\right) &= \ex{A\pcar{ij}(s)\middle|\Gmc^{A,k}_s,A\pcar{ij} = a'} = B(a',Z\pcar{i}(s),Z\pcar{j}(s)).
\end{align*}
Likewise, if $a = 0$, then 
\begin{align*}
\PP\left(A\pcar{ij}(s) = 0\middle|\Gmc^{A,k}_s,A\pcar{ij} = a'\right) &= 1-\ex{A\pcar{ij}(s)\middle|\Gmc^{A,k}_s,A\pcar{ij} = a'} = 1-B(a',Z\pcar{i}(s),Z\pcar{j}(s)).
\end{align*}
Hence,
\[\PP\left(A\pcar{ij}(s) = a\middle|\Gmc^{A,k}_s,A\pcar{ij} = a'\right) = \alt{B}\left(a,a',Z\pcar{i}(s),Z\pcar{j}(s)\right).\]
\end{proof}
\subsection{Proof of Proposition \ref{prop::welldef}}
\label{ssec::wdpf}
\sloppy It suffices to show that Definition \ref{defn::AZlim}(b) completely characterizes the distribution of $(Z\pcar{1:k}[t],A\pcar{1:k,1:k}[t])$ and that Definition \ref{defn::AZlim}(a) implies \ref{defn::AZlim}(b) implies \ref{defn::AZlim}(c) implies \ref{defn::AZlim}(a). We prove these statements in Lemmas \ref{lem::achar}-\ref{lem::ctoa} below.

\ind Before we prove this, we establish an intermediate result that will be extremely useful for the remainder of this section. This result establishes that the limiting subnetwork $A\pcar{1:k,1:k}$ is conditionally independent of future latent opinions given current and past latent opinions.

\begin{lemma}
\label{lem::futind}
If $(Z\pcar{1:k}[t],A\pcar{1:k,1:k}[t])$ satisfies the conditions laid out in Definition \ref{defn::AZlim}(a),(b) or (c), then for any $s \in [t-1]$, $A\pcar{1:k,1:k}[s]$ is conditionally independent of $Z\pcar{1:k}[s+1:t]$ given $Z\pcar{1:k}[s]$.
\end{lemma}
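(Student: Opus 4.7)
The plan is to exploit the asymmetric structure of the limiting model: although $A\pcar{1:k,1:k}$ is built from the latent opinions $Z\pcar{1:k}$, the reverse dependence disappears in the mean-field limit. The drift (\ref{eqn::Lidef}) driving $Z\pcar{i}(s+1)$ is a conditional expectation against the \emph{deterministic} product measure $\mu_s^2 = \mu_s \otimes \mu_s$, so by Fubini it reduces to
\[
L\pcar{i}(s) = \ell_s(Z\pcar{i}[s]), \qquad \ell_s(z) \defeq \frac{\int z'(s)\, B_s(z, z'[s])\, \mu_s(dz'[s])}{\int B_s(z, z'[s])\, \mu_s(dz'[s])},
\]
a deterministic measurable function of $Z\pcar{i}[s]$ alone. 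Consequently, (\ref{eqn::Zevolve}) yields $Z\pcar{i}(s+1) = \Phi_s(Z\pcar{i}[s], \xi_i(s))$ for a fixed measurable $\Phi_s$, and iterating gives a measurable $\Psi$ with $Z\pcar{1:k}[s+1:t] = \Psi(Z\pcar{1:k}[s], \xi_{1:k}[s:t-1])$.

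Next, I would verify that the noise block $\xi_{1:k}[s:t-1]$ is independent of $\Fmc^k_s$. The hypothesis stated immediately after (\ref{eqn::mu2def}) gives $\xi_{1:k}(u) \indp \Fmc^k_u$ for every $u$. Since $\xi_i(u-1) = Z\pcar{i}(u) - (1-\gamma)Z\pcar{i}(u-1) - \gamma\ell_{u-1}(Z\pcar{i}[u-1])$ is $\Fmc^k_u$-measurable, an easy induction on $u$ shows $\sigma(\Fmc^k_s, \xi_{1:k}(s), \ldots, \xi_{1:k}(u-1)) \subseteq \Fmc^k_u$. Applied successively, this forces $\xi_{1:k}(u)$ to be independent of this enlarged $\sigma$-algebra for each $u \in [s:t-1]$, so the whole vector $\xi_{1:k}[s:t-1]$ is jointly independent of $\Fmc^k_s$.

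Combining these two ingredients, for any bounded measurable $f$,
\[
\ex{f\bigl(Z\pcar{1:k}[s+1:t]\bigr) \,\middle|\, \Fmc^k_s} = \ex{f\bigl(\Psi(z, \xi_{1:k}[s:t-1])\bigr)}\Big|_{z = Z\pcar{1:k}[s]},
\]
because $\Psi$ is deterministic and $\xi_{1:k}[s:t-1]$ is independent of $\Fmc^k_s$. The right-hand side is $\Gmc^k_s$-measurable, so the conditional expectation is unchanged when $\Fmc^k_s$ is replaced by $\Gmc^k_s$. Since $A\pcar{1:k,1:k}[s]$ is $\Fmc^k_s$-measurable, this establishes that $Z\pcar{1:k}[s+1:t]$ and $A\pcar{1:k,1:k}[s]$ are conditionally independent given $Z\pcar{1:k}[s]$.

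The only non-routine point is the reduction of $L\pcar{i}(s)$ to a deterministic function of $Z\pcar{i}[s]$; this is what truly makes the limiting opinion dynamics autonomous and decouples them from the (past) subnetwork. Everything else is bookkeeping with the noise assumption and conditional expectations. The argument does not distinguish among Definition \ref{defn::AZlim}(a), (b), or (c), since it uses only the opinion dynamics (\ref{eqn::Zevolve})--(\ref{eqn::mu2def}) and the noise structure stated immediately below (\ref{eqn::mu2def}), which are common to all three formulations.
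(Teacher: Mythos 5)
Your proof is correct and follows essentially the same route as the paper: observe that $L\pcar{i}(s)$ is a deterministic function of $Z\pcar{i}[s]$ (hence $\Gmc^k_s$-measurable), deduce that $Z\pcar{1:k}[s+1:t]$ is a measurable function of $(Z\pcar{1:k}[s],\xi_{1:k}[s:t-1])$, and combine this with the independence of the noise block from $\Fmc^k_s$ to conclude. The paper delegates the final conditional-independence step to a cited proposition from Puterman--Schmee, whereas you verify the tower-property identity $\ex{f(Z\pcar{1:k}[s+1:t])\mid\Fmc^k_s}=\ex{f(Z\pcar{1:k}[s+1:t])\mid\Gmc^k_s}$ by hand; you also spell out, via a short induction, the joint independence of $\xi_{1:k}[s:t-1]$ from $\Fmc^k_s$, which the paper asserts directly from the standing assumptions. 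These are stylistic differences, not a different argument.
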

\begin{proof}
By \eqref{eqn::Lidef}, $L\pcar{i}(s)$ is $\Gmc^k_s$-measurable for any $i \in [1:k]$. It follows by \eqref{eqn::Zevolve} that $Z\pcar{i}(s+1) - \xi_i(s)$ is $\Gmc^k_s$-measurable for each $i \in [1:k]$. It immediately follows that $\Gmc^k_{s+1} = \Gmc^k_s\vee \sigma(\xi_{1:k}(s))$. Iterating this argument, we get
\[\Gmc^k_t = \Gmc^k_s\vee\sigma\left(\xi_{1:k}[s:t-1]\right).\]

\ind Recall from \eqref{eqn::Zevolve}-\eqref{eqn::mu2def} that $\xi_{1:k}[s:t-1]$ is independent of $\Fmc^k_s = \sigma(A\pcar{1:k,1:k}[s])\vee\sigma(Z\pcar{1:k}[s])$. The result then follows from \cite[Proposition 2.5(b)]{PutSch85}, where $F_1 = \sigma(\xi_{1:k}[s:t-1])$, $F_2 = \sigma(A\pcar{1:k,1:k}[s])$ and $G = \Gmc^k_s$.
\end{proof}

We now show that Definition \ref{defn::AZlim}(b) completely characterizes the desired distribution.

\begin{lemma}
\label{lem::achar}
The distribution of $(Z\pcar{1:k}[t],A\pcar{1:k,1:k})$ is completely characterized by Definition \ref{defn::AZlim}(b).
\end{lemma}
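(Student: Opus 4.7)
The plan is to show that the joint law of $(Z\pcar{1:k}[t],A\pcar{1:k,1:k}[t])$ is pinned down by first determining the marginal law of $Z\pcar{1:k}[t]$ from \eqref{eqn::Zevolve}--\eqref{eqn::mu2def} (which is common to all three parts of Definition \ref{defn::AZlim}), and then using the conditional independence and conditional Markov chain structure of \ref{defn::AZlim}(b) to write the conditional law of $A\pcar{1:k,1:k}[t]$ given $Z\pcar{1:k}[t]$ in closed form.

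First I would argue by induction on $s \in [t]$ that the joint law of $(Z\pcar{1:k}[s],\mu_s)$ is uniquely determined, hence so is $\mu_s \in \Pmc(\mf_s(\R^d))$. The base case is given by Assumption \ref{assu::init}. For the inductive step, if $\mu_s$ is known then the function $B_s$ is explicitly determined by \eqref{eqn::Bevolve}, so $L\pcar{i}(s)$ in \eqref{eqn::Lidef} is a deterministic measurable function of $Z\pcar{i}[s]$. Combined with the fact that $\xi_{1:k}(s)$ is independent of $\Fmc^k_s$ with a specified distribution, this fixes the joint law of $Z\pcar{1:k}[s+1]$ via \eqref{eqn::Zevolve}, and hence $\mu_{s+1}$ as well. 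The uniqueness assumed in Definition \ref{defn::AZlim} guarantees that this inductive procedure is well-posed.

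Next, I would characterize the conditional law of $A\pcar{1:k,1:k}[t]$ given $\Gmc^k_t = \sigma(Z\pcar{1:k}[t])$. For any family $(a\pcar{ij}[t])_{(i,j)\in \incs_k}\in \mf_t(\{0,1\})^{|\incs_k|}$, the conditional independence in Definition \ref{defn::AZlim}(b) gives
\begin{equation*}
\PP\!\left(A\pcar{ij}[t] = a\pcar{ij}[t]\ \forall (i,j)\in \incs_k\,\middle|\,\Gmc^k_t\right) \;=\; \prod_{(i,j)\in \incs_k} \PP\!\left(A\pcar{ij}[t] = a\pcar{ij}[t]\,\middle|\,\Gmc^k_t\right),
\end{equation*}
and the conditional Markov chain property in \eqref{eqn::Atrans}--\eqref{eqn::Ainit} (together with Lemma \ref{lem::condpBtild}) gives each factor explicitly as
\begin{equation*}
\alt{B}_0\!\left(a\pcar{ij}(0),Z\pcar{i}(0),Z\pcar{j}(0)\right)\prod_{s=1}^{t}\alt{B}\!\left(a\pcar{ij}(s),a\pcar{ij}(s-1),Z\pcar{i}(s),Z\pcar{j}(s)\right),
\end{equation*}
in the notation of \eqref{eqn::altdef}. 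The entries on the diagonal and below are pinned down by $A\pcar{ii}(s) = 1$ and $A\pcar{ij}[t] = A\pcar{ji}[t]$ from Definition \ref{defn::AZlim}. Since the marginal law of $Z\pcar{1:k}[t]$ and the conditional law of $A\pcar{1:k,1:k}[t]$ given $Z\pcar{1:k}[t]$ are both uniquely specified, so is their joint law.

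I expect the main subtlety to be the well-posedness of the recursive construction of $\mu_s$: the denominator in $L\pcar{i}(s)$ involves $\exmu{\mu_s^2}{B_s(Z\pcar{i}[s],Z'[s])\mid Z\pcar{i}[s]}$, which requires $B_s$ to be bounded away from $0$ on a set of full $\mu_s$-measure in order for $L\pcar{i}(s)$ to be almost surely finite. This is where Assumption \ref{assu::bds}(c) enters: either the lower bound \eqref{eq::Bexpbd} together with the tail control from \eqref{eq::MGFZ0}--\eqref{eq::MGFxi0} ensures a positive denominator, or in the finite range case the continuity and strict positivity at $(a,z,z)$ play the analogous role. The well-definedness of $Z\pcar{i}[t]$ as the unique solution to \eqref{eqn::Zevolve}--\eqref{eqn::mu2def} is asserted in Definition \ref{defn::AZlim} itself, so for this lemma I would simply cite that standing clause; the rest of the argument is bookkeeping via conditional independence and the explicit Markov kernel.
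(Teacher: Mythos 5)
Your plan is correct and follows the same overall decomposition as the paper: fix the marginal law of $Z\pcar{1:k}[t]$ from \eqref{eqn::Zevolve}--\eqref{eqn::mu2def}, then pin down the conditional law of $A\pcar{1:k,1:k}[t]$ given $\Gmc^k_t$ via the conditional-independence-plus-Markov-chain structure of Definition \ref{defn::AZlim}(b), and you arrive at exactly the same closed-form product over edges and time steps. The differences from the paper's proof are small but worth noting. First, for the $Z$-marginal the paper simply cites the ``unique solution'' clause of Definition \ref{defn::AZlim}; you spell out the induction on $s$ that makes this precise, which is fine and arguably more self-contained. Second, and more substantively, you read the one-step transition probability $\PP(A\pcar{ij}(s)=a_{ij}(s)\mid \Gmc^k_t,\,A\pcar{ij}(s-1)=a_{ij}(s-1)) = \alt{B}(a_{ij}(s),a_{ij}(s-1),Z\pcar{i}(s),Z\pcar{j}(s))$ directly from \eqref{eqn::Atrans}--\eqref{eqn::Ainit}, since the right-hand side of \eqref{eqn::Atrans} is already $\Gmc^k_t$-measurable; the paper instead takes a longer route, first using Lemma \ref{lem::futind} and \cite[Proposition 3.2(a)]{PutSch85} to reduce the conditioning $\Gmc^k_t$ down to $\Gmc^{A,k}_s$ and then invoking \eqref{eqn::Aevolve} and Lemma \ref{lem::condpBtild}. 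Your more direct route is valid because \eqref{eqn::Atrans}--\eqref{eqn::Ainit} are taken as hypotheses in Definition \ref{defn::AZlim}(b). One minor imprecision: Lemma \ref{lem::condpBtild} as stated is about conditioning on $\Gmc^{A,k}_s$, not on $(\Gmc^k_t, A\pcar{ij}(s-1))$, so you cannot cite it literally for the step you describe; but the identical one-line Bernoulli calculation applied to \eqref{eqn::Atrans} gives what you want, so this is cosmetic rather than a gap. Your flagging of the denominator positivity for $L\pcar{i}(s)$ is a reasonable precaution, though as you say it is already absorbed into the standing ``unique solution'' clause and not re-proved in the paper's version of this lemma (it is addressed separately in Lemma \ref{lem::posdenom}).
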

\begin{proof}
The first part of Definition \ref{defn::AZlim} completely characterizes the marginal distribution of $Z\pcar{1:k}[t]$. Moreover, $A\pcar{ij}[t] = A\pcar{ji}[t]$ for all $i,j \in [1:k]$ and $A\pcar{ii}[t] = 1$ for all $i \in [1:k]$. Therefore, it suffices to prove that the conditional distribution of $(A\pcar{ij}[t])_{(i,j) \in \incs_k}$ given $\Gmc^k_t = \sigma(Z\pcar{1:k}[t])$ is well-defined, where we recall that $\incs_k = \{(i,j)\in[k]^2: 1\leq i <j\leq k\}$.

\ind Fix any $\bm{a}\defeq (a_{ij}[t])_{(i,j)\in\incs_k} \in (\mf_t(\{0,1\}))^{\incs_k}$. Then by Definition \ref{defn::AZlim}(b), Lemma \ref{lem::futind}, \cite[Proposition 3.2(a)]{PutSch85} (setting $F_1 = \Gmc^k_t$, $F_2 = \sigma(A\pcar{ij}(s-1))$, $F_3 = \sigma(A\pcar{1:k,1:k}[s])$ and $G = \Gmc^k_s$), \eqref{eqn::filtrations}, and \eqref{eqn::Aevolve}:
\begin{align*}
\PP&\left((A\pcar{ij}[t])_{(i,j)\in \incs_k} = (a_{ij}[t])_{(i,j)\in\incs_k}\middle|\Gmc^k_t\right)=\\
&= \prod_{(i,j)\in \incs_k}\PP\left(A\pcar{ij}[t] = a_{ij}[t]\middle|\Gmc^k_t\right)\\
&=\prod_{(i,j)\in \incs_k}\PP\left(A\pcar{ij}(0) = a_{ij}(0)\middle|\Gmc^k_t\right)\prod_{s=1}^t\PP\left(A\pcar{ij}(s) = a_{ij}(s)\middle|\Gmc^k_t,A\pcar{ij}(s-1)=a_{ij}(s-1)\right)\\
&=\prod_{(i,j)\in \incs_k}\PP\left(A\pcar{ij}(0) = a_{ij}(0)\middle|\Gmc^{k}_0\right)\prod_{s=1}^t\PP\left(A\pcar{ij}(s) = a_{ij}(s)\middle|\Gmc^k_s,A\pcar{ij}(s-1)=a_{ij}(s-1)\right)\\
&=\prod_{(i,j)\in \incs_k}\PP\left(A\pcar{ij}(0) = a_{ij}(0)\middle|\Gmc^{A,k}_0\right)\\
&\ind\prod_{s=1}^t\ex{\PP\left(A\pcar{ij}(s) = a_{ij}(s)\middle|\Gmc^{A,k}_s,A\pcar{ij}(s-1)=a_{ij}(s-1)\right)\middle|\Gmc^k_s,A\pcar{ij}(s-1)=a_{ij}(s-1)}\\
&=\prod_{(i,j)\in \incs_k}\alt{B}_0\left(a_{ij}(0),Z\pcar{i}(0),Z\pcar{j}(0)\right)\\
&\ind\prod_{s=1}^t\ex{\alt{B}\left(a_{ij}(s),a_{ij}(s-1),Z\pcar{i}(s),Z\pcar{j}(s)\right)\middle|\Gmc^k_s,A\pcar{ij}(s-1)=a_{ij}(s-1)}\\
&=\prod_{(i,j)\in \incs_k}\alt{B}_0\left(a_{ij}(0),Z\pcar{i}(0),Z\pcar{j}(0)\right)\prod_{s=1}^t\alt{B}\left(a_{ij}(s),a_{ij}(s-1),Z\pcar{i}(s),Z\pcar{j}(s)\right),
\end{align*}
where the function $\alt{B}$ is defined via $B$ based on \eqref{eqn::altdef}, see Lemma \ref{lem::condpBtild}. This concludes the proof. \end{proof}

We now prove the equivalences of definitions.

\begin{lemma}
\label{lem::atob}
Definition \ref{defn::AZlim}(a) implies Definition \ref{defn::AZlim}(b).
\end{lemma}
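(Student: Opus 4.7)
My plan is to compute $\PP(A\pcar{1:k,1:k}[t] = \bm{a} \mid \Gmc^k_t)$ explicitly for an arbitrary $\bm{a} = (a_{ij}[t])_{(i,j)\in\incs_k}$, show it factorizes simultaneously across time $s \in [t]$ and across edge indices $(i,j)\in\incs_k$, and then read off from that factorization both the mutual conditional independence of $\{A\pcar{ij}[t]\}_{(i,j)\in\incs_k}$ given $\Gmc^k_t$ and the conditional-Markov structure of each trajectory with the prescribed initial law and transitions. The first step is the chain-rule decomposition
\begin{align*}
&\PP\bigl(A\pcar{1:k,1:k}[t] = \bm{a}\,\big|\,\Gmc^k_t\bigr) \\
&\qquad = \prod_{s=0}^t \PP\bigl(A\pcar{1:k,1:k}(s) = a(s)\,\big|\,\Gmc^k_t,\, A\pcar{1:k,1:k}[s-1] = a[s-1]\bigr),
\end{align*}
with the $s=0$ factor interpreted as conditioning only on $\Gmc^k_t$.

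Each per-time-step factor is where Definition \ref{defn::AZlim}(a) enters. By hypothesis $\{A\pcar{ij}(s)\}_{(i,j)\in\incs_k}$ are mutually conditionally independent given $\Gmc^{A,k}_s = \sigma(Z\pcar{1:k}[s], A\pcar{1:k,1:k}[s-1])$, with the conditional marginals identified via Lemma \ref{lem::condpBtild} and \eqref{eqn::Aevolve}. Substituting $A[s-1] = a[s-1]$ thereby produces the product $\prod_{(i,j)\in\incs_k} \alt{B}(a_{ij}(s), a_{ij}(s-1), Z\pcar{i}(s), Z\pcar{j}(s))$ (with the $B_0$-analogue at $s=0$) as the conditional probability with respect to $\sigma(\Gmc^k_s, A\pcar{1:k,1:k}[s-1])$.

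The main obstacle is then upgrading this from conditioning on $(\Gmc^k_s, A[s-1])$ to $(\Gmc^k_t, A[s-1])$, i.e.\ removing the dependence on the future latent positions $Z\pcar{1:k}[s+1:t]$. For this I would invoke Lemma \ref{lem::futind}, which yields that $A\pcar{1:k,1:k}[s]$ is conditionally independent of $Z\pcar{1:k}[s+1:t]$ given $\Gmc^k_s$. Since this joint independence covers both $A(s)$ and $A[s-1]$, the standard conditional-independence implication that $(X_1,X_2) \perp Y \mid Z$ entails $X_1 \perp Y \mid (X_2, Z)$ extracts $A(s) \perp Z\pcar{1:k}[s+1:t] \mid (\Gmc^k_s, A[s-1])$, which is precisely what justifies replacing $\Gmc^k_s$ by $\Gmc^k_t$ in the conditioning without changing the conditional probability.

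Multiplying the per-time factors over $s = 0, \dots, t$ produces a joint conditional law that factors simultaneously across $(i,j)\in\incs_k$ and, for each fixed $(i,j)$, takes the form of the finite-dimensional law of a time-inhomogeneous $\{0,1\}$-valued Markov chain with initial distribution $B_0(Z\pcar{i}(0), Z\pcar{j}(0))$ and transition kernel $a \mapsto B(a, Z\pcar{i}(s), Z\pcar{j}(s))$. Because each per-edge factor already sums to $1$ over $a_{ij}[t]$, the factorization across $(i,j)$ is genuine mutual conditional independence; the per-edge factorization across time is the Markov property, with transitions matching \eqref{eqn::Atrans}--\eqref{eqn::Ainit}. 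This establishes both conjuncts of Definition \ref{defn::AZlim}(b).
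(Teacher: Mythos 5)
Your argument is correct and matches the paper's own proof almost step for step: both decompose $\PP(A\pcar{1:k,1:k}[t]=\bm{a}\mid\Gmc^k_t)$ via the chain rule over time, factor each per-time conditional probability over edges using Definition \ref{defn::AZlim}(a) and Lemma \ref{lem::condpBtild}, and then read off mutual conditional independence and the Markov structure from the resulting factored form. Your explicit invocation of Lemma \ref{lem::futind} together with the contraction rule $(X_1,X_2)\perp Y\mid Z \Rightarrow X_1\perp Y\mid(X_2,Z)$ to pass between $\Gmc^k_s$- and $\Gmc^k_t$-conditioning actually spells out a step that the paper leaves implicit in the second equality of its chain, so this is a slightly more careful rendering of the same argument.
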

\begin{proof}
Fix any $\bm{a}\defeq (a_{ij}[t])_{(i,j)\in\incs_k} \in (\mf_t(\{0,1\})^{\incs_k}$. Then applying \eqref{eqn::filtrations}, Definition \ref{defn::AZlim}(a) and \eqref{eqn::Aevolve},
\begin{align*}
\PP&\left((A\pcar{ij}[t])_{(i,j)\in \incs_k} = (a_{ij}[t])_{(i,j)\in \incs_k}\middle|\Gmc^k_t\right)\\
&= \PP\left((A\pcar{ij}(0))_{(i,j)\in \incs_k} = (a_{ij}(0))_{(i,j)\in \incs_k}\middle|\Gmc^k_t\right)\\
&\ind \prod_{s=1}^t\PP\left((A\pcar{ij}(s))_{(i,j)\in \incs_k} = (a_{ij}(s))_{(i,j)\in \incs_k}\middle|\Gmc^k_t,(A\pcar{ij}[s-1])_{(i,j)\in\incs_k} = (a\pcar{ij}[s-1])_{(i,j)\in\incs_k}\right)\\
&= \PP\left((A\pcar{ij}(0))_{(i,j)\in \incs_k} = (a_{ij}(0))_{(i,j)\in \incs_k}\middle|\Gmc^k_0\right)\\
&\ind \prod_{s=1}^t\PP\left((A\pcar{ij}(s))_{(i,j)\in \incs_k} = (a_{ij}(s))_{(i,j)\in \incs_k}\middle|\Gmc^k_s,(A\pcar{ij}[s-1])_{(i,j)\in\incs_k} = (a\pcar{ij}[s-1])_{(i,j)\in\incs_k}\right)\\
&= \PP\left((A\pcar{ij}(0))_{(i,j)\in \incs_k} = (a_{ij}(0))_{(i,j)\in \incs_k}\middle|\Gmc^{A,k}_0\right)\\
&\ind \prod_{s=1}^t\PP\left((A\pcar{ij}(s))_{(i,j)\in \incs_k} = (a_{ij}(s))_{(i,j)\in \incs_k}\middle|\Gmc^{A,k}_s,(A\pcar{ij}[s-1])_{(i,j)\in\incs_k} = (a\pcar{ij}[s-1])_{(i,j)\in\incs_k}\right)\\
&=\prod_{(i,j) \in \incs_k}\PP\left(A\pcar{ij}(0) = a_{ij}(0)\middle|\Gmc^{A,k}_0\right) \times\nonumber\\
&\ind\times \prod_{s=1}^t\PP\left(A\pcar{ij}(s) = a_{ij}(s)\middle|\Gmc^{A,k}_s,(A\pcar{ij}[s-1])_{(i,j)\in\incs_k} = (a\pcar{ij}[s-1])_{(i,j)\in\incs_k}\right)\\
&=\prod_{(i,j) \in \incs_k}\alt{B_0}\left(a_{ij}(0),Z\pcar{i}(0),Z\pcar{j}(0)\right) \prod_{s=1}^t\alt{B}\left(a_{ij}(s),a_{ij}(s-1),Z\pcar{i}(s),Z\pcar{j}(s)\right),
\end{align*}
by Lemma \ref{lem::condpBtild}. The latter relation proves both the conditional independence (with respect to $\Gmc^k_t$) of $\{A\pcar{ij}[t]\}_{(i,j)\in \incs_k}$ and the conditional Markovian structure of $A\pcar{ij}[t]$ given $\Gmc^k_t$ for each $(i,j) \in \incs_k$. Therefore, Definition \ref{defn::AZlim}(a) indeed implies that  Definition \ref{defn::AZlim}(b) holds.
\end{proof}

\begin{lemma}
\label{lem::btoc}
Definition \ref{defn::AZlim}(b) implies Definition \ref{defn::AZlim}(c).
\end{lemma}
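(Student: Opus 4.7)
The plan is to reduce the claim to a direct consequence of the factorization given by Definition \ref{defn::AZlim}(b), combined with the conditional independence of $A\pcar{1:k,1:k}[s]$ and $Z\pcar{1:k}[s+1:t]$ given $\Gmc^k_s$ that was already established in Lemma \ref{lem::futind}. Concretely, fix $s \in [t]$ and $\bm{a}[s] \defeq (a_{ij}[s])_{(i,j)\in \incs_k} \in (\mf_s(\{0,1\}))^{\incs_k}$. I want to show
\[
\PP\!\left((A\pcar{ij}[s])_{(i,j) \in \incs_k} = \bm{a}[s]\,\bigg|\,\Gmc^k_s\right) = \prod_{(i,j) \in \incs_k} \PP\!\left(A\pcar{ij}[s] = a_{ij}[s]\,\bigg|\,\Gmc^k_s\right),
\]
which, together with the first part of Definition \ref{defn::AZlim} (which is common to all three formulations), is exactly what Definition \ref{defn::AZlim}(c) asserts.

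First I would apply Definition \ref{defn::AZlim}(b) at the full horizon $t$ to write
\[
\PP\!\left((A\pcar{ij}[s])_{(i,j) \in \incs_k} = \bm{a}[s]\,\bigg|\,\Gmc^k_t\right) = \prod_{(i,j) \in \incs_k} \PP\!\left(A\pcar{ij}[s] = a_{ij}[s]\,\bigg|\,\Gmc^k_t\right),
\]
which follows from the conditional independence part of (b) by considering cylinder events of the form $\{A\pcar{ij}[t] \in \cdot\}$ whose $[s+1:t]$-marginals are unrestricted. Next I would evaluate each factor using the conditional Markov chain structure from (b), namely \eqref{eqn::Atrans}--\eqref{eqn::Ainit}, which gives
\[
\PP\!\left(A\pcar{ij}[s] = a_{ij}[s]\,\bigg|\,\Gmc^k_t\right) = \alt{B_0}\!\left(a_{ij}(0),Z\pcar{i}(0),Z\pcar{j}(0)\right) \prod_{r=1}^{s} \alt{B}\!\left(a_{ij}(r),a_{ij}(r-1),Z\pcar{i}(r),Z\pcar{j}(r)\right),
\]
where $\alt{B}_0$ and $\alt{B}$ are as in \eqref{eqn::altdef} (cf.\ Lemma \ref{lem::condpBtild}). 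The crucial observation is that this expression depends on $Z\pcar{1:k}$ only through $Z\pcar{1:k}[s]$, hence each factor is $\Gmc^k_s$-measurable.

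Finally I would take conditional expectation with respect to $\Gmc^k_s \subseteq \Gmc^k_t$ on both sides of the product formula, use the tower property, and pull the $\Gmc^k_s$-measurable factors outside, obtaining
\[
\PP\!\left((A\pcar{ij}[s])_{(i,j) \in \incs_k} = \bm{a}[s]\,\bigg|\,\Gmc^k_s\right) = \prod_{(i,j) \in \incs_k} \PP\!\left(A\pcar{ij}[s] = a_{ij}[s]\,\bigg|\,\Gmc^k_t\right).
\]
To finish, I would identify each right-hand factor with $\PP(A\pcar{ij}[s] = a_{ij}[s]\mid \Gmc^k_s)$. This last identification is the only mildly subtle step, and is where I would rely on Lemma \ref{lem::futind}: since $A\pcar{ij}[s]$ is $\Fmc^k_s$-measurable and Lemma \ref{lem::futind} gives that $A\pcar{1:k,1:k}[s]$ is conditionally independent of $Z\pcar{1:k}[s+1:t]$ given $Z\pcar{1:k}[s]$, the conditional distribution of $A\pcar{ij}[s]$ given $\Gmc^k_t = \Gmc^k_s \vee \sigma(Z\pcar{1:k}[s+1:t])$ coincides with its conditional distribution given $\Gmc^k_s$. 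Substituting this identification into the display above yields the required product formula and completes the proof. I do not anticipate any serious obstacle here; the argument is essentially bookkeeping with the Markov kernel and one invocation of Lemma \ref{lem::futind}.
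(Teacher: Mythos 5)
Your proof is correct and follows essentially the same route as the paper: factorize using the conditional independence from Definition \ref{defn::AZlim}(b) given $\Gmc^k_t$, and then transfer the conditioning down to $\Gmc^k_s$ using Lemma \ref{lem::futind} (the paper applies Lemma \ref{lem::futind} on both sides of the factorization rather than taking a conditional expectation, but that is a cosmetic difference). The only small redundancy in your plan is that once you have shown, via the explicit Markov-kernel formula, that each factor $\PP(A\pcar{ij}[s]=a_{ij}[s]\mid\Gmc^k_t)$ is $\Gmc^k_s$-measurable, the tower property alone already gives $\PP(A\pcar{ij}[s]=a_{ij}[s]\mid\Gmc^k_t)=\PP(A\pcar{ij}[s]=a_{ij}[s]\mid\Gmc^k_s)$, so the second appeal to Lemma \ref{lem::futind} is not strictly needed (and conversely, Lemma \ref{lem::futind} alone would make the explicit computation unnecessary).
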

\begin{proof}
Fix any $s \in [t]$. If $s = t$, then we are done, so assume $s < t$. Fix any $\bm{a}\defeq (a_{ij}[s])_{(i,j)\in\incs_k} \in \left(\mf_s(\{0,1\})\right)^{\incs_k}$. Then applying Lemma \ref{lem::futind} and Definition \ref{defn::AZlim}(b),
\begin{align*}
\PP\left((A\pcar{ij}[s])_{(i,j)\in\incs_k} = \bm{a}\middle|\Gmc^k_s\right) &= \PP\left((A\pcar{ij}[s])_{(i,j)\in\incs_k} = \bm{a}\middle|\Gmc^k_t\right)\\
&=\prod_{(i,j)\in\incs_k} \PP\left(A\pcar{ij}[s] = \bm{a}_{ij}[s]\middle|\Gmc^k_t\right)\\
&=\prod_{(i,j)\in\incs_k} \PP\left(A\pcar{ij}[s] = \bm{a}_{ij}[s]\middle|\Gmc^k_s\right),
\end{align*}
which proves that $A\pcar{ij}[s]$ is conditionally independent given $\Gmc^k_s$.
\end{proof}

\begin{lemma}
\label{lem::ctoa}
Definition \ref{defn::AZlim}(c) implies Definition \ref{defn::AZlim}(a).
\end{lemma}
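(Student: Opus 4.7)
The plan is to derive the one-time-slice conditional independence of Definition \ref{defn::AZlim}(a) from the trajectory-level conditional independence of Definition \ref{defn::AZlim}(c) by a decomposition argument. First, I would handle the boundary case $s=0$ trivially: since $\Gmc^{A,k}_0 = \Gmc^k_0 = \sigma(Z\pcar{1:k}(0))$, applying Definition \ref{defn::AZlim}(c) with $s=0$ immediately gives conditional independence of $\{A\pcar{ij}(0)\}_{(i,j)\in\incs_k}$ given $\Gmc^{A,k}_0$.

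For $s>0$, the key observation is that $\Gmc^{A,k}_s = \Gmc^k_s \vee \sigma\left(A\pcar{1:k,1:k}[s-1]\right)$, together with the trajectory decomposition $A\pcar{ij}[s] = \left(A\pcar{ij}[s-1], A\pcar{ij}(s)\right)$. The strategy is to invoke the following elementary fact (which I would verify by a short direct computation in the finite-state setting we are in): if a family $\{X_\alpha\}_\alpha$ of random elements of the form $X_\alpha = (Y_\alpha, Z_\alpha)$ is conditionally independent given some $\sigma$-field $\mathcal{H}$, then $\{Z_\alpha\}_\alpha$ remains conditionally independent given the enlarged $\sigma$-field $\mathcal{H} \vee \sigma(\{Y_\alpha\}_\alpha)$. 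Specializing this to $\mathcal{H}=\Gmc^k_s$, $Y_{ij}=A\pcar{ij}[s-1]$, and $Z_{ij}=A\pcar{ij}(s)$, and using that $\sigma(\{Y_{ij}\}_{(i,j)\in\incs_k}) = \sigma(A\pcar{1:k,1:k}[s-1])$, yields exactly the desired conditional independence of $\{A\pcar{ij}(s)\}_{(i,j)\in\incs_k}$ given $\Gmc^{A,k}_s$.

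To prove the general fact concretely, I would compute for any atomic events $\bigcap_{(i,j)}\{A\pcar{ij}(s)=a_{ij}, A\pcar{ij}[s-1]=b_{ij}[s-1]\}$ the conditional probability given $\Gmc^k_s$, which factors over $(i,j)$ by Definition \ref{defn::AZlim}(c); then I would marginalize over the $a_{ij}$ to obtain the analogous factored expression for $\bigcap_{(i,j)}\{A\pcar{ij}[s-1]=b_{ij}[s-1]\}$; dividing the two (on the set where the denominator is positive, which suffices because all random variables in question take finitely many values) delivers a factored conditional distribution of $\{A\pcar{ij}(s)\}_{(i,j)\in\incs_k}$ given $\Gmc^k_s \vee \sigma(A\pcar{1:k,1:k}[s-1]) = \Gmc^{A,k}_s$. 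The main obstacle is purely notational, namely making the ``division'' argument precise as an identity of regular conditional probabilities; this is mild here because the conditioning on $A\pcar{1:k,1:k}[s-1]$ is on a discrete $\sigma$-field so one can argue atom-by-atom and avoid any measure-theoretic subtleties.
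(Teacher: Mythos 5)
Your proof is correct and goes by a genuinely different route than the paper's. The paper's proof of Lemma~\ref{lem::ctoa} uses the abstract conditional-independence calculus of \cite{PutSch85}: it first applies Lemma~\ref{CIsuff} to reduce mutual conditional independence to the statement $A\pcar{ij}[s] \indp (A\pcar{i'j'}[s])_{(i',j')\neq (i,j)}\mid\Gmc^k_s$, then chains two applications of \cite[Proposition~3.2(a)]{PutSch85} (successively absorbing $\sigma((A\pcar{i'j'}[s-1])_{(i',j')\neq(i,j)})$ and then $\sigma(A\pcar{ij}[s-1])$ into the conditioning $\sigma$-field) to land on $A\pcar{ij}(s) \indp (A\pcar{i'j'}(s))_{(i',j')\neq(i,j)}\mid\Gmc^{A,k}_s$, and finally invokes Lemma~\ref{CIsuff} once more. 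You instead formulate and prove a clean standalone fact --- if $X_\alpha=(Y_\alpha,Z_\alpha)$ are mutually conditionally independent given $\Hmc$, then the $Z_\alpha$ are mutually conditionally independent given $\Hmc\vee\sigma(\{Y_\alpha\}_\alpha)$ --- and apply it with $X_{ij}=A\pcar{ij}[s]$, $Y_{ij}=A\pcar{ij}[s-1]$, $Z_{ij}=A\pcar{ij}(s)$, $\Hmc=\Gmc^k_s$, after observing $\Gmc^{A,k}_s=\Gmc^k_s\vee\sigma(A\pcar{1:k,1:k}[s-1])$. Your atom-by-atom verification of that fact is sound here precisely because $\sigma(\{Y_\alpha\})$ is generated by a finite-valued random vector, so the division by $\PP(\bigcap_\alpha\{Y_\alpha=y_\alpha\}\mid\Hmc)$ can be carried out on atoms without regular-conditional-probability headaches. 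The two approaches buy slightly different things: the paper's is modular and reuses standard lemmas already present in the appendix, while yours is more self-contained, gives a reusable conditional-independence lemma directly tailored to the trajectory decomposition, and produces mutual conditional independence in one shot rather than via the pairwise reduction of Lemma~\ref{CIsuff}. Either is acceptable; if you write yours up in full you should state the general fact as a lemma and give the short atom-by-atom proof, since it is not a citation to existing machinery.
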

\begin{proof}
This is a simple consequence of \cite[Proposition 3.2(a)]{PutSch85} and Lemma \ref{CIsuff}. Fix any $s\in [t]$ and $(i,j) \in \incs_k$. By Definition \ref{defn::AZlim}(c) and Lemma \ref{CIsuff},
\[A\pcar{ij}[s] \indp (A\pcar{i'j'}[s])_{(i',j')\neq (i,j)}\Big|\Gmc^k_s.\]
Define $F_1 = \sigma(A\pcar{ij}[s])$, $F_2 = \sigma\left((A\pcar{i'j'}[s-1])_{(i',j')\neq (i,j)}\right)$, $F_3 = \sigma\left((A\pcar{i'j'}(s))_{(i',j')\neq (i,j)}\right)$ and $G = \Gmc^k_s$. Then by \cite[Proposition 3.2(a)]{PutSch85},
\[A\pcar{ij}[s] \indp (A\pcar{i'j'}(s))_{(i',j')\neq (i,j)}\Big|\Gmc^k_s\vee\sigma\left((A\pcar{i'j'}[s-1])_{(i',j')\neq (i,j)}\right).\]
Now set $F_1 = \sigma\left((A\pcar{i'j'}(s))_{(i',j')\neq (i,j)}\right)$, $F_2 = \sigma(A\pcar{ij}[s-1])$, $F_3 = \sigma(A\pcar{ij}(s))$ and $G = \Gmc^k_s\vee\sigma\left((A\pcar{i'j'}[s-1])_{(i',j')\neq (i,j)}\right)$. Then by \cite[Proposition 3.2(a)]{PutSch85},
\[A\pcar{ij}(s) \indp (A\pcar{i'j'}(s))_{(i',j')\neq (i,j)}\Big|\Gmc^k_s\vee\sigma\left((A\pcar{i'j'}[s-1])_{(i',j')\neq (i,j)}\right)\vee\sigma\left(A\pcar{ij}[s-1]\right) = \Gmc^{A,k}_s.\]
By Lemma \ref{CIsuff}, this implies that the random variables in $\{A\pcar{ij}(s)\}_{(i,j)\in \incs_k}$ are mutually conditionally independent given $\Gmc^{A,k}_s$ as desired.
\end{proof}

\subsection{Proof of Corollary \ref{coro::morestruct}}
\label{ssec::coromspf}

By Definition \ref{defn::AZlim}, $Z\pcar{i}(0)$, $i \in [1:k]$ are i.i.d.. By \eqref{eqn::Zevolve}-\eqref{eqn::mu2def}, there exists a function $F: \R^d\times \mf_{t-1}(\R^d) \to \mf_t(\R^d)$ such that
\[F\left(Z\pcar{i}(0),\xi_i[t-1]\right) = Z\pcar{i}[t]\te{ for }i \in [1:k].\]
By the statement after \eqref{eqn::Zevolve}-\eqref{eqn::mu2def}, $\{\xi_i(s)\}_{i \in [1:k],s\in [t]}$ are i.i.d. and independent of $\Fmc^k_0 = \sigma(Z\pcar{1:k}(0))$. This implies that
\[(Z\pcar{i}[t])_{i\in [1:k]} = \left(F\left(Z\pcar{i}(0),\xi_i[t-1]\right)\right)_{i \in [1:k]}\]
is an i.i.d. collection of random elements as desired.

\ind Next, we establish \eqref{eqn::exAH}. Fix $(i,j) \in \incs_k$ and $s \in [t]$. For $s = 0$, $\Hmc^k_0 = \Gmc^{A,k}_0$, so \eqref{eqn::exAH} holds. Then,
\begin{align*}
\ex{A\pcar{ij}(s)\middle|\Hmc^k_s} &= \ex{\ex{A\pcar{ij}(s)\middle|\Gmc^{A,k}_s}\middle|\Hmc^k_s}\\
&= \ex{B\left(A\pcar{ij}(s-1),Z\pcar{i}(s),Z\pcar{j}(s)\right)\middle|\Hmc^k_s}\\
&= B\left(A\pcar{ij}(s-1),Z\pcar{i}(s),Z\pcar{j}(s)\right)\\
&= \ex{A\pcar{ij}(s)\middle|\Gmc^{A,k}_s}.
\end{align*}
Likewise, if $s = 0$, then \eqref{eqn::exAG} holds by \eqref{eqn::Aevolve}. Now, fix $s \in [1:t]$ and assume that \eqref{eqn::exAG} holds for $s - 1$. Then applying \eqref{eqn::Aevolve}, \eqref{eqn::Bsdef}, and \eqref{eqn::Bhatdef},
\begin{align*}
\ex{A\pcar{ij}(s)\middle|\Gmc^k_s} &= \ex{\ex{A\pcar{ij}(s)\middle|\Gmc^k_s,A\pcar{ij}(s-1)}\middle|\Gmc^k_s}\\
&= \ex{B\left(A\pcar{ij}(s-1),Z\pcar{i}(s),Z\pcar{j}(s)\right)\middle|\Gmc^k_s}\\
&= \sum_{a = 0}^1 \ex{B\left(a,Z\pcar{i}(s),Z\pcar{j}(s)\right)\middle|\Gmc^k_s,A\pcar{ij}(s-1)=a}\PP\left(A\pcar{ij}(s-1) = a\middle|\Gmc^k_s\right)\\
&= \sum_{a = 0}^1 B\left(a,Z\pcar{i}(s),Z\pcar{j}(s)\right)\PP\left(A\pcar{ij}(s-1) = a\middle|\Gmc^k_{s-1}\right)\\
&= \wh{B}\left(B_{s-1}(Z\pcar{i}[s-1],Z\pcar{j}[s-1]),Z\pcar{i}(s),Z\pcar{j}(s)\right)\\
&= B_s\left(Z\pcar{i}[s],Z\pcar{j}[s]\right).
\end{align*}
All that remains now is to prove the conditional independence of $A\pcar{ij}(s)$, $(i,j) \in \incs_k$ given $\Hmc^k_s$. Fix any $s\in [t]$ and $\bm{a} \defeq (a_{ij})_{(i,j)\in \incs_k}\in \{0,1\}^{\incs_k}$. Applying Lemma \ref{lem::condpBtild},
\begin{align*}
\PP\left(A\pcar{ij}(s)=a_{ij}\middle|\Hmc^k_s\right) &= \ex{\PP\left(A\pcar{ij}(s)=a_{ij}\middle|\Gmc^{A,k}_s\right)\middle|\Hmc^k_s} \\
&= \ex{\alt{B}\left(a_{ij},A\pcar{ij}(s-1),Z\pcar{i}(s),Z\pcar{j}(s)\right)\middle|\Hmc^k_s} \\
&= \alt{B}\left(a_{ij},A\pcar{ij}(s-1),Z\pcar{i}(s),Z\pcar{j}(s)\right).
\end{align*}
Applying the above display, \eqref{eqn::exAH}, and Definition \ref{defn::AZlim}(b), 
\begin{align*}
&\PP\left((A\pcar{ij}(s))_{(i,j)\in \incs_k} = \bm{a}\middle|\Hmc^k_s\right) \\
&= \ex{\PP\left((A\pcar{ij}(s))_{(i,j)\in \incs_k} = \bm{a}\middle|\Gmc^{A,k}_{s}\right)\middle|\Hmc^k_s}\\
&= \ex{\prod_{(i,j)\in \incs_k} \PP\left(A\pcar{ij}(s) = a_{ij}\middle|\Gmc^{A,k}_{s}\right)\middle|\Hmc^k_s}\\
&= \ex{\prod_{(i,j)\in \incs_k} \alt{B}\left(A\pcar{ij}(s-1),Z\pcar{i}(s),Z\pcar{j}(s)\right)\middle|\Hmc^k_s}\\
&= \prod_{(i,j)\in \incs_k} \alt{B}\left(A\pcar{ij}(s-1),Z\pcar{i}(s),Z\pcar{j}(s)\right)\\
&=\prod_{(i,j)\in \incs_k} \PP\left(A\pcar{ij}(s)=a_{ij}\middle|\Hmc^k_s\right).
\end{align*}
This completes the proof.

\section{Proof of Theorem \ref{thm::Sampleconv}}
\label{sec::pfsmple}

\subsection{A Related Result}
\label{ssec::general}

We begin with a slightly more general result, which highlights the inductive claims used in our proof of Theorem \ref{thm::Sampleconv}. To this end, we use a notion of joint exchangeability, which slightly modifies the concept of an exchangeable collection of random elements.

\begin{definition}[Joint Exchangeability]
\label{def::jointexchangeability}
For any $n \in\N$, let $X = (X_1,\dots,X_n)$ be an $\Xmc^n$-random element and let $M = (M_{ij})_{i,j=1}^n$ be an $n\times n$ random matrix with entries in $\Ymc$. Then we say that the pair $(X,M)\defeq (X_{1:n},M_{1:n,1:n})$ is \emph{jointly exchangeable} if and only if for any permutation $\sigma \in S_n$,
\[(X_{1:n},M_{1:n,1:n}) \deq (X_{\sigma(1:n)},M_{\sigma(1:n)\sigma(1:n)}).\]
\end{definition}

Note that joint exchangeability is closely related to the notion of exchangeability excluding $i$:

\begin{remark}
\label{rmk::jointtoexcl}
If a $\Xmc^n\times \Ymc^{n\times n}$-random collection $(X,M)$ is jointly exchangeable, then for all $i \in [1:n]$, the collections $\left(X_i,(M_{ij})_{j=1}^n\right)$ and $\left(X_i,(X_j,M_{ij})_{j=1}^n\right)$ are exchangeable excluding $i$. The proof is simple: for any $\sigma \in S_n$ such that $\sigma(i) = i$, 
\[\left(X_i,(M_{ij})_{j=1}^n\right) \deq \left(X_{\sigma(i)},(M_{\sigma(i)\sigma(j)})_{j=1}^n\right) = \left(X_{i},(M_{i\sigma(j)})_{j=1}^n\right).\]
The proof for $(X_i,(X_j,M_{ij}))$ is essentially the same.
\end{remark}

To this end, we define the following collection of conditions on both the prelimit and the limiting systems, which are assumed to hold at a certain time $t$. We call these conditions \emph{property A at time $t$}. We later show that property A holding at all times implies the conclusions of Theorem \ref{thm::Sampleconv}, which allows our proof of the theorem to reduce to an inductive proof that property A holds at all times.

\begin{definition}
\label{def::property}(Property A at time $t$)
For a given $t \in \N_0$, we say the models given in \eqref{eqn::Znevolve}-\eqref{eqn::Anevolve} and Definition \ref{defn::AZlim} satisfies property A at time $t$ if the following conditions hold for all $n \in \N$:
\begin{enumerate}[label = (\alph*)]
\item\label{cond::jexch} $(Z^n[t],A^n[t])$ is jointly exchangeable.
\item\label{cond::ui} The collection $\{Z^n_{i}(t)\}_{n \in \N, i \in [1:n]}$ is uniformly integrable.
\item\label{cond::jconv} For any $k \in \N$, the following convergence holds:
\[(Z^n_{1:k}[t],A^n_{1:k,1:k}[t]) \Rightarrow (Z\pcar{1:k}[t],A\pcar{1:k,1:k}[t]).\]
\item\label{cond::Btbdd} $Z\pcar{1}[t]$ is absolutely continuous.
\end{enumerate}
\end{definition}

As mentioned above, property A holding at all times implies the conclusions of Theorem \ref{thm::Sampleconv}:

\begin{proposition}[Dynamics Preserve Property A]
\label{prop::genstat}
Under Assumptions \ref{assu::cond}, \ref{assu::init} and \ref{assu::bds}, and for any $t \in \N_0$, if the model satisfies property A at all times $s \leq t$
, then it satisfies property A at time $t + 1$ as well. 
\end{proposition}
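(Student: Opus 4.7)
The plan is to verify the four conditions \ref{cond::jexch}--\ref{cond::Btbdd} of property A at time $t+1$ in turn, exploiting property A at all times $s\leq t$ together with Assumptions \ref{assu::cond}, \ref{assu::init}, \ref{assu::bds}. Conditions \ref{cond::jexch} and \ref{cond::Btbdd} are the most elementary. For \ref{cond::jexch}, the updates \eqref{eqn::Znevolve}--\eqref{eqn::Anevolve} are symmetric in the agent labels, the noise $\xi_{1:n}(t)$ is i.i.d.\ and independent of $\filtm^n_t$, and the edges $\{A^n_{ij}(t+1)\}_{(i,j)\in\incs_n}$ are conditionally independent given $\filtm^{A,n}_{t+1}$; together these transport joint exchangeability of $(Z^n[t],A^n[t])$ into joint exchangeability of $(Z^n[t+1],A^n[t+1])$. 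For \ref{cond::Btbdd}, equation \eqref{eqn::Zevolve} writes $Z\pcar{1}(t+1)$ as a $\Fmc^k_t$-measurable term plus the independent absolutely continuous noise $\xi_1(t)$, and a convolution with an absolutely continuous measure is absolutely continuous.

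Condition \ref{cond::ui} reduces via \eqref{eqn::Znevolve} to controlling the neighborhood average $L^n_1(t)$ in \eqref{eqn::Lnidef}, which is delicate due to the inverse degree $1/d_{A^n(t)}(1)$. I would inductively propagate the stronger bound $\sup_n \ex{\exp(C|Z^n_1(s)|)}<\infty$ for every $C\in\R$, starting from $s=0$ where it holds by Assumption \ref{assu::bds}(a). Since $L^n_1(t)$ is a convex combination of the $Z^n_j(t)$'s over the neighbors of agent $1$, Jensen's inequality applied to $\exp(C\,\cdot)$ gives $\exp(C|L^n_1(t)|)\leq d_{A^n(t)}(1)^{-1}\sum_{j}A^n_{1j}(t)\exp(C|Z^n_j(t)|)$. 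The crux is ruling out the degeneracy $d_{A^n(t)}(1)=O(1)$: under Assumption \ref{assu::bds}(c), the conditional edge probability admits a lower bound of the form $c\exp(-C_b|Z^n_1(s)-Z^n_j(s)|)$ (or remains uniformly positive on a ball of identical opinions in the finite-range case), which, combined with the inductive exponential moment bounds and a Chernoff-type concentration of $d_{A^n(t)}(1)$ around its positive conditional mean of order $n$, allows one to split the expectation on $\{d_{A^n(t)}(1)\geq\delta n\}$ and its complement, the latter having exponentially small probability after conditioning on $Z^n[t]$.

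Condition \ref{cond::jconv} is the main obstacle and is where the conditional propagation of chaos machinery of Section \ref{sec::condpropchaos} enters. The plan is to apply Proposition \ref{prop::condconvres} with $X^n = Z^n_1[t]$ and $Y^n_j = (Z^n_j[t],A^n_{1j}[t])$; the $\Xmc/\Ymc$-convenience hypothesis of Definition \ref{def::usefulassu} is supplied by \ref{cond::jexch} at time $t$ via Remark \ref{rmk::jointtoexcl}, and continuous dependence of $\mu\pcar{1}_{A,t}$ on $Z\pcar{1}[t]$ follows from \eqref{eqn::muatdef} together with Definition \ref{defn::AZlim}(b) and Corollary \ref{coro::morestruct}. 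The ``if'' direction of Proposition \ref{prop::condconvres} then yields the conditional law of large numbers
\[
\left(Z^n_1[t],\,\mu^n_{A,1,t}\right) \Rightarrow \left(Z\pcar{1}[t],\,\mu\pcar{1}_{A,t}\right),
\]
where $\mu^n_{A,1,t}$ is the empirical measure from \eqref{eqn::munaitdef}. Writing $L^n_1(t)$ as the ratio of integrals of $(z',a)\mapsto z'a$ and $(z',a)\mapsto a$ against this empirical measure, the continuous mapping theorem combined with the uniform integrability supplied by \ref{cond::ui} (to handle the unbounded numerator) and the positivity of the denominator (guaranteed by Assumption \ref{assu::bds}(c) via the lower bound on $B_t$) produces the joint weak convergence of $(Z^n_i[t],L^n_i(t))_{i\in[1:k]}$ to $(Z\pcar{i}[t],L\pcar{i}(t))_{i\in[1:k]}$. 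The conditional Slutzky lemma of Appendix \ref{sec::useful} then promotes this (after adding the independent noise $\xi_{1:k}(t)$) to the joint convergence $(Z^n_{1:k}[t+1],A^n_{1:k,1:k}[t])\Rightarrow (Z\pcar{1:k}[t+1],A\pcar{1:k,1:k}[t])$. Finally, because $\{A^n_{ij}(t+1)\}_{(i,j)\in\incs_k}$ are conditionally independent Bernoullis given $\filtm^{A,n}_{t+1}$ with mean $B(A^n_{ij}(t),Z^n_i(t+1),Z^n_j(t+1))$ and $B$ is a.e.\ continuous, a characteristic-function / continuous mapping argument identifies the full joint limit via Definition \ref{defn::AZlim}(a), completing \ref{cond::jconv}. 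The hardest step is the control of the inverse-degree singularity needed both in \ref{cond::ui} and in the continuous mapping step of \ref{cond::jconv}, which is precisely what forces the dichotomy in Assumption \ref{assu::bds}(c).
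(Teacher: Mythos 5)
Your four-part structure (verify \ref{cond::jexch}, \ref{cond::ui}, \ref{cond::jconv}, \ref{cond::Btbdd} in turn) matches the paper's proof, and your arguments for joint exchangeability, for absolute continuity of $Z\pcar{1}(t+1)$, and the overall route through Proposition~\ref{prop::condconvres}, the conditional Slutzky lemma, and a conditionally-independent-Bernoulli convergence lemma for condition~\ref{cond::jconv} all line up with what the paper does (the paper packages the last step as Lemma~\ref{lem::cibrlem} plus a filtration-reduction via Lemma~\ref{lem:Bercondindsubfilt}; your ``characteristic-function / continuous mapping'' gloss is a sketch of the same thing). However, there is a genuine gap in your proposed argument for condition~\ref{cond::ui} under Assumption~\ref{assu::bds}(c)(i), which is where the difficulty actually concentrates.

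Your plan is to split the expectation of $\exp(C|L^n_1(t)|)$ on the event $\{d_{A^n(t)}(1)\geq\delta n\}$ and its complement, with the complement controlled by ``a Chernoff-type concentration of $d_{A^n(t)}(1)$ around its positive conditional mean of order $n$.'' The problem is that under (c)(i) the conditional edge probability decays like $\exp(-C_b|z_1-z_2|)$, so the conditional mean degree $\sum_j P^n_j$ is not bounded below by $c n$ uniformly in $Z^n[t]$. On configurations where $Z^n_1$ is far from most other particles, $\frac{1}{n}\sum_j P^n_j$ can be arbitrarily small, the Chernoff rate degenerates, and the bad event no longer has uniformly (in $Z$) exponentially small conditional probability. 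To close the bound one would then need to integrate the $Z$-dependent Chernoff rate against the exponential moments of $|Z^n_1(t)-Z^n_j(t)|$, which is exactly the hard part and is not addressed. The paper sidesteps truncation entirely: it writes $\ex{\frac{A^n_{12}}{\sum_i A^n_{1i}}\big|\Fmc^{A,n}_t}$ using the identity $\frac{1}{1+x}=\int_0^1 s^x\,ds$, factors the conditional expectation by conditional independence of the edges, applies AM--GM to pass to $(1-P^n_{-2}s)^{n-1}$, integrates exactly to get a bound of the form $1/P^n_{-2}$, and only then uses H\"older together with the inductively propagated exponential moment bound $\limsup_n\ex{\exp(C|Z^n_1(s)|)}<\infty$ to control $\ex{(P^n_{-2})^{-2}}$. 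This handles the inverse-degree singularity pointwise rather than via a high-probability event, which is why no uniform lower bound on the conditional mean degree is ever needed.

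A second, more minor imprecision: you assert positivity of the denominator $\ex{A\pcar{12}(t)|Z\pcar{1}[t]}$ ``via the lower bound on $B_t$.'' That is correct under (c)(i), where $B$ has a pointwise exponential lower bound. Under (c)(ii), however, $B$ vanishes outside a finite range and there is no such pointwise lower bound on $B_t$; the paper's Lemma~\ref{lem::posdenom} instead uses continuity and strict positivity of $B$ on the diagonal together with absolute continuity of $Z\pcar{1}(t)$ and the Lebesgue differentiation theorem. That case needs its own argument, not a ``lower bound on $B_t$.''
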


We prove Proposition \ref{prop::genstat} in Section \ref{ssec::genstatpf}. 

\subsection{Proof of Theorem \ref{thm::Sampleconv} given Proposition \ref{prop::genstat}}
\label{ssec::sampleconvpf}

The proof of Theorem \ref{thm::Sampleconv} reduces to the proof of the following lemmas:

\begin{lemma}
\label{lem::t0}
Under Assumptions \ref{assu::cond}, \ref{assu::init} and \ref{assu::bds}, the model satisfies property A at time $0$.
\end{lemma}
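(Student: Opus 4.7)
The plan is to verify each of the four conditions (a)--(d) of Property A at time $t=0$ in turn, beginning with the simpler ones and reserving the main effort for the joint convergence of the latent positions with the adjacency block.

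Condition (d), absolute continuity of $Z\pcar{1}(0)$, is immediate from Assumption \ref{assu::init}. For condition (b), the exponential-moment bound \eqref{eq::MGFZ0} implies $\sup_n \ex{|Z^n_1(0)|^p} < \infty$ for every $p \geq 1$, which yields uniform integrability of $\{Z^n_1(0)\}_{n\in\N}$ by de la Vall\'ee Poussin's criterion; the exchangeability of $Z^n_{1:n}(0)$ from Assumption \ref{assu::init} then extends the uniform integrability over all indices $i \in [1:n]$.

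For condition (a), joint exchangeability of $(Z^n(0), A^n(0))$, I would fix $\sigma \in S_n$ and compute the joint law by first conditioning on $Z^n(0)$. Assumption \ref{assu::cond}(b) makes $\{A^n_{ij}(0)\}_{(i,j)\in\incs_n}$ conditionally independent Bernoullis with parameters $B_0(Z^n_i(0), Z^n_j(0))$, which are symmetric in their arguments and ensure that the family of conditional distributions transforms covariantly under the permutation. Combined with the exchangeability of $Z^n(0)$ from Assumption \ref{assu::init}, this yields the required equality in distribution of $(Z^n_{\sigma(1:n)}(0), A^n_{\sigma(1:n),\sigma(1:n)}(0))$ and $(Z^n_{1:n}(0), A^n_{1:n,1:n}(0))$.

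The main work is condition (c): $(Z^n_{1:k}(0), A^n_{1:k,1:k}(0)) \Rightarrow (Z\pcar{1:k}(0), A\pcar{1:k,1:k}(0))$. I would first obtain the i.i.d. limit $Z^n_{1:k}(0) \Rightarrow Z\pcar{1:k}(0)$ by applying Proposition \ref{prop::convres} (propagation of chaos) once the empirical-measure convergence $n^{-1}\sum_{i=1}^n \delta_{Z^n_i(0)} \to \mu_0$ in probability is in hand, which is the natural reading of the initial condition in Assumption \ref{assu::init} combined with absolute continuity of $\mu_0$. With the latent positions converging jointly, the adjacency block is handled by conditioning: Assumption \ref{assu::cond}(b) gives $A^n_{ij}(0)$, $(i,j)\in\incs_k$, exactly the conditionally independent Bernoulli structure given $Z^n_{1:k}(0)$ that $A\pcar{1:k,1:k}(0)$ enjoys given $Z\pcar{1:k}(0)$ under Definition \ref{defn::AZlim}. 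The hard part will be the merely a.e.~continuity of $B_0$ from Assumption \ref{assu::bds}(c); I would resolve this by exploiting the absolute continuity of $\mu_0$, which forces the discontinuity set of $B_0$ to have $\mu_0\otimes\mu_0$-measure zero, so that the continuous mapping theorem applies to pass to the limit inside the Bernoulli parameters $B_0(Z^n_i(0), Z^n_j(0)) \to B_0(Z\pcar{i}(0), Z\pcar{j}(0))$ jointly over $(i,j)\in\incs_k$. The final joint weak convergence of $(Z^n_{1:k}(0), A^n_{1:k,1:k}(0))$ is then assembled via the conditional Slutzky-type lemma from Appendix \ref{sec::useful}, which upgrades the marginal convergence of the latent positions and the convergence of the conditional Bernoulli laws of the adjacency entries into joint convergence of the pair.
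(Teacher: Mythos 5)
Your decomposition into the four conditions (a)–(d) matches the paper's, and parts (a), (b), (d) go through along the lines you describe: the paper's (a) abstracts the computation you sketch into a reusable lemma (Lemma \ref{Addbernexch}) for adjoining conditionally independent Bernoulli entries to a jointly exchangeable structure, and its (b) cites a characterization via a convex superlinear test function rather than de la Vall\'ee Poussin, but these are cosmetic differences. For (c), your propagation-of-chaos step to pass from Assumption \ref{assu::init} to $Z^n_{1:k}(0)\Rightarrow Z\pcar{1:k}(0)$ with i.i.d.\ limits is actually more explicit than what the paper writes; the paper simply cites Assumption \ref{assu::init}(a) and Lemma \ref{lem::cibrlem}, and your remark makes visible the empirical-measure-convergence reading needed to justify that the joint limit is i.i.d. Your observation that absolute continuity of $\mu_0$ neutralizes the merely a.e.\ continuity of $B_0$ is exactly right and is how the hypothesis of the relevant lemma is met.

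The gap is in the final assembly of (c). The conditional Slutzky lemma (Lemma \ref{Slutzkycond}) is the wrong tool: it requires the limiting $Y_i$ to be deterministic measurable functions $\phi_i(X_i)$, but $A\pcar{ij}(0)$ is \emph{not} a deterministic function of $Z\pcar{1:k}(0)$ — given the latent positions it is a fresh Bernoulli draw with parameter $B_0(Z\pcar{i}(0),Z\pcar{j}(0))$, so no such $\phi_i$ exists. Likewise, applying the continuous mapping theorem to the parameters $B_0(Z^n_i(0),Z^n_j(0))$ only yields convergence of the success probabilities, not of the Bernoulli variables themselves. The lemma that actually does the job is Lemma \ref{lem::cibrlem} (Appendix \ref{cibr}), which takes $X^n\Rightarrow X$ together with the conditionally independent Bernoulli structure given $X^n$ (with bounded, $X$-a.s.\ continuous parameter functions) and directly produces the joint convergence $(X^n,B^n)\Rightarrow(X,B)$ with the same conditional Bernoulli structure in the limit. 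That is what the paper invokes, and it finishes (c) in one step; the intermediate move through $\PP\bigl((A^n_{ij}(0))_{(i,j)\in\incs_k}=\bm{a}\,\big|\,Z^n_{1:k}(0)\bigr)$ shows the prelimit conditional structure is as required so that Lemma \ref{lem::cibrlem} applies with $X^n=Z^n_{1:k}(0)$.
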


\begin{lemma}
\label{lem::induct}
If the model satisfies property A at all times $t \in \N_0$, then the conclusions of Theorem \ref{thm::Sampleconv} hold.
\end{lemma}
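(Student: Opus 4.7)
The plan is to deduce Theorem \ref{thm::Sampleconv} from Property A at the time $t$ of interest by using joint exchangeability in (a) as the bridge between the fixed-index convergence in (c) and the random-sample convergence we need. Fix $t \in \N_0$ and $k \in \N$. The strategy divides into two short steps corresponding to the two sampling conventions in the statement of the theorem.

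For sampling without replacement, I would first note that the sampling mechanism is independent of the particle system and produces an ordered tuple $(m^n_1,\dots,m^n_k)$ uniform over distinct tuples in $[1:n]^k$. For any such fixed distinct $(j_1,\dots,j_k)$, pick a permutation $\sigma \in S_n$ with $\sigma(i) = j_i$ for $i\le k$; Property A(a) then gives
\[
(Z^n_{j_{1:k}}[t], A^n_{j_{1:k}, j_{1:k}}[t]) \deq (Z^n_{1:k}[t], A^n_{1:k, 1:k}[t]).
\]
Since this holds for every distinct choice of $(j_1,\dots,j_k)$, and since the sampling is independent of $(Z^n[t],A^n[t])$, averaging over the law of $M^n_k$ yields the same distributional equality with $(j_1,\dots,j_k)$ replaced by $(m^n_1,\dots,m^n_k)$. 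Property A(c) then directly supplies the required weak limit $(Z\pcar{1:k}[t],A\pcar{1:k,1:k}[t])$.

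For sampling with replacement, I would introduce the event $E_n = \{m^n_i \ne m^n_j \text{ for all } i\ne j\}$ and observe that $\PP(E_n) = \prod_{i=0}^{k-1}(1-i/n) \to 1$. Conditional on $E_n$, the distribution of $(m^n_{1:k})$ coincides with the ordered without-replacement distribution, so the previous step gives convergence along $E_n$. For any bounded continuous test function $f$ the contribution off $E_n$ is at most $\|f\|_\infty \PP(E_n^c) = o(1)$, so splitting
\[
\Emb\bigl[f\bigl(Z^n_{m^n_{1:k}}[t], A^n_{m^n_{1:k}, m^n_{1:k}}[t]\bigr)\bigr] = \Emb\bigl[f(\cdot)\mathbf{1}_{E_n}\bigr] + \Emb\bigl[f(\cdot)\mathbf{1}_{E_n^c}\bigr]
\]
and sending $n\to\infty$ gives $\Emb[f(Z\pcar{1:k}[t], A\pcar{1:k,1:k}[t])]$, which is weak convergence.

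The hard part here is essentially nonexistent: this lemma is the terminal bookkeeping step that converts Property A into the statement of Theorem \ref{thm::Sampleconv}. Property A(b) (uniform integrability) and Property A(d) (absolute continuity of the limit) are not invoked at this stage; their role is to power the inductive step carried out in Proposition \ref{prop::genstat}. In particular, no new use of the conditional-structure results of Section \ref{sec::proppf}, no conditional propagation-of-chaos input from Proposition \ref{prop::condconvres}, and no handling of the interaction kernel $B$ is required at this step — all of that work has already been absorbed into the verification that Property A(c) holds.
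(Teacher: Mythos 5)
Your argument is correct and follows the same route as the paper: both reduce the without-replacement case to a fixed index set via joint exchangeability and independence of the sampling mechanism, and both handle the with-replacement case by conditioning on the all-distinct event $E_n$, whose probability tends to $1$, and absorbing the complementary contribution via boundedness of the test function. The only cosmetic difference is that the paper phrases the exchangeability step through an auxiliary uniform random permutation $\sigma$ with $M^n_k = \{\sigma(1),\dots,\sigma(k)\}$, whereas you average directly over the law of the ordered sample; these are the same calculation.
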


\begin{proof}[Proof of Theorem \ref{thm::Sampleconv} given Lemmas \ref{lem::t0} and \ref{lem::induct}]
By Lemma \ref{lem::t0}, the model satisfies property A at time 0. By Proposition \ref{prop::genstat} and induction, this implies that the model satisfies property A at all times $t \in \N_0$, so by Lemma \ref{lem::induct}, the conclusions of Theorem \ref{thm::Sampleconv} hold.
\end{proof}

We now start with the proof of Lemma \ref{lem::t0}.

\begin{proof}[Proof of Lemma \ref{lem::t0}]
Fix an $n \in \N$. We next establish that each condition of property A at time $0$ holds.
\begin{enumerate}[label = (\alph*)]
\item Recall from Section \ref{ssec::notation} that $S_n$ is the permutation group on the set $[1:n]$. Let $\sigma \in S_n$ be any permutation. Let $Y$ be the $n\times n$ identity matrix. For any $i,j \in [1:n]$, let
\[P_{ij} = \begin{cases}
B_0(Z^n_i(0),Z^n_j(0)) &\te{ if } i \neq j,\\
1 &\te{ if } i = j.
\end{cases}\]
For each $i \in [1:n]$, let $X_i = Z^n_i(0)$. Then Assumption \ref{assu::init} (exchangeability of $Z^n(0)$ and initialization of $A^n(0)$) implies that for any permutation $\sigma \in S_n$,
\begin{align*}
\left(X_{1:n},(Y_{ij},P_{ij})_{i,j \in [1:n]}\right) &= \left(\begin{cases}
\left(Z^n_{1:n}(0),(0,B_0(Z^n_i(0),Z^n_j(0)))\right) &\te{ if } i \neq j,\\
\left(Z^n_{1:n}(0),(0,1)\right) &\te{ if } i=j
\end{cases}\right)_{i,j\in [1:n]}\\
&\hspace{-24pt}\deq \left(\begin{cases}
\left(Z^n_{\sigma(1:n)}(0),(0,B_0(Z^n_{\sigma(i)}(0),Z^n_{\sigma(j)}(0)))\right) &\te{ if } \sigma(i) \neq \sigma(j),\\
\left(Z^n_{\sigma(1:n)}(0),(0,1)\right) &\te{ if } \sigma(i) = \sigma(j)
\end{cases}\right)_{i,j\in [1:n]}\\
&\hspace{-24pt}= \left(X_{\sigma(1:n)},(Y_{\sigma(i)\sigma(j)},P_{\sigma(i)\sigma(j)})_{i,j\in [1:n]}\right).
\end{align*}
Therefore, the collection $\left(X_{1:n},(Y_{ij},P_{ij})_{i,j \in [1:n]}\right)$ is jointly exchangeable. Let $A = A^n(0)$. By Assumption \ref{assu::cond}(b), $(A_{ij})_{(i,j)\in \incs_n}$ are mutually conditionally independent given $\sigma(X,Y) = \sigma(Z^n_{1:n}(0)) = \Fmc^{A,n}_0$ and for each $i,j\in[1:n]$, $P_{ij} = \PP(A_{ij}=1|X,Y)$. Thus, $X,Y,P$ and $A$ satisfy the conditions of Lemma \ref{lem::Addberexchexcl}, so
\[\left(X_{1:n},(Y_{ij},A_{ij})_{i,j \in [1:n]}\right) = \left(Z^n_{1:n}(0),\left(\indic{i=j},A^n_{ij}(0)\right)_{i,j\in [1:n]}\right),\]
is jointly exchangeable, which implies that $\left(Z^n(0),A^n(0)\right)$ is likewise jointly exchangeable.

\item This follows directly from Assumption \ref{assu::bds}(a) and \cite[Theorem 4.5.9]{Bog07}, which states that a sequence of random variables $\{X_i\}_{i \in \N}$ are uniformly integrable if and only if there exists a convex, superlinear function $G$ such that $\{G(X_i)\}_{i\in \N}$ is bounded in expectation.

\item Fix any $k \in\N$ and assume $n > k$. For any $\bm{a} \defeq (a_{ij})_{(i,j)\in \incs_k} \in \{0,1\}^{\incs_k}$, \eqref{eqn::Anevolve}, Assumptions \ref{assu::cond}(b), \ref{assu::init}(b) and \eqref{eqn::Aevolve} imply 
\begin{align*}
\PP\left((A^n_{ij})_{(i,j)\in \incs_k} = \bm{a}\middle|Z^n_{1:k}(0)\right) & = \ex{\PP\left((A^n_{ij})_{(i,j)\in \incs_k} = \bm{a}\middle|Z^n(0)\right)\middle| Z^n_{1:k}(0)}\\ 
&= \ex{\prod_{(i,j)\in \incs_k}\alt{B}_0(a_{ij},Z^n_i(0),Z^n_j(0))\middle| Z^n_{[1:k]}(0)}\\
&= \prod_{(i,j)\in \incs_k}\alt{B}_0(a_{ij},Z^n_i(0),Z^n_j(0))\\
&=\prod_{(i,j)\in \incs_k}\PP\left(A^n_{ij} = a_{ij}\middle|Z^n_{1:k}(0)\right)
\end{align*}

This proves that $(A^n_{ij}(0))_{(i,j)\in \incs_k}$ are mutually conditionally independent given $Z^n_{1:k}(0)$ and that for any $(i,j)\in \incs_k$,
\[\ex{A^n_{ij}(0)\middle|Z^n_{1:k}(0)} = B_0(Z^n_i(0),Z^n_j(0)).\]
Let $c_1,\dots,c_{\binom{k}{2}}$ be an enumeration of the set $\incs_k$. Given the a.e. continuity of $B_0$, it follows by Assumption \ref{assu::init}(a) (together with Proposition \ref{prop::convres}) that the conditions of Lemma \ref{lem::cibrlem} are satisfied for $X^n \defeq Z^n_{1:k}(0)$, $X = Z\pcar{1:k}(0)$, $I = \{0,1\}$, $D^n_j = A^n_{c_j}$, $\phi_j^1(z) = B(z_{c_{j,1}},z_{c_{j,2}})$ and $\phi_j^0(z) = 1 - \phi_j^1(z)$. Thus, by Lemma \ref{lem::cibrlem},
\[(Z^n_{1:k}(0),(A^n_{ij}(0))_{(i,j)\in \incs_k}) \Rightarrow (Z\pcar{1:k}(0), (B\pcar{ij})_{(i,j)\in \incs_k}),\]
where $(B\pcar{ij})_{(i,j)\in \incs_k}$ are mutually conditionally independent Bernoulli random variables given $Z\pcar{1:k}(0)$ and for each $(i,j)\in \incs_k$,
\[\PP(B\pcar{ij} = 1|Z\pcar{1:k}(0)) = B_0(Z\pcar{i}(0),Z\pcar{j}(0)).\]
However, this is precisely how the distribution of $A\pcar{ij}(0)$ is defined in Definition \ref{defn::AZlim}, so by Proposition \ref{prop::welldef},
\[(Z^n_{1:k}(0),(A^n_{ij}(0))_{(i,j)\in \incs_k}) \Rightarrow (Z\pcar{1:k}(0), (B\pcar{ij})_{(i,j)\in \incs_k}) \deq (Z\pcar{1:k}(0), (A\pcar{ij}(0))_{(i,j)\in \incs_k}).\]
By symmetry ($A\pcar{ij} = A\pcar{ji}$) and the fact that $A\pcar{ii}(0) = 1$ for all $i \in \N$, it immediately follows that 
\[\left(Z^n_{1:k}(0),A^n_{1:k,1:k}(0)\right) \Rightarrow \left(Z\pcar{1:k}(0),A\pcar{1:k,1:k}(0)\right)\]
as desired.

\item When $t = 0$, this is given by Assumption \ref{assu::init}(a).
\end{enumerate}
\end{proof}

We finish with the proof of Lemma \ref{lem::induct}.

\begin{proof}[Proof of Lemma \ref{lem::induct}]
Fix any $t \in \N_0$ and suppose that $M^n_k$ is a sample without replacement. Then we may note that there exists a uniformly random $\sigma \in S_n$ independent of $Z^n[t],A^n[t],Z\pcar{1:k}[t]$, and $A\pcar{1:k,1:k}[t]$ such that
\[M^n_k = \{\sigma(1),\dots,\sigma(k)\}.\]
Note that for each $\phi \in S_n$, $\PP(\sigma = \phi) = \PP(\sigma = I_{S_n})$ where $I_{S_n}\in S_n$ is the identity permutation. Then by property A(a), $(Z^n[t],A^n[t])$ is jointly exchangeable, so by property A(c),
\begin{align*}
\left(Z^n_{m^n_{1:k}}[t],A^n_{m^n_{1:k}m^n_{1:k}}[t]\right) &= \left(Z^n_{\sigma(1:k)}[t],A^n_{\sigma(1:k)\sigma(1:k)}[t]\right) \\
&\deq \left(Z^n_{1:k}[t],A^n_{1:k,1:k}[t] \right) \\
&\Rightarrow \left(Z\pcar{1:k}[t],A\pcar{1:k,1:k}[t]\right).
\end{align*}
This is precisely the conclusion of Theorem \ref{thm::Sampleconv}. Now, suppose $M^n_k$ is a sample taken with replacement. Let $\Emc^n_k = \{m^n_i \neq m^n_j\te{ for all distinct }i,j \in [1:k]\}$. Then we note that
\[\PP\left(\Emc^n_k\right) = \frac{\frac{n!}{(n-k)!}}{n^k} \to 1\te{ as } n\to \infty.\]
Therefore, for any $f\in C_b\left(\left(\mf_t\left((\R^d)^k\right)\right) \times \left(\mf_t\left(\{0,1\}^{k\times k}\right)\right)\right)$, 
\begin{align*}
\ex{f\left(Z^n_{m^n_{1:k}}[t],A^n_{m^n_{1:k}m^n_{1:k}}[t]\right)\indic{(\Emc^n_k)^c}} \to 0\te{ as } n \to \infty.
\end{align*}
Given that $\sigma(M^n_k,\Emc^n_k)$ is independent of $(Z^n[t],A^n[t])$ and that the conditional distribution of $M^n_k$ given $\Emc^n_k$ is that of a uniform sample without replacement,
\begin{align*}
\lim_{n\to\infty} \ex{f\left(Z^n_{m^n_{1:k}}[t],A^n_{m^n_{1:k}m^n_{1:k}}[t]\right)} &= \lim_{n\to\infty} \ex{f\left(Z^n_{m^n_{1:k}}[t],A^n_{m^n_{1:k}m^n_{1:k}}[t]\right)\indic{\Emc^n_k}}\\
&= \lim_{n\to\infty} \ex{f\left(Z^n_{1:k}[t],A^n_{1:k,1:k}[t]\right)}\PP\left(\Emc^n_k\right)\\
&= \ex{f\left(Z\pcar{1:k}[t],A\pcar{1:k,1:k]}[t]\right)},
\end{align*}
so 
\begin{align*}
\left(Z^n_{m^n_{1:k}}[t],A^n_{m^n_{1:k}m^n_{1:k}}[t] \right) \Rightarrow \left(Z\pcar{1:k}[t],A\pcar{1:k,1:k}[t]\right),
\end{align*}
completing the proof.
\end{proof}

\subsection{Proof of Proposition \ref{prop::genstat}}
\label{ssec::genstatpf}

Fix any $t \in \N_0$. Assume that property A holds at time $t$. Then, to complete the proof, we need to show that it also holds at time $t + 1$. We break this proof down into multiple parts.

\subsubsection{Proof that Property A(a) holds at time $t + 1$}
\label{sssec::Aapf}

\ind To establish property A(a) at time $t + 1$, we start with the following useful intermediate result:

\begin{lemma}
\label{lem::ztp1atjexch}
If property A holds at time $t$, then the quantity $\left(Z^n[t+1],A^n[t]\right)$ is jointly exchangeable for all $n \in \N$.
\end{lemma}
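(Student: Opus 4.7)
The plan is to extend the joint exchangeability from time $t$ (given by property A(a)) to time $t+1$ by writing $Z^n_i(t+1)$ as a deterministic function of $(Z^n_i(t), L^n_i(t), \xi_i(t))$ via \eqref{eqn::Znevolve} and verifying that the map defining $L^n_i(t)$ is \emph{equivariant} under simultaneous relabeling of particle and adjacency-matrix indices. The key observation is that joint exchangeability of $(Z^n[t], A^n[t])$ together with the i.i.d.\ and independent noise $\xi_{1:n}(t)$ is preserved under this equivariant one-step update.

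Concretely, I would introduce the map $\Phi_i \colon (\R^d)^n \times \{0,1\}^{n\times n} \to \R^d$ given by
\[
\Phi_i(z, a) = \frac{1}{\sum_{j=1}^n a_{ij}}\sum_{j=1}^n z_j\, a_{ij},
\]
so that by \eqref{eqn::Lnidef}--\eqref{eqn::dAnt}, $L^n_i(t) = \Phi_i(Z^n(t), A^n(t))$. For any $\sigma \in S_n$, the substitution $j = \sigma(k)$ in both sums yields the equivariance identity
\[
\Phi_i\bigl(z_{\sigma(1:n)},\, a_{\sigma(1:n),\sigma(1:n)}\bigr) = \Phi_{\sigma(i)}(z, a).
\]

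Next, by Assumption \ref{assu::cond}(a), $\{\xi_i(t)\}_{i=1}^n$ is i.i.d.\ and independent of $\Fmc^n_t = \sigma(Z^n[t], A^n[t])$. Combined with joint exchangeability of $(Z^n[t], A^n[t])$ from property A(a), this gives, for every $\sigma \in S_n$,
\[
\bigl(Z^n[t],\, A^n[t],\, \xi_{1:n}(t)\bigr) \deq \bigl(Z^n_{\sigma(1:n)}[t],\, A^n_{\sigma(1:n),\sigma(1:n)}[t],\, \xi_{\sigma(1:n)}(t)\bigr).
\]
Finally, I would apply the measurable map $\Psi$ that appends to each trajectory $z_i[t]$ the new entry $(1-\gamma)z_i(t) + \gamma\,\Phi_i(z(t), a(t)) + \xi_i$ and leaves $a[t]$ unchanged. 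Applied to the left-hand side, $\Psi$ produces $(Z^n[t+1], A^n[t])$ via \eqref{eqn::Znevolve}. Applied to the right-hand side, the equivariance identity gives
\[
(1-\gamma)Z^n_{\sigma(i)}(t) + \gamma\,\Phi_i\bigl(Z^n_{\sigma(1:n)}(t), A^n_{\sigma(1:n),\sigma(1:n)}(t)\bigr) + \xi_{\sigma(i)}(t) = Z^n_{\sigma(i)}(t+1)
\]
for each $i$, producing $(Z^n_{\sigma(1:n)}[t+1], A^n_{\sigma(1:n),\sigma(1:n)}[t])$. Equality in distribution of the outputs then gives the claim.

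There is no substantive obstacle; the main subtlety is being careful with trajectory-level (rather than time-$t$ only) joint exchangeability and verifying the equivariance identity for $\Phi_i$ correctly, which hinges on the symmetry $A^n_{ij}(t) = A^n_{ji}(t)$ together with the fact that the averaging kernel treats the neighbor index $j$ in an index-symmetric way.
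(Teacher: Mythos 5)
Your proof is correct and takes essentially the same route as the paper, which first establishes (Lemma \ref{ZLexch}) the joint exchangeability of $(Z^n[t],L^n(t),A^n(t))$ via the same equivariance-of-$L^n_i$ calculation you perform through $\Phi_i$, appends the i.i.d.\ noise, and pushes through a deterministic map. One small inaccuracy: the equivariance identity $\Phi_i(z_{\sigma(1:n)},a_{\sigma(1:n),\sigma(1:n)})=\Phi_{\sigma(i)}(z,a)$ follows purely from the change of summation index $j\mapsto\sigma(j)$ and does not actually rely on the symmetry $a_{ij}=a_{ji}$, so that parenthetical remark should be dropped.
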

\begin{proof}
Fix any $n \in\N$ and $\sigma \in S_n$. By Lemma \ref{ZLexch}, $(Z^n[t],L^n(t),A^n[t])$ is jointly exchangeable. By Assumption \ref{assu::cond}(a), $(\xi_i(t))_{i=1}^n$ are i.i.d. and independent of $(Z^n[t],L^n(t),A^n[t])$, so it follows that $(Z^n[t],L^n(t),\xi(t),A^n[t])$ is also jointly exchangeable. Then
\begin{align*}
\left(Z^n_i[t+1],A^n_{ij}[t]\right)_{i,j \in [n]} &= \left(Z^n_i[t],(1 - \gamma)Z^n_i(t) + \gamma L^n_i(t) + \xi_i(t),A^n_{ij}[t]\right)_{i,j \in [n]}\\
&\deq \left(Z^n_{\sigma(i)}[t],(1 - \gamma)Z^n_{\sigma(i)}(t) + \gamma L^n_{\sigma(i)}(t) + \xi_{\sigma(i)}(t),A^n_{{\sigma(i)}{\sigma(j)}}[t]\right)_{i,j \in [n]}\\
&=\left(Z^n_{\sigma(i)}[t+1],A^n_{\sigma(i)\sigma(j)}[t]\right)_{i,j \in [n]},
\end{align*}
where the penultimate equality holds by permuting the terms in the summation. Thus, $(Z^n[t+1],A^n[t])$ is jointly exchangeable for all $n \in \N$.
\end{proof}

Now we can establish property A(a):

\begin{lemma}
\label{jexchpf}
If property A holds at time $t$, then property A(a) holds at time $t + 1$.
\end{lemma}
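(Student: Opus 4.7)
The plan is to upgrade Lemma \ref{lem::ztp1atjexch} (which gives joint exchangeability of $(Z^n[t+1], A^n[t])$) by appending the fresh edge layer $A^n(t+1)$, using exactly the Bernoulli-augmentation technique already employed at time $0$ in the proof of Lemma \ref{lem::t0}(a). First, fix $n \in \N$; under the inductive hypothesis that property A holds at time $t$, Lemma \ref{lem::ztp1atjexch} yields that $(Z^n[t+1], A^n[t])$ is jointly exchangeable. Because $A^n_{ij}[t+1] = (A^n_{ij}[t], A^n_{ij}(t+1))$, it suffices to show that adjoining the conditionally independent Bernoulli layer $A^n(t+1)$ preserves joint exchangeability.

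To do this I would invoke Lemma \ref{lem::Addberexchexcl} with the identifications $X_i \defeq Z^n_i[t+1]$, $Y_{ij} \defeq A^n_{ij}[t]$ (now valued in $\mf_t(\{0,1\})$), and
\[
P_{ij} \defeq \begin{cases} B\bigl(A^n_{ij}(t), Z^n_i(t+1), Z^n_j(t+1)\bigr) & \text{if } i\neq j, \\ 1 & \text{if } i=j.\end{cases}
\]
Joint exchangeability of $(X, Y, P)$ follows immediately from that of $(Z^n[t+1], A^n[t])$ together with the pairwise symmetry $B(a,z_1,z_2) = B(a,z_2,z_1)$ forced by $A^n_{ij}(t+1) = A^n_{ji}(t+1)$ in \eqref{eqn::Anevolve}. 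By Assumption \ref{assu::cond}(b), conditionally on $\Fmc^{A,n}_{t+1} = \sigma(Z^n[t+1], A^n[t]) = \sigma(X, Y)$, the family $\{A^n_{ij}(t+1)\}_{(i,j)\in\incs_n}$ consists of independent Bernoullis with means $P_{ij}$, and $A^n_{ii}(t+1) = 1$. Lemma \ref{lem::Addberexchexcl} then produces joint exchangeability of $\bigl(X, (Y_{ij}, A^n_{ij}(t+1))_{i,j}\bigr)$, which, after reassembling the time trajectories, is precisely the joint exchangeability of $(Z^n[t+1], A^n[t+1])$ demanded by property A(a) at time $t+1$.

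The main (quite mild) obstacle is bookkeeping: one must check that Lemma \ref{lem::Addberexchexcl}, which was originally applied with $\{0,1\}$-valued $Y_{ij}$ in the time-$0$ argument, continues to apply when $Y_{ij}$ is the full trajectory $A^n_{ij}[t]\in \mf_t(\{0,1\})$. This is a formal extension, since the lemma's statement depends only on the joint exchangeability of the input triple $(X,Y,P)$ and on conditional independence of the Bernoullis given $\sigma(X,Y)$, both of which are verified above. No genuinely new ingredients are needed beyond the $t=0$ argument and Lemma \ref{lem::ztp1atjexch}.
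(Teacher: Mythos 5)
Your argument follows exactly the same route as the paper's proof of this lemma: combine Lemma \ref{lem::ztp1atjexch} (joint exchangeability of $(Z^n[t+1], A^n[t])$) with the Bernoulli-augmentation technique, adjoining the new edge layer $A^n(t+1)$ after checking that the conditional-mean matrix $P^n$ is jointly exchangeable together with $(Z^n[t+1], A^n[t])$. Your identifications of $X$, $Y$, and $P$ match the paper's, and your explicit choice $P_{ii}=1$ on the diagonal, together with your remark that \eqref{eqn::Anevolve} enforces $B(a,z_1,z_2)=B(a,z_2,z_1)$ (so that $P_{ij}=P_{ji}$ is consistent with $B_{ij}=B_{ji}$), are both correct and are left implicit in the paper.

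The only issue is a mis-citation. Lemma \ref{lem::Addberexchexcl} is the vector-indexed augmentation lemma whose conclusion is exchangeability \emph{excluding} $1$ in the sense of Definition \ref{def::exchexcli}; its signature is a scalar $X$ paired with a vector $Y=(Y_i)_{i\in[1:n]}$ and vector-indexed Bernoullis $B_i$, and it does not apply to the present symmetric matrix-indexed situation. The lemma you actually need --- and whose hypotheses your identifications $X_i=Z^n_i[t+1]$, $Y_{ij}=A^n_{ij}[t]$, and symmetric $B_{ij}=A^n_{ij}(t+1)$ precisely verify --- is Lemma \ref{Addbernexch}, which takes a vector $X=(X_i)$, a matrix $Y=(Y_{ij})$, and a symmetric conditionally independent Bernoulli matrix, and concludes \emph{joint} exchangeability in the sense of Definition \ref{def::jointexchangeability}. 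The paper's own proof of Lemma \ref{jexchpf} invokes Lemma \ref{Addbernexch}; the stray reference to Lemma \ref{lem::Addberexchexcl} appears to be inherited from the paper's proof of Lemma \ref{lem::t0}(a), which contains the same mislabel. Substantively your proof is correct; only the cross-reference needs fixing.
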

\begin{proof}
Fix any $n \in\N$. By Lemma \ref{lem::ztp1atjexch}, $\left(Z^n[t+1],A^n[t]\right)$ is jointly exchangeable. For each $i,j \in [n]$ define
\[P^n_{ij} = B(A^n_{ij}(t),Z^n_i(t+1),Z^n_j(t+1)).\]
Then it is easy to see that the joint exchangeability of $\left(Z^n[t+1],A^n[t]\right)$ extends to $\left(Z^n[t+1], A^n[t], P^n\right)$. Furthermore, by assumption, $\{A^n_{ij}(t+1)\}_{1\leq i\leq j\leq n}$ are mutually conditionally independent given $\sigma(Z^n[t+1],A^n[t])$, $A^n_{ij}(t+1) = A^n_{ji}(t+1)$ and
\[\ex{A^n_{ij}(t+1)\middle|Z^n[t+1], A^n[t]} = P^n_{ij}\]
for all $i,j \in [n]$. Then by Lemma \ref{Addbernexch}, $(Z^n[t+1],A^n[t+1])$ is jointly exchangeable as desired.
\end{proof}

\subsubsection{Proof that Property A(d) holds at time $t + 1$}
\label{sssec::Adpf}

It suffices to prove that the conditional distribution of $Z\pcar{1}(t+1)$ given $Z\pcar{1}[t]$ is absolutely continuous. By \eqref{eqn::Zevolve}, $Z\pcar{1}(t+1) - \xi_1(t)$ is $\sigma(Z\pcar{1}[t])$-measurable, so conditioned on $Z\pcar{1}[t]$, it is constant. By Assumption \ref{assu::cond}(a), $\xi_1(t)$ is independent of $Z\pcar{1}[t]$ and absolutely continuous. So, conditioned on $Z\pcar{1}[t]$, $Z\pcar{1}(t+1)$ is the sum of a constant vector and an absolutely continuous random vector and is therefore absolutely continuous as well.

\subsubsection{Proof that Property A(b) holds at time $t + 1$}
\label{sssec::Abpf}

The proof differs depending on whether Assumption \ref{assu::bds}(c)(i) holds or Assumption \ref{assu::bds}(c)(ii) holds. \\

\textbf{Proof of property A(b) at time $t+1$ given Assumption \ref{assu::bds}(c)(ii):} We begin with the claim that the random vectors $\{L^n_1\}_{n \in \N}$ are uniformly integrable. Note that by Lemma \ref{lem::ztp1atjexch}, $\{L^n_i(t)\}_{i \in [1:n]}$ is exchangeable for all $n$. This implies that $L^n_i(t) \deq L^n_1(t)$ for all $n \in \N$ and $i \in [1:n]$ so the claim implies that the sequence $\{L^n_i(t)\}_{n \in \N,i \in [1:n]}$ is uniformly integrable.

\ind It is a standard result that given two uniformly integrable sequences of random vectors $\{X_i\}_{i \in I}, \{Y_i\}_{i \in I}$ (where $I$ is a countable index set) and two constants $a,b \in \R$, $\{aX_i + bY_i\}_{i \in I}$ is likewise uniformly integrable. This naturally extends to three sequences. The lemma then follows by noting that $\{Z^n_i(t)\}_{n \in \N, i \in [1:n]}$ is uniformly integrable because property A(b) holds at time $t$, $\{L^n_i(t)\}_{n \in \N, i \in [1:n]}$ is uniformly integrable as argued above, and by Assumption \ref{assu::cond}(a), $\{\xi_i(t)\}_{n \in \N, i \in [1:n]}$ is a collection of $L^1$, i.i.d. random variables and is therefore also uniformly integrable. Thus,
\[\{Z^n_i(t+1)\}_{n \in \N, i \in [1:n]} = \{\gamma Z^n_i(t) + (1 - \gamma)L^n_i(t) + \xi_i(t)\}_{n \in \N, i \in [1:n]}\]
is uniformly integrable, completing the proof.

\ind We now prove the claim. By \cite[Theorem 4.5.9]{Bog07}, property A(b) at time $t$ implies that there exists a convex, superlinear function $g: \R \to \R$ such that
\[\sup_{n \in \N, i\in [1:n]} \ex{g(|2Z^n_i(t)|)} \defeq M < \infty,\]
where we use the fact that property A(b) at time $t$ implies that $\{2Z^n_i(t)\}_{n \in \N, i \in [1:n]}$ is uniformly integrable. We note that by Assumption \ref{assu::bds}(c)(ii), $A^n_{1j}(t) = 0$ whenever $|Z^n_1(t) - Z^n_j(t)| > C_b$. Notably, this implies that $A^n_{1j}(t) = 0$ when $|Z^n_j(t)| > |Z^n_1(t)| + C_b$. So,
\[|L^n_1(t)|\leq \sum_{j \in [1:n]} \frac{|A^n_{1j}(t)Z^n_j(t)|}{\sum_{k \in [1:n]} A^n_{1k}} \leq |Z^n_1(t)| + C_b.\]
Therefore, by convexity of $g$,
\[\sup_{n \in \N} \ex{g(|L^n_1(t)|)}\leq \sup_{n \in \N} \ex{g(|Z^n_1(t)|+ C_b)}\leq \frac{1}{2}\left(\ex{g(2C_b)} + \sup_{n \in \N}\ex{g(2|Z^n_1(t)|}\right) < \infty.\]
By \cite[Theorem 4.5.9]{Bog07}, this proves that $\{L^n_1(t)\}_{n \in \N}$ is uniformly integrable.\\

\textbf{Proof of property A(b) at time $t+1$ given Assumption \ref{assu::bds}(c)(i):} We provide an inductive proof of the following two equations for all $s \leq t+1$ and $C > 0$:
\begin{align}
\limsup_{n\to\infty} \ex{\exp{C|Z^n_1(s)|}} &< \infty\label{eqn::zsupexpLn}\\
\label{eqn::lsupexpLn}
\limsup_{n\to\infty} \ex{\exp{C|L^n_1(s)|}} &< \infty.
\end{align}
As a base case, we show that \eqref{eqn::zsupexpLn} holds for $s=0$. Then, we apply two inductive arguments. First, we show that \eqref{eqn::zsupexpLn} implies \eqref{eqn::lsupexpLn} at any time $s$. Then we show that \eqref{eqn::lsupexpLn} and \eqref{eqn::zsupexpLn} at time $s$ imply \eqref{eqn::zsupexpLn} at time $s+1$. Together, these arguments plus the base case imply the above equations for all values of $s$.

\textbf{Base Case:} \eqref{eqn::zsupexpLn} holds for $s=0$ by Assumption \ref{assu::bds}(a).\\

\textbf{First Inductive Argument:} We show that if \eqref{eqn::zsupexpLn} holds at time $s$, then \eqref{eqn::lsupexpLn} holds at time $s$ as well. For each $C > 0$, define the function $\psi_C: \R^d \to \R$ by
\[\psi_C(z)\defeq \exp\left(C|z|\right).\]
It is easily verified that $\psi_C$ is increasing, convex and superlinear for all $C > 0$. We define the non-decreasing function $M_{s,Z}$ by
\begin{equation}
\label{eqn::MC}
M_{s,Z}(C) \defeq \limsup_{n\to\infty} \ex{\exp\left(C|Z^n_1(s)|\right)}.
\end{equation}
By \eqref{eqn::zsupexpLn}, $M_{s,Z}: \R\to \R_+$ is a finite-valued function. 

\ind Fix $C > 0$. Then applying the convexity of $\psi_C$ and the fact that $\sum_{j=1}^n \frac{A^n_{1j}(s)}{\sum_{i=1}^n A^n_{1i}(s)} = 1$, we can break up the expectation of $\ex{\psi_C(L^n_1(s))}$ as follows:
\begin{equation}
\label{eqn::breakupsum}
\ex{\psi_C(L^n_1(s))} = \ex{\psi_C\left(\frac{1}{\sum_{i=1}^n A^n_{1i}(s)}\sum_{j=1}^n A^n_{1j}(s)Z^n_j(s)\right)} \leq \sum_{j=1}^n\ex{\frac{A^n_{1j}(s)}{\sum_{i=1}^n A^n_{1i}(s)}\psi_C\left(Z^n_j(s)\right)}.
\end{equation}
The $j=1$ term of the sum above is easily reduced using the fact that $\frac{A^n_{1j}(s)}{\sum_{i=1}^n A^n_{1i}(s)} \leq 1$:
\begin{equation}
\label{eqn::j=1breakup}
\limsup_{n\to\infty}\ex{\frac{A^n_{11}(s)}{\sum_{i=1}^n A^n_{1i}(s)}\psi_C\left(Z^n_1(s)\right)}\leq \limsup_{n\to\infty}\ex{\psi_C\left(Z^n_1(s)\right)} = M_{s,Z}(C).
\end{equation}
We can now examine the remaining terms. Recall the $\sigma$-algebras $\Fmc^{A,n}_s \defeq \sigma(Z^n(s),A^n(s-1))$ if $s > 0$, where $\Fmc^{A,n}_0 \defeq \sigma(Z^n(0))$. For $j \in [1:n]$, \eqref{eqn::Anevolve} and Assumption \ref{assu::init}(b) imply
\[P^n_j \defeq \ex{A^n_{1j}(s)|\Fmc^{A,n}_s} = \begin{cases}
B\left(A^n_{1j}(s-1),Z^n_1(s),Z^n_j(s)\right)  &\te{ if } s > 0\te{ and }j\neq 1,\\
B_0(Z^n_1(0),Z^n_j(0))&\te{ if } s = 0\te{ and }j\neq 1,\\
1 &\te{ if } j = 1.
\end{cases}\]
Define
\[P^n_{-2} =\frac{1}{n-1}\sum_{i\neq 2} P^n_i.\]
Then, we perform the following computation. In \eqref{eqn::j>1exch}, we apply joint exchangeability of $(Z^n(s),A^n(s))$ (which holds by property A(a)). In \eqref{eqn::j>1A12=1}, we make use of the fact that $\psi_C(Z^n_2(s))$ is $\Fmc^{A,n}_s$-measurable and that $\{A^n_{1j}\}_{j \in [n]}$ are conditionally independent given $\Fmc^{A,n}_s$. The equation is obtained by noticing that when $A^n_{12}(s) = 0$, the whole expression in the expectation is equal to $0$. In \eqref{eqn::j>1tonelli}, we make use of the fact that for any $x \geq 0$, $\int_0^1 s^x \,ds = \frac{1}{1+x}$, then apply Tonelli's theorem to pull the integral out of the conditional expectation. \eqref{eqn::j>1CI} follows from the fact that $\{A^n_{1j}(s)\}_{j \in [n]}$ are conditionally independent given $\Fmc^{A,n}_s$. \eqref{eqn::j>1AMGM} follows from an application of the AM-GM inequality (arithmetic means are greater than or equal to geometric means). Lastly, \eqref{eqn::j>1Holder} is just an application of H\"{o}lder's inequality.
\begin{align}
\limsup_{n\to\infty}\sum_{j=2}^n&\ex{\frac{A^n_{1j}(s)}{\sum_{i=1}^n A^n_{1i}(s)}\psi_C\left(Z^n_j(s)\right)} \nonumber\\
&= \limsup_{n\to\infty}\sum_{j=2}^n\ex{\frac{A^n_{12}(s)}{\sum_{i=1}^n A^n_{1i}(s)}\psi_C\left(Z^n_2(s)\right)}\label{eqn::j>1exch}\\
&\leq \limsup_{n\to\infty} n\ex{\frac{A^n_{12}(s)}{\sum_{i=1}^n A^n_{1i}(s)}\psi_C\left(Z^n_2(s)\right)}\nonumber\\
&= \limsup_{n\to\infty} n\ex{\ex{\frac{A^n_{12}(s)}{\sum_{i=1}^n A^n_{1i}(s)}\psi_C\left(Z^n_2(s)\right)\middle|\Fmc^{A,n}_s}}\nonumber\\
&= \limsup_{n\to\infty} n\ex{P^n_2\psi_C\left(Z^n_2(s)\right)\ex{\frac{1}{1 + \sum_{i\neq 2} A^n_{1i}(s)}\middle|\Fmc^{A,n}_s}}\label{eqn::j>1A12=1}\\
&= \limsup_{n\to\infty} n\ex{P^n_2\psi_C\left(Z^n_2(s)\right)\int_0^1\ex{s^{\sum_{i\neq 2} A^n_{1i}(s)}\middle|\Fmc^{A,n}_s}\,ds}\label{eqn::j>1tonelli}\\
&= \limsup_{n\to\infty} n\ex{P^n_2\psi_C\left(Z^n_2(s)\right)\int_0^1\prod_{i\neq 2}\ex{s^{A^n_{1i}(s)}\middle|\Fmc^{A,n}_s}\,ds}\label{eqn::j>1CI}\\
&= \limsup_{n\to\infty} n\ex{P^n_2\psi_C\left(Z^n_2(s)\right)\int_0^1\prod_{i\neq 2}\left(P^n_is + (1 - P^n_i)\right)\,ds}\nonumber\\
&= \limsup_{n\to\infty} n\ex{P^n_2\psi_C\left(Z^n_2(s)\right)\int_0^1\prod_{i\neq 2}\left(1 - P^n_is\right)\,ds}\nonumber\\
&\leq \limsup_{n\to\infty} n\ex{P^n_2\psi_C\left(Z^n_2(s)\right)\int_0^1\left(1 - P^n_{-2}s\right)^{n-1}\,ds}\label{eqn::j>1AMGM}\\
&= \limsup_{n\to\infty} \ex{P^n_2\psi_C\left(Z^n_2(s)\right)
\frac{1 - (1 - P^n_{-2})^n}{P^n_{-2}}}\nonumber\\
&\leq \limsup_{n\to\infty} \ex{\frac{\psi_C\left(Z^n_2(s)\right)}{P^n_{-2}}}\nonumber\\
&\leq \limsup_{n\to\infty}\sqrt{\ex{(\psi_C(Z^n_2(s))^2}\ex{\frac{1}{(P^n_{-2})^2}}},\label{eqn::j>1Holder}\\
&\leq \limsup_{n\to\infty}\sqrt{\ex{\psi_{2C}(Z^n_1(s)}\ex{\frac{1}{(P^n_{-2})^2}}},\nonumber\\
&= \limsup_{n\to\infty} \sqrt{\ex{\frac{M_{s,Z}(2C)}{(P^n_{-2})^2}}}.\label{eqn::j>1}
\end{align}

By Assumption \ref{assu::bds}(c)(i), there exists a constant $\bar{C}> 0$ such that for all $n \in \N$ and $j \in [1:n]$,
\[P^n_j \geq \bar{C}\exp\left(-C_b|Z^n_1(s) - Z^n_j(s)|\right).\]
Then, we can make the following computation, where we apply the AM-GM inequality once more in \eqref{eqn::PsqinvAMGM}.
\begin{align}
\frac{1}{(P^n_{-2})^2} &\leq \frac{1}{\left(\frac{\bar{C}}{n-1}\sum_{j\neq 2} \exp\left(-C_b|Z^n_1(s) - Z^n_j(s)|\right)\right)^2}\nonumber\\
&=\frac{1}{\frac{\bar{C}^2}{(n-1)^2}\sum_{i,j\neq 2} \exp\left(-C_b\left(|Z^n_1(s) - Z^n_i(s)| + |Z^n_1(s) - Z^n_j(s)|\right)\right)}\nonumber\\
&=\left(\frac{1}{\bar{C}^2}\right)\frac{1}{\frac{1}{(n-1)^2}\sum_{i,j\neq 2} \exp\left(-C_b\left(|Z^n_1(s) - Z^n_i(s)| + |Z^n_1(s) - Z^n_j(s)|\right)\right)}\nonumber\\
&\leq\left(\frac{1}{\bar{C}^2}\right)\frac{1}{\exp\left(-\frac{C_b}{(n-1)^2}\sum_{i,j\neq 2}\left(|Z^n_1(s) - Z^n_i(s)| + |Z^n_1(s) - Z^n_j(s)|\right)\right)}\label{eqn::PsqinvAMGM}\\
&=\left(\frac{1}{\bar{C}^2}\right)\exp\left(\frac{C_b}{(n-1)^2}\sum_{i,j\neq 2}\left(|Z^n_1(s) - Z^n_i(s)| + |Z^n_1(s) - Z^n_j(s)|\right)\right)\nonumber\\
&=\left(\frac{1}{\bar{C}^2}\right)\exp\left(\frac{C_b(2n-3)}{(n-1)^2}\sum_{k=3}^n|Z^n_1(s) - Z^n_k(s)|\right).\label{eqn::Psqinv}
\end{align}

We can compute the expected value of this quantity in the limit as $n\to \infty$ by making use of the following consequence of the generalized H\"{o}lder's inequality. If $X_1,\dots,X_m$ are identically distributed (but not necessarily independent), then 
\[\ex{\prod_{i=1}^m X_i} \leq \prod_{i=1}^m \|X_i\|_m = \|X_1\|_m^m = \ex{X_1^m}.\]

Below, we apply this result in \eqref{eqn::exPsqinvGHI} and \eqref{eqn::exPsqinvGHI2}. In \eqref{eqn::exPsqinvfn<3}, we use the fact that $\frac{(n-2)(2n-3)}{(n-1)^2} < 2$ for all $n \in \N$. \eqref{eqn::exPsqinvsimple} applies the triangle inequality. 

\begin{align}
\limsup_{n\to\infty} \ex{\frac{1}{(P^n_{-2})^2}} &\leq \frac{1}{\bar{C}^2}\limsup_{n\to\infty} \ex{\exp\left(\frac{C_b(2n-3)}{(n-1)^2}\sum_{k=3}^n|Z^n_1(s) - Z^n_k(s)|\right)}\nonumber\\
&=\frac{1}{\bar{C}^2}\limsup_{n\to\infty} \ex{\prod_{k=3}^n \exp\left(\frac{C_b(2n-3)}{(n-1)^2}|Z^n_1(s) - Z^n_k(s)|\right)}\nonumber\\
&\leq \frac{1}{\bar{C}^2}\limsup_{n\to\infty} \ex{\exp\left(\frac{C_b(n-2)(2n-3)}{(n-1)^2}|Z^n_1(s) - Z^n_3(s)|\right)}\label{eqn::exPsqinvGHI}\\
&\leq \frac{1}{\bar{C}^2}\limsup_{n\to\infty} \ex{\exp\left(2C_b|Z^n_1(s) - Z^n_3(s)|\right)}\label{eqn::exPsqinvfn<3}\\
&\leq \frac{1}{\bar{C}^2}\limsup_{n\to\infty} \ex{\exp\left(2C_b\left(|Z^n_1(s)| + |Z^n_3(s)|\right)\right)}\label{eqn::exPsqinvsimple}\\
&\leq \frac{1}{\bar{C}^2}\limsup_{n\to\infty} \ex{\exp\left(4C_b|Z^n_1(s)|\right)}\label{eqn::exPsqinvGHI2}\\
&\leq \frac{M_{s,Z}(4C_b)}{\bar{C}^2}.\label{eqn::exPsqinv}
\end{align}

Combining \eqref{eqn::j>1} and \eqref{eqn::exPsqinv},
\begin{align*}
\limsup_{n\to\infty} \ex{\psi_C(L^n_1(s))}&\leq \limsup_{n\to\infty}\sqrt{\ex{\frac{M_{s,Z}(2C)}{(P^n_{-2})^2}}}\\
&\leq \frac{\sqrt{M_{s,Z}(2C)M_{s,Z}(4C_b)}}{\bar{C}}\label{z-xi}\\
&< \infty.
\end{align*}

\textbf{Second Inductive Argument:} We show that \eqref{eqn::zsupexpLn} and \eqref{eqn::lsupexpLn} at time $s$ imply \eqref{eqn::zsupexpLn} at time $s+1$. Note that
\[Z^n_1(s+1) - \xi^1(s) = (1-\gamma)Z^n_1(s) + \gamma L_1^n(s).\]
By convexity of $\psi_C$ and \eqref{eqn::zsupexpLn}-\eqref{eqn::lsupexpLn}, this implies that
\begin{align*}
\limsup_{n\to\infty} \ex{\psi_C(Z^n_1(s+1) - \xi^1(s))}&=\limsup_{n\to\infty} \ex{\psi_C((1-\gamma)Z^n_1(s) + \gamma L^n_1(s))}\\
&\leq \limsup_{n\to\infty} (1 - \gamma) \ex{\psi_C(Z^n_1(s))} + \gamma\ex{\psi_C(L^n_1(s))} \\
&\leq(1-\gamma)M_{s,Z}(C)+\gamma\frac{\sqrt{M_{s,Z}(2C)M_{s,Z}(4C_b)}}{\bar{C}}\\
&< \infty \te{ for all }C > 0.
\end{align*}
By \eqref{eqn::Znevolve} and \eqref{eqn::Lnidef}, $Z^n_1(s+1)- \xi_1(s)$ is $\filt^n_t$-measurable, so by Assumption \ref{assu::cond}(a), $Z^n_1(s+1)-\xi_1(s)$ and $\xi_1(s)$ are independent. Applying this independence and the above display,
\begin{align*}
M_{s+1,Z}(C) &\defeq \limsup_{n\to\infty} \ex{\psi_C(Z^n_1(s+1))} \\
&= \limsup_{n\to\infty} \ex{\exp\left(C|Z^n_1(s+1)-\xi_1(s) + \xi_1(s)|\right)}\\
&\leq \limsup_{n\to\infty} \ex{\exp\left(C\left(|Z^n_1(s+1)-\xi_1(s)| + |\xi_1(s)|\right)\right)}\\
&\leq \limsup_{n\to\infty} \ex{\exp\left(2C|Z^n_1(s+1)-\xi_1(s)|\right)\exp\left(2C|\xi_1(s)|\right)}\\
&\leq \limsup_{n\to\infty} \ex{\exp\left(2C|Z^n_1(s+1)-\xi_1(s)|\right)}\ex{\exp\left(2C|\xi_1(s)|\right)}\\
&= \left((1-\gamma)M_{s,Z}(2C) + \gamma\frac{\sqrt{M_{s,Z}(4C)M_{s,Z}(8C_b)}}{\bar{C}}\right)M_{\xi}(2C)\\
&< \infty,
\end{align*}
where $M_{\xi}$ is the mapping defined in Assumption \ref{assu::bds}(b).

\subsubsection{Proof that Property A(c) holds at time $t+1$}
\label{sssec::Acpf}
Throughout the section, recall that Assumptions \ref{assu::cond} and \ref{assu::bds} hold at time $t$, and Property A holds at time $t$. For any $k \in \N$, this implies
\begin{equation}
\label{eqn::ZAconvtimet}
(Z^n_{1:k}[t],A^n_{1:k,1:k}[t]) \Rightarrow \left(Z\pcar{1:k}[t],A\pcar{1:k,1:k}[t]\right).
\end{equation} 

The full proof that Property A(c) holds at time $t+1$ is long, so we first provide a proof outline in which technical details are omitted.

\begin{proof}[Proof of Property A(c) Outline:]
We prove this in four steps, some of which are described by a lemma. Consider the following random measures defined for $n \in \N$:
\begin{equation}
\label{eqn::altmu}
\alt{\mu}^{n}_t \defeq \frac{1}{n}\sum_{j=1}^n \delta_{Z^n_1[t],Z^n_j[t],A^n_{1j}(t)}\te{ and } \alt{\mu}_t \defeq \law\left(Z\pcar{1}[t],Z\pcar{2}[t],A\pcar{12}(t)|Z\pcar{1}[t]\right).
\end{equation}
In step 1 of the proof, we use Proposition \ref{prop::condconvres} to show that conditional propagation of chaos holds in this regime:
\begin{lemma}
\label{lem::cpcpf}
The following convergence holds:
\begin{equation}
\label{eqn::altmucpc}
\left(Z^n_1[t],\alt{\mu}^n_t\right)\Rightarrow \left(Z\pcar{1}[t],\alt{\mu}_t\right).
\end{equation}
\end{lemma}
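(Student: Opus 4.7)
The plan is to apply the ``if'' direction of Proposition \ref{prop::condconvres} with $k=3$, under the identifications
\[
X^n = Z^n_1[t],\quad Y^n_j = (Z^n_j[t], A^n_{1j}(t))\ (j \in [1:n]),\quad X = Z\pcar{1}[t],\quad Y\pcar{j} = (Z\pcar{j}[t], A\pcar{1j}(t))\ (j \geq 2).
\]
The ambient spaces $\Xmc = \mf_t(\R^d)$ and $\Ymc = \mf_t(\R^d) \times \{0,1\}$ are locally compact, so the proposition is available. Two things must be verified: (i) that $(\bm{X}, \bm{Y})$ is $\Xmc/\Ymc$-convenient, and (ii) that the joint weak convergence $(X^n, Y^n_2, Y^n_3) \Rightarrow (X, Y\pcar{2}, Y\pcar{3})$ holds with $Y\pcar{2}, Y\pcar{3}$ conditionally independent given $X$ and $\law(X, Y\pcar{j} \mid X) = \eta_{XY}$ for $j=2,3$.

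For (i), the exchangeability of $(X^n, Y^n)$ excluding $1$ is immediate from Property A(a) at time $t$ via Remark \ref{rmk::jointtoexcl}. For the continuous dependence, I use Corollary \ref{coro::morestruct} (in particular \eqref{eqn::exAG}) together with the i.i.d.\ structure of $(Z\pcar{i}[t])_{i \in \N}$ to write $\eta_{XY} = \phi(Z\pcar{1}[t])$, where the deterministic map $\phi: \Xmc \to \P(\Xmc \times \Ymc)$ is given by
\[
\phi(x)(f) = \int \bigl[f(x, z, 1)\, B_t(x, z) + f(x, z, 0)\,(1 - B_t(x, z))\bigr]\,\mu_t(dz).
\]
Continuity of $\phi$ at $Z\pcar{1}[t]$ almost surely then reduces, by bounded convergence, to showing that for $\mu_t$-a.e.\ $x$ the map $x' \mapsto B_t(x', z)$ is continuous at $x$ for $\mu_t$-a.e.\ $z$. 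I will obtain this by an induction on $s \in [0:t]$: the base case uses the a.e.\ continuity of $B_0$ in Assumption \ref{assu::bds}(c); the inductive step applies the recursion \eqref{eqn::Bevolve} together with the a.e.\ continuity of $B$ and a Fubini argument, leveraging the absolute continuity of $Z\pcar{1}[s]$ (Property A(d) at $s$, which itself is preserved from step to step by Assumption \ref{assu::cond}(a) on the additive noise).

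For (ii), the weak convergence follows directly from Property A(c) at time $t$ applied with $k = 3$ and the continuous mapping theorem (projecting out $A^n_{23}[t]$). The required conditional structure of the limit comes from combining Corollary \ref{coro::morestruct} (which gives $Z\pcar{1:3}[t]$ i.i.d.) with Definition \ref{defn::AZlim}(b), which yields conditional independence of the edge trajectories $\{A\pcar{ij}[t]\}_{(i,j)\in\incs_3}$ given $\sigma(Z\pcar{1:3}[t])$ and further implies that $\law(A\pcar{1j}(t) \mid \sigma(Z\pcar{1:3}[t]))$ depends only on $(Z\pcar{1}[t], Z\pcar{j}[t])$. A straightforward tower-property computation then factorizes the joint conditional law to yield $(Z\pcar{2}[t], A\pcar{12}(t)) \indp (Z\pcar{3}[t], A\pcar{13}(t)) \mid Z\pcar{1}[t]$ with the correct marginals $\eta_{XY}$. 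Proposition \ref{prop::condconvres} then delivers \eqref{eqn::altmucpc}. The main obstacle in this plan is the continuous-dependence verification in (i): propagating the a.e.\ continuity of the kernels through the recursion \eqref{eqn::Bevolve} requires careful bookkeeping of null sets under the product laws $\mu_s \otimes \mu_s$ and crucially exploits the absolute continuity maintained at each intermediate time.
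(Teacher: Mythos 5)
Your proof is correct and follows essentially the same route as the paper: the identifications of $X^n$, $Y^n_i$, $X$, $Y\pcar{j}$ are identical; exchangeability excluding $1$ is obtained from Property A(a) in the same way; the joint weak convergence comes from Property A(c) and continuous mapping; and the factorization of the limiting conditional law is the same tower-property computation via Definition \ref{defn::AZlim}(b) and Corollary \ref{coro::morestruct}. The only place you diverge is the continuous-dependence verification: you propose to check continuity of the explicit map $x\mapsto\phi(x)$ by hand, propagating a.e.\ continuity of $B_s$ through the recursion \eqref{eqn::Bevolve} with a Fubini argument and Property A(d), whereas the paper simply observes that $B_t$ is Lebesgue-a.e.\ continuous (by the same one-line recursion), that $(Z\pcar{1}[t],Z\pcar{2}[t])$ is absolutely continuous so $B_t$ is a.s.\ continuous on the pair, and then cites Lemma \ref{lem::condmapcont}, whose proof is precisely the dominated-convergence argument you sketch; you are re-deriving the content of that lemma rather than invoking it, which is sound but redundant given the paper already has it packaged.
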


In step 2 of the proof, we apply exchangeability, \eqref{eqn::ZAconvtimet}, \eqref{eqn::altmucpc} and a conditional Slutzky's lemma (Lemma \ref{Slutzkycond}) to establish the joint convergence of $Z^n_{1:k}[t]$, $L^n_{1:k}[t]$ and $A^n_{1:k,1:k}[t]$:
\begin{lemma}
The following convergence holds:
\label{lem::ZALjoint}
\begin{equation}
\label{eqn::ZALjoint}
\left(Z^n_{1:k}[t],L^n_{1:k}(t),A^n_{1:k,1:k}[t]\right) \Rightarrow \left(Z\pcar{1:k}[t],L\pcar{1:k}(t),A\pcar{1:k,1:k}[t]\right).
\end{equation}
\end{lemma}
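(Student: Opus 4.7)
The plan is to extract each $L^n_i(t)$ from an empirical-measure functional using Lemma \ref{lem::cpcpf}, extend to all $i \in [1:k]$ by joint exchangeability (Property A(a)), and then assemble the full joint convergence via the conditional Slutzky lemma from Appendix \ref{sec::useful} together with the sample convergence in Property A(c).

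The first step is to write
\begin{equation*}
L^n_1(t) \;=\; \frac{\langle \alt{\mu}^n_t,\, g_1\rangle}{\langle \alt{\mu}^n_t,\, g_2\rangle}, \qquad g_1(z_0,z,a)=z(t)a,\quad g_2(z_0,z,a)=a,
\end{equation*}
and apply Lemma \ref{lem::cpcpf}. Since $g_2$ is bounded and continuous, the continuous mapping theorem yields the joint convergence $(Z^n_1[t], \langle \alt{\mu}^n_t, g_2\rangle) \Rightarrow (Z\pcar{1}[t], \exmu{\mu_t^2}{B_t(Z\pcar{1}[t],Z'[t]) \mid Z\pcar{1}[t]})$, where the limit is identified using \eqref{eqn::exAG} of Corollary \ref{coro::morestruct} and is almost surely strictly positive by Assumption \ref{assu::bds}(c). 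For the unbounded numerator $g_1$, I would combine Lemma \ref{lem::cpcpf} with a truncation argument powered by the uniform integrability of $\{Z^n_1(t)\}_n$ established in the proof of Property A(b). Dividing and applying continuous mapping yields $(Z^n_1[t], L^n_1(t)) \Rightarrow (Z\pcar{1}[t], L\pcar{1}(t))$. Joint exchangeability (Property A(a)) then transfers the argument verbatim to each $i \in [1:k]$, with $\alt{\mu}^{n,i}_t \defeq \frac{1}{n}\sum_{j=1}^n \delta_{Z^n_i[t], Z^n_j[t], A^n_{ij}(t)}$ replacing $\alt{\mu}^n_t$, producing the marginal convergence $(Z^n_i[t], L^n_i(t)) \Rightarrow (Z\pcar{i}[t], L\pcar{i}(t))$.

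To assemble the joint convergence \eqref{eqn::ZALjoint}, observe that by \eqref{eqn::Lidef} each $L\pcar{i}(t)$ is a deterministic measurable function of $Z\pcar{i}[t]$ alone. Combining this structural feature with the $k$ marginal convergences above and the joint convergence $(Z^n_{1:k}[t], A^n_{1:k,1:k}[t]) \Rightarrow (Z\pcar{1:k}[t], A\pcar{1:k,1:k}[t])$ from Property A(c), the conditional Slutzky lemma from Appendix \ref{sec::useful} delivers the desired joint limit. The principal obstacle will be the unbounded numerator integrand: conditional propagation of chaos as stated in Lemma \ref{lem::cpcpf} is formulated for bounded continuous test functions, so a careful truncation argument is needed to exchange the limit with integration against the random limiting measure $\alt{\mu}_t$. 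Uniform integrability of $\{Z^n_1(t)\}_n$ provides the necessary tail control, but one must verify that the control remains effective after conditioning on $Z^n_1[t]$ and that the truncated integrals continue to converge jointly with $Z^n_1[t]$; once this technicality is handled, the rest of the argument reduces to exchangeability bookkeeping and an invocation of the conditional Slutzky machinery.
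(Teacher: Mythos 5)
Your route coincides with the paper's: express $L^n_i(t)$ as a ratio of functionals of the empirical measure $\alt{\mu}^n_t$, apply conditional propagation of chaos (Lemma \ref{lem::cpcpf}), handle the unbounded numerator integrand via uniform integrability (the paper packages the truncation you describe as Corollary \ref{unifintflem}, applied after passing to a Skorokhod coupling), extend to each $i$ by joint exchangeability, and assemble the full joint law via the conditional Slutzky lemma together with the measurability $L\pcar{i}(t) = \phi_i(Z\pcar{i}[t])$. That overall plan is correct. Two steps, however, are asserted without the support they actually need.

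First, \emph{dividing and applying continuous mapping} to obtain $(Z^n_1[t], L^n_1(t)) \Rightarrow (Z\pcar{1}[t], L\pcar{1}(t))$ requires the \emph{joint} weak convergence of the triple $\bigl(Z^n_1[t],\, \langle\alt{\mu}^n_t,g_1\rangle,\, \langle\alt{\mu}^n_t,g_2\rangle\bigr)$. Your argument only produces the two pairwise convergences $(Z^n_1[t], \langle\alt{\mu}^n_t,g_\ell\rangle) \Rightarrow \cdots$, $\ell = 1,2$, and those do not automatically combine. The paper closes this gap with a \emph{first} application of Lemma \ref{Slutzkycond} (taking $X^n_1 = X^n_2 = Z^n_i[t]$ and using that both limits are deterministic functions of $Z\pcar{i}[t]$); an equally valid route is to observe that, on the Skorokhod coupling used to prove Lemma \ref{lem::cpcpf}, both functionals converge \emph{in probability} to their limits, hence the triple converges. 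Either way, a sentence is owed here. Second, the claim that $\ex{A\pcar{12}(t)\mid Z\pcar{1}[t]}>0$ a.s.\ ``by Assumption \ref{assu::bds}(c)'' skips a nontrivial argument. In case (c)(i) it does follow from the exponential lower bound on $B$, but in the finite-range case (c)(ii) you need the continuity and strict positivity of $B$ at diagonal points together with the Lebesgue differentiation theorem applied to the density of $Z\pcar{1}(t)$ (which exists by Property A\ref{cond::Btbdd}). The paper isolates this as Lemma \ref{lem::posdenom}; you should state and prove that separately before invoking the continuous mapping theorem on the ratio.
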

The proof of Lemma \ref{lem::ZALjoint} requires the following technical lemma:

\begin{lemma}
\label{lem::posdenom}
The following expression holds:
\[\ex{A\pcar{12}(t)|Z\pcar{1}[t]} > 0 \te{ a.s..}\]
\end{lemma}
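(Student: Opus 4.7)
The plan is to reduce the claim to a deterministic statement about $B_t$ and then split into the two cases of Assumption \ref{assu::bds}(c). By \eqref{eqn::exAG} of Corollary \ref{coro::morestruct}, $\ex{A\pcar{12}(t)\mid\Gmc^2_t} = B_t(Z\pcar{1}[t], Z\pcar{2}[t])$, and the same corollary gives that $Z\pcar{1}[t]$ and $Z\pcar{2}[t]$ are i.i.d. Taking a tower expectation and using independence yields
\[\ex{A\pcar{12}(t)\mid Z\pcar{1}[t]} \;=\; \int_{\mf_t(\R^d)} B_t\!\left(Z\pcar{1}[t], z\right)\mu_t(dz),\]
so it suffices to show this integral is strictly positive for $\mu_t$-almost every realization of $Z\pcar{1}[t]$.

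Under Assumption \ref{assu::bds}(c)(i), a straightforward induction on $s$ shows that $B_s$ is strictly positive everywhere: the base case is \eqref{eq::B0expbd}, and the inductive step follows because $\hat{B}(p,x,y) = pB(1,x,y)+(1-p)B(0,x,y)\ge \min_{a\in\{0,1\}} B(a,x,y) > 0$ for any $p\in[0,1]$ by \eqref{eq::Bexpbd}. In this case the conclusion is immediate.

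The finite-range case Assumption \ref{assu::bds}(c)(ii) is the substantive one, since $B_t$ can genuinely vanish far from the diagonal. Here I would establish by induction on $s$ two diagonal properties of $B_s$: (i) $B_s(z,z) > 0$ for every $z\in\mf_s(\R^d)$, and (ii) the function $(z_1,z_2)\mapsto B_s(z_1,z_2)$ is continuous at each diagonal point $(z,z)$. Both are inherited from the base conditions $\min\{B_0(z,z),B(a,z,z)\} > 0$ and the stipulated continuity of $B_0,B$ at diagonal points: the recursion \eqref{eqn::Bevolve} composes the inductive hypothesis with $\hat{B}$, which preserves both properties since $\hat{B}$ is affine in $p$, and $\hat{B}(p,z,z)\ge \min_{a\in\{0,1\}} B(a,z,z) > 0$ for any $p\in[0,1]$. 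From (i) and (ii), for any $z_1\in\mf_t(\R^d)$ there is an open neighborhood $U_{z_1}$ on which $B_t(z_1,\cdot)\ge \tfrac12 B_t(z_1,z_1)$, giving
\[\int B_t(z_1,z)\,\mu_t(dz)\;\ge\;\tfrac12 B_t(z_1,z_1)\,\mu_t(U_{z_1}).\]

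The main obstacle is then to verify that $\mu_t(U_{z_1}) > 0$ for $\mu_t$-a.e. $z_1$, i.e. that typical realizations of $Z\pcar{1}[t]$ lie in the topological support of $\mu_t$. For this I would invoke Property A(d) at time $t$, which asserts absolute continuity of the full trajectory $Z\pcar{1}[t]$ on $(\R^d)^{t+1}$ (not merely of the time-$t$ marginal): this is exactly what the inductive construction in Section \ref{sssec::Adpf} produces, since joint absolute continuity at time $s$ combined with conditional absolute continuity of $Z\pcar{1}(s+1)$ given $Z\pcar{1}[s]$ yields joint absolute continuity at time $s+1$. Letting $f_t$ be the Lebesgue density of $\mu_t$, the set $\{f_t=0\}$ has $\mu_t$-measure zero, and the Lebesgue density theorem tells us that $\mu_t$-a.e. $z_1$ is a density point of $f_t$. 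At any such $z_1$ we have $\mu_t(B(z_1,\varepsilon)) > 0$ for every sufficiently small $\varepsilon > 0$, and choosing $\varepsilon$ so that $B(z_1,\varepsilon)\subseteq U_{z_1}$ finishes the proof.
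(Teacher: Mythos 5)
Your proof is correct, but it takes a genuinely different route from the paper's. Both proofs split into the two cases of Assumption~\ref{assu::bds}(c), and both reach the conclusion through local positivity and a Lebesgue-differentiation argument in the finite-range case; the divergence is in how the conditional expectation is first rewritten. You use Corollary~\ref{coro::morestruct} to express $\ex{A\pcar{12}(t)|Z\pcar{1}[t]} = \int B_t(Z\pcar{1}[t],z)\,\mu_t(dz)$, which forces you to control the recursively defined $B_t$ directly: hence your induction on $s$ showing that $B_s$ is strictly positive and continuous on the diagonal of $\mf_s(\R^d)\times\mf_s(\R^d)$, and your use of absolute continuity of the \emph{full trajectory} $Z\pcar{1}[t]$ on $(\R^d)^{t+1}$ (which is indeed what property~A(d) provides). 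The paper instead peels off the previous edge state via the Markov structure of Definition~\ref{defn::AZlim}(b): $\ex{A\pcar{12}(t)|Z\pcar{1}[t]} = \ex{B(A\pcar{12}(t-1),Z\pcar{1}(t),Z\pcar{2}(t))|Z\pcar{1}[t]} \geq \min_{a\in\{0,1\}}\ex{B(a,Z\pcar{1}(t),Z'(t))|Z\pcar{1}[t]}$. This collapses the problem to the one-step kernel $B$ evaluated at time-$t$ states only, so the paper needs no induction on $B_s$, no continuity argument for $B_t$, and only absolute continuity of the time-$t$ marginal $Z\pcar{1}(t)$. Your approach is a valid and natural alternative if one regards $B_t$ as the fundamental object; the paper's is more economical because it bypasses the inductive analysis of $B_t$ entirely. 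One small simplification to your finish: once $B_t(z_1,\cdot)$ is continuous and positive at $z_1$, you get $\mu_t(U_{z_1})>0$ for $\mu_t$-a.e.\ $z_1$ from the bare fact that a Borel probability measure on a Polish space puts full mass on its topological support, without invoking densities or the Lebesgue density theorem.
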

This ensures that the denominator of $L^n_1(t)$ does not vanish as $n \to\infty$.

\ind In step 3, we show that Lemma \ref{lem::ZALjoint} implies the joint convergence of $Z^n_{1:k}[t+1]$ and $A^n_{1:k,1:k}[t]$:
\begin{lemma}
\label{lem::Zt+Atjoint}
The following convergence holds: 
\begin{equation}
\label{eqn::Zt+Atjoint}
\left(Z^n_{1:k}[t+1],A^n_{1:k,1:k}[t]\right) \Rightarrow \left(Z\pcar{1:k}[t+1],A\pcar{1:k,1:k}[t]\right).
\end{equation}
\end{lemma}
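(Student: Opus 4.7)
The plan is to obtain Lemma \ref{lem::Zt+Atjoint} directly from Lemma \ref{lem::ZALjoint} by first augmenting the joint convergence with the noise vector $\xi_{1:k}(t)$ and then applying the continuous mapping theorem to the one-step update defined by \eqref{eqn::Znevolve} and \eqref{eqn::Zevolve}.

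First, I would upgrade Lemma \ref{lem::ZALjoint} to include the noise, establishing
\[
\bigl(Z^n_{1:k}[t],\, L^n_{1:k}(t),\, A^n_{1:k,1:k}[t],\, \xi_{1:k}(t)\bigr) \Rightarrow \bigl(Z\pcar{1:k}[t],\, L\pcar{1:k}(t),\, A\pcar{1:k,1:k}[t],\, \xi_{1:k}(t)\bigr).
\]
This follows from three observations. In the prelimit, $L^n_{1:k}(t)$ is a measurable function of $(Z^n[t], A^n[t])$ by \eqref{eqn::Lnidef}--\eqref{eqn::dAnt}, so the triple $(Z^n_{1:k}[t], L^n_{1:k}(t), A^n_{1:k,1:k}[t])$ is $\Fmc^n_t$-measurable; by Assumption \ref{assu::cond}(a), $\xi_{1:k}(t)$ is then independent of this triple. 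In the limit, $L\pcar{1:k}(t)$ is $\Gmc^k_t$-measurable by \eqref{eqn::Lidef} and $A\pcar{1:k,1:k}[t]$ is $\Fmc^k_t$-measurable by definition, while the construction in \eqref{eqn::Zevolve}--\eqref{eqn::mu2def} ensures $\xi_{1:k}(t)$ is independent of $\Fmc^k_t$. Finally, Assumption \ref{assu::cond}(a) fixes the common marginal law of $\xi_{1:k}(t)$ across the two models. A standard factorization on product test functions (the independence decouples $\xi_{1:k}(t)$ from the converging triple in both prelimit and limit) then yields the displayed joint convergence.

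Next, define the continuous mapping
\[
\Phi\bigl(z[t], \ell, a[t], \xi\bigr) = \Bigl(\bigl(z[t],\, (1-\gamma)z(t) + \gamma\ell + \xi\bigr),\, a[t]\Bigr),
\]
from $\mf_t((\R^d)^k) \times (\R^d)^k \times \mf_t(\{0,1\}^{k\times k}) \times (\R^d)^k$ into $\mf_{t+1}((\R^d)^k) \times \mf_t(\{0,1\}^{k\times k})$. By \eqref{eqn::Znevolve}, $\Phi$ evaluated on the prelimit quadruple equals $(Z^n_{1:k}[t+1], A^n_{1:k,1:k}[t])$, and by \eqref{eqn::Zevolve}, $\Phi$ evaluated on the limit quadruple equals $(Z\pcar{1:k}[t+1], A\pcar{1:k,1:k}[t])$. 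Since $\Phi$ is continuous (in fact affine in each coordinate), the continuous mapping theorem applied to the augmented convergence above delivers Lemma \ref{lem::Zt+Atjoint}.

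The argument is essentially bookkeeping once Lemma \ref{lem::ZALjoint} is in hand and presents no substantive obstacle. The only point that merits care is the independence of the limiting noise $\xi_{1:k}(t)$ from the entire vector $(L\pcar{1:k}(t), A\pcar{1:k,1:k}[t])$; this is what legitimizes the product-law identification used to obtain the augmented weak convergence and is precisely what the $\Fmc^k_t$-measurability argument in the preceding paragraph supplies.
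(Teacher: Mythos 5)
Your proposal is correct and follows essentially the same route as the paper: augment the convergence of Lemma \ref{lem::ZALjoint} with the independent noise $\xi_{1:k}(t)$ (using its independence from both the prelimit and limit $\sigma$-algebras) and then push forward through the one-step update map via the continuous mapping theorem. You merely spell out the independence and the explicit form of the map $\Phi$ in more detail than the paper does.
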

In step 4, we apply the continuous mapping theorem and Lemma \ref{lem::cibrlem} to complete the proof.
\end{proof}

We now prove the result starting with step 4.

\begin{proof}[Proof that Property A(c) holds at time $t+1$ given Lemma \ref{lem::Zt+Atjoint}] Suppose we know Lemma \ref{lem::Zt+Atjoint} holds. Then
\[\left(Z^n_{1:k}[t+1],A^n_{1:k,1:k}[t]\right) \Rightarrow \left(Z\pcar{1:k}[t+1],A\pcar{1:k,1:k}[t]\right)\]
by Lemma \ref{lem::Zt+Atjoint}.

\ind  We now apply Lemma \ref{lem::cibrlem}. Let $\Xmc = (\mf_{t+1}(\R^d))^k\times \left(\mf_t\left(\{0,1\}\right)\right)^{k\times k}$. For each $n$ let $X_n \defeq (Z^n_{1:k}[t+1],A^n_{1:k,1:k}[t])$ be an $\Xmc$-random element. In addition, let $X \defeq (Z\pcar{1:k}[t+1],A\pcar{1:k,1:k}[t])$ be an $\Xmc$-random element. Then by the above display, $X_n \Rightarrow X$, and $\sigma(X_n) = \sigma\left(Z^n_{1:k}[t+1],A^n_{1:k,1:k}[t]\right)\subset \Fmc^{A,n}_{t+1}$. Let $a_1,\dots,a_{k(k-1)/2}$ be an enumeration of the set $\incs_k$ and let $D^n_j = A^n_{a_j}(t+1)$ (where $D^n_j$ are $I \defeq \{0,1\}$-valued random variables). Then by Assumption \ref{assu::cond}(b), $\{D^n_j\}_{j \in [1:k(k-1)/2]}$ is a conditionally independent sequence of Bernoulli random variables given $\Fmc^{A,n}_{t+1}$. Given that $\sigma(X_n)\subset \Fmc^{A,n}_{t+1}$ and for each $j$,
\[\PP(D^n_j=1|\Fmc^{A,n}_{t+1}) = B(A^n_{a_j}(t),Z^n_{a_{j,1}}(t+1),Z^n_{a_{j,2}}(t+1))\]
is $\sigma(X_n)$-measurable, Lemma \ref{lem:Bercondindsubfilt} implies that $\{D^n_j\}_{j \in [1:k(k-1)/2]}$ is a conditionally independent sequence of Bernoulli random variables given $X_n$ for each $n$. Moreover, 
\[\phi_j^1(X_n) \defeq B\left(A^n_{a_j}(t),Z^n_{a_{j,1}}(t+1),Z^n_{a_{j,2}}(t+1)\right) \te{ and } \phi_j^0(X_n)\defeq 1 - \phi_j^1(X_n),\]
are bounded and a.e. continuous by definition. This implies that for all $j$ and $a \in I$, $\phi_j^a$ is also $X$-a.s. continuous by property A\ref{cond::Btbdd}. Therefore all the properties of Lemma \ref{lem::cibrlem} are satisfied, so
\[(X^n,D^n_{1:k(k-1)/2}) \Rightarrow (X,D\pcar{1:k(k-1)/2}),\]
where $\{D\pcar{j}\}_{j=1}^{k(k-1)/2}$ are mutually conditionally independent given $\sigma(X) = \Gmc^{A,k}_{t+1}$ and 
\[\ex{D\pcar{j}|X} = \ex{D\pcar{j}\middle|\Gmc^{A,k}_{t+1}} = B\left(A\pcar{a_j}(t),Z\pcar{a_{j,1}}(t+1),Z\pcar{a_{j,2}}(t+1)\right).\]

By Definition \ref{defn::AZlim}(a) and Proposition \ref{prop::welldef}, it follows that
\begin{align*}
\left(Z\pcar{1:k}[t+1],A\pcar{1:k,1:k}[t],D\pcar{1:k(k-1)/2}\right)\deq \left(Z\pcar{1:k}[t+1],A\pcar{1:k,1:k}[t],A\pcar{a_{1:k(k-1)/2}}(t+1)\right),
\end{align*}
so
\begin{align*}
\left(Z^n_{1:k}[t+1],A^n_{1:k,1:k}[t],A^n_{a_{1:k(k-1)/2}}(t+1)\right) &= \left(X^n,D^n_{1:k(k-1)/2}\right) \\
&\Rightarrow \left(X,D\pcar{1:k(k-1)/2}\right) \\
&= \left(Z\pcar{1:k}[t+1],A\pcar{1:k,1:k}[t],A\pcar{a_{1:k(k-1)/2}}(t+1)\right).
\end{align*}

By the continuous mapping theorem, we may conclude, as desired,
\[\left(Z^n_{1:k}[t+1],A^n_{1:k,1:k}[t+1]\right) \Rightarrow \left(Z\pcar{1:k}[t+1],A\pcar{1:k,1:k}[t+1]\right).\]
\end{proof}

We now prove each lemma in sequence beginning with step 1 of the proof,  given by Lemma \ref{lem::cpcpf}.

\begin{proof}[Proof of Lemma \ref{lem::cpcpf}]
Let $\Xmc = \mf_t(\R^d)$ and for each $n \in \N$, let $X^n \defeq Z^n_1[t]$ be a $\Xmc$-random element. Let $\Ymc = \mf_t(\R^d)\times \{0,1\}$ and for each $n \in \N,i \in [1:n]$, let $Y^n_i = (Z^n_i[t],A^n_{1i}(t))$ be a $\Ymc$-random element. Let $X = Z\pcar{1}[t]$ and let $Y = (Z\pcar{2}[t],A\pcar{12}(t))$. We show that these random elements and spaces satisfy the conditions of Proposition \ref{prop::condconvres}.

\ind Recall that we assume Property A holds at time $t$, so $(Z^n[t],A^n[t])$ is jointly exchangeable. For each $n \in \N$ and $\sigma \in S_n$ such that $\sigma(1) = 1$, joint exchangeability of $(Z^n[t],A^n[t])$ implies:
\[(X^n,(Y^n_i)_{i=1}^n) = (Z^n_1[t],(Z^n_i[t],A^n_{1i}[t])_{i =1}^n) \deq (Z^n_1[t],(Z^n_{\sigma(i)}[t],A^n_{1\sigma(i)}[t])_{i=1}^n = (X^n,(Y^n_{\sigma(i)})_{i=1}^n),\]
so $(X^n,Y^n)$ is exchangeable excluding 1. Recall that $\Gmc^2_t = \sigma(Z\pcar{1}[t],Z\pcar{2}[t])$. By Corollary \ref{coro::morestruct}, 
\begin{equation}
\label{eqn::ABtdef}
\PP\left(A\pcar{12}(t) = 1\middle|\Gmc^2_t\right) = B_t(Z\pcar{1}[t],Z\pcar{2}[t]) \te{ for } t\geq 0.
\end{equation}

\ind Because $B$ and $B_0$ are a.e. continuous functions, $B_t$ is likewise a.e. continuous. By Property A(d) at time $t$ and Definition \ref{defn::AZlim}, $(Z\pcar{1}[t], Z\pcar{2}[t])$ is the cartesian product of two independent, absolutely continuous random vectors and is therefore absolutely continuous. Thus, $B_t$ is also $(Z\pcar{1}[t],Z\pcar{2}[t])$-a.s. continuous. Lemma \ref{lem::condmapcont} then implies that
\[\eta\defeq \law(Z\pcar{1}[t],Z\pcar{2}[t],A\pcar{12}(t)|Z\pcar{1}[t]) = \law(X,Y|X),\]
depends continuously on $X=Z\pcar{1}[t]$, so $(\mathbf{X},\mathbf{Y})$ is $\Xmc/\Ymc$-convenient.

\ind By \eqref{eqn::ZAconvtimet} and the continuous mapping theorem,
\[\left(X^n,(Y^n_j)_{j=2}^3\right) = \left(Z^n_1[t],(Z^n_{j}[t],A^n_{1j}(t))_{j=2}^3\right) \Rightarrow \left(Z\pcar{1}[t],(Z\pcar{j}[t],A\pcar{1j}(t))_{j=2}^3\right) = \left(X,(Y\pcar{j})_{j=2}^3\right).\]
Moreover, Corollary \ref{coro::morestruct} and \eqref{eqn::ABtdef} (which holds if all instances of $A\pcar{12}$, $Z\pcar{2}$ and $\Gmc^2_t$ are replaced by $A\pcar{13}$, $Z\pcar{3}$  and $\sigma(Z\pcar{1}[t],Z\pcar{3}[t])$ respectively) together imply that for $j=2,3$, $(X,Y\pcar{j}) \deq (X,Y)$. Lastly, note that $\{Z\pcar{i}[t]\}_{i=1}^3$ are i.i.d. and $A\pcar{12}(t)\indp A\pcar{13}(t)|\Gmc^3_t$ (Definition \ref{defn::AZlim}(b)). By Corollary \ref{coro::morestruct}, $\ex{A\pcar{1j}(t)\middle|\Gmc^3_t} = B_t(Z\pcar{1}[t],Z\pcar{j}[t])$. It then follows for any bounded, measurable functions $f_j: \Ymc \to \R$, $j =2,3,$
\begin{align*}
\ex{\prod_{j=2}^3 f_j(Z\pcar{j}[t],A\pcar{1j}(t))\middle|Z\pcar{1}[t]} &= \ex{\ex{\prod_{j=2}^3 f_j(Z\pcar{j}[t],A\pcar{1j}(t))\middle|\Gmc^3_t}\middle|Z\pcar{1}[t]}\\
&= \ex{\prod_{j=2}^3\ex{ f_j(Z\pcar{j}[t],A\pcar{1j}(t))\middle|\Gmc^3_t}\middle|Z\pcar{1}[t]}\\
&= \ex{\prod_{j=2}^3 \left(\sum_{a=0}^1 f_j(Z\pcar{j}[t],a)\alt{B}_t(a,Z\pcar{1}[t],Z\pcar{j}[t])\right)\middle|Z\pcar{1}[t]}\\
&= \prod_{j=2}^3\ex{ \left(\sum_{a=0}^1 f_j(Z\pcar{j}[t],a)\alt{B}_t(a,Z\pcar{1}[t],Z\pcar{j}[t])\right)\middle|Z\pcar{1}[t]}\\
&= \prod_{j=2}^3 \ex{f_j(Z\pcar{j}[t],A\pcar{1j}(t))\middle|Z\pcar{1}[t]}.
\end{align*}
Above we use the notation $\alt{B}_t$ introduced at \eqref{eqn::altdef} and the fact that $Z\pcar{1:3}[t]$ are i.i.d. and therefore $Z\pcar{2}[t]\indp Z\pcar{3}[t]|Z\pcar{1}[t]$. This proves that $Y\pcar{2}\indp Y\pcar{3}|X$. Then by Proposition \ref{prop::condconvres} and \eqref{eqn::altmu},
\begin{equation}
\label{eqn::altmuconv}
(X^n,\eta^n_{XY})\defeq (Z^n_1[t],\alt{\mu}^n_t) \Rightarrow (Z\pcar{1}[t],\alt{\mu}_t) \defeq (X,\eta_{XY}).
\end{equation}
\end{proof}

Now, we move to step 2 of the proof. To prove Lemma \ref{lem::ZALjoint}, we first prove the technical lemma.

\begin{proof}[Proof of Lemma \ref{lem::posdenom}]
Suppose Assumption \ref{assu::bds}(c)(i) holds. Then applying Definition \ref{defn::AZlim}(b), there exists a $\bar{C} > 0$ such that if $t > 0$,
\begin{align}
\ex{A\pcar{12}(t)|Z\pcar{1}[t]} &= \ex{\ex{A\pcar{12}(t)|\Gmc^{2}_t,A\pcar{12}(t-1)}\middle|Z\pcar{1}[t]}\nonumber\\
&= \ex{B(A\pcar{12}(t-1),Z\pcar{1}(t),Z\pcar{2}(t))|Z\pcar{1}[t]}\nonumber\\
&\geq \min_{a \in \{0,1\}} \ex{B(a,Z\pcar{1}(t),Z\pcar{2}(t))|Z\pcar{1}[t]}\label{eqn::exA|Zbd}\\
&\geq \ex{\bar{C}\exp\left(-C_b|Z\pcar{1}(t)-Z\pcar{2}(t)|\right)\middle| Z\pcar{1}[t]}\nonumber\\
& > 0.\nonumber
\end{align}
If $t = 0$, the same computation holds replacing $B(a,\cdot,\cdot)$ by $B_0(\cdot,\cdot)$ above. 

\ind Now suppose Assumption \ref{assu::bds}(c)(ii) holds instead. Then because $B(\cdot,z,z)>0$ and because $(a,z,z)$ is a continuity point of $B$ for all $(a,z) \in \{0,1\}\times\R^d$, there exist measurable functions $\ep,\delta: \R^d \to \R_+$ such that for all $z \in \R^d$, $\delta(z) > 0$, $\ep(z) > 0$ and for any $z' \in \R^d$ such that $|z-z'| < \ep(z)$,
\[\min_{a \in \{0,1\}} B(a,z,z') \geq \delta(z).\]
Let $Z'$ be an i.i.d. copy of $Z\pcar{1}$. By Property A(d), $Z\pcar{1}(t)$ is absolutely continuous for each $i$. Let $f_{Zt}$ be pdf of $Z\pcar{1}(t)$. Then,
\begin{align*}
\PP\left(\min_{a \in\{0,1\}} |Z\pcar{1}(t)-Z'(t)|< \ep(Z\pcar{1}(t))\middle|Z\pcar{1}[t]\right) &= \int_{\R^d}\indic{|z'-Z\pcar{1}(t)| < \ep(Z\pcar{1}(t))}f_{Zt}(z')\,dz'\\
&\geq \sup_{\ep' < \ep(Z\pcar{1}(t))} \int_{\R^d}\indic{|z'-Z\pcar{1}(t)| < \ep}f_{Zt}(z')\,dz'.
\end{align*}
The above quantity is a.s. strictly positive due to the Lebesgue differentiation theorem which states
\[\lim_{n\to\infty} \frac{1}{\ep}\int_{\R^d}\indic{|z'-Z\pcar{1}(t)| < \ep}f_{Zt}(z')\,dz' = f_{Zt}(Z\pcar{1}(t)) > 0\te{ a.s..}\]
Then by \eqref{eqn::exA|Zbd} and setting $Z\pcar{2} = Z'$,
\begin{align*}
\ex{A\pcar{12}(t)|Z\pcar{1}[t]} &\geq \min_{a \in \{0,1\}} \ex{B(a,Z\pcar{1}(t),Z'(t))|Z\pcar{1}[t]}\\
&\geq \ex{\delta(Z\pcar{1}(t))\middle|Z\pcar{1}[t]}\PP\left(\min_{a\in \{0,1\}} B(a,Z\pcar{1}(t),Z'(t))\geq \delta(Z\pcar{1}(t))\middle|Z\pcar{1}[t]\right)\\
&\geq \ex{\delta(Z\pcar{1}(t))\middle|Z\pcar{1}[t]}\PP\left(|Z\pcar{1}(t)-Z'(t)|< \ep(Z\pcar{1}(t))\middle|Z\pcar{1}[t]\right)\\
&>0 \te{ a.s..}
\end{align*}
\end{proof}

We now prove Lemma \ref{lem::ZALjoint}.
\begin{proof}
By the Skorokhod representation theorem, there exists a probability space $(\alt{\Omega},\alt{\Fmc},\alt{\PP})$ which supports the following random elements 
\begin{align*}
\left(\wh{Z}^n_{1:n}[t],\wh{A}^n_{1:n,1:n}[t]\right)&\deq\left(Z^n_{1:n}[t],A^n_{1:n,1:n}[t]\right) \te{ for all }n \in \N\\
\left(\wh{Z}\pcar{1:k}[t],\wh{A}\pcar{1:k,1:k}[t]\right) &\deq \left(Z\pcar{1:k}[t],A\pcar{1:k,1:k}[t]\right),
\end{align*}
such that
\[(\wh{X}^n,\wh{\eta}^n_{XY})\defeq (\wh{Z}^n_1[t],\wh{\alt{\mu}}^n_t) \to (\wh{Z}\pcar{1}[t],\wh{\alt{\mu}}_t)\defeq (\wh{X},\wh{\eta}_{XY})\te{ in probability,}\]
where
\[\wh{\alt{\mu}}^n_t \defeq \frac{1}{n}\sum_{i=1}^n \delta_{\wh{Z}^n_1[t],\wh{Z}^n_j[t],\wh{A}^n_{1j}(t)}\te{ and } \wh{\alt{\mu}}_t \defeq \law\left(\wh{Z}\pcar{1}[t],\wh{Z}\pcar{2}[t],\wh{A}\pcar{12}[t]\middle|\wh{Z}\pcar{1}[t]\right).\] 
Consider the continuous function $f: \mf_t(\R^d)\times \mf_t(\R^d)\times \{0,1\}\to \R^d$ given by $f(z_1,z_2,a) = az_2(t)$. Note that uniform integrability of $\{Z^n_i(t)\}_{n \in \N, i\in [1:n]}$ and the fact that $A^n_{ij}(t) \in \{0,1\}$ implies the uniform integrability of $\{f(Z^n_1(t),Z^n_j(t),A^n_{1j}(t))\}_{n \in \N, j\in [1:n]}$ and therefore $\{f(\wh{Z}^n_1(t),\wh{Z}^n_j(t),\wh{A}^n_{1j}(t))\}_{n \in \N, j\in [1:n]}$, so by Corollary \ref{unifintflem}, 
\[\langle \wh{\eta}^n_{XY},f \rangle \to \langle \wh{\eta}_{XY},f\rangle \te{ in probability.}\]
This implies
\begin{align*}
\left(Z^n_1[t],\frac{1}{n}\sum_{j=1}^n Z^n_j(t)A^n_{1j}(t)\right) &= \left(Z^n_1[t],\langle \alt{\mu}^n_t,f\rangle\right) \\
&=\left(X^n,\langle \eta^n_{XY},f\rangle\right)\\
&\deq \left(\wh{X}^n,\langle  \wh{\eta}^n_{XY},f\rangle\right)\\
& \to \left(\wh{X},\langle \wh{\eta}_{XY},f\rangle\right) \te{ in probability,} \\
&\deq \left(Z\pcar{1}[t],\ex{Z\pcar{2}[t]A\pcar{12}(t)|Z\pcar{1}[t]}\right).
\end{align*}
\sloppy Repeating the same computation replacing $f$ by the bounded, continuous function $g \in C_b\left(\mf_t(\R^d)\times \mf_t(\R^d)\times \{0,1\}\right)$ given by $g(z_1,z_2,a) = a$ yields
\[\left(Z^n_1[t],\frac{1}{n}\sum_{j=1}^nA^n_{1j}(t)\right) \Rightarrow \left(Z\pcar{1}[t],\ex{A\pcar{12}(t)|Z\pcar{1}[t]}\right).\]
By joint exchangeability, for any $i \in \N$,
\begin{align}
\left(Z^n_i[t],\frac{1}{n}\sum_{j=1}^n Z^n_j(t)A^n_{ij}(t)\right) &\Rightarrow \left(Z\pcar{i}[t],\ex{Z'(t)A'(t)\middle|Z\pcar{i}[t]}\right),\label{eqn::ZsZAconv}\\
\left(Z^n_i[t],\frac{1}{n}\sum_{j=1}^n A^n_{ij}(t)\right) &\Rightarrow \left(Z\pcar{i}[t],\ex{A'(t)\middle|Z\pcar{i}[t]}\right),\label{eqn::ZsAconv}
\end{align}
where $(Z\pcar{i}[t],Z'[t],A'(t)) \deq (Z\pcar{1}[t],Z\pcar{2}[t],A\pcar{12}(t))$. We have now shown that the marginal distributions in \eqref{eqn::ZALjoint} converge. We next apply Lemma \ref{Slutzkycond} twice to show the joint convergence in \eqref{eqn::ZALjoint}.\\

\textbf{First application of Lemma \ref{Slutzkycond}:} Fix $i \in [1:k]$. Set $k = 2$. For $m = 1,2,$ and $n \geq i$, define $X^n_1 = X^n_2 = Z^n_{i}[t]$. For $n \geq i$, let $Y^n_1 = \frac{1}{n}\sum_{j=1}^n Z^n_j(t)A^n_{ij}(t)$ and $Y^n_2 = \frac{1}{n}\sum_{j=1}^n A^n_{ij}(t)$. Lastly, let $X_1 = X_2 = Z\pcar{i}[t]$, $Y_1 = \ex{Z'(t)A'(t)\middle|Z\pcar{i}[t]}$. Likewise define $Y_2 = \ex{A'(t)\middle|Z\pcar{i}[t]}$. Then $Y_m$ is $\sigma(X_m)$ measurable for $m = 1,2$, and \eqref{eqn::ZsZAconv}-\eqref{eqn::ZsAconv} imply that $(X^n_{m},Y^n_{m})\Rightarrow (X_m,Y_{m})$ for $m = 1,2$. Then by Lemma \ref{Slutzkycond},
\begin{align*}
\left(Z^n_i[t],\frac{1}{n}\sum_{j=1}^n Z^n_j(t)A^n_{ij}(t),\frac{1}{n}\sum_{j=1}^n A^n_{ij}(t)\right)&=(X^n_{m},Y^n_{m})_{m=1,2} \\
&\Rightarrow (X_m,Y_m)_{m=1,2}\\
&\hspace{-4pt}=\left(Z\pcar{i}[t],\ex{Z\pcar{\ell}(t)A\pcar{i,\ell}(t)\middle|Z\pcar{i}[t]}, \ex{A\pcar{i,\ell}(t)\middle|Z\pcar{i}[t]}\right).
\end{align*}
By \eqref{eqn::Lidef}, the continuous mapping theorem and Lemma \ref{lem::posdenom} (which ensures the denominator of \eqref{eqn::Lidef} is a.s. positive), this implies
\begin{equation}
\label{eqn::Lniconv}
(Z^n_i[t],L^n_i(t)) \Rightarrow (Z\pcar{i}[t],L\pcar{i}(t))
\end{equation}
for all $i \in [k]$.\\

\textbf{Second Application of Lemma \ref{Slutzkycond}:} Now set $K = k+1$. For $n \in \N$ and $i\leq k$ let $X^n_i = Z^n_i[t]$ and let $X^n_{k+1} = A^n_{1:k,1:k}[t]$. For $n \in \N$ and $i \leq k$ define $Y^n_i = L^n_i(t)$ and define $Y^n_{k+1}(t) = 1$. Lastly, define $X_i = Z\pcar{i}[t]$, $Y_i = L\pcar{i}(t)$ for $i \leq k$ and $X_{k+1} = A\pcar{1:k,1:k}[t]$, $Y_{k+1} = 1$. Then by \eqref{eqn::Lniconv}, $(X^n_i,Y^n_i)\Rightarrow (X_i,Y_i)$ for $i\leq k$. By assumption, $(Z^n_{1:k}[t],A^n_{1:k,1:k}[t])\Rightarrow (Z\pcar{1:k}[t],A\pcar{1:k,1:k}[t])$ which implies $(X^n_{k+1},Y^n_{k+1}) \Rightarrow (X_{k+1},Y_{k+1})$ and $X^n_{1:k+1} \Rightarrow X_{1:k+1}$. Lastly, each $L\pcar{i}(t)$ is $\sigma(Z\pcar{i}[t])$-measurable, so for $i \leq k$, there exists a measurable function $\phi_i$ such that $\phi_i(X_i) = Y_i$. For $i = k+1$, we may simply set $\phi_{k+1} \equiv 1$ which also yields $\phi_{k+1}(X_{k+1}) = 1 = Y_{k+1}$. This completes the verification of the conditions of Lemma \ref{Slutzkycond}, so
\begin{align*}
\left(Z^n_{1:k}[t],A^n_{1:k,1:k}[t],L^n_{1:k}(t),1\right) &= (X^n_{1:k+1},Y^n_{1:k+1}) \\
&\Rightarrow (X_{1:k+1},Y_{1:k+1}) \\
&= \left(Z\pcar{1:k}[t],A\pcar{1:k}[t],L\pcar{1:k}(t),1\right).
\end{align*}
This completes the proof of the lemma.
\end{proof}

We now finish the proof by establishing Lemma \ref{lem::Zt+Atjoint}.

\begin{proof}[Proof of Lemma \ref{lem::Zt+Atjoint}:]
By Assumption \ref{assu::cond}(a), $\xi_{1:k}(t)$ are i.i.d. and independent of $\vee_{n \in \N} \Fmc^n_t\vee \Fmc^k_t$. Therefore, Lemma \ref{lem::ZALjoint} implies that
\[\left(Z^n_{1:k}[t],L^n_{1:k}(t),A^n_{1:k,1:k}[t],\xi_{1:k}(t)\right) \Rightarrow \left(Z\pcar{1:k}[t],L\pcar{1:k}(t),A\pcar{1:k,1:k}[t],\xi_{1:k}(t)\right).\] By \eqref{eqn::Zevolve} and the continuous mapping theorem, this directly implies, as desired,
\[\left(Z^n_{1:k}[t+1],A^n_{1:k,1:k}[t]\right) \Rightarrow \left(Z\pcar{1:k}[t+1],A\pcar{1:k,1:k}[t]\right)\]
\end{proof}

\subsection{Proof of Theorem \ref{thm::hydro}}
\label{ssec::hydropf}

Suppose Assumptions \ref{assu::cond}-\ref{assu::bds} hold. By Lemma \ref{lem::t0} and Proposition \ref{prop::genstat}, property A holds at all times $s \in [t]$. By property A(a), $(Z^n[t],A^n[t])$ is jointly exchangeable for all $n \in \N$, which implies that $(Z^n_i[t])_{i\in [n]}$ is an exchangeable collection of random vectors. Furthermore, by property A(c),
\[\left(Z^n_{1:2}[t],A^n_{1:2}[t]\right) \Rightarrow \left(Z\pcar{1:2}[t],Z\pcar{1:2}[t]\right),\]
so 
\[Z^n_{1:2}[t]\Rightarrow Z\pcar{1:2}[t],\]
where by Definition \ref{defn::AZlim}, $Z\pcar{1}[t]$ and $Z\pcar{2}[t]$ are i.i.d.. Then by Proposition \ref{prop::convres},
\[\mu^n_t = \frac{1}{n}\sum_{i=1}^n \delta_{Z^n_i[t]} \Rightarrow \te{Law}\left(Z\pcar{1}[t]\right) = \mu_t,\]
so \eqref{eqn::muntdef} holds.

\ind Fix $\Xmc\defeq \mf_t(\R^d)$ and $\Ymc\defeq \mf_t(\R^d\times\{0,1\})$. For all $n \in \N$, define $X^n = Z^n_1[t]$ and for all $j \in [n]$ define $Y^n_j = (Z^n_j[t],A^n_{1j}[t])$. By property A(a) at time $t$, $(Z^n_{1:n}[t],A^n_{1:n,1:n}[t])$ is jointly exchangeable. Then for any $\sigma \in S_n$ such that $\sigma(1) = 1$,
\[\left(X^n,(Y^n_j)_{j=1}^n\right) = \left(Z^n_1[t],\left(Z^n_j[t],A^n_{1j}[t]\right)_{j=1}^n\right) \deq \left(Z^n_1[t],\left(Z^n_{\sigma(j)}[t],A^n_{1\sigma(j)}[t]\right)_{j=1}^n\right) = \left(X^n,(Y^n_{\sigma(j)})_{j=1}^n\right),\]
so $(X^n,Y^n)$ is exchangeable excluding 1.

\ind Next, let $X = Z\pcar{1}[t]$ and for each $j$ let $Y\pcar{j} = (Z\pcar{j}[t],A\pcar{1j}[t])$. Then for $j > 1$, $\law(X,Y\pcar{j})\defeq \eta$ does not depend on $j$. Letting $Y = Y\pcar{2}$, define $\eta_{XY} = \law(X,Y|X)$. To show that $\eta_{XY}$ depends continuously on $X$, it suffices to show that for any $f \in C_b\left(\Xmc\times \Ymc\right)$, there exists a continuous $\phi_f: \Xmc \to \R$ satisfying $\phi_f(X) = \langle \eta_{XY},f\rangle$ a.s.. To show this, we first note that for any $\bm{a} \in \mf_t(\{0,1\})$, Definition \ref{defn::AZlim}(b) implies
\begin{align*}
\phi_{2,\bm{a}}(Z\pcar{1:2}[t]) &\defeq \PP\left(A\pcar{12}[t] = \bm{a}\middle|\Gmc^2_t\right)\\
& = \alt{B}_0\left(a(0),Z\pcar{1}(0),Z\pcar{2}(0)\right)\prod_{s=1}^t \alt{B}\left(a(s),a(s-1),Z\pcar{1}(s),Z\pcar{2}(s)\right),
\end{align*}
where $\alt{B}$ is defined via \eqref{eqn::altdef}. 
So, we get that  $\phi_{2,\bm{a}}$ is a bounded and a.s. continuous function of $Z\pcar{1:2}[t]$. Then,

\begin{align*}
\phi_f(X) &= \ex{f(X,Y)|X} \\
&= \ex{f\left(Z\pcar{1}[t],Z\pcar{2}[t],A\pcar{12}[t]\right)\middle|Z\pcar{1}[t]}\\
&= \ex{\ex{f\left(Z\pcar{1}[t],Z\pcar{2}[t],A\pcar{12}[t]\right)\middle|\Gmc^2_t}\middle|Z\pcar{1}[t]}\\
&= \ex{\sum_{\bm{a} \in \mf_t(\{0,1\})} f\left(Z\pcar{1}[t],Z\pcar{2}[t],\bm{a}\right)\PP\left(A\pcar{12}[t] = \bm{a}\middle|\Gmc^2_t\right)\middle|Z\pcar{1}[t]}\\
&= \ex{\sum_{\bm{a} \in \mf_t(\{0,1\})} f\left(Z\pcar{1}[t],Z\pcar{2}[t],\bm{a}\right)\phi_{2,\bm{a}}(Z\pcar{1}[t],Z\pcar{2}[t])\middle|Z\pcar{1}[t]}.
\end{align*}
Because $Z\pcar{1}[t]$ and $Z\pcar{2}[t]$ are independent, it follows that for any $z \in \mf_t(\R^d)$,
\begin{align*}
\phi_f(z) &= \ex{\sum_{\bm{a} \in \mf_t(\{0,1\})} f\left(z,Z\pcar{2}[t],\bm{a}\right)\phi_{2,\bm{a}}(z,Z\pcar{2}[t])}.
\end{align*}
Since the term inside the expectation is bounded and a.s. continuous for $Z\pcar{1}[t]$ a.s. values of $z$, it follows that $\phi_f$ is a.s. continuous, so $\eta_{XY}$ depends continuously on $X$. This establishes that $(\bm{X},\bm{Y})$ is $\Xmc/\Ymc$-convenient. Furthermore, by property A(c),
\[(X^n,Y^n_{2:3}) = \left(Z^n_{1:3}[t],A^n_{1,2:3}[t]\right) \Rightarrow \left(Z\pcar{1:3}[t],A\pcar{1,2:3}[t]\right) = (X,Y\pcar{2:3}),\]
where $\law(X,Y\pcar{j}|X) = \eta_{XY}$ for $j=2,3$. Lastly, given $f_2,f_3: \mf_t\left(\R^d\times\{0,1\}\right)$, Definition \ref{defn::AZlim}(b) implies
\begin{align*}
\ex{\prod_{j=2}^3 f_j\left(Y\pcar{j}\right)\middle|X} &= \ex{\prod_{j=2}^3 f_j\left(Z\pcar{j}[t],A\pcar{1j}[t]\right)\middle|Z\pcar{1}[t]}\\
&= \ex{\ex{\prod_{j=2}^3 f_j\left(Z\pcar{j}[t],A\pcar{1j}[t]\right)\middle|\Gmc^3_t}\middle|Z\pcar{1}[t]}\\
&= \ex{\prod_{j=2}^3\ex{ f_j\left(Z\pcar{j}[t],A\pcar{1j}[t]\right)\middle|\Gmc^3_t}\middle|Z\pcar{1}[t]}\\
&= \ex{\prod_{j=2}^3\ex{f_j\left(Z\pcar{j}[t],A\pcar{1j}[t]\right)\middle|Z\pcar{1}[t],Z\pcar{j}[t]}\middle|Z\pcar{1}[t]}\\
&= \prod_{j=2}^3\ex{\ex{f_j\left(Z\pcar{j}[t],A\pcar{1j}[t]\right)\middle|Z\pcar{1}[t],Z\pcar{j}[t]}\middle|Z\pcar{1}[t]}\\
&= \prod_{j=2}^3\ex{f_j\left(Z\pcar{j}[t],A\pcar{1j}[t]\right)Z\pcar{1}[t]}\\
&=\prod_{j=2}^3\ex{ f_j\left(Y\pcar{j}\right)\middle|X},
\end{align*}
where the fourth equality stems from the independence of $(Z\pcar{1}[t],Z\pcar{j}[t],A\pcar{1j}[t])$ and $Z\pcar{5-j}[t]$ for $j = 2,3$.\footnote{Indeed, note that for a measurable function $f$, we have that
\begin{align*}
\ex{f(Z\pcar{1}[t],Z\pcar{j}[t],A\pcar{1j}[t])\middle|Z\pcar{5-j}[t]} &= \ex{\ex{f(Z\pcar{1}[t],Z\pcar{j}[t],A\pcar{1j}[t])\middle|\Gmc^3_t}\middle|Z\pcar{5-j}[t]}\\
&= \ex{\phi(Z\pcar{1}[t],Z\pcar{j}[t])\middle|Z\pcar{5-j}[t]}\\
&= \ex{\phi(Z\pcar{1}[t],Z\pcar{j}[t])},
\end{align*}
where 
\[\phi(z_1,z_2) = \sum_{\bm{a}\in \mf_t(\{0,1\})} f(z_1,z_2,\bm{a})\alt{B}_0(a(0),z_1(0),z_2(0))\prod_{s=1}^t \alt{B}(a(s),a(s-1),z_1(s),z_2(s)).\]} This implies that $Y\pcar{2}$ and $Y\pcar{3}$ are independent given $X$. The result then follows from Proposition \ref{prop::condconvres}.

\section{Proof of Theorem \ref{thm::Antmultconv} and Corollary \ref{coro::Wdefthm}}
\label{sec::graphon}

\subsection{Left-Convergence of Multiplexons}
\label{ssec::mgraphonres}
Left-convergence of decorated graphs was first studied by \cite{LovSze10}, and recently a class of probability graphons was developed which captures many convenient properties of graphons in the more general decorated graph case \cite{AbrDelWei23}. A more specialized framework for multiplexes was introduced by \cite{BhaGan25}. In this section, we present a few useful results from \cite{BhaGan25} which we then apply to the proof of Theorem \ref{thm::Antmultconv} in Section \ref{ssec::Antmultconvpfpf}. In particular, the results from \cite{BhaGan25} show that to prove the convergence of the graph trajectories $\bm{G}_n$, it suffices to establish the convergence of homomorphism densities of $\bm{G}_n$.

\begin{definition}[Multiplex Homomorphism Densities]
\label{defn::multhomdens}
\cite[Definition 3.1]{BhaGan25} If $\mathbf{H}$ and $\mathbf{G}$ are two $(t+1)$-layer multiplexes,
\[\te{Hom}(\mathbf{H},\mathbf{G}) = \bigcap_{s=0}^t \te{Hom}(H(s),G(s)),\]
and
\[t(\mathbf{H},\mathbf{G}) = \frac{\left|\te{Hom}(\mathbf{H},\mathbf{G})\right|}{|V_G|^{|V_H|}}.\]
\end{definition}

We now introduce a different multiplex decomposition to that used in Definition \ref{defn::multdecomp}.
\begin{definition}[Disjoint Decomposition]
\label{defn::multcumuldecomp}
Let $\mathbf{H}$ be a $t+1$ layer multiplex. Then set $H\pcar{S}= (V_H,E\pcar{S}_H)$, where
\[E\pcar{S}_H = \left(\cap_{s \in S} E_H(s)\right)\setminus \left(\cup_{s\notin S} E_H(s)\right),\]
so $E\pcar{S}_H$ contains all edges that lie in $E_H(s)$ if and only if $s \in S$. Let $E_H = \cup_{s \in S}E_H(s)$ be the set of all edges that lie in \emph{any} layer of $\bm{H}$.
\end{definition}

Using this decomposition, we can now extend the definition of a homomorphism to multiplexons:

\begin{definition}[Multiplexon Homomorphism Densities]
\label{multhomdensgraphon}
Let $\mathbf{H}$ be a $(t+1)$-layer multiplex with vertex set $V_H = [k]$ and let $\mathbf{W}$ be a $(t+1)$-layer multiplexon. Then
\[t(\mathbf{H},\mathbf{W}) = \int_{[0,1]^k} \prod_{\substack{S\subseteq [t]\\ S\neq \emptyset}} \prod_{\{i,j\} \in E_H\pcar{S}} W^S(x_i,x_j)\,dx_1,\dots,dx_k.\]
\end{definition}

We apply the following useful results which follow from \cite{AbrDelWei23}:

\begin{proposition}[Homomorphism Density Equivalence]
\label{multhomequiv}
For any $t+1$-multiplexes $\mathbf{H}$ and $\mathbf{G}$,
\[t(\mathbf{H},\mathbf{G}) = t\left(\mathbf{H},\mathbf{W}^{\mathbf{G}}\right).\]
For any $(t+1)$-layer multiplexons $\mathbf{W}_1 \sim \mathbf{W}_2\sim \alt{\mathbf{W}}$,
\[t\left(\mathbf{H},\alt{\mathbf{W}}\right) \defeq t\left(\mathbf{H},\mathbf{W}_1\right) = t\left(\mathbf{H},\mathbf{W}_2\right).\]
\end{proposition}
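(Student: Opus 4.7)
My plan is to split Proposition \ref{multhomequiv} into two assertions: (i) $t(\mathbf{H},\mathbf{G}) = t(\mathbf{H},\mathbf{W}^{\mathbf{G}})$ for every pair of $(t+1)$-multiplexes $(\mathbf{H},\mathbf{G})$, and (ii) $t(\mathbf{H},\mathbf{W}_1)=t(\mathbf{H},\mathbf{W}_2)$ whenever $\delta_{\square}^t(\mathbf{W}_1,\mathbf{W}_2)=0$. Both claims adapt the classical graphon arguments by exploiting the signature decomposition $\{E_H\pcar{S}\}_{S \subseteq [t],\, S \neq \emptyset}$ of Definition \ref{defn::multcumuldecomp}, which partitions $E_H$ according to which layers each edge lies in.

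For (i), I would observe that a map $\phi\colon V_H \to V_G$ belongs to $\te{Hom}(\mathbf{H},\mathbf{G}) = \cap_{s} \te{Hom}(H(s),G(s))$ if and only if for every nonempty $S \subseteq [t]$ and every $\{i,j\} \in E_H\pcar{S}$ one has $\{\phi(i),\phi(j)\} \in E_G^S \defeq \cap_{s \in S} E_G(s)$; indeed $\{i,j\}$ lies in layer $s$ of $\mathbf{H}$ exactly when $s\in S$, so preservation across all layers reduces to this condition. Writing $n = |V_G|$, $k = |V_H|$, this yields
\[ t(\mathbf{H},\mathbf{G}) = \frac{1}{n^k}\sum_{\phi\colon [k] \to [n]} \prod_{S\neq\emptyset} \prod_{\{i,j\}\in E_H\pcar{S}} \indic{\{\phi(i),\phi(j)\}\in E_G^S}. \]
On the other hand, partitioning $[0,1]^k$ into the $n^k$ product cubes indexed by maps $\phi$ (so $\lceil x_i n \rceil = \phi(i)$ on each such cube) and using $W^{G^S}(x,y) = \indic{\{\lceil xn\rceil,\lceil yn\rceil\}\in E_{G^S}}$ reduces $t(\mathbf{H},\mathbf{W}^{\mathbf{G}})$ to exactly the same sum.

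For (ii), I would proceed in two substeps. First, for any measure-preserving $\psi\colon [0,1]\to[0,1]$, the change of variables $y_i = \psi(x_i)$ applied to the $k$-fold integral in Definition \ref{multhomdensgraphon} gives $t(\mathbf{H},\mathbf{W}^{\psi}) = t(\mathbf{H},\mathbf{W})$ for every multigraphon $\mathbf{W}$, where $\mathbf{W}^{\psi} \defeq ((W^S)^{\psi})_S$. Second, I would establish a multigraphon counting lemma
\[ \bigl|t(\mathbf{H},\mathbf{W}_1) - t(\mathbf{H},\mathbf{W}_2)\bigr| \leq C_{\mathbf{H}}\sum_{S\neq\emptyset} \|W_1^S - W_2^S\|_{\square}, \]
for a constant $C_{\mathbf{H}}$ depending only on $\mathbf{H}$. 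The proof is a standard telescoping argument: the integrand is a finite product over pairs $(S,\{i,j\})$ with $\{i,j\}\in E_H\pcar{S}$, and replacing a single factor $W_1^S(x_i,x_j)$ by $W_2^S(x_i,x_j)$ while integrating out the other $k-2$ variables produces a test function $\Phi \in [0,1]$ on $[0,1]^2$; the well-known inequality $|\int \Phi(x,y)(W_1^S - W_2^S)(x,y)\,dx\,dy| \leq 4\|W_1^S - W_2^S\|_{\square}$ then controls each step. Combining the two substeps, if $\delta_{\square}^t(\mathbf{W}_1,\mathbf{W}_2)=0$, pick measure-preserving $\psi_n$ with $\sum_S \|W_1^S - (W_2^S)^{\psi_n}\|_{\square} \to 0$, apply the counting lemma to $(\mathbf{W}_1,\mathbf{W}_2^{\psi_n})$ and invoke the first substep to pass to the limit.

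The only step that requires any real care is the multigraphon counting lemma, and this reduces cleanly to the single-layer cut-norm estimate because the integrand factorizes across the finitely many $(S,\{i,j\})$ pairs. Alternatively, via the identification in footnote \ref{ft::prob} between multigraphons and probability-graphons on the discrete space $\ps([t])$ together with the equivalence of $\delta_{\square}^t$ and $\delta_{\square,\Pmc}$ established there, the whole proposition can be deduced directly from the corresponding homomorphism-density identity and counting lemma for probability-graphons proved in \cite{AbrDelWei23}.
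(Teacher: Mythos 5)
Your main argument is correct in outline but takes a genuinely different route from the paper. The paper proves Proposition \ref{multhomequiv} purely by reduction to the probability-graphon framework of \cite{AbrDelWei23}: using the correspondence of footnote \ref{ft::prob}, $\mathbf{H}$ is identified with a $C_b(\ps([t]))$-decorated graph $H^g$, $\mathbf{W}$ with a probability-graphon $W$ on the discrete space $\ps([t])$, the two notions of homomorphism density are matched, and \cite[Remark 7.2]{AbrDelWei23} together with the discussion after \cite[Remark 7.4]{AbrDelWei23} is cited. Your primary argument instead adapts the classical graphon proofs directly: part (i) via the cube decomposition of $[0,1]^k$ and the observation that $\phi\in\te{Hom}(\mathbf{H},\mathbf{G})$ iff $\{\phi(i),\phi(j)\}\in E_G^S$ for every $\{i,j\}\in E_H\pcar{S}$, and part (ii) via measure-preserving invariance of $t(\mathbf{H},\cdot)$ plus a multigraphon counting lemma. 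Both routes are valid, and you flag the paper's route as an alternative at the end; the direct route avoids leaning on the probability-graphon machinery of \cite{AbrDelWei23} beyond definitions, at the cost of having to write out the counting lemma.

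One caution about that counting lemma. The inequality $\left|\int \Phi(x,y)\,(W_1^S - W_2^S)(x,y)\,dx\,dy\right| \leq 4\,\|W_1^S - W_2^S\|_{\square}$ does \emph{not} hold for an arbitrary measurable $\Phi:[0,1]^2\to[0,1]$: a kernel can have vanishing cut norm while its positive part has integral bounded away from zero, so taking $\Phi$ to be the indicator of the region where the kernel is positive defeats any such bound. The standard repair is not to integrate out $x_{3:k}$ before invoking the cut norm. Instead, for fixed $x_{3:k}$, the remaining factors (after pulling out the one indexed by $\{1,2\}$, say) decompose as $\Phi_1(x_1;x_{3:k})\,\Phi_2(x_2;x_{3:k})\,C(x_{3:k})$ with all three valued in $[0,1]$, because every other edge of $\mathbf{H}$ touches at most one of the two vertices of the replaced edge; one then applies the decomposable-test-function bound $\left|\int (W_1^S-W_2^S)(x_1,x_2)\Phi_1(x_1)\Phi_2(x_2)\,dx_1\,dx_2\right|\leq\|W_1^S-W_2^S\|_{\square}$ for each fixed $x_{3:k}$ and integrates afterwards, giving the counting lemma with constant $|E_H|$. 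This is a local fix, not a flaw in your overall plan, and your fallback via \cite{AbrDelWei23} sidesteps it entirely.
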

\begin{proof}
This is a restatement of Proposition 3.13 and Corollary 6.3 of \cite{BhaGan25}.
\end{proof}

\begin{proposition}
\label{tmultcont}
A sequence of $t+1$-layer multiplexon classes $\alt{\mathbf{W}}_1,\alt{\mathbf{W}}_2,\dots$ converges to a multiplexon class $\alt{\mathbf{W}}$ if and only if
\[\lim_{n\to\infty} t\left(\mathbf{H},\alt{\mathbf{W}}_n\right) = t\left(\mathbf{H},\alt{\mathbf{W}}\right).\]
\end{proposition}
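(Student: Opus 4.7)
The plan is to reduce this statement to the analogous left-convergence theorem for probability-graphons in \cite{AbrDelWei23}. The key input is the topological identification established in footnote \ref{ft::prob}: each $(t+1)$-layer multigraphon class $\alt{\bm{W}} \in \alt{\Wmc}^{(t)}$ corresponds to a probability-graphon class $\alt{W}$ on the finite discrete space $\ps([t])$, with the metric $\delta^t_{\square}$ bi-Lipschitz equivalent to $\delta_{\square,\Pmc}$ of \cite{AbrDelWei23}. Consequently, $\alt{\bm{W}}_n \to \alt{\bm{W}}$ in $\alt{\Wmc}^{(t)}$ if and only if the corresponding probability-graphon classes converge in $\delta_{\square,\Pmc}$.

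Next, following exactly the translation used in the proof of Proposition \ref{multhomequiv}, for any multiplex $\bm{H}$ the multiplex-homomorphism density $t(\bm{H}, \bm{W})$ of Definition \ref{multhomdensgraphon} coincides with the $C_b(\ps([t]))$-graph homomorphism density $t(H^g, W)$ of \cite[Definition 7.1]{AbrDelWei23}, where $H^g$ is the weighted graph obtained by endowing each edge $e$ of the underlying graph of $\bm{H}$ with the indicator function $g_e(S) = \indic{e \in E_H^{(S)}}$. Thus convergence of multiplex homomorphism densities $t(\bm{H}, \alt{\bm{W}}_n) \to t(\bm{H}, \alt{\bm{W}})$ for every finite multiplex $\bm{H}$ is equivalent to convergence of $t(H^g, \alt{W}_n) \to t(H^g, \alt{W})$ over the subclass of $C_b(\ps([t]))$-graphs arising in this way.

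With these identifications in hand, I would invoke the left-convergence characterization for probability-graphons in \cite{AbrDelWei23} (the analogue of Proposition \ref{prop::graphonleft}), which says that convergence of $\alt{W}_n$ to $\alt{W}$ in $\delta_{\square,\Pmc}$ is equivalent to convergence of the homomorphism densities $t(H^g, \alt{W}_n) \to t(H^g, \alt{W})$ over \emph{all} $C_b(\ps([t]))$-weighted graphs $H^g$. The ``only if'' direction is then immediate. For the ``if'' direction, I need to argue that the restricted subclass of indicator-weighted $H^g$ (those coming from multiplexes) is already rich enough to determine probability-graphon convergence. Because $\ps([t])$ is finite, every $g \in C_b(\ps([t]))$ is a finite linear combination of the singleton indicators $\indic{\cdot = S_0}$, and each singleton indicator $\indic{\cdot = S_0}$ is itself a signed $\Z$-linear combination of the ``upward'' indicators $\indic{S_0 \subseteq \cdot}$ (which are precisely the $g_e$ used above) via inclusion--exclusion. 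Because the density $t(H^g, W)$ is multilinear in the edge weights $(g_e)_{e \in E_H}$, convergence against our subclass propagates by linearity to convergence against arbitrary $C_b$-weighted graphs.

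The main obstacle is precisely the last step: verifying that the indicator-type edge weights produced by the multiplex decomposition $E_H^{(S)}$ form a determining family via the multilinearity/inclusion--exclusion argument, so that the ``if'' direction is not weaker than the corresponding probability-graphon statement. Once this linear-algebraic reduction on the finite space $\ps([t])$ is written out carefully, both directions of the equivalence follow from the cited theorem in \cite{AbrDelWei23}, completing the proof.
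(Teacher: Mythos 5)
Your proof follows the same general strategy as the paper's --- pass from multigraphons to probability-graphons on the finite discrete space $\ps([t])$ via the identification in footnote \ref{ft::prob}, translate multiplex homomorphism densities to weighted-graph homomorphism densities via Proposition \ref{multhomequiv}, and then reduce to a left-convergence result in \cite{AbrDelWei23} --- but the final reduction is carried out differently. The paper's proof directly exhibits a finite \emph{convergence-determining sequence} on $\ps([t])$ (the constant $f_0 \equiv 1$ together with the singleton indicators $\indic{\cdot = S}$ as $S$ ranges over $\ps([t])$) and then cites \cite[Remark 7.6 and Lemma 7.5]{AbrDelWei23}, which characterize left-convergence of probability-graphons in terms of homomorphism densities against weights drawn from such a sequence; no further argument is needed. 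You instead invoke the left-convergence characterization over \emph{all} of $C_b(\ps([t]))$ and then argue that the multiplex-derived edge weights already determine convergence, using multilinearity of $t(H^g,W)$ in the edge weights and an inclusion--exclusion reduction from the "upward'' indicators $\indic{S_0 \subseteq \cdot}$ (which is what the translation in Proposition \ref{multhomequiv} actually produces once one reads $E_H^{(S)}$ together with Definition \ref{defn::multhomdensgraphon}) down to the singleton indicators. Both are legitimate; yours is longer because the cited Lemma 7.5 / Remark 7.6 already internalize exactly the refinement you re-derive, and the paper avoids the multilinearity step entirely by choosing the convergence-determining sequence outright.

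One detail in your spanning argument deserves a sentence. The multiplex-derived weights are $\{\indic{S_0 \subseteq \cdot} : S_0 \neq \emptyset\}$, since every edge of $\bm{H}$ lies in at least one layer; inclusion--exclusion over these produces $\indic{\cdot = S_0}$ only for $S_0 \neq \emptyset$, not $\indic{\cdot = \emptyset}$. To span all of $C_b(\ps([t])) \cong \R^{\ps([t])}$ you also need the constant $1$, which corresponds to simply omitting the edge from $\bm{H}$ (equivalently $S_0 = \emptyset$), and which the paper's sequence explicitly includes as $f_0$. This is easy to add, but as stated your spanning claim is off by one dimension.
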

\begin{proof}
This is a direct consequence of Theorem 7.1 of \cite{BhaGan25}.
\end{proof}

\subsection{Proof of Theorem \ref{thm::Antmultconv} and Corollary \ref{coro::Wdefthm}}
\label{ssec::Antmultconvpfpf}

We apply the propositions above to prove Theorem \ref{thm::Antmultconv} and Corollary \ref{coro::Wdefthm} together.

\begin{proof}[Proof of Theorem \ref{thm::Antmultconv} and Corollary \ref{coro::Wdefthm}]
Fix $t > 0$ and choose an arbitrary $\lambda$-$\mu_t$ measure-preserving transformation $\theta_t$. Let $\mathbf{H}$ be any multiplex. Assume without loss of generality that the vertex set of $\mathbf{H}$ is $V_H = [1:k]$ for some $k \in \N$. Let $\phi: [1:k] \to [1:n]$ be a map from the vertex set of $\mathbf{H}$ to the vertex set of $\bm{G}_n$. Then $\phi$ is a homomorphism if for every non-empty $S \subseteq [t]$ and $\{i,j\} \in E_H\pcar{S}$ (recall Definition \ref{defn::multcumuldecomp}), $\{\phi(i),\phi(j)\}$ also lies in $E_{\bm{G}_n}^S$, or equivalently,
\[\prod_{s \in S} A^n_{\phi(i)\phi(j)}(s) = 1.\]
Then,
\begin{align*}
t(\mathbf{H},\bm{G}_n) &= \frac{1}{n^k} \sum_{\phi: [1:k]\to [1:n]} \indic{\phi\in \te{Hom}(\mathbf{H},\bm{G}_n)}\\
&= \frac{1}{n^k} \sum_{\phi: [1:k]\to [1:n]} \prod_{\substack{S\subseteq [t]\\S\neq \emptyset}} \prod_{\{i,j\}\in E_H\pcar{S}} \prod_{s \in S} A^n_{\phi(i)\phi(j)}(s)\\
&= \frac{1}{n^k}\sum_{\mathbf{m}\in [1:n]^k} \prod_{\substack{S\subseteq [t]\\S\neq \emptyset}} \prod_{\{i,j\} \in E_H\pcar{S}} \prod_{s \in S} A^n_{m_i,m_j}(s).
\end{align*}

This allows us to make the following computation. In the third equality below, we apply the joint exchangeability of $(Z^n[t],A^n[t])$. The fourth equality applies the fact that $R_n$ is $o(1)$ by an argument we provide below in \eqref{eqn::RtHGn}. We also apply Proposition \ref{prop::genstat} and Lemma \ref{lem::t0} which together imply that property A holds at all times. By property A(c) at time $t$, $(A^n_{ij}[t])_{(i,j)\in \incs_k} \Rightarrow (A\pcar{ij}[t])_{(i,j)\in \incs_k}$. The sixth equality follows by Definition \ref{defn::AZlim}(b). The seventh equality follows by \eqref{eq::Bzz0def}, \eqref{eq::Bzzdef}, Remark \ref{rem::Bzzeqdist} and the fact that $\ex{\prod_{i\in S}A\pcar{ij}(s)|\Gmc^k_t}$ is $\sigma(Z\pcar{i}[t],Z\pcar{j}[t])$-measurable for any $S\subseteq [t]$. The penultimate equality follows from the definition of $\mathbf{W}$ given in \eqref{eqn::Wmultdef}. The final equality is simply the definition of multiple homomorphism density (Definition \ref{defn::multhomdens}).
\begin{align*}
\lim_{n\to\infty} \ex{t(\mathbf{H},\bm{G}_n)} &= \lim_{n\to\infty}\ex{\frac{1}{n^k}\sum_{\mathbf{m} \in [1:n]^k} \prod_{\substack{S\subseteq [t]\\S\neq\emptyset}}\prod_{\{i,j\}\in E_H\pcar{S}}\prod_{s \in S} A^n_{m_i,m_j}(s)}\\
&= \lim_{n\to\infty}\left(\ex{\frac{(n-k)!}{n!} \sum_{\substack{\mathbf{m} \in [1:n]^k\\ (m_1,\dots,m_k)\te{ distinct.}}} \prod_{\substack{S\subseteq [t]\\S\neq\emptyset}}\prod_{\{i,j\}\in E_H\pcar{S}}\prod_{s \in S} A^n_{m_i,m_j}(s)} + R_n\right)\\
&= \lim_{n\to\infty}\ex{\prod_{\substack{S\subseteq [t]\\S\neq\emptyset}}\prod_{\{i,j\}\in E_H\pcar{S}}\prod_{s \in S} A^n_{i,j}(s)} + \lim_{n\to\infty}R_n\\
&= \ex{\prod_{\substack{S\subseteq [t]\\S\neq\emptyset}}\prod_{\{i,j\}\in E_H\pcar{S}}\prod_{s \in S}A\pcar{ij}(s)}\\
&= \ex{\ex{\prod_{\substack{S\subseteq [t]\\S\neq\emptyset}}\prod_{\{i,j\}\in E_H\pcar{S}}\prod_{s \in S}A\pcar{ij}(s)\middle|\Gmc^k_t}}\\
&= \ex{\prod_{\substack{S\subseteq [t]\\S\neq\emptyset}}\prod_{\{i,j\}\in E_H\pcar{S}}\ex{\prod_{s \in S}A\pcar{ij}(s)\middle|\Gmc^k_t}}\\
&= \ex{\prod_{\substack{S\subseteq [t]\\S\neq\emptyset}}\prod_{\{i,j\}\in E_H\pcar{S}}\ex{\prod_{s \in S}B_{Z\pcar{i}[t],Z\pcar{j}[t]}(s)\middle|Z\pcar{i}[t],Z\pcar{j}[t]}}\\
&= \ex{\prod_{\substack{S\subseteq [t]\\S\neq\emptyset}}\prod_{\{i,j\}\in E_H\pcar{S}}\ex{\prod_{s \in S}B_{\theta_t(U_i),\theta_t(U_j)}(s)\middle|\theta_t(U_i),\theta_t(U_j)}}\\
&= \ex{\prod_{\substack{S\subseteq [t]\\S\neq\emptyset}}\prod_{\{i,j\}\in E_H\pcar{S}}W^S(U_i,U_j)}\\
&= t\left(\mathbf{H},\mathbf{W}\right),
\end{align*}

Above, $|R_n| \to 0$ by the following combinatorial argument:

\begin{align}
|R_n| &= \Bigg|\frac{1}{n^k}\sum_{\substack{\mathbf{m} \in [1:n]^k\\ (m_1,\dots,m_k)\te{ not distinct.}}} \prod_{\substack{S\subseteq [t]\\ S\neq \emptyset}}\prod_{\{i,j\} \in E_H\pcar{S}}\prod_{s\in S} A^n_{m_i,m_j}(s) + \nonumber\\
&\ind\ind \left(\frac{1}{n^k}-\frac{(n-k)!}{n!}\right) \sum_{\substack{\mathbf{m} \in [1:n]^k\\ (m_1,\dots,m_k)\te{ distinct.}}}\prod_{\{i,j\} \in E_H\pcar{S}}\prod_{s\in S} A^n_{m_i,m_j}(s)\Bigg|\nonumber\\
&\leq \left|\frac{1}{n^k}\left(n^k - \frac{n!}{(n-k)!}\right)\right| + \left|\left(\frac{1}{n^k} - \frac{(n-k)!}{n!}\right)\frac{n!}{(n-k)!}\right|\nonumber\\
&= 2\left|1 - \frac{n!}{n^k(n-k)!}\right|\nonumber\\
& \to 0.\label{eqn::RtHGn}
\end{align}

Using the same arguments, we also get the second-order condition. In this case, the fact that $R'_n = o(1)$ follows by a combinatorial argument we describe in \eqref{eqn::RtsqHGn} below. We additionally apply the fact that
\[\left(\prod_{\substack{S\subseteq [t]\\S \neq \emptyset}} \prod_{\{i,j\} \in E\pcar{S}_H} \mathbf{W}^S(U_i,U_j)\right) \indp \left(\prod_{\substack{S\subseteq [t]\\S \neq \emptyset}} \prod_{\{i,j\} \in E\pcar{S}_H} \mathbf{W}^S(U_{k+i},U_{k+i})\right).\]

\begin{align*}
&\lim_{n\to\infty} \ex{(t(\mathbf{H},\bm{G}_n))^2} \\
&= \lim_{n\to\infty}\ex{\left(\frac{1}{n^k}\sum_{\mathbf{m} \in [1:n]^k} \prod_{\substack{S\subseteq [t]\\S\neq\emptyset}}\prod_{\{i,j\} \in E_H\pcar{S}} \prod_{s \in S}A^n_{m_i,m_j}(s)\right)^2}\\
&= \lim_{n\to\infty}\ex{\frac{1}{n^{2k}}\sum_{\mathbf{m},\mathbf{m}' \in [1:n]^k} \prod_{\substack{S\subseteq [t]\\S\neq\emptyset}}\prod_{\{i,j\} \in E_H\pcar{S}} \prod_{s \in S} A^n_{m_i,m_j}(s)A^n_{m'_i,m'_j}(s)}\\
&= \lim_{n\to\infty}\left(\ex{\frac{(n-2k)!}{n!}\sum_{\substack{\mathbf{m},\mathbf{m}' \in [1:n]^k\\ (m_1,\dots,m_k,m'_1,\dots,m'_k)\te{ distinct.}}} \prod_{\substack{S\subseteq [t]\\S\neq\emptyset}}\prod_{\{i,j\} \in E_H\pcar{S}} \prod_{s \in S} A^n_{m_i,m_j}(s)A^n_{m'_i,m'_j}(s)} + R'_n\right)\\
&= \lim_{n\to\infty}\ex{\prod_{\substack{S\subseteq [t]\\S\neq\emptyset}}\prod_{\{i,j\} \in E_H\pcar{S}} \prod_{s \in S} A^n_{i,j}(s)A^n_{k+i,k+j}(s)} + \lim_{n\to\infty} R'_n\\
&= \ex{\prod_{\substack{S\subseteq [t]\\S\neq\emptyset}}\prod_{\{i,j\} \in E_H\pcar{S}} \prod_{s \in S} A\pcar{ij}(s)A\pcar{k+i,k+j}(s)}\\
&= \ex{\ex{\prod_{\substack{S\subseteq [t]\\S\neq\emptyset}}\prod_{\{i,j\} \in E_H\pcar{S}} \prod_{s \in S} A\pcar{ij}(s)A\pcar{k+i,k+j}(s)\middle|\Gmc^{2k}_t}}\\
&= \ex{\prod_{\substack{S\subseteq [t]\\S\neq\emptyset}}\prod_{\{i,j\} \in E_H\pcar{S}}\ex{ \prod_{s \in S} A\pcar{ij}(s)\middle|\Gmc^{2k}_t}\ex{\prod_{s \in S}A\pcar{k+i,k+j}(s)\middle|\Gmc^{2k}_t}}\\
&= \ex{\prod_{\substack{S\subseteq [t]\\S\neq\emptyset}}\prod_{\{i,j\} \in E_H\pcar{S}}\ex{ \prod_{s \in S} A\pcar{ij}(s)\middle|\Gmc^{2k}_t}\ex{\prod_{s \in S}A\pcar{k+i,k+j}(s)\middle|\Gmc^{2k}_t}}\\
&= \ex{\prod_{\substack{S\subseteq [t]\\S\neq\emptyset}}\prod_{\{i,j\} \in E_H\pcar{S}}\ex{ \prod_{s \in S} B_{Z\pcar{i}[t],Z\pcar{j}[t]}(s)\middle|Z\pcar{i}[t],Z\pcar{j}[t]}\ex{\prod_{s \in S}B_{Z\pcar{k+i}[t],Z\pcar{k+j}[t]}(s)\middle|Z\pcar{k+i}[t],Z\pcar{k+j}[t]}}\\
&= \ex{\prod_{\substack{S\subseteq [t]\\S\neq\emptyset}}\prod_{\{i,j\} \in E_H\pcar{S}}\ex{ \prod_{s \in S} B_{\theta_t(U_i),\theta_t(U_j)}(s)\middle|\theta_t(U_i),\theta_t(U_j)}\ex{\prod_{s \in S}B_{\theta_t(U_{k+i}),\theta_t(U_{k+j})}(s)\middle|\theta_t(U_{k+i}),\theta_t(U_{k+j})}}\\
&= \ex{\prod_{\substack{S\subseteq [t]\\S\neq\emptyset}}\prod_{\{i,j\} \in E_H\pcar{S}}\mathbf{W}^S(U_i,U_j)\mathbf{W}^S(U_{k+i},U_{k+j})}\\
&= \ex{\prod_{\substack{S\subseteq [t]\\S\neq\emptyset}}\prod_{\{i,j\} \in E_H\pcar{S}}\mathbf{W}^S(U_i,U_j)}\ex{\prod_{\substack{S\subseteq [t]\\S\neq\emptyset}}\prod_{\{i,j\} \in E_H\pcar{S}}\mathbf{W}^S(U_{k+i},U_{k+j})}\\
&= \left(t\left(\mathbf{H},\mathbf{W}\right)\right)^2.
\end{align*}

Above, $|R'_n| \to 0$ by the following combinatorial argument:

\begin{align}
|R'_n| &= \left|\frac{1}{n^{2k}}\sum_{\substack{\mathbf{m},\mathbf{m}'\in [n]^k\\ (m_1,\dots,m_k,m'_1,\dots,m'_k)\te{ not distinct.}}} \prod_{\substack{S\subseteq [t]\\S\neq\emptyset}}\prod_{\{i,j\} \in E_H\pcar{S}}\prod_{s\in S} A^n_{m_i,m_j}(s)A^n_{m'_i,m'_j}(s)\right|\nonumber\\
 &\ind + \left|\left(\frac{1}{n^{2k}} - \frac{(n-2k)!}{n!}\right)\sum_{\substack{\mathbf{m},\mathbf{m}'\in [n]^k\\ (m_1,\dots,m_k,m'_1,\dots,m'_k)\te{ distinct.}}} \prod_{\substack{S\subseteq [t]\\S\neq\emptyset}}\prod_{\{i,j\} \in E_H\pcar{S}}\prod_{s\in S} A^n_{m_i,m_j}(s)A^n_{m'_i,m'_j}(s)\right|\nonumber\\
 &\leq \left|\frac{1}{n^{2k}}\left(n^{2k} - \frac{n!}{(n-2k)!}\right)\right| + \left|\left(\frac{1}{n^{2k}} - \frac{(n-2k)!}{n!}\right)\frac{n!}{(n-2k)!}\right|\nonumber\\
 &= 2\left|1 - \frac{n!}{n^{2k}(n-2k)!}\right|\nonumber\\
 &\to 0.\label{eqn::RtsqHGn}
\end{align}

It follows that $t(\bm{H},\bm{G}_n[t]) \to t(\bm{H},\bm{W})$ in probability, so by Proposition \ref{multhomdensgraphon}, 
\[t(\bm{H},\alt{\bm{G}_n[t]}) \to t(\bm{H},\alt{\bm{W})} \te{ in probability}.\]
Because $\theta_t$ was chosen arbitrarily, it follows from Proposition \ref{tmultcont} that the choice of $\theta_t$ does not affect the isomorphism class $\alt{\bm{W}}$ of $\bm{W}$.
\end{proof}

\appendix

\section{Useful Lemmas}
\label{sec::useful}

This appendix contains several minor lemmas that we used to prove our results throughout the article. It is split into lemmas to establish continuity, convergence and other miscellaneous lemmas.

\subsection{Establishing Continuous Dependence}
\label{ssec::contdep}

This section includes some lemmas which allow us to establish continuous dependence (Definition \ref{def::depcont}). This is very useful in applications of Proposition \ref{prop::condconvres}.

\begin{lemma}
\label{lem::condmapcont}
Let $X$ and $Y$ be independent $\Xmc$ and $\Ymc$-random elements respectively, and let $I$ be a finite index set. Suppose there exist $|I|$ $(X,Y)$-a.s. continuous functions $f_i: \Xmc\times \Ymc\to [0,1]$, $i \in I$ and a $I$-valued random variable $D$ such that $\P(D = i|X,Y) = f_i(X,Y)$ for each $i \in I$. Then, the following random elements depend continuously on $X$:
\begin{enumerate}[label = (\alph*)]
\item $\eta_{XYD}\defeq \law(X,Y,D|X)$.
\item $\ex{g(X,Y,D)|X}$ where $g \in C_b\left(\Xmc\times\Ymc\times I\right)$.
\item $\ex{g(X,Y,D)|X}$ where $g:\Xmc\times\Ymc\times I\to \R$ is continuous and there exists an $h:\Ymc\to\R_+$ such that $\sup_{x,d}|g(x,Y,d)|\leq h(Y)$ a.s. and $\ex{h(Y)}<\infty$.
\end{enumerate}
\end{lemma}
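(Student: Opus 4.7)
The plan is to treat part (b) as the core technical statement, then derive (a) by the portmanteau theorem and (c) as a direct adaptation of (b). The key ingredient is that, because $X \indp Y$, the conditional expectation $\ex{\cdot\mid X}$ collapses to an integral against $\nu \defeq \law(Y)$, and the $(X,Y)$-a.s.\ continuity of $f$ will transfer by Fubini to sectional continuity of $f(x,\cdot)$ at $\law(X)$-a.e.\ $x$.

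For (b), I would first use $X\indp Y$ together with $\ex{B\mid X,Y}=f(X,Y)$ to verify that, for any $g\in C_b(\Xmc\times\Ymc\times\{0,1\})$,
\[\ex{g(X,Y,B)\mid X} = \phi_g(X) \quad\te{a.s., where}\quad \phi_g(x) \defeq \int_\Ymc\bigl[g(x,y,1)f(x,y)+g(x,y,0)(1-f(x,y))\bigr]d\nu(y).\]
It then suffices to show $\phi_g$ is continuous at $\law(X)$-a.e.\ $x$. Let $D_f$ denote the discontinuity set of $f$. Since $\PP((X,Y)\in D_f)=0$, independence and Fubini give $\nu(D_f^x)=0$ for $\law(X)$-a.e.\ $x$, where $D_f^x\defeq\{y:(x,y)\in D_f\}$. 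Fix such an $x$ and a sequence $x_n\to x$: for every $y\notin D_f^x$ one has $f(x_n,y)\to f(x,y)$ by continuity of $f$ at $(x,y)$, and $g(x_n,y,b)\to g(x,y,b)$ by continuity of $g$. Dominated convergence (with $g$ bounded and $f\in[0,1]$) then gives $\phi_g(x_n)\to\phi_g(x)$.

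For (a), I would define the candidate map $\phi:\Xmc\to\P(\Xmc\times\Ymc\times\{0,1\})$ by
\[\phi(x)(A) \defeq \int_\Ymc\bigl[f(x,y)\indic{(x,y,1)\in A}+(1-f(x,y))\indic{(x,y,0)\in A}\bigr]d\nu(y)\]
and verify $\phi(X)=\law(X,Y,B\mid X)$ almost surely directly from the factorization used in (b). Since $\Xmc\times\Ymc\times\{0,1\}$ is Polish, there exists a countable convergence-determining family $\{g_k\}\subset C_b$. Applying (b) to each $g_k$ produces a full-measure set $E_k$ on which $\phi_{g_k}$ is continuous; on the intersection $E\defeq\bigcap_k E_k$, which still has full $\law(X)$-measure, any sequence $x_n\to x\in E$ satisfies $\int g_k\,d\phi(x_n)\to\int g_k\,d\phi(x)$ for every $k$, so $\phi(x_n)\Rightarrow\phi(x)$ by portmanteau.

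Part (c) will follow from the same argument as (b), with the single change being that in the dominated convergence step the uniform bound on $g$ is replaced by the dominating function $h(y)$, which is $\nu$-integrable by hypothesis. The only genuinely nontrivial step in the entire argument is the Fubini transfer in (b) from joint a.s.\ continuity of $f$ on $\Xmc\times\Ymc$ to sectional a.s.\ continuity of $f(x,\cdot)$; this is precisely where the independence hypothesis $X\indp Y$ is essential, and once it is in hand the rest of the proof reduces to routine applications of dominated convergence and portmanteau.
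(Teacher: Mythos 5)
Your proposal is correct and follows essentially the same route as the paper: independence reduces the conditional expectation to an integral against $\law(Y)$, and dominated convergence does the rest, with part (a) obtained by testing against bounded continuous functions and part (c) by replacing the uniform bound with the integrable dominating function $h$. The one genuine refinement you add is making explicit the Fubini transfer from joint $(X,Y)$-a.s.\ continuity of $f$ to sectional $\nu$-a.s.\ continuity of $f(x,\cdot)$ at $\law(X)$-a.e.\ $x$; the paper's proof applies dominated convergence as though $\phi_g$ were continuous at every $x$, when in fact the argument only works for $\law(X)$-a.e.\ $x$ --- which is all that ``depends continuously on $X$'' requires, so the conclusion is unaffected, but your bookkeeping is cleaner.
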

\begin{proof}
For each $x \in \Xmc$ let $D_x$ be a $I$-valued random variable such that $\PP(D_x = i|Y) =  f_i(x,Y))$ for each $i \in I$. Then define $\phi: \Xmc\to \P(\Xmc\times \Ymc\times I)$ by
\[\phi(x) = \law(x,Y,D_x) = \delta_x\times \law(Y,D_x).\]
Fix any $g:\Xmc\times\Ymc\times I\to\R$ satisfying the conditions of part (c) of the lemma. Then, for any sequence $x_n \to x$, the Lebesgue-dominated convergence theorem and a.s. continuity of $f$ and $g$ imply
\begin{align}
\lim_{n\to\infty} \langle \phi(x_n),g\rangle &= \lim_{n\to\infty}\ex{g(x_n,Y,D_{x_n})}\nonumber\\
&=\lim_{n\to\infty}\ex{\ex{g(x_n,Y,D_{x_n})\middle|Y}}\nonumber\\
&= \lim_{n\to\infty}\ex{\sum_{i \in I}g(x_n,Y,i)f_i(x_n,Y)}\nonumber\\
&= \ex{\lim_{n\to\infty}\sum_{i \in I}g(x_n,Y,i)f_i(x_n,Y)}\nonumber\\
&= \ex{\sum_{i \in I}g(x,Y,i)f_i(x,Y)}\nonumber\\
&= \ex{g(x,Y,D_x)}\nonumber\\
&=\langle \phi(x),g\rangle.\label{phixngconv}
\end{align}
\ind This proves that $x \mapsto \langle\phi(x),g\rangle$ is continuous for all $g$ satisfying the conditions of part (c). By definition of $\phi$, $\phi(X) = \eta_{XYD}$ a.s.. Because all functions $g \in C_b(\Xmc\times\Ymc\times I)$ satisfy the conditions of part (c), it follows that $\phi$ is continuous so $\eta_{XYD}$ depends continuously on $X$. Moreover, $\langle \phi(X),g\rangle = \langle \eta_{XYD},g\rangle = \ex{g(X,Y,D)|X}$ a.s., so $\ex{g(X,Y,D)|X}$ depends continuously on $X$. That concludes the proofs of parts (a) and (c). Part (b) is a special case of (c).
\end{proof}

\subsection{Convergence Lemmas}
\subsubsection{Conditional Slutzky's Lemma}

The next lemma is a kind of conditional Slutzky's lemma. Slutzky's theorem states that the joint distribution of $(X^n,Y^n)$ converges if the marginals converge and $Y^n \Rightarrow c$, where $c$ is deterministic. We extend this to a similar result except that we now assume the \emph{conditional} distribution of $Y^n$ given $X^n$ approaches a random Dirac delta measure in some sense.

\begin{lemma}[Conditional Slutzky's Lemma]
\label{Slutzkycond}
Suppose that for some $k \in \N$ and all $i \in [1:k]$, a sequence of $\Xmc\times \Ymc$ random elements $(X_i^n,Y^n_i)$ converge weakly to some $(X_i,Y_i)$. Suppose also that for each $i$ there exists a measurable function $\phi_i: \Xmc\to\Ymc$ such that $Y_i = \phi_i(X_i)$ a.s.. If $(X^n_i)_{i\in [1:k]} \Rightarrow (X_i)_{i\in [1:k]}$, then
\[(X^n_i,Y^n_i)_{i\in [1:k]} \Rightarrow (X_i,\phi_i(X_i))_{i\in [1:k]}.\]
\end{lemma}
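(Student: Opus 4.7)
The plan is to apply Prokhorov's theorem to the joint sequence $\{(X^n_i, Y^n_i)_{i \in [1:k]}\}_{n \in \N}$ and characterize every subsequential weak limit. First, the marginal convergences $(X^n_i, Y^n_i) \Rightarrow (X_i, Y_i)$ imply that each of the $k$ coordinate families $\{(X^n_i, Y^n_i)\}_n$ is tight in $\Xmc \times \Ymc$, which in turn yields tightness of the joint family in the Polish product space $(\Xmc \times \Ymc)^k$. By Prokhorov's theorem, it then suffices to show that every weakly convergent subsequence converges to the distribution of $(X_i, \phi_i(X_i))_{i \in [1:k]}$.

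Suppose $((X^{n_\ell}_i, Y^{n_\ell}_i))_{i \in [1:k]} \Rightarrow ((X^*_i, Y^*_i))_{i \in [1:k]}$ along some subsequence $\{n_\ell\}$, where the limit is defined on an auxiliary probability space. Applying the continuous mapping theorem to the coordinate projection $(x_i, y_i)_i \mapsto (x_i)_i$ together with the hypothesis $(X^n_i)_i \Rightarrow (X_i)_i$ gives $(X^*_i)_{i \in [1:k]} \deq (X_i)_{i \in [1:k]}$. Applying it instead to the projection onto the $i$th coordinate pair gives $(X^*_i, Y^*_i) \deq (X_i, \phi_i(X_i))$ for each $i$ individually.

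The second fact implies that $\law(X^*_i, Y^*_i)$ is carried by the measurable graph $\Gamma_i \defeq \{(x, \phi_i(x)) : x \in \Xmc\} \subseteq \Xmc \times \Ymc$, since this set has full mass under the pushforward of $\law(X_i)$ by $x \mapsto (x, \phi_i(x))$. Hence $Y^*_i = \phi_i(X^*_i)$ almost surely for each $i \in [1:k]$. Combined with $(X^*_i)_{i \in [1:k]} \deq (X_i)_{i \in [1:k]}$, this gives
\[
((X^*_i, Y^*_i))_{i \in [1:k]} = ((X^*_i, \phi_i(X^*_i)))_{i \in [1:k]} \deq ((X_i, \phi_i(X_i)))_{i \in [1:k]},
\]
where the final equality in distribution follows because the joint law of $(X^*_i, \phi_i(X^*_i))_i$ is fully determined by the joint law of $(X^*_i)_i$ and the deterministic measurable functions $\phi_i$. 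Since every subsequential limit coincides with this target law, the full sequence converges.

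The main subtlety, and the reason a direct continuous mapping argument is unavailable, is that $\phi_i$ is only assumed measurable, so $\Gamma_i$ is measurable but not closed and $x \mapsto (x, \phi_i(x))$ need not be continuous. This forces the subsequential route via Prokhorov, and makes essential use of the Polish structure of $\Xmc$ and $\Ymc$ assumed throughout Section \ref{ssec::notation}. Beyond this, the argument reduces to carefully tracking the two marginal convergences and exploiting the almost sure functional relationship $Y^*_i = \phi_i(X^*_i)$ that they jointly enforce.
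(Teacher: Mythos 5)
Your argument is correct and is essentially the same as the paper's proof: both use tightness of the joint family (from convergence of the coordinate marginals), pass to a weakly convergent subsequence, and identify the subsequential limit by matching (i) the full law of the $(X_i)_i$-coordinates and (ii) the individual laws $(X^*_i, Y^*_i) \deq (X_i, \phi_i(X_i))$, the latter forcing $Y^*_i = \phi_i(X^*_i)$ a.s. You spell out the step the paper leaves implicit — that the graph $\Gamma_i$ of a Borel map between Polish spaces is Borel measurable and carries full $\law(X_i, Y_i)$-mass, so the identical distributional identity transfers to $(X^*_i, Y^*_i)$ — which is a welcome clarification but not a change of method.
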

\begin{proof}
Because its marginals are weakly convergent, the sequence $\{(X^n_i,Y^n_i)_{i\in[1:k]}\}_{n\in\N}$ is tight, so there exists a subsequence $\{n_{\ell}\}_{\ell\in\N}$ such that
\[(X^{n_{\ell}}_i,Y^{n_{\ell}}_i)_{i\in [1:k]} \Rightarrow (\alt{X}_i,\alt{Y}_i)_{i\in [1:k]}.\]
For each $i$, $(X^{n_{\ell}}_i,Y^{n_{\ell}}_i) \Rightarrow (X_i,Y_i)$, so $(\alt{X}_i,\alt{Y}_i) \deq (X_i,Y_i)$ and $\alt{Y}_i = \phi_i(\alt{X}_i)$ a.s.. Lastly, since $(X^{n_{\ell}}_i)_{i \in [1:k]} \rightarrow (X_i)_{i\in [1:k]}$ it follows that $(X_i)_{i \in [1:k]}\deq (\alt{X}_i)_{i\in[1:k]}$. Thus, 
\[(\alt{X}_i,\alt{Y}_i)_{i \in [1:k]} \deq (X_i,\phi_i(X_i))_{i \in [1:k]}.\]
Since this limiting quantity is unique in distribution, it follows, as desired, that
\[(X^n_i,Y^n_i)_{i \in [1:k]} \Rightarrow (\alt{X}_i,\alt{Y}_i)_{i \in [1:k]} = (\alt{X}_i,\phi_i(\alt{X}_i))_{i \in [1:k]} \deq (X_i,\phi_i(X_i))_{i\in [1:k]}.\]
\end{proof}

\subsubsection{Convergence of Conditionally Independent Bernoulli Random Variables}
\label{cibr}

\begin{lemma}
\label{lem::cibrlem}
Let $X$ be any $\Xmc$-random element, and let $I$ be a finite index set. Fix $k \in \N$, let $\{\phi_j^a\}_{j\in [1:k],a\in I}$ be a sequence of bounded, $X$-a.s. continuous functions from $\Xmc$ to $[0,1]$. In addition, let $(X^n,D^n)\defeq (X^n,D^n_{[1:k]})$, $n \in \N$ be a sequence of $\Xmc\times  I^k$- random elements satisfying the following conditions:
\begin{enumerate}[label = (\alph*)]
\item $X^n \Rightarrow X$ for some $\Xmc$-random element $X$;
\item for each $n\in \N$, $D^n_1,\dots,D^n_k$ are mutually conditionally independent given $X^n$;
\item for all $n \in \N$, $j \in [1:k]$, and $a \in I$,
\[\PP(D^n_j = a|X^n) = \phi_j^a(X^n).\]
\end{enumerate}
Then, 
\[(X^n,D^n) \Rightarrow (X,D) \defeq (X,D\pcar{1:k}),\]
where
\begin{enumerate}[label = (\Alph*)]
\item $D\pcar{1},\dots,D\pcar{k}$ are mutually conditionally independent given $X$;
\item for each $j \in [1:k]$ and $a \in I$, 
\[\PP(D\pcar{j} = a|X) = \phi_j^a(X).\]
\end{enumerate}
\end{lemma}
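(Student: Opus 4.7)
The plan is to prove weak convergence by checking the integral of an arbitrary bounded continuous test function, using the explicit conditional distribution of $B^n$ given $X^n$ to reduce everything to weak convergence statements about functionals of $X^n$.

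First, I would define the joint distribution of the limit. Let $B = B\pcar{1:k}$ be $\{0,1\}^k$-valued, constructed on an enlarged space so that given $X$ the entries are mutually conditionally independent Bernoullis with $\PP(B\pcar{i}=1\mid X)=\phi_i(X)$; this is well defined via conditional product measures. For each $b=(b_1,\dots,b_k)\in\{0,1\}^k$, introduce
\[
f_b(x) \defeq \prod_{i=1}^k \phi_i(x)^{b_i}(1-\phi_i(x))^{1-b_i}, \qquad x\in \Xmc.
\]
By conditional independence (assumption (b)) and (c), $\PP(B^n=b\mid X^n) = f_b(X^n)$, and analogously $\PP(B=b\mid X)=f_b(X)$. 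Each $f_b$ is bounded in $[0,1]$ and, as a finite product/sum of compositions of the $\phi_i$'s, is $X$-a.s.\ continuous because each $\phi_i$ is.

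Next, fix any $g\in C_b(\Xmc\times\{0,1\}^k)$. Using the tower property,
\[
\ex{g(X^n,B^n)} = \sum_{b\in\{0,1\}^k} \ex{g(X^n,b)\,f_b(X^n)}.
\]
For each fixed $b$, the map $x\mapsto g(x,b)f_b(x)$ is bounded and $X$-a.s.\ continuous (as a product of a bounded continuous function of $x$ with the $X$-a.s.\ continuous bounded $f_b$). Since $X^n\Rightarrow X$ by (a), the Portmanteau/continuous mapping theorem for $P_X$-a.s.\ continuous bounded functionals gives
\[
\ex{g(X^n,b)\,f_b(X^n)} \to \ex{g(X,b)\,f_b(X)}.
\]
Summing over the finite set $\{0,1\}^k$ and invoking the tower property in reverse,
\[
\lim_{n\to\infty}\ex{g(X^n,B^n)} = \sum_{b} \ex{g(X,b)\,f_b(X)} = \ex{g(X,B)},
\]
which is exactly weak convergence $(X^n,B^n)\Rightarrow(X,B)$.

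The only delicate point is the justification of the continuous-mapping step: we must make sure that $x\mapsto g(x,b)f_b(x)$ is continuous at $P_X$-almost every $x$, not merely measurable. This follows because each $\phi_i$ is $X$-a.s.\ continuous (so the set of discontinuities is $P_X$-null), a finite union of null sets is null, and products/sums of functions that are continuous off a null set are continuous off the same null set; multiplying by the (everywhere continuous) function $g(\cdot,b)$ preserves this. With this observation in hand, the argument above is routine, and I do not expect any other obstacles — the statement is essentially ``conditional product measures depend continuously on the conditioning variable through continuous parameters,'' cleanly reduced to a finite sum by the fact that $\{0,1\}^k$ is finite.
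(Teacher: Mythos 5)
Your proof is correct and follows essentially the same route as the paper: condition on $X^n$, express $\ex{g(X^n,B^n)}$ as a finite sum over $b\in\{0,1\}^k$ of $\ex{g(X^n,b)\,f_b(X^n)}$ with $f_b$ the conditional Bernoulli product density, observe each summand is a bounded $X$-a.s.\ continuous functional of $X^n$, and pass to the limit using $X^n\Rightarrow X$. The paper packages the same sum into a single function $g(x)=\sum_a f_a(x)\prod_{i:a_i=1}\phi_i(x)\prod_{i:a_i=0}(1-\phi_i(x))$ and applies the Portmanteau argument once rather than termwise, but the content is identical.
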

\begin{proof}
Fix any $f \in C_b(\Xmc\times I^k)$. For each $\bm{a} \in I^k$, define $f_{\bm{a}} \in C_b(\Xmc)$ by $f_{\bm{a}}(x) = f(x,\bm{a})$. Define the bounded, $X$-a.s. continuous function $g_{\bm{a}}:\Xmc\to\R$ by
\[g_{\bm{a}}(x) = f_{\bm{a}}(x)\prod_{j = 1}^k \phi_j^{a_j}(x),\]
and define the bounded, $X$-a.s. continuous function $g: \Xmc\to\R$ by
\[g(x) = \sum_{\bm{a} \in I^k} g_{\bm{a}}(x).\]

\ind Suppose that $(X,D)$ satisfies conditions (A) and (B) above. Then,
\begin{align*}
\ex{f(X,D)} &= \sum_{\bm{a} \in I^k}\ex{f_{\bm{a}}(X)\indic{D = \bm{a}}}\\
&= \sum_{\bm{a} \in I^k}\ex{f_{\bm{a}}(X)\PP(D=\bm{a}|X)}\\
&\overset{(A),(B)}{=}\sum_{\bm{a} \in I^k}\ex{f_{\bm{a}}(X)\prod_{j=1}^k \phi_j^{a_j}(X)}\\
&= \ex{\sum_{\bm{a} \in I^k}g_{\bm{a}}(X)}\\
&= \ex{g(X)}.
\end{align*}
By conditions (b) and (c), note that $(X^n,D^n)$ also satisfies conditions (A) and (B) for all $n \in \N$, so 
\[\ex{f(X^n,D^n)} = \ex{g(X^n)},\]
for all $n \in \N$. Then, by condition (a) and the fact that $g$ is bounded and $X$-a.s. continuous,
\[\lim_{n\to\infty} \ex{f(X^n,D^n)} = \lim_{n\to\infty} \ex{g(X^n)} = \ex{g(X)} = \ex{f(X,D)},\]
completing the proof that $(X^n,D^n) \Rightarrow (X,D)$ where $(X,D)$ satisfies conditions (A) and (B).
\end{proof}

\subsubsection{Mutual Conditional Independence of Bernoulli Random Vectors}
\label{CIBRV}

We start with a couple of simple lemmas. The first one reduces the task of establishing mutual conditional independence to the task of establishing simple conditional independence.

\begin{lemma}[Sufficient Condition for Mutual Conditional Independence]
\label{CIsuff}
A collection of $\Ymc$-random elements $(Y_i)_{i=1}^k$ is mutually conditionally independent given a $\sigma$-algebra $\Fmc$ if and only if for any $j \in [k]$,
\begin{equation}
\label{CIsuffjs}
Y_j \indp (Y_i)_{i\neq j}|\Fmc.
\end{equation}

\end{lemma}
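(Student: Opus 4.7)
The plan is to prove both implications separately. The forward direction ($\Rightarrow$) is immediate from the definition of mutual conditional independence: if the conditional joint law of $(Y_1,\dots,Y_k)$ given $\filtm$ factors as $\prod_{i=1}^k \law(Y_i|\filtm)$, then grouping the factors into the single factor for $Y_j$ and the product over $i \neq j$ shows that $Y_j \indp (Y_i)_{i\neq j}|\filtm$ for every $j \in [k]$. No further work is needed.

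For the converse ($\Leftarrow$), I would proceed by induction on $k$. The cases $k=1$ and $k=2$ are trivial (the condition at any single $j$ is already mutual independence). For the inductive step, assume the claim holds for collections of size $k-1$, and consider $(Y_i)_{i=1}^k$ satisfying the hypothesis for every $j \in [k]$. Fix bounded measurable test functions $f_1,\dots,f_k$. Applying the hypothesis at $j=k$ and expressing conditional independence in its equivalent factored form yields
\[
\exnd{\textstyle\prod_{i=1}^k f_i(Y_i)\,\big|\,\filtm} = \exnd{f_k(Y_k)\,\big|\,\filtm}\exnd{\textstyle\prod_{i=1}^{k-1} f_i(Y_i)\,\big|\,\filtm}.
\]

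To finish, I would verify that the sub-collection $(Y_i)_{i=1}^{k-1}$ inherits the hypothesis. Indeed, for each $j \in [k-1]$ the assumption $Y_j \indp (Y_i)_{i \neq j,\, i \leq k}|\filtm$ reduces (by taking the test function associated with $Y_k$ to be $\mathbf{1}$, or equivalently by marginalizing over $Y_k$) to $Y_j \indp (Y_i)_{i \neq j,\, i \leq k-1}|\filtm$. The inductive hypothesis then gives $\exnd{\prod_{i=1}^{k-1} f_i(Y_i)\,|\,\filtm} = \prod_{i=1}^{k-1}\exnd{f_i(Y_i)\,|\,\filtm}$, and substituting this back produces the full product $\prod_{i=1}^k \exnd{f_i(Y_i)|\filtm}$, which is exactly mutual conditional independence.

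There is no genuine obstacle here beyond bookkeeping. The only point requiring a bit of care is the passage from the condition "for every $j \in [k]$" to the analogous condition for the sub-collection of size $k-1$; it is precisely for this reason that the hypothesis is needed at every $j$ and not merely at one, since e.g. $Y_1 \indp (Y_2,Y_3)|\filtm$ alone does not imply $Y_2 \indp Y_3|\filtm$. Once this reduction is in place, the induction closes cleanly.
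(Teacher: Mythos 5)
Your proof is correct and takes essentially the same approach as the paper: both peel off one factor at a time using the single--$j$ hypothesis and then recurse (or iterate) on the remaining sub-collection. The only difference is cosmetic — you formalize the iteration as an induction and make explicit the step of marginalizing out one variable to inherit the hypothesis on the sub-collection, whereas the paper writes the same argument as a direct sequential computation and leaves that reduction implicit.
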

\begin{proof}
The ``only if" direction is an immediate consequence of the definition of mutual conditional independence.

\ind We now prove the ``if" direction. For each $i \in [k]$, let $f_i: \Ymc \to \R$ be some bounded, measurable function. For each $i \in [k]$, define $g_i: \Ymc^{k-i+1} \to \R$ by
\[g_i(y_i,y_{i+1},\dots,y_k) = \prod_{j=i}^k f_j(y_j).\]
Then applying \eqref{CIsuffjs} sequentially for $j = 1$, then $j = 2$, etc.
\begin{align*}
\ex{\prod_{i=1}^n f_i(Y_i)\middle|\Fmc} &= \ex{f_1(Y_1)g_2(Y_2,\dots,Y_k)\middle|\Fmc}\\
&= \ex{f_1(Y_1)\middle|\Fmc}\ex{\prod_{i=2}^n f_i(Y_i)\middle|\Fmc}\\
&= \ex{f_1(Y_1)\middle|\Fmc}\ex{f_2(Y_2)g_3(Y_3,\dots,Y_k)\middle|\Fmc}\\
&= \cdots\\
&= \prod_{i=1}^n \ex{f_i(Y_i)\middle|\Fmc}.
\end{align*}
That concludes the proof.
\end{proof}

\begin{lemma}[Mutual Conditional Independence of Finite-State Random Variables]
\label{lem:Bercondindsubfilt}
Let $\Gmc \subseteq \Fmc$ be two sigma algebras and let $I$ be a finite state space. Let $\{D_i\}_{j \in J}$ be a collection of $I$-valued random variables indexed by some finite set $J$ that are conditionally independent given $\Fmc$. If $P^a_j \defeq \PP(D_j = a|\Fmc)$ is $\Gmc$-measurable for all $a \in I$ and $j\in J$, then $\{D_j\}_{j\in J}$ are also conditionally independent given $\Gmc$.
\end{lemma}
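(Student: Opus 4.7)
The plan is straightforward: exploit the fact that since each $P_i = \ex{B_i|\Fmc}$ is already $\Gmc$-measurable, conditioning on $\Fmc$ versus conditioning on $\Gmc$ gives the same answer for all Bernoulli events built from the $B_i$'s individually; then collapse the $\Fmc$-conditional independence into a $\Gmc$-conditional independence via a tower-property computation.

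More concretely, I would first observe that $\ex{B_i | \Gmc} = \ex{\ex{B_i|\Fmc}|\Gmc} = \ex{P_i|\Gmc} = P_i$, so the individual conditional distributions $\PP(B_i = a | \Gmc)$ and $\PP(B_i = a | \Fmc)$ coincide and both equal $\alt{P}_i(a) \defeq a P_i + (1-a)(1-P_i)$ for each $a\in\{0,1\}$. In particular these quantities are all $\Gmc$-measurable.

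Next, for any finite subset $J \subseteq I$ and any choice of values $(a_i)_{i\in J}\in\{0,1\}^J$, I would write, using the $\Fmc$-conditional independence hypothesis and the tower property,
\[
\PP\!\left(B_i = a_i \ \forall i\in J \,\middle|\, \Gmc\right) = \ex{\PP\!\left(B_i = a_i \ \forall i\in J\,\middle|\, \Fmc\right)\middle|\Gmc} = \ex{\prod_{i\in J}\alt{P}_i(a_i)\middle|\Gmc}.
\]
Since each $\alt{P}_i(a_i)$ is $\Gmc$-measurable, the product is $\Gmc$-measurable and can be pulled out of the conditional expectation, yielding
\[
\PP\!\left(B_i = a_i \ \forall i\in J \,\middle|\, \Gmc\right) = \prod_{i\in J}\alt{P}_i(a_i) = \prod_{i\in J}\PP(B_i = a_i | \Gmc),
\]
which is precisely mutual conditional independence of $\{B_i\}_{i\in I}$ given $\Gmc$ (in fact only the $J = I$ case is needed, since for Bernoulli random variables factorization of the joint mass function at all $\{0,1\}^{|I|}$ points is equivalent to mutual conditional independence).

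There is no real obstacle here: the essential point is that $\Gmc$-measurability of the conditional success probabilities $P_i$ is exactly what is required to identify $\PP(\cdot|\Fmc)$ with $\PP(\cdot|\Gmc)$ on the algebra generated by the $B_i$'s, after which the factorization transfers trivially. I would simply have to be careful to state the result for a generic choice of $a_i\in\{0,1\}$ (not just $a_i = 1$) so that the product formula characterizing mutual conditional independence holds on all of $\ps(\{0,1\}^I)$.
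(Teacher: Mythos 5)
Your proof is correct and follows essentially the same route as the paper's: first establish $\ex{B_i\mid\Gmc}=P_i$ via the tower property, then apply the tower property to the joint event, factor using the $\Fmc$-conditional independence, and pull the $\Gmc$-measurable product out of the conditional expectation. The only cosmetic difference is your $\alt{P}_i(a)$ notation for $aP_i + (1-a)(1-P_i)$, which the paper writes out explicitly as a product over $\{i: a_i=0\}$ and $\{i: a_i=1\}$.
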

\begin{proof}
Notice that for any $j \in J$ and $a \in I$,
\[\PP\left(D_j = a\middle|\Gmc\right) = \ex{\PP\left(D_j = a\middle|\Fmc\right)\middle|\Gmc} = \ex{P_j^a|\Gmc} = P_j^a.\]
Let $\bm{a}\defeq \{a_j\}_{j \in J} \in I^J$ be any constant. Then
\begin{align*}
\PP\left(D_j = a_j\te{ for all }j \in J\middle|\Gmc\right) &= \ex{\PP\left(D_j = a_j\te{ for all }j \in J\middle|\Fmc\right)\middle|\Gmc}\\
&= \ex{\prod_{j \in J} \PP\left(D_j = a_j\middle|\Fmc\right)\middle|\Gmc}\\
&= \ex{\prod_{j \in J}P_j^{a_j}\middle|\Gmc}\\
&= \prod_{j \in J}P_j^{a_j}\\
&= \prod_{j \in J} \PP\left(D_j = a_j\middle|\Gmc\right).
\end{align*}
\end{proof}
\subsubsection{Convergence of Random Measures}

We begin with a sufficient condition under which a sequence of random measures converges in probability. We believe the following lemma is known. However, we were unable to find a suitable reference.

\begin{lemma}[Convergence in Probability of Random Measures]
\label{convinprobsuff}
Assume that $\Xmc$ is locally compact. Let $\{\eta_n\}_{n \in \N}$ be a sequence of random probability measures in $\P(\Xmc)$. Suppose there exists a random $\P(\Xmc)$-element $\eta$ such that for all $f \in C_b(\Xmc)$, $\langle \eta_n, f\rangle \to \langle \eta, f\rangle$ in probability. Then $\eta_n \to \eta$ in probability.
\end{lemma}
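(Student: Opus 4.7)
The strategy is to reduce the claim to an almost-sure statement via the subsequence characterization of convergence in probability in a metric space, and then verify the a.s.\ statement using a countable convergence-determining family in $C_b(\Xmc)$. Since $\Xmc$ is Polish (per Section \ref{ssec::notation}), the weak topology on $\P(\Xmc)$ is metrizable, say by the L\'evy--Prokhorov metric $d_\Pmc$. Convergence in probability in a metric space is equivalent to: every subsequence admits a further subsequence converging almost surely. Hence it suffices to show that for any subsequence $\{n_k\}$ there is a sub-subsequence $\{n_{k_\ell}\}$ along which $\eta_{n_{k_\ell}} \to \eta$ a.s.\ in the weak topology.

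To build the convergence-determining family, I would use local compactness. A locally compact Polish space is $\sigma$-compact, so one can pick compact sets $K_m \uparrow \Xmc$ and cutoffs $\psi_m \in C_c(\Xmc)$ with $0 \le \psi_m \le 1$ and $\psi_m \equiv 1$ on $K_m$. The space $C_0(\Xmc)$ is separable; fix a countable dense subset $\{g_j\}_{j \in \N} \subset C_0(\Xmc)$, and set $\mathcal{F} = \{g_j\}_{j \in \N} \cup \{\psi_m\}_{m \in \N}$. For each $f \in \mathcal{F}$ the hypothesis gives $\langle \eta_{n_k}, f\rangle \to \langle \eta, f\rangle$ in probability along the chosen subsequence, so a standard diagonal extraction yields a single sub-subsequence $\{n_{k_\ell}\}$ along which $\langle \eta_{n_{k_\ell}}, f\rangle \to \langle \eta, f\rangle$ a.s.\ for every $f \in \mathcal{F}$ simultaneously.

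Finally, one must argue that on the resulting probability-one event the convergences in $\mathcal{F}$ upgrade to weak convergence of probability measures. The sequence $\{g_j\}$ delivers vague convergence $\eta_{n_{k_\ell}} \to \eta$, while the $\{\psi_m\}$ prevent escape of mass: since $\psi_m \uparrow 1$ pointwise, $\langle \eta, \psi_m\rangle \uparrow 1$ a.s.\ by monotone convergence, and combined with $\langle \eta_{n_{k_\ell}}, \psi_m\rangle \to \langle \eta, \psi_m\rangle$ this yields uniform (in $\ell$) tightness of $\{\eta_{n_{k_\ell}}\}$ on the good event. For any $f \in C_b(\Xmc)$ and $\epsilon > 0$, decompose $f = f\psi_m + f(1-\psi_m)$: the first piece is in $C_c(\Xmc) \subset C_0(\Xmc)$ and can be uniformly approximated by elements of $\{g_j\}$, so $\langle \eta_{n_{k_\ell}}, f\psi_m\rangle \to \langle \eta, f\psi_m\rangle$; the second is bounded by $\|f\|_\infty \langle \cdot, 1-\psi_m\rangle$, which is small for $m$ large by the tightness just established. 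Taking $m$ large then $\ell \to \infty$ gives $\langle \eta_{n_{k_\ell}}, f\rangle \to \langle \eta, f\rangle$, so $\eta_{n_{k_\ell}} \to \eta$ weakly a.s. The main obstacle is this final upgrade from vague convergence plus tightness to full weak convergence against arbitrary $f \in C_b(\Xmc)$; the subsequence reduction and the diagonalization are routine, but the simultaneous approximation by $\{g_j\}$ and $\{\psi_m\}$ requires the local compactness hypothesis crucially (cf.\ Remark \ref{rem::A6}).
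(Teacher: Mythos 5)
Your proposal is correct, and it shares the main skeleton with the paper's proof: both reduce convergence in probability to almost-sure convergence along a sub-subsequence (using metrizability of $\P(\Xmc)$), both exploit local compactness to get a countable diagonalization family inside $C_0(\Xmc)$, and both then upgrade almost-sure vague convergence to almost-sure weak convergence against $C_b(\Xmc)$. The divergence is in the upgrade. The paper extends the a.s.\ convergence from the countable dense set to all of $C_0(\Xmc)$ via the Moore--Osgood double-limit theorem (interchanging the $\ell$-limit with the uniform approximation limit), thereby obtaining a.s.\ vague convergence $\eta_{n_{m_\ell}}\to\eta$, and then closes in one stroke by citing the textbook fact that vague convergence of probability measures to a limit that is itself a probability measure is automatically weak convergence (\cite[Exercise 26, Chapter 7]{Fol99}); no tightness argument is needed because the normalization $\eta(\Xmc)=1$ already forbids escape of mass. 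You instead enlarge the diagonalization family with compactly supported cutoffs $\psi_m$, derive tightness of the sub-subsequence on the good event, and then carry out the explicit decomposition $f = f\psi_m + f(1-\psi_m)$ for arbitrary $f\in C_b$. Your route is more self-contained (it re-derives the vague-to-weak step rather than citing it) at the cost of a larger family and a two-parameter limit. One step worth making explicit in a final write-up: when you pass from the countable family $\{g_j\}$ to a generic $f\psi_m\in C_c(\Xmc)$, you need the uniform-approximation estimate $\sup_\ell|\langle\eta_{n_{k_\ell}},f\psi_m-g_j\rangle|\leq\|f\psi_m-g_j\|_\infty$ to transfer a.s.\ convergence beyond the countable set; this is exactly the bookkeeping the Moore--Osgood theorem packages for the paper, and leaving it implicit is the only soft spot in your argument.
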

\begin{proof}
Ideally we would consider a sequence $\{f_k\}_{k \in \N}$ that is dense in $C_b(\Xmc)$. However, $C_b(\Xmc)$ is not necessarily separable in the topology of uniform convergence. Instead, let $\{f_k\}_{k \in \N}$ be a collection of functions that are dense in the space $C_0(\Xmc)\subseteq C_b(\Xmc)$ (which is separable when $\Xmc$ is locally compact).

\ind Let $\{n_m\}_{m \in \N}\subseteq \N$ be an arbitrary, strictly increasing subsequence of $\N$ such that $n_m \nearrow \infty$. Then by a standard diagonalization argument, there exists a subsubsequence (also strictly increasing to infinity)  $\{n_{m_{\ell}}\}_{m \in \N}\subseteq \{n_m\}_{m \in \N}$ such that 
\[\left\langle \eta_{n_{m_{\ell}}}, f_k\right\rangle \to \langle \eta,f_k\rangle \te{ a.s. for all }k \in \N.\]

Let $f \in C_0(\Xmc)$ and fix a sequence $\{k_i\}_{i\in \N}\subseteq \N$ such that $f_{k_{i}} \to f$ uniformly. Then 
\[\lim_{i\to\infty} \sup_{\ell \in \N} \left|\left\langle \eta_{n_{m_{\ell}}}, f_{k_{i}}\right\rangle - \left\langle \eta_{n_{m_{\ell}}}, f\right \rangle\right| \leq  \lim_{i\to\infty} \left\|f_{k_i} - f\right\|_{\infty} = 0.\]
For any $i \in \N$,
\[\lim_{\ell \to \infty} \left\langle \eta_{n_{m_{\ell}}}, f_{k_i}\right\rangle = \left\langle \eta, f_{k_i}\right\rangle \te{ a.s.}.\]
This shows that the assumptions of the Moore-Osgood double limit theorem (see \cite[Theorem 7.11]{Rud76} or \cite[Chapter 5, Theorem 3]{Hof75} for example) are satisfied by the doubly indexed sequence $\left\{\left\langle \eta_{n_{m_{\ell}}},f_{k_i}\right\rangle\right\}_{i,\ell \in \N}$, so we apply the Moore-Osgood theorem to interchange limits below:
\begin{align*}
\lim_{\ell\to\infty} \left\langle \eta_{n_{m_{\ell}}},f\right\rangle &= \lim_{\ell\to\infty} \lim_{i\to\infty}\left\langle \eta_{n_{m_{\ell}}},f_{k_i}\right\rangle = \lim_{i\to\infty} \lim_{\ell\to\infty} \left\langle \eta_{n_{m_{\ell}}},f_{k_i}\right\rangle = \lim_{i\to\infty} \left\langle \eta, f_{k_i}\right\rangle = \langle \eta,f\rangle \te{ a.s..}
\end{align*}
Thus, $\eta_{n_{m_{\ell}}} \to \eta$ vaguely a.s.. Because $\eta_{n_{m_{\ell}}}$ is a probability measure for all $\ell \in \N$, $\Xmc$ is a locally compact Polish space and $\eta$ is also a probability measure, $\eta_{n_{m_{\ell}}} \to \eta$ a.s. in $\P(\Xmc)$ \cite[Exercise 26, Chapter 7]{Fol99}. Therefore, $\eta_n \to \eta$ in $\P(\Xmc)$ in probability. 
\end{proof}

\subsubsection{Convergence of Integrals of Random Measures}

If a sequence of random measures $\eta_X^n \to \eta_X$ in probability, then for any bounded, continuous $f$, $\langle \eta_X^n,f\rangle \to \langle \eta_X,f\rangle$ in probability. However, if $f$ is continuous but unbounded, then the map $\eta \mapsto \langle \eta,f\rangle$ is no longer continuous. We establish sufficient conditions under which $\langle \eta_X^n,f\rangle$ still converges to $\langle \eta_X,f\rangle$ in probability.

\begin{lemma}
\label{unifintlem}
Let $\{X^n_i\}_{n\in \N,i\in [1:n]}$ be a uniformly integrable triangular array of $\R^d$-random vectors such that 
\[\eta^n_X \defeq \frac{1}{n}\sum_{i=1}^n \delta_{X^n_i}\to \eta_X \in \P(\R^d) \te{ in probability,}\]
where $\eta_X$ is a possibly random probability measure. Then,
\[\frac{1}{n}\sum_{i=1}^n X^n_i \to \int_{\R^d}x\,\eta_X(dx)\te{ in probability.}\]
\end{lemma}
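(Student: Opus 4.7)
The plan is to use a standard truncation argument that reduces the problem to integrating a bounded continuous function against a weakly convergent measure, together with a uniform tail bound coming from uniform integrability. For $M > 0$, let $\psi_M: [0,\infty) \to [0,1]$ be a continuous cutoff with $\psi_M(r) = 1$ for $r \leq M$, $\psi_M(r) = 0$ for $r \geq M+1$, and linear in between, and define $f_M: \R^d \to \R^d$ by $f_M(x) = x\psi_M(|x|)$. Then $f_M \in C_b(\R^d, \R^d)$ (applied componentwise if needed), $\|f_M\|_\infty \leq M+1$, and the truncation error satisfies $|x - f_M(x)| \leq |x|\indic{|x| > M}$ for every $x \in \R^d$.

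The core decomposition is
\begin{equation*}
\frac{1}{n}\sum_{i=1}^n X^n_i \;-\; \int_{\R^d} x\,\eta_X(dx) \;=\; R^n_M + D^n_M + E_M,
\end{equation*}
where $R^n_M \defeq \frac{1}{n}\sum_i (X^n_i - f_M(X^n_i))$, $D^n_M \defeq \langle \eta^n_X,f_M\rangle - \langle \eta_X,f_M\rangle$, and $E_M \defeq \langle \eta_X,f_M\rangle - \int x\,\eta_X(dx)$. By the triangle inequality and the pointwise bound above, $\ex{|R^n_M|} \leq \sup_{n,i}\ex{|X^n_i|\indic{|X^n_i|>M}}$, which tends to $0$ as $M\to\infty$ uniformly in $n$ by the assumed uniform integrability. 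For the middle term, $f_M$ is bounded and continuous, so the convergence in probability $\eta^n_X \to \eta_X$ in $\P(\R^d)$ yields $D^n_M \to 0$ in probability for each fixed $M$. For $E_M$, I first need to argue that $\eta_X$ has a.s.\ finite first moment; this follows from Fatou's lemma applied to the random measures, since
\begin{equation*}
\ex{\int |x|\,\eta_X(dx)} \;\leq\; \liminf_{n\to\infty}\ex{\int |x|\,\eta^n_X(dx)} \;=\; \liminf_{n\to\infty}\frac{1}{n}\sum_{i=1}^n \ex{|X^n_i|} \;<\;\infty
\end{equation*}
by uniform integrability. Hence $|E_M| \leq \int |x|\indic{|x|>M}\,\eta_X(dx) \to 0$ almost surely as $M\to\infty$, by the dominated convergence theorem applied pathwise.

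To combine these three estimates, fix $\delta,\epsilon > 0$. Using uniform integrability, pick $M_1$ so that $\sup_{n,i}\ex{|X^n_i|\indic{|X^n_i|>M_1}} < \epsilon\delta/3$, which by Markov's inequality gives $\PP(|R^n_M|>\delta/3)<\epsilon$ uniformly in $n$ for all $M\geq M_1$. Using the a.s.\ convergence $E_M\to 0$, pick $M_2$ so that $\PP(|E_{M_2}|>\delta/3)<\epsilon$. Setting $M = \max(M_1,M_2)$ and invoking $D^n_M\to 0$ in probability, we find $N$ with $\PP(|D^n_M|>\delta/3)<\epsilon$ for $n\geq N$. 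The union bound then gives $\PP(|\frac{1}{n}\sum_i X^n_i - \int x\,\eta_X(dx)|>\delta)<3\epsilon$ for $n\geq N$, completing the argument.

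The main technical point, which is easy but must be stated carefully, is that both the truncation parameter $M$ must be chosen simultaneously to control the deterministic-given-$\eta_X$ remainder $E_M$ and the uniform-in-$n$ remainder $R^n_M$ before passing to the limit $n\to\infty$ to kill $D^n_M$. The uniform integrability assumption is what makes the $R^n_M$ control uniform in $n$, and the a.s.\ integrability of $\eta_X$ (itself a consequence of uniform integrability, via Fatou) is what makes $E_M$ controllable.
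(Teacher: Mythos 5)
Your argument is correct and follows essentially the same truncation strategy as the paper's proof: both split the identity into a bounded continuous truncation plus a tail (you use a linear cutoff $x\psi_M(|x|)$, the paper the radial truncation $x\min(1,C/|x|)$), control the truncated part via the weak convergence $\eta^n_X\to\eta_X$, control the tail uniformly in $n$ via uniform integrability, and combine with a three-epsilon argument (the paper passes to $L^1$ through Markov, you use a union bound in probability, but this is the same bookkeeping). Your Fatou step verifying $\ex{\int|x|\,\eta_X(dx)}<\infty$ makes explicit a detail the paper only asserts, namely that $\ex{|\langle\eta_X,\phi_C\rangle|}\le M'_C\to 0$, and is a worthwhile addition.
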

\begin{proof}
The following argument is adapted from the proof of \cite[Theorem 3.4(b)]{BouDupEll00}. Let $\phi: \R^d\to\R^d$ be the identity map. For any $C < \infty$, define $\phi_C: \R^d\to\R^d$ by
\[\phi_C(x) = x\indic{|x|> C}.\]
By uniform integrability of $\{X^n_i\}_{n \in \N,i \in [1:n]}$ there exists for each $C \in (0,\infty)$ a constant $M_C$ converging to $0$ as $C\to \infty$ such that
\[\sup_{n\in \N,i\in[1:n]} \ex{|\phi_C(x^n_i)|} \leq M_C.\]
It follows that for any $n\in \N$,
\begin{align*}
\ex{|\langle \eta^n_X,\phi_C\rangle|} \leq \ex{\frac{1}{n}\sum_{i=1}^n |\phi_C(X^n_i)|} =\frac{1}{n}\sum_{i=1}^n \ex{|\phi_C(x^n_i)|} \leq M_C.
\end{align*}

Thus,

\begin{align*}
\lim_{C\to\infty}\sup_n\ex{|\langle \eta^n_X,\phi_C\rangle|} \leq \lim_{C\to\infty} M_C = 0.
\end{align*}

If we also define

\[\phi^C(x) \defeq \begin{cases}
x &\te{ if } |x| < C,\\
C\frac{x}{|x|} &\te{ if } |x| \geq C.
\end{cases}\]

Note that $|\phi_C+\phi^C| \geq |\phi|$. Moreover, there exists a sequence $M'_C \to 0$ as $C\to\infty$ such that $\ex{|\langle \eta_X,\phi_C\rangle|} \leq M'_C$ for all $C \in (0,\infty)$. Lastly, because $\phi^C$ is a bounded, continuous function, $\{\langle\eta^n_X,\phi^C\rangle\}_{n \in \N}$ is a sequence of random vectors whose magnitudes are uniformly bounded by $C$ and that converge to $\langle \eta_X,\phi^C\rangle$ in probability. Thus, for any $\ep >0$,

\begin{align*}
\lim_{n\to\infty} \PP(|\langle \eta^n_X,\phi\rangle &- \langle \eta_X,\phi\rangle| > \ep) \leq \lim_{n\to\infty} \frac{1}{\ep}\ex{|\langle \eta^n_X,\phi\rangle - \langle \eta_X,\phi\rangle|}\\
&\leq \inf_{C \in (0,\infty)} \lim_{n\to\infty} \frac{1}{\ep}\left(\ex{|\langle \eta^n_X,\phi^C\rangle - \langle \eta_X,\phi^C\rangle|} + \ex{|\langle \eta^n_X,\phi_C\rangle|} + \ex{|\langle \eta_X,\phi_C\rangle|}\right)\\
&\leq \inf_{C \in (0,\infty)} \lim_{n\to\infty} \frac{1}{\ep}\left(\ex{|\langle \eta^n_X,\phi^C\rangle - \langle \eta_X,\phi^C\rangle|} + M_C + M'_C\right)\\
&= \inf_{C \in (0,\infty)} \frac{1}{\ep}\left(M_C + M'_C\right)\\
&=0.
\end{align*}

Thus,

\[\frac{1}{n}\sum_{i=1}^n X^n_i = \langle \eta^n_X,\phi\rangle \to \langle \eta_X,\phi\rangle = \int_{\R^d} x\,\eta_X(dx) \te{ in probability}\]
completing the proof.
\end{proof}

This immediately implies the following simple corollary.

\begin{corollary}
\label{unifintflem}
Suppose that $\Xmc$ is locally compact. Let $\{Y^n_i\}_{n\in \N,i\in [1:n]}$ be a triangular array of $\Xmc$-random elements such that 
\[\eta^n_Y \defeq \frac{1}{n}\sum_{i=1}^n \delta_{Y^n_i}\to \eta_Y \in \P(\Xmc) \te{ in probability,}\]
where $\eta_Y$ is a possibly random probability measure. Let $f: \Xmc \to \R^d$ be a continuous function such that $\{f(Y^n_i)\}_{n \in \N, i \in [1:n]}$ is uniformly integrable. Then,
\[\frac{1}{n}\sum_{i=1}^n f(Y^n_i) \to \int_{\R^d}f(y)\,\eta_Y(dy)\te{ in probability.}\]
\end{corollary}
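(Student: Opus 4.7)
The plan is to reduce the statement to Lemma \ref{unifintlem} by pushing forward the empirical measures through the continuous map $f$. Set $X^n_i \defeq f(Y^n_i)$ for $n \in \N$ and $i \in [1:n]$, so that $\{X^n_i\}$ is a triangular array of $\R^d$-random vectors which is uniformly integrable by hypothesis. The empirical measure satisfies
\[\eta^n_X \defeq \frac{1}{n}\sum_{i=1}^n \delta_{X^n_i} = \frac{1}{n}\sum_{i=1}^n \delta_{f(Y^n_i)} = f_*\eta^n_Y,\]
where $f_*\eta$ denotes the pushforward of a measure $\eta \in \P(\Xmc)$ under $f$. Thus to invoke Lemma \ref{unifintlem}, it suffices to show that $\eta^n_X \to f_*\eta_Y$ in probability in $\P(\R^d)$.

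The key step is the observation that the pushforward map $\eta \mapsto f_*\eta$ is continuous as a map $\P(\Xmc) \to \P(\R^d)$ in the topologies of weak convergence. Indeed, for any $g \in C_b(\R^d)$, the function $g\circ f \in C_b(\Xmc)$, so if $\eta_n \to \eta$ weakly in $\P(\Xmc)$, then $\langle f_*\eta_n,g\rangle = \langle \eta_n,g\circ f\rangle \to \langle \eta,g\circ f\rangle = \langle f_*\eta,g\rangle$. Since the hypothesis gives $\eta^n_Y \to \eta_Y$ in probability in $\P(\Xmc)$, applying the continuous mapping theorem for convergence in probability yields
\[\eta^n_X = f_*\eta^n_Y \to f_*\eta_Y \quad\text{in probability in } \P(\R^d).\]

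With this in hand, Lemma \ref{unifintlem} applied to the uniformly integrable array $\{X^n_i\}$ and the limiting (possibly random) probability measure $f_*\eta_Y$ yields
\[\frac{1}{n}\sum_{i=1}^n f(Y^n_i) = \frac{1}{n}\sum_{i=1}^n X^n_i \to \int_{\R^d} x\,(f_*\eta_Y)(dx) = \int_{\Xmc} f(y)\,\eta_Y(dy) \quad\text{in probability,}\]
which is the desired conclusion. There is no real obstacle here: the argument is a clean reduction, with the only mildly nontrivial point being the invocation of the continuous mapping theorem for random measures in the setting of convergence in probability, which follows from the subsequence characterization of convergence in probability together with the almost-sure continuous mapping theorem.
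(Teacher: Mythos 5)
Your proof is correct and follows essentially the same strategy as the paper's: identify $\eta^n_X$ with the pushforward $f_*\eta^n_Y$, establish that $\eta^n_X \to f_*\eta_Y$ in probability, and then invoke Lemma~\ref{unifintlem}. The one place you diverge is the middle step: the paper shows $\langle \eta^n_f,g\rangle \to \langle \eta_f, g\rangle$ in probability for each $g\in C_b(\R^d)$ (since $g\circ f \in C_b(\Xmc)$) and then routes through Lemma~\ref{convinprobsuff} to upgrade this to convergence in probability of the measures; you instead observe directly that $f_* : \P(\Xmc)\to\P(\R^d)$ is weakly continuous (same computation with $g\circ f$) and appeal to the continuous mapping theorem for convergence in probability via the subsequence criterion. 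Both versions hinge on the identical observation, but your route is a bit more economical since it avoids the round-trip through test functions and Lemma~\ref{convinprobsuff}; in particular, it would dispense with the local-compactness hypothesis on $\Xmc$, which the paper's argument also does not actually use (Lemma~\ref{convinprobsuff} is applied on $\R^d$, not on $\Xmc$).
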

\begin{proof}
Define 
\[\eta^n_f \defeq \frac{1}{n}\sum_{i=1}^n \delta_{f(Y^n_i)}\te{ and } \eta_f \defeq f_*\eta_Y,\]
so for any $A \in \borel(\R^d)$, $\eta_f(A) = \eta_Y\left(\{y: f(y) \in A\}\right)$. Fix any $g \in C_b(\R^d)$. Then $g\circ f \in C_b(\Xmc)$, so
\[\langle \eta^n_f,g\rangle = \langle \eta^n_Y,g\circ f\rangle \to \langle \eta_Y, g\circ f\rangle = \langle \eta_f,g\rangle\te{ in probability,}\]
so by Lemma \ref{convinprobsuff}, $\eta^n_f \to \eta_f$ in probability. The result now holds by Lemma \ref{unifintlem} setting $X^n_i = f(Y^n_i)$, $\eta^n_X = \eta^n_f$ and $\eta_X = \eta_f$.
\end{proof}

\subsection{Establishing Exchangeability}

\begin{lemma}
\label{ZLexch}
Suppose Property A(a) holds at time $t$. Then the collection $((Z^n[t],L^n(t)),A^n(t))$ is jointly exchangeable.
\end{lemma}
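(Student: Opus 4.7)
The plan is to exploit the fact that $L^n(t)$ is a deterministic function of $(Z^n(t), A^n(t))$ that is \emph{equivariant} under simultaneous relabeling of indices, so that joint exchangeability of $(Z^n[t], A^n[t])$ (Property A(a)) transfers directly to $((Z^n[t], L^n(t)), A^n(t))$.

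First I would define the measurable map $\phi = (\phi_i)_{i=1}^n$ with
\[
\phi_i(z, a) = \frac{\sum_{j=1}^n z_j\, a_{ij}}{\sum_{j=1}^n a_{ij}}, \qquad z \in (\R^d)^n,\ a \in \{0,1\}^{n\times n},
\]
which is well-defined since $a_{ii} = 1$ forces the denominator to be at least $1$. By \eqref{eqn::Lnidef}, $L^n_i(t) = \phi_i(Z^n(t), A^n(t))$.

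The key computation is the equivariance: for any $\sigma \in S_n$, setting $z'_j = z_{\sigma(j)}$ and $a'_{ij} = a_{\sigma(i)\sigma(j)}$ and performing the change of summation index $k = \sigma(j)$,
\[
\phi_i(z', a') \;=\; \frac{\sum_{j} z_{\sigma(j)}\, a_{\sigma(i)\sigma(j)}}{\sum_{j} a_{\sigma(i)\sigma(j)}} \;=\; \frac{\sum_{k} z_k\, a_{\sigma(i)k}}{\sum_{k} a_{\sigma(i)k}} \;=\; \phi_{\sigma(i)}(z, a).
\]

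Next I would plug this into joint exchangeability. Define the measurable map $F$ on trajectories by
\[
F\bigl(z[t],\, a[t]\bigr) \;=\; \Bigl( \bigl(z_i[t],\, \phi_i(z(t), a(t))\bigr)_{i=1}^n,\; a(t) \Bigr).
\]
Then $F(Z^n[t], A^n[t]) = \bigl((Z^n_i[t], L^n_i(t))_{i=1}^n, A^n(t)\bigr)$, and by the equivariance identity,
\[
F\bigl(Z^n_{\sigma(1:n)}[t],\, A^n_{\sigma(1:n)\sigma(1:n)}[t]\bigr) \;=\; \Bigl( \bigl(Z^n_{\sigma(i)}[t],\, L^n_{\sigma(i)}(t)\bigr)_{i=1}^n,\; A^n_{\sigma(1:n)\sigma(1:n)}(t) \Bigr).
\]
Since Property A(a) gives $(Z^n[t], A^n[t]) \deq (Z^n_{\sigma(1:n)}[t], A^n_{\sigma(1:n)\sigma(1:n)}[t])$, applying the measurable function $F$ preserves equality in distribution, and this is exactly the joint exchangeability statement for $((Z^n[t], L^n(t)), A^n(t))$ in the sense of Definition \ref{def::jointexchangeability}.

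There is no real obstacle here; the only point worth flagging is a bookkeeping subtlety, namely that joint exchangeability in Definition \ref{def::jointexchangeability} permutes the vertex labels simultaneously across both the ``vector'' entries and both indices of the ``matrix'' entries, which matches precisely the way $\phi_i$ transforms under relabeling of $(z,a)$. After this observation the argument is a one-line application of a measurable map to Property A(a).
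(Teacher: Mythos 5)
Your proposal is correct and follows essentially the same route as the paper: both write $L^n_i(t)$ as the same equivariant function of $(Z^n(t),A^n(t))$, reindex the inner sum under $\sigma$, and invoke the joint exchangeability of $(Z^n[t],A^n[t])$ from Property A(a). The paper presents this as a chain of distributional equalities while you package it as applying a measurable equivariant map, but the underlying computation is identical.
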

\begin{proof}
Fix any $\sigma \in S_n$. Then,
\begin{align*}
((Z^n[t],L^n(t)),A^n(t)) &= \left(\left(Z^n_i[t], \frac{\sum_{k=1}^n Z^n_k(t)A^n_{ik}(t)}{\sum_{k=1}^n A^n_{ik}(t)}\right)_{i \in [1:n]}, (A^n_{ij}(t))_{i,j \in [1:n]}\right)\\
&\deq\left(\left(Z^n_{\sigma(i)}[t], \frac{\sum_{k=1}^n Z^n_{\sigma(k)}(t)A^n_{\sigma(i)\sigma(k)}(t)}{\sum_{k=1}^n A^n_{\sigma(i)\sigma(k)}(t)}\right)_{i \in [1:n]}, (A^n_{\sigma(i)\sigma(j)}(t))_{i,j \in [1:n]}\right)\\
&=\left(\left(Z^n_{\sigma(i)}[t], \frac{\sum_{k=1}^n Z^n_k(t)A^n_{\sigma(i)k}(t)}{\sum_{k=1}^n A^n_{\sigma(i)k}(t)}\right)_{i \in [1:n]}, (A^n_{\sigma(i)\sigma(j)}(t))_{i,j \in [1:n]}\right)\\
&= \left(\left(Z^n_{\sigma(i)}[t],L^n_{\sigma(i)}(t)\right)_{i \in [1:n]},(A^n_{\sigma(i)\sigma(j)}(t))_{i,j\in [1:n]}\right).
\end{align*}
Thus, $(Z^n[t],L^n(t),A^n(t))$ is jointly exchangeable.
\end{proof}

\begin{lemma}
\label{Addbernexch}
Let $X \defeq (X_i)_{i \in [1:n]}$ and $Y \defeq (Y_{ij})_{i,j \in [1:n]}$ be $\Xmc^n$ and $\Ymc^{n\times n}$ random elements respectively such that $(X,Y)$ is jointly exchangeable. Let $\{B_{ij}\}_{(i,j)\in\incs_n}$ be a conditionally mutually independent sequence of Bernoulli random variables (given $(X,Y)$) and let $B_{ij} = B_{ji}$ for all $i,j \in [1:n]$. If for each $i,j$, $P_{ij} = \PP(B_{ij}=1|X,Y)$ and $(X,(Y,P))$ is jointly exchangeable, then $(X_{1:k},(Y_{ij},B_{ij})_{i,j \in [1:n]})$ is jointly exchangeable.
\end{lemma}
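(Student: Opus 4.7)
The plan is to establish joint exchangeability of $(X,Y,B)$ by a direct comparison of expectations, exploiting the fact that the conditional law of $B$ given $(X,Y)$ is determined entirely by $P$. Fix an arbitrary $\sigma \in S_n$, and for any index family $Z = (Z_{i})_i$ or $Z = (Z_{ij})_{ij}$ write $Z_\sigma$ for the index-permuted family (so $(X_\sigma)_i = X_{\sigma(i)}$, $(Y_\sigma)_{ij} = Y_{\sigma(i)\sigma(j)}$, etc.). The target is the distributional identity $(X,Y,B) \deq (X_\sigma, Y_\sigma, B_\sigma)$.

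A preliminary observation is that the map $\iota_\sigma \colon (i,j) \mapsto (\sigma(i) \wedge \sigma(j),\,\sigma(i) \vee \sigma(j))$ is a bijection from $\incs_n$ to itself. Since $B_{ij} = B_{ji}$ forces $P_{ij} = P_{ji}$, the family $\{B_{\sigma(i)\sigma(j)}\}_{(i,j) \in \incs_n}$ is just a reindexing of $\{B_{ij}\}_{(i,j) \in \incs_n}$ by $\iota_\sigma$, and therefore is still a conditionally mutually independent collection of Bernoullis given $(X,Y)$, now with success probabilities $(P_\sigma)_{ij} = P_{\sigma(i)\sigma(j)}$. The core step is to integrate $B$ out against this product-Bernoulli law: for any bounded measurable test function $f$ on the product state space, define
\[
g(x,y,p) \;:=\; \sum_{b \in \{0,1\}^{\incs_n}} f\bigl(x, y, \operatorname{sym}(b)\bigr) \prod_{(i,j) \in \incs_n} p_{ij}^{b_{ij}}(1-p_{ij})^{1-b_{ij}},
\]
where $\operatorname{sym}(b)$ extends $b \in \{0,1\}^{\incs_n}$ to a symmetric array on $[1{:}n]^2$ (with the same convention on the diagonal used to define $B_{ii}$ in the application). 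Conditioning on $(X,Y,P)$ and using the preliminary observation yields
\[
\ex{f(X,Y,B)} = \ex{g(X,Y,P)}, \qquad \ex{f(X_\sigma, Y_\sigma, B_\sigma)} = \ex{g(X_\sigma, Y_\sigma, P_\sigma)}.
\]

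Now I would invoke the hypothesis that $(X,(Y,P))$ is jointly exchangeable, applied to the bounded measurable function $g$, to conclude $\ex{g(X,Y,P)} = \ex{g(X_\sigma, Y_\sigma, P_\sigma)}$. Chaining the three identities gives $\ex{f(X,Y,B)} = \ex{f(X_\sigma, Y_\sigma, B_\sigma)}$ for every bounded measurable $f$ and every $\sigma \in S_n$, which is precisely joint exchangeability of $(X,Y,B)$. The only step requiring genuine care is the preliminary bookkeeping around $\iota_\sigma$: one must verify that permuting row and column indices of an upper-triangular conditionally independent Bernoulli family produces another upper-triangular conditionally independent family, with parameters matching $P_\sigma$, rather than accidentally duplicating, collapsing, or mis-pairing entries. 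The bijectivity of $\iota_\sigma$ together with the symmetry $P_{ij} = P_{ji}$ is what makes this reindexing clean; everything else is a routine application of the tower property and joint exchangeability of $(X,(Y,P))$.
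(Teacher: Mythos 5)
Your proposal is correct and is essentially the same argument as the paper's: both condition on $(X,Y,P)$, integrate out $B$ against its conditional product-Bernoulli law to obtain a function $g$ of $(X,Y,P)$ alone, and then invoke joint exchangeability of $(X,(Y,P))$. The paper performs the reindexing as a change of variables $b \mapsto b'$ inside the sum over symmetric $\{0,1\}$-matrices, whereas you package it as the bijection $\iota_\sigma$ on $\incs_n$ together with the symmetry $P_{ij}=P_{ji}$; these are the same bookkeeping.
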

\begin{proof}
Fix any $\sigma \in S_n$ and any bounded, measurable $g: \Xmc^n \times (\Ymc\times \{0,1\}\times [0,1])^{n\times n}\to \R$. Then,
\begin{align*}
&\ex{g\left((X_i)_{i\in [1:n]}, (Y_{ij},B_{ij},P_{ij})_{i,j\in [1:n]}\right)}\\
&=\ex{\ex{g\left((X_i)_{i\in [1:n]}, (Y_{ij},B_{ij},P_{ij})_{i,j\in [1:n]}\right)\middle|X,Y,P}}\\
&= \sum_{\substack{b \in \{0,1\}^{n\times n}\\ b\te{ is symmetric}}} \ex{g\left((X_i)_{i\in [1:n]}, (Y_{ij},b_{ij},P_{ij})_{i,j\in [1:n]}\right)\prod_{\substack{1\leq i\leq j\leq n\\ b_{ij} = 1}} P_{ij}\prod_{\substack{1\leq i\leq j\leq n\\ b_{ij} = 0}} (1-P_{ij})}\\
&= \sum_{\substack{b \in \{0,1\}^{n\times n}\\ b\te{ is symmetric}}} \mathbb{E}\Bigg[g\left((X_{\sigma(i)})_{i\in [1:n]}, (Y_{\sigma(i)\sigma(j)},b_{ij},P_{\sigma(i)\sigma(j)})_{i,j\in [1:n]}\right)\prod_{\substack{1\leq i\leq j\leq n\\ b_{ij} = 1}} P_{\sigma(i)\sigma(j)}\\
&\ind \prod_{\substack{1\leq i\leq j\leq n\\ b_{ij} = 0}} (1-P_{\sigma(i)\sigma(j)})\Bigg]\\
&= \sum_{\substack{b' \in \{0,1\}^{n\times n}\\ b'\te{ is symmetric}}} \mathbb{E}\Bigg[g\left((X_{\sigma(i)})_{i\in [1:n]}, (Y_{\sigma(i)\sigma(j)},b'_{\sigma(i)\sigma(j)},P_{\sigma(i)\sigma(j)})_{i,j\in [1:n]}\right)\\
&\ind \prod_{\substack{1\leq i\leq j\leq n\\ b'_{\sigma(i)\sigma(j)} = 1}} P_{\sigma(i)\sigma(j)}\prod_{\substack{1\leq i\leq j\leq n\\ b'_{\sigma(i)\sigma(j)} = 0}} (1-P_{\sigma(i)\sigma(j)})\Bigg]\\
&=\ex{g\left((X_{\sigma(i)})_{i\in [1:n]}, (Y_{\sigma(i)\sigma(j)},B_{\sigma(i)\sigma(j)},P_{\sigma(i)\sigma(j)})_{i,j\in [1:n]}\right)}.
\end{align*}
Thus, $(X,(Y,P,B))$ is jointly exchangeable, so $(X,(Y,B))$ is also jointly exchangeable.
\end{proof}

\begin{lemma}
\label{lem::Addberexchexcl}
Let $(X,Y)$ be an $\Xmc\times \Ymc^n$-random element that is exchangeable excluding $1$. Let $C: \Xmc\times\Ymc \to [0,1]$ be a measurable function and let $\{B_i\}_{i \in [1:n]}$ be a collection of conditionally independent (given $(X,Y)$) Bernoulli random variables with respective parameters $P_i \defeq C(X,Y_i)$. Then $(X,(Y,B))$ is also exchangeable excluding 1.
\end{lemma}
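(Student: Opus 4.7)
The plan is to prove this by conditioning on $(X,Y)$ and reducing to the exchangeability excluding $1$ property of $(X,Y)$. The key observation is that the conditional distribution of $B$ given $(X,Y)$ is a product of Bernoulli laws whose parameters depend on $(X,Y_i)$ only through the single fixed function $C$, so permuting indices commutes with taking conditional expectations in a clean way.

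First I would fix an arbitrary $\sigma \in S_n$ with $\sigma(1) = 1$ and an arbitrary bounded measurable test function $g: \Xmc \times (\Ymc\times\{0,1\})^n \to \R$. Using conditional independence of the $B_i$ given $(X,Y)$ together with $\PP(B_i = 1|X,Y) = C(X,Y_i)$, I would compute
\[
\ex{g(X,(Y_i,B_i)_{i=1}^n) \mid X,Y} = \sum_{b\in\{0,1\}^n} g(X,(Y_i,b_i)_{i=1}^n)\prod_{i: b_i=1} C(X,Y_i)\prod_{i: b_i=0}(1-C(X,Y_i)).
\]
The right-hand side is a bounded measurable function of $(X,Y)$ alone; call it $h(X,(Y_i)_{i=1}^n)$.

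Next, I would apply the exchangeability of $(X,Y)$ excluding $1$: since $\sigma(1) = 1$, we have $(X,(Y_i)_{i=1}^n) \deq (X,(Y_{\sigma(i)})_{i=1}^n)$, and hence $\ex{h(X,(Y_i)_i)} = \ex{h(X,(Y_{\sigma(i)})_i)}$. Running the same conditional computation in reverse, and relabelling the summation index $b$ by $b\circ\sigma^{-1}$, yields
\[
\ex{h(X,(Y_{\sigma(i)})_i)} = \ex{g(X,(Y_{\sigma(i)}, B_{\sigma(i)})_{i=1}^n)},
\]
where I use that, conditionally on $(X,Y)$, the vector $(B_{\sigma(i)})_i$ consists of mutually independent Bernoullis with parameters $C(X,Y_{\sigma(i)})$. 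Chaining these equalities gives $\ex{g(X,(Y_i,B_i)_i)} = \ex{g(X,(Y_{\sigma(i)},B_{\sigma(i)})_i)}$ for every bounded measurable $g$, which is precisely the statement that $(X,(Y,B))$ is exchangeable excluding $1$.

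There is no real obstacle here; this is essentially a book-keeping exercise mirroring the proof of Lemma \ref{Addbernexch}. The only mild care required is to verify that the auxiliary function $h$ is measurable and bounded (it is, being a finite sum of products of bounded measurable functions), and to keep track of the summation index under relabelling by $\sigma$. The assumption $\sigma(1)=1$ is used exactly once, namely when invoking the exchangeability excluding $1$ of $(X,Y)$.
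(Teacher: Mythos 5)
Your proof is correct and follows essentially the same route as the paper's: both condition on $(X,Y)$, expand the conditional expectation over $b\in\{0,1\}^n$ using the product-of-Bernoullis structure, invoke exchangeability of $(X,Y)$ excluding $1$, and relabel the summation index by $\sigma$. Packaging the conditional expectation as an explicit function $h(X,(Y_i)_i)$ is a slightly cleaner way to organize what the paper writes inline, but the argument is the same.
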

\begin{proof}
The proof is very similar to the proof of Lemma \ref{Addbernexch}. Let $\sigma \in S_n$ be such that $\sigma(1) = 1$. Then 
\[(X,Y_i,P_i)_{i \in [1:n]} = (X,Y_i,C(X,Y_i))_{i \in [1:n]}\deq (X,Y_{\sigma(i)},C(X,Y_{\sigma(i)}))_{i \in [1:n]} = (X,Y_{\sigma(i)},P_{\sigma(i)}),\]
so $(X,Y,P)$ is exchangeable excluding 1. Then for any bounded, measurable $f: \Xmc\times \Ymc^n\times [0,1]^n\times \{0,1\}^n \to \R$,
\begin{align*}
\ex{f(X,(Y_i,P_i,B_i)_{i\in [1:n]}} &= \sum_{b \in \{0,1\}^n}\ex{f(X,(Y_i,P_i,b_i)_{i\in [1:n]}\prod_{\substack{i \in [1:n]\\ b_i = 1}} P_i\prod_{\substack{i \in [1:n]\\ b_i = 0}}(1-P_i)}\\
&= \sum_{b \in \{0,1\}^n}\ex{f(X,(Y_{\sigma(i)},P_{\sigma(i)},b_i)_{i\in [1:n]}\prod_{\substack{i \in [1:n]\\ b_i = 1}} P_{\sigma(i)}\prod_{\substack{i \in [1:n]\\ b_i = 0}}(1-P_{\sigma(i)})}\\
&= \sum_{b' \in \{0,1\}^n}\ex{f(X,(Y_{\sigma(i)},P_{\sigma(i)},b'_{\sigma(i)})_{i\in [1:n]}\prod_{\substack{i \in [1:n]\\ b'_{\sigma(i)} = 1}} P_{\sigma(i)}\prod_{\substack{i \in [1:n]\\ b'_{\sigma(i)} = 0}}(1-P_{\sigma(i)})}\\
&= \ex{f(X,(Y_{\sigma(i)},P_{\sigma(i)},B_{\sigma(i)})_{i \in[1:n]})},
\end{align*}
where in the third equality, we use the tranformation $(b'_i)_{i\in [1:n]} = (b_{\sigma^{-1}(i)})_{i\in[1:n]}$. Thus, $(X,(Y,P,B))$ is exchangeable excluding 1, so $(X,(Y,B))$ is exchangeable excluding 1.
\end{proof}

\subsection{Other General Lemmas}
\label{usefulother}

Another useful lemma concerns a representation of the product of conditional probability measures.

\begin{lemma}
\label{lem::CIgivenprod}
Let $\eta$ be a $\P(\Xmc\times \Ymc)$-random element defined by $\eta = \law(X,Y|X)$ for respective $\Xmc$ and $\Ymc$-random elements $X$ and $Y$. Now suppose that for some $k\geq 3$, $(X,Y_2,\dots,Y_k)$ satisfies the following conditions:
\begin{itemize}
\item for $j =2,\dots,k$, $\law(X,Y_j|X) = \eta$,
\item $(Y_j)_{j=2}^k$ are conditionally independent given $X$.
\end{itemize}
Then
\[(\eta)^{k-1} = \law\left((X,Y_j)_{j=2}^k\middle|X\right).\]
\end{lemma}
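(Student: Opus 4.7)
The approach will be a direct computation, exploiting the fact that although $\eta$ is a measure on $\Xmc\times\Ymc$, when we condition on $X$ its marginal on the $\Xmc$-coordinate collapses to a Dirac mass at $X$. The crux of the argument is to make this collapse precise and then use the conditional independence assumption to factor the conditional law.

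\textbf{Step 1 (Decompose $\eta$).} Let $\mu_Y$ be a regular conditional distribution of $Y$ given $X$, so that $\mu_Y$ is a $\P(\Ymc)$-random element and $\law(Y\mid X)=\mu_Y$. The plan is to show that, almost surely,
\[
\eta(dx,dy)=\delta_{X}(dx)\,\mu_Y(dy).
\]
This is obtained by checking the defining property on product sets: for any $A\in\borel(\Xmc)$ and $B\in\borel(\Ymc)$,
\[
\eta(A\times B)=\PP(X\in A,\,Y\in B\mid X)=\indic{X\in A}\,\mu_Y(B),
\]
since $\indic{X\in A}$ is $\sigma(X)$-measurable. A monotone class argument (or Fubini) then extends this to all bounded measurable $f\colon\Xmc\times\Ymc\to\R$:
\[
\langle\eta,f\rangle=\int_{\Ymc} f(X,y)\,\mu_Y(dy).
\]

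\textbf{Step 2 (Identify the conditional $Y_j$-marginals).} For each $j\in\{2,\dots,k\}$, the hypothesis $\law(X,Y_j\mid X)=\eta$ together with Step 1 implies, by projecting onto the $\Ymc$-coordinate, that $\law(Y_j\mid X)=\mu_Y$ almost surely. Thus all $Y_j$ share the same regular conditional distribution given $X$.

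\textbf{Step 3 (Factor via conditional independence).} Let $f\colon(\Xmc\times\Ymc)^{k-1}\to\R$ be bounded and measurable. Since the $(Y_j)_{j=2}^k$ are conditionally independent given $X$, Step 2 yields
\[
\E\!\left[f\bigl((X,Y_j)_{j=2}^k\bigr)\,\middle|\,X\right]
=\int_{\Ymc^{k-1}} f\bigl((X,y_j)_{j=2}^k\bigr)\prod_{j=2}^{k}\mu_Y(dy_j).
\]

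\textbf{Step 4 (Re-insert the Diracs).} Using the decomposition from Step 1 in each factor, the right-hand side equals
\[
\int_{(\Xmc\times\Ymc)^{k-1}} f\bigl((x_j,y_j)_{j=2}^k\bigr)\prod_{j=2}^{k}\delta_X(dx_j)\,\mu_Y(dy_j)
=\int f\,d\eta^{\otimes(k-1)}=\langle\eta^{k-1},f\rangle.
\]
Since $f$ was arbitrary, this characterizes $\law\bigl((X,Y_j)_{j=2}^k\mid X\bigr)=\eta^{k-1}$, as desired.

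There is no real obstacle here; the only subtlety is making the identity $\eta=\delta_X\otimes\mu_Y$ rigorous, which is why Step 1 is split off and handled via a product-set check plus monotone class extension. Once that decomposition is in hand, conditional independence delivers the factorization and a measurable version of Fubini's theorem reassembles the product measure $\eta^{k-1}$.
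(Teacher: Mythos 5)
Your proof is correct but takes a genuinely different route from the paper's. The paper works directly from the definition of conditional expectation: it fixes test functions of the form $f = \indic{\times_{j} A_{j-1}}$ for Borel rectangles, computes $\ex{\langle \eta^{k-1},f\rangle h(X)}$ by unwinding the two hypotheses one after the other (first using $\law(X,Y_j\mid X)=\eta$ to rewrite $\eta(A_j)$ as $\PP((X,Y_j)\in A_j\mid X)$, then using conditional independence to collapse the product of conditional probabilities into a single one), and finally invokes the monotone class theorem to pass from rectangle indicators to all bounded measurable $f$. You instead establish the structural fact $\eta = \delta_X\otimes\mu_Y$ a.s., where $\mu_Y = \law(Y\mid X)$, and then reduce the problem to the elementary observation that the conditional law of $(Y_j)_{j=2}^k$ given $X$ is $\mu_Y^{\otimes(k-1)}$, re-inserting the Diracs at the end. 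This is more conceptual: it explains \emph{why} the lemma holds (the $\Xmc$-marginal of $\eta$ is degenerate, so taking a power of $\eta$ just tensors the $\Ymc$-conditional), whereas the paper's computation is more hands-on and doesn't need the explicit decomposition. One minor point worth stating explicitly in your Step 1 is that identifying $\eta = \delta_X\otimes\mu_Y$ almost surely (as a single a.s. equality of random measures, not merely for each fixed rectangle up to a null set depending on the rectangle) relies on $\Xmc$ and $\Ymc$ being Polish, so one can intersect over a countable generating algebra of rectangles and then use that two probability measures agreeing on a generating $\pi$-system coincide. Since the paper's standing convention makes these spaces Polish, this is unproblematic, but it is the only place where a non-trivial measure-theoretic input is hiding.
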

\begin{proof}
Because $(\eta)^{k-1}$ is $\sigma(X)$-measurable, it suffices to show that
\[\ex{\langle (\eta)^{k-1}, f\rangle h(X)} = \ex{f\left((X,Y_j)_{j=2}^k\right)h(X)}\]
for all bounded, measurable functions $f: (\Xmc\times \Ymc)^{k-1}\to \R$ and $h: \Xmc\to \R$.

\ind Let us write $\times_{j=2}^k A_{j-1}$ for the Cartesian product of the sets $A_{1},\cdots,A_{k-1}$. Suppose that $f\left((x_{j-1},y_j)_{j=2}^k\right) = \prod_{j=2}^k\indic{A_{j-1}}(x_{j-1},y_j)$ for the borel sets $A_1,\dots,A_{k-1} \in \borel(\Xmc\times\Ymc)$. Then, 
\begin{align*}
\ex{\langle(\eta)^{k-1},f\rangle h(X)} &= \ex{\prod_{j=1}^{k-1} \eta(A_j) h(X)}\\
&= \ex{\prod_{j=1}^{k-1}\PP\left((X,Y)\in A_j\middle|X\right)h(X)}\\
&= \ex{\PP\left((X,Y_j)_{j=2}^k \in \times_{j=2}^k A_{j-1}\middle|X\right)h(X)}\\
&= \ex{\indic{\times_{j=2}^k A_{j-1}}\left((X,Y_j)_{j=2}^k\right)h(X)}\\
&= \ex{f\left((X,Y_j)_{j=2}^k\right)h(X)}.
\end{align*}
It is easily seen that the collection of bounded, measurable functions $f: (\Xmc\times\Ymc)^{k-1}\to\R$ such that $\ex{\langle (\eta)^{k-1},f\rangle h(X)} = \ex{f\left((X,Y_j)_{j=2}^k\right)h(X)}$ forms a monotone class, and the display above shows that this monotone class includes all functions of the form $f\left((x_{j-1},y_j)_{j=2}^k\right) = \indic{\times_{j=2}^kA_{j-1}}$ for $A_1,\dots,A_{k-1} \in \borel(\Xmc\times\Ymc)$. Since the set of sets $\left\{\times_{j=2}^k A_{j-1}: A_1,\dots,A_{k-1} \in \borel(\Xmc\times\Ymc)\right\}$ is a $\pi$-system, we may use the monotone class theorem \cite[Theorem 5.2.2]{Dur19} to show that
\[\ex{\langle (\eta)^{k-1},f\rangle h(X)} = \ex{f\left((X,Y_j)_{j=2}^k\right)h(X)}\]
for all bounded, measurable $f: (\Xmc\times\Ymc)^{k-1}\to\R$. This completes the proof.
\end{proof}

\bibliographystyle{plain}
\bibliography{reference}
\end{document}